\DeclareMathOperator{\esslim}{ess\, lim}
\def\adh#1{\overline{#1}}
\newtheorem {pro}{Proposition}[section]
\newtheorem {thm}[pro]{Theorem}
\newtheorem {cor}[pro]{Corollary}
\newtheorem{lem}[pro]{Lemma}
\theoremstyle{definition}
 \newtheorem {rem}[pro]{Remark}
\newtheorem {dfn}[pro]{Definition}
\newtheorem {exa}[pro]{Example}
\newtheorem {ste}{Step}
 \newtheorem {rems}[pro]{Remarks and examples}
\thanks{Research supported by the Narodowe Centrum Nauki (Poland) under  grant 
	2021/43/B/ST1/02359.}
\newcommand{\dor}{\mathscr{V}}
\newcommand{\blj}{\mathbf{B}^{l,j}_\mu}
\newcommand{\jac}{\mbox{jac}\,}
\newcommand{\tra}{\mathbf{tr}}\newcommand{\trd}{\mathbf{Tr}}\newcommand{\res}{\mathbf{res}}
\newcommand{\R}{\mathbb{R}}
\newcommand{\N}{\mathbb{N}}
\newcommand{\dsc}{\mathscr{D}}
\newcommand{\cc}{\mathscr{C}}\newcommand{\C}{\mathcal{C}}
\newcommand{\st}{\mathscr{S}}
\newcommand{\poi}{\mathcal{P}}
\newcommand{\Hom}{\mathbf{Hom}}
\newcommand{\cfr} {\mathfrak{c}}
\newcommand{\lcs}{\mathbb{\R}}
\newcommand{\rfr}{\mathfrak{r}}\newcommand{\Rfr}{\mathfrak{R}}
\newcommand{\rfh}{{\hat{\mathfrak{r}}}}
\newcommand{\et}{\quad \mbox{and} \quad }
\newcommand{\hn}{\mathcal{H}}
\newcommand{\mba}{ {\overline{M}}}
\newcommand{\mep}{ {M^\ep}}
\newcommand{\uxo}{U_\xo}
\newcommand{\nep}{ {N^\ep}}
\newcommand{\wsc}{\mathscr{W}} \newcommand{\wca}{\mathcal{W}}
\newcommand{\cch}{\check{C}}\newcommand{\chh}{\check{\C}}
\newcommand{\hch}{\check{H}}\newcommand{\dfh}{{\hat{d}^F}}\newcommand{\dsh}{{\hat{d}^S}}
\newcommand{\F}{\mathcal{F}}
\newcommand{\ep}{\varepsilon}\newcommand{\epd}{\frac{\varepsilon}{2}}
\newcommand{\E}{\mathcal{E}}
\newcommand{\pa}{\partial}
\newcommand{\hh}{\mathcal{V}}
\newcommand{\bou}{\mathbf{B}}
\newcommand{\sph}{\mathbf{S}}
\newcommand{\orn}{{0_{\R^n}}}
\newcommand{\Bb}{\overline{ \mathbf{B}}}
\newcommand{\xo}{{x_0}}
\newcommand{\mc}{{\check{M}}}
\newcommand{\supp}{\mbox{\rm supp}}
\newcommand{\hsk}{ \hskip-1mm}
\title[]{$L^p$ Hodge theory for bounded subanalytic manifolds}
\author[ G. Valette]{ Guillaume Valette}
\address[G. Valette]{Instytut Matematyki Uniwersytetu
Jagiello\'nskiego, ul. S. \L ojasiewicza 6, Krak\'ow, Poland}\email{guillaume.valette@im.uj.edu.pl}
\keywords{$L^p$ Differential forms, Lefschetz duality, de Rham theorem, Hodge theory,   $p$-harmonic forms, currents,  subanalytic varieties, singularities.}
\subjclass[2020]{58A14, 32B20, 58A25, 57P10,  35J92, 14F40, 58A12,  14P10}
\begin{document}

\begin{abstract}
	Given a bounded subanalytic submanifold of $\mathbb{R}^n$, possibly admitting singularities within its closure,
 we study the cohomology of $L^p$ differential forms having an $L^p$  exterior differential (in the sense of currents) and satisfying  Dirichlet or Neumann condition. We show an $L^p$ Hodge decomposition theorem,  an $L^p$ de Rham theorem, as well as a Lefschetz duality theorem  between $L^p$ and $L^{p'}$ forms (with $\frac{1}{p}+\frac{1}{p'}=1$) in the case where $p$ is large or close to $1$. This is achieved by proving that de Rham's pairing between complementary $L^p$ differential forms induces a   pairing between cohomology classes which is nondegenerate (for such $p$).  The main difficulty to carry it out is to show the density (in  Sobolev spaces of differential forms) of  forms that vanish near some singularities and are smooth up to the closure of the underlying manifold in the case $p$ large (Theorem \ref{thm_dense_formes}). This result, which is of independent interest, also makes it possible to give  a trace theorem that leads us to some explicit characterizations of Dirichlet and Neumann conditions in terms of traces and residues.
\end{abstract}
\maketitle
\begin{section}{Introduction}
The famous Hodge decomposition theorem asserts that on a compact smooth Riemannian manifold,  every smooth differential form can be decomposed as the sum of an exact form, a coexact form, and a harmonic form \cite{warmer}. As a consequence, every de Rham cohomology class contains a unique harmonic representative. This also provides a nice interpretation of the Poincar\'e duality isomorphism which is then induced by the action of the Hodge star operator on harmonic forms. The situation is more complicated if we drop the compactness assumption, especially if singularities appear in the closure of the manifold, and this has much to do with the Lipschitz geometry of these singularities. An important step in the comprehension of Hodge  theory of singular varieties was accomplished by J. Cheeger who showed a de Rham theorem and a Hodge decomposition theorem for $L^2$ differential forms on varieties that admit only even codimensional metrically conical singularities \cite{c1,c2,c3,c4}. This theory turned out to be related with the famous intersection homology  introduced by M. Goresky and R. Macpherson \cite{ih1,ih2,cgm}. Unfortunately,   Cheeger's de Rham type theorem no longer holds for non metrically conical singularities and these results about $L^2$ cohomology fail \cite{bb}. In \cite{linfty}, the author of the present article established a counterpart of Cheeger's de Rham theorem for $L^p$ differential forms on subanalytic bounded manifolds (not necessarily compact) in the case $p=\infty$. This result was then extended to the case ``$p$ close to $1$'' or ``$p$ sufficiently large''  \cite[Theorems $2.9$ and $2.10$]{gvpoincare},  and some Poincar\'e duality isomorphisms between $L^p$ and $L^{p'}$ cohomologies were given (here $p'$ stands for the H\"older conjugate of $p$, i.e. $\frac{1}{p}+\frac{1}{p'}=1$). The isomorphisms provided by the latter article were however not completely satisfying to carry out a nice Hodge theory on singular varieties as a boundary phenomenon arose at singularities (see \cite[Section $5.4$]{gvpoincare}).

 It is on the other hand well-known that Hodge theory goes over compact manifolds with boundary, the Hodge star operator inducing isomorphisms between harmonic forms satisfying Dirichlet and Neumann conditions respectively, and that these isomorphisms correspond to Lefschetz duality. In this article we establish a Lefschetz duality theorem for $L^p$ forms on bounded subanalytic manifolds (not necessarily closed) when $p$ is large or close to $1$ (Theorem \ref{thm_lefschetz_duality}) as well as a de Rham type theorem for these forms (Theorem \ref{thm_derham}). This $L^p$ Lefschetz duality theorem, which generalizes the main result of \cite{gvpoincare}, requires to make sure that integration by parts of products of forms satisfying complementary conditions is licit. Namely, we have to  show that, given a subanalytic manifold $M$ and a closed subanalytic subset $A$ of $\delta M:=\mba \setminus M$, the natural pairing between $\wca^j_p(M,A)$, which is the space of $L^p$ forms that vanish in the Dirichlet sense on $A$,  and $\wca^{m-j}_{p'}(M,\delta M\setminus A)$ induces a mapping in cohomology (here $m=\dim M$, see Corollary \ref{cor_lsp_A}). This fact will be derived from a density theorem asserting that on a normal subanalytic manifold (i.e. connected at frontier points, see Definition \ref{dfn_normal}), the space of forms that are smooth up to the frontier and vanish nearby $A$ is dense in $\wca^j_p(M,A)$ for all $p$ sufficiently large (Theorem \ref{thm_dense_formes}). This theorem, which is of its own interest and is essential for our Lefschetz duality result, is the main technical difficulty of this article, and its proof occupies all section \ref{sect_density}.

 As a byproduct of our Lefschetz duality theorem, we derive a Poincar\'e inequality for differential forms (Corollary \ref{cor_image_fermee}) and the $L^p$ Hodge decomposition theorem (Theorem \ref{thm_hodge}). The idea is that since we are working with the $L^p$ norm, the $L^p$  Hodge decomposition theorem should give a $p$-harmonic representative of every cohomology class, i.e., a zero of the $p$-coboundary operator, defined for $\omega \in  \wca^j_p(M)$ as
  $$\delta_p \omega:=(-1)^{m(j-1)+1}*_{p'}d*_p \omega,\quad \mbox{where }\quad *_p\omega:=\frac{|\omega|^{p-2}}{||\omega||_{L^p(M)}^{p-2}}*\omega,$$
 where $*$ is the Hodge star operator and $d$ the exterior differentiation, $m=\dim M$. In the case $p=2$, we of course find the usual coboundary operator, which means that our $L^p$ Hodge theory naturally extends the classical one. Forms that are both closed and $p$-coclosed (i.e. $d\omega=0$, $\delta_p\omega=0$) are called $p$-harmonic. In the case $j=0$, these are nothing but the zeros of the $p$-Laplace operator $\Delta_p$ \cite{degen, gupel} which is a well-known generalization of the usual Laplace-Beltrami operator. Differential forms that are in the image of $\delta_p$ are called $p$-coexact.  The $L^p$ Hodge  theorem (Theorem \ref{thm_hodge}) thus provides, given  a subanalytic subset $A$ of $\delta M$, a decomposition of any $L^p$ form that has an $L^p$ exterior differential as the sum of an exact form vanishing in the Dirichlet sense on $A$, a $p$-coexact form vanishing  in the   Neumann sense on $\delta M\setminus A$, and a $p$-harmonic form vanishing in the Dirichlet sense on $A$ and  in the Neumann sense on $\delta M\setminus A$. We also compute the relative $L^p$ cohomology groups for $p$ large (Theorem \ref{thm_derham}), which shows that they are finitely generated (Corollary \ref{cor_finitely_generated}) and topological invariants of $\mba$, establishing that the set of $p$-harmonic forms is a $\cc^0$ manifold  for such $p$ (the $p$-Laplace operator is not linear, see Corollary \ref{cor_hodge}).

We will prove a residue formula (see (\ref{eq_residue_formula})) which will provide an efficient way to perform integration by parts on noncompact subanalytic manifolds. Thanks to our density theorems (Theorems \ref{thm_dense_formes} and \ref{thm_pprime}), this will enable us to characterize more explicitly functions that vanish on a definable subset of $\delta M$ in the Dirichlet sense (Corollaries \ref{cor_vanishing_res} and \ref{cor_trace_formes}).
This residue theory for $L^1$ differential forms on subanalytic varieties could prove useful for other purposes, such as the study of Plateau problems or to put boundary conditions for PDE's on singular domains \cite{gupel}.

 It is worthy of notice that we will also establish that the  boundary conditions that we put are vacuous  for all $j>\dim \delta M+1$ if  $p$ is large, and all $j<m-\dim \delta M-1$ if $p$ is close to $1$  (see (\ref{eq_tra_0}) and  (\ref{eq_res_0})). We thus get an $L^p$ Hodge decomposition theorem without any boundary condition at singular points for all such $j$ and $p$ (Corollary \ref{cor_hodge_deltaM_ptit}). In particular, in the case where the closure of $M$ admits only isolated singularities we get a Hodge decomposition of $\wca^j_p(M)$ for all $j\le m-2$ (for $p$ close to $1$). These results somehow unravel the duality pointed out in \cite[Corollary $2.12$]{gvpoincare}, which applied to every $j<m-\dim \delta M-1$, for all $p$ close to $1$.

\begin{subsection}{Some notations and conventions} By ``manifold'', we will always mean $\cc^\infty$ submanifold of $\R^n$. A submanifold of $\R^n$ will always be endowed with its canonical measure, provided by volume forms.  As integrals will always be considered with respect to this measure, we will not match the measure when integrating on a submanifold of $\R^n$.   Throughout this article, the letter
$M$  stands for a {\it bounded oriented subanalytic} submanifold of $\R^n$ and $m$ for its dimension.  We set $\dim \emptyset:=-1$.

 $0_{\R^n}\;$: origin of $\R^n$. When the ambient space will be obvious from the context, we will however omit the subscript $\R^n$.

  $x\cdot y$ and $|x|$:  euclidean inner product of $x\in \R^n$ and $y\in \R^n$ and euclidean norm of $x$.

   $|\omega(x)|$: euclidean norm of a differential form $\omega$ at $x$, i.e.  $|\omega(x)\wedge *\omega(x)|^{1/2}$, where $*$ is the Hodge operator. 

   $\sph(x,\ep),\bou(x,\ep)$, and $\adh{\bou}(x,\ep)$: sphere, open ball, and closed ball of radius $\ep$ that are centered  at $x\in \R^n$ (in the euclidean norm).

  $\adh{A}$: closure of $A\subset \R^n$. We also set $\delta A=\adh{A}\setminus A$.

$\Sigma_A$:  stratification induced by $\Sigma$ on $A$ (if $\Sigma $ is a stratification compatible with  $A$), Definition \ref{dfn_stratifications}.

``$\xi\lesssim \zeta$ on $B$'' (or also ``$\xi(x)\lesssim \zeta(x)$ for $x\in B$''): means that there is  $C>0$ such that $\xi(x) \le C\zeta(x)$ for all $x\in B$ ($\xi$ and $\zeta$ being  two nonnegative functions  on a set $A\supset B$).

$X_{reg}$:  regular locus of a subanalytic set $X$, section \ref{sect_sub}.


  $Dh$ and $\pa u$: derivative of the mapping $h$ and gradient of a function $u$.

$\jac\, h$: jacobian of  the mapping $h$, i.e.,   absolute value of the determinant of $Dh$.

 $||v||_{L^p(S)}$: $L^p$ norm  (possibly infinite) of the restriction to $S$ of a measurable mapping $v:M\to \R^k$, if $S$ is  a submanifold of $M$.

  $W^{1,p}(M)$:   Sobolev space, i.e. $W^{1,p}(M)=\{u\in L^p(M):|\pa u| \in L^p(M)\} $.

$\trd^p \alpha$ : $p$-trace of a form,  section \ref{sect_trd}.

$\tra_S $: trace operator on a set $S$, defined in section \ref{sect_trace_operators} (in the case of functions, i.e. $0$-forms) and section \ref{sect_trace_forms} (in the case of $j$-forms, $j\ge 0$).


 $L_p^j(M)$: space of measurable $L^p$ differential $j$-forms  on $M$.

$\wca^j_p(M)=\{\omega\in L^j_p(M):d\omega \in L^{j+1}_p(M)\}$  (see (\ref{eq_wcap})).

 $\wca^j_p(M,A)$: space of elements of $\wca^j_p(M)$ satisfying Dirichlet condition on $A\subset \delta M$ (section \ref{sect_forms_with_boundary conditions}).


 $H^j_p(M,A)$ (denoted $H^j_p(M)$ if $A=\emptyset$): $j^{th}\,$  cohomology group of $(\wca^i_p(M,A),d)_{i\in \N}$.

 $<\alpha,\beta>\;$: de Rham's pairing of differential forms (see (\ref{eq_pairing})).

$\omega_\xi(x)(\zeta):=\omega(x)(\xi(x) \otimes \zeta)$, if $\omega$ is a differential form and $\xi$ a vector field, see (\ref{eq_om_xi}).

$\cc^{j,\infty}(E)$: for $E\subset \mba$, elements of $\cc^{j,\infty}(M)$ that extend smoothly to a neighborhood of $E$ in $\R^n$. In particular, $\cc^{j,\infty}(\mba)\subset \cc^{j,\infty}(M)$ is the space of  $j$-forms that are smooth up to $\delta M$.  When $j=0$, i.e., in the case of functions, we omit $j$.


$\supp\, \omega$: support  of a  differential form $\omega$ on $M$.

$\supp_U \omega$: closure in $U\supset M$ of  $\supp\,\omega$.  We then set for $U\supset M$:
\begin{equation}\label{eq_ccjU}
\cc^{j,\infty}_{U}(\mba):=\{\omega\in \cc^{j,\infty}(\overline{M}): \supp_\mba \omega \subset U \}.
\end{equation}
Alike  $\cc^{j,\infty}(\mba)$, the space $\cc^{j,\infty}_U(\mba)$ will be regarded as a subspace of $\cc^{j,\infty}(M)$.  Note that $$\cc^{j,\infty}_{U}(\mba)=\{\omega\in \cc^{j,\infty}(\overline{M}): \supp_U \omega \mbox{ is compact} \}.$$  The spaces $\cc^{j,\infty}_U(M)$ and $\wca^j_{p,U}(M,A)$ are then defined analogously. If $U=M$, we use the more standard notation $\cc_0^{j,\infty}(M)$. 

As usual \cite[Chapter $3$]{grihar}, a {\bf $j$-current} on an open subset $U$ of $\R^m$ will be a linear functional $T:\cc^{m-j,\infty}_0(U)\to \R$ which is continuous in the $\cc^k$ topology for all $k$, and a current on a manifold $M$ will be  a functional which gives rise to a current on an open subset of $\R^m$ after a push-forward under the coordinate systems of $M$. We will denote by $\dsc^j(M)$ the space of $j$-currents on $M$.

$\hn^k$ and $L^p(A,\hn^k)$ :    $k$-dimensional Hausdorff measure  and space of $L^p$ functions  with respect to the measure $\hn^k$.

 $p'\;$: H\"older conjugate of $p$, i.e., $p'=\frac{p}{p-1}$, $1'=\infty$, $\infty'=1$. Given a topological vector space $V$, we denote by $V'$ the topological dual.

$<T,\beta>'$: duality bracket between a continuous linear functional $T:V\to \R$ on a topological vector space $V$ and a vector $\beta \in V$, i.e. $<T,\beta>'=T(\beta)$.

$\Hom(V_1,V_2)$: space of homomorphisms (of $\R$-vector spaces) from $V_1$ to $V_2$.

A mapping $h:A\to B$, $A\subset \R^n, B\subset \R^k$, is {\bf Lipschitz} if there is a constant $C$ such that $|h(x)-h(x')|\le C|x-x'|$ for all $x$ and $x'$. It is {\bf bi-Lipschitz} if it is a homeomorphism and if in addition $h$ and $h^{-1}$ are both Lipschitz.

 We shall several times make use of the following function.  Let $\psi:\R \to [0,1]$  be a nondecreasing $\cc^\infty$ function
such that $\psi\equiv 0$ on $(-\infty,\frac{1}{2})$ and $\psi\equiv 1$ near $[1,\infty)$, and set for $\eta$ positive and $s\in \R$, \begin{equation}\label{eq_psieta}\psi_\eta(s):=\psi\left(\frac{s}{\eta}\right).\end{equation}
Observe that we have for $\eta>0$:
\begin{equation}\label{eq_psieta_ineq}
	\sup \psi_\eta=1 \et  \sup \left|\frac{d\psi_\eta}{dt}\right| \lesssim \frac{1}{\eta}, \quad \mbox{as well as }\;\,  \supp\, \frac{d\psi_\eta}{dt}\subset [\frac{\eta}{2},\eta).
\end{equation}
\end{subsection}

\end{section}

\section{Definitions and main results}

\begin{subsection}{Subanalytic sets}
 We refer the reader to  \cite{bm, ds, loj, livre} for all the basic facts about subanalytic geometry.  Actually, \cite{livre}  gives in addition a detailed presentation of the Lipschitz properties of these sets, so that the reader can find there all the needed facts to understand the result achieved in the present article.

\begin{dfn}\label{dfn_semianalytic}
A subset $E\subset \R^n$ is called {\bf semi-analytic} if it is {\it locally}
defined by finitely many real analytic equalities and inequalities. Namely, for each $a \in   \R^n$, there are
a neighborhood $U$ of $a$ in $\R^n$, and real analytic  functions $f_{ij}, g_{ij}$ on $U$, where $i = 1, \dots, r, j = 1, \dots , s_i$, such that
\begin{equation}\label{eq_definition_semi}
E \cap   U = \bigcup _{i=1}^r\bigcap _{j=1} ^{s_i} \{x \in U : g_{ij}(x) > 0 \mbox{ and } f_{ij}(x) = 0\}.
\end{equation}

The flaw of the  semi-analytic category is that  it is not preserved by analytic mappings, even when they are proper. To overcome this problem, we prefer working with   subanalytic sets, which are defined as the projections of semi-analytic sets.

A subset $E\subset \R^n$  is  {\bf  subanalytic} if 
 each point $x\in\R^n$ has a neighborhood $U$ such that $U\cap E$ is the image under the canonical projection $\pi:\R^n\times\R^k\to\R^n$ of some relatively compact semi-analytic subset of $\R^n\times\R^k$ (where $k$ depends on $x$).
   
   A subset $Z$ of $\R^n$ is  {\bf globally subanalytic} if $\hh_n(Z)$ is a subanalytic subset of $\R^n$, where $\hh_n : \R^n  \to (-1,1) ^n$ is the homeomorphism defined by $$\hh_n(x_1, \dots, x_n) :=  (\frac{x_1}{\sqrt{1+|x|^2}},\dots, \frac{x_n}{\sqrt{1+|x|^2}} ).$$

   We say that {\bf a mapping $f:A \to B$ is  subanalytic} (resp. globally subanalytic), $A \subset \R^n$, $B\subset \R^m$ subanalytic (resp. globally subanalytic), if its graph is a  subanalytic  (resp. globally subanalytic) subset of $\R^{n+m}$. In the case $B=\R$, we say that  $f$ is a (resp. globally) {\bf  subanalytic function}. For the sake of simplifying statements, globally subanalytic sets and mappings will be referred in this article as {\bf definable} sets and mappings (this terminology is often used by o-minimal geometers \cite{vdd,costeomin}).

   \end{dfn}

The globally subanalytic category is very well adapted to our purpose.  It is stable under intersection, union, complement, and projection. It  constitutes an o-minimal structure \cite{vdd, costeomin}, from which it comes down that definable sets enjoy a large number of finiteness properties (see \cite{costeomin,livre} or section \ref{sect_sub} below for more). The results of this article are actually valid on any polynomially bounded o-minimal structure (expanding $\R$).

\end{subsection}

\begin{subsection}{$L^p$ differential forms}\label{sect_lpforms}
	Given $p \in [1,\infty)$, we say that a (measurable) differential $j$-form $\omega$
		on $M$    is $L^p$  if so is $x\mapsto |\omega(x)|$, i.e.
	$$||\omega||_{L^p(M)}:=\left(\int_{x\in M} |\omega(x)|^p\right)^\frac{1}{p}<\infty.$$
 We say that $\omega$ is $L^\infty$ if   $||\omega||_{L^\infty(M)}:= \mbox{ess sup}_{x\in M} |\omega(x)|<\infty$ (the essential supremum).
	We denote by $L^j_p(M)$ the $\R$-vector space constituted by the $L^p$ $j$-forms on $M$ and define
	\begin{equation}\label{eq_wcap}\wca^j_{p} (M):=\{\omega\in L^j_p(M): d\omega \in L^{j+1}_p(M) \},\end{equation}
that we endow with its natural norm:
\begin{equation}\label{eq_norm_wca}
	||\omega||_{\wca^j_p(M)}:=||\omega||_{L^p(M)} +||d\omega||_{L^p(M)}.
 \end{equation}
Here, the exterior differential is considered in the sense of currents. It is worthy of notice that, since bounded subanalytic manifolds have finite measure \cite[Chapter $4$]{livre}, $\wca^j_p(M)\subset \wca^j_q(M)$, as soon as $p\ge q$.

If $\omega$ is a differential form  on $M$ and  $U$ a subset   of $\R^n$ containing $M$, we denote by $\supp_U \omega$ the closure in $U$ of the support of $\omega$, and by $\wca^j_{p,U}(M)$ the subspace of $\wca^j_p(M)$ constituted by the form that {\bf have compact support in $U$}, i.e. the forms $ \omega\in \wca_p^j (M) $ for which $\supp_U \omega$ is compact. Clearly,
\begin{equation}\label{eq_wcapU}\wca^j_{p,U}(M)=\{ \omega\in \wca_p^j (M) : \supp_\mba\, \omega\subset U\}.\end{equation}

\subsection{The operator $\trd^p$.}\label{sect_trd}Given  $\alpha \in L^j_p(M)$ and $\beta \in L^{m-j}_{p'}(M)$, $p\in [1,\infty]$ (with $1/p+1/p'=1$),  we set (recall that $M$ is oriented and $m=\dim M$):
\begin{equation}\label{eq_pairing}<\alpha,\beta>:=\int_M \alpha \wedge \beta. \end{equation}
We then define a continuous operator
$$\trd^p:\wca^j_p (M)\to \wca^{m-j-1}_{p'}(M)' $$ by setting for $\beta\in \wca^{m-j-1}_{p'}(M)$, if  $\alpha\in \wca^j_p(M)$:
$$<\trd^p \alpha,\beta>':=\int_M d(\alpha\wedge \beta)=<d\alpha,\beta>+(-1)^j<\alpha, d\beta>.$$
 Clearly, the functional $\trd^p\alpha$ vanishes on $M$ for all $p\in [1,\infty]$, in the sense that it vanishes on the forms $\beta$ that are compactly supported in $M$.
It is therefore natural to say that  $\trd^p\alpha:\wca^{m-j-1}_{p'} (M)\to \R$ {\bf vanishes on a subset} $A$ of $\delta M=\mba\setminus M$ if for all $\beta\in \wca^{m-j-1}_{p'}(M)$ having compact support in $ M\cup A$, we have
$$<\trd^p\alpha,\beta>'=0.$$
We will also sometimes shortcut it as ``$\trd^p \alpha\equiv 0$ on $A$''.
Notice that elements $\alpha$ of $\wca^j_p(M)$ for which $\trd^p \alpha$ vanishes on $A$ are the forms that satisfy for all $\beta\in \wca^{m-j-1}_{p', M\cup A}(M)$:
\begin{equation}\label{eq_lsp_j}
 <d\alpha,\beta>=(-1)^{j+1}<\alpha,d\beta>.
\end{equation}
We call $\trd^p\alpha$, the ``$p$-trace'' of $\alpha$. Theorems \ref{thm_trace_formes} and \ref{thm_residue_formula} show that in the case $p$ large, the $p$-trace coincides with a current induced by  restriction  of $\cc^\infty$ forms to the strata of a stratification of $\delta M$, providing a natural and more explicit notion of trace.

\subsection{$L^p$ Forms with boundary conditions.}\label{sect_forms_with_boundary conditions}
If $A\subset \delta M=\mba\setminus M$, we then set
\begin{equation}\label{eq_wmz}
\wca^j_p(M,A):=\{\alpha\in \wca_p^j(M): \trd^p \alpha\equiv 0 \mbox{ on $A$} \}. \end{equation}
 Observe that if   $\trd^p\alpha$ vanishes on $A$ then (\ref{eq_lsp_j}) immediately entails that so does $\trd^p d\alpha$, which means that $(\wca^j_p(M,A),d)_{j\in \N}$ is a cochain complex. The elements of $\wca^j_p(M,A)$ will be said to satisfy {\bf the Dirichlet condition on $A$}.

 For the moment, for the sake of simplicity of definitions, elements of $\wca^j_p(M,A)$ are thus forms  that vanish on $A$ ``in the weak sense'' (see (\ref{eq_lsp_j})). We will make it more explicit in sections \ref{sect_trace_forms} and \ref{sect_residues}, which will respectively establish that $\wca^j_p(M,A)$ gathers, when $p$ is large, the closure in $\wca^j_p(M)$ of the space of smooth forms that are zero in  restriction to $A$ (Theorem \ref{thm_trace_formes} and Corollary \ref{cor_trace_formes}) and,  when $p$ is close to  $1$, all the elements that vanish in the residue sense (Theorems \ref{thm_welldefined} and \ref{thm_residue_formula}, and Corollary \ref{cor_vanishing_res}).

Given  $A\subset \delta M$,  the  cohomology groups of  $(\wca^j_{p} (M,A) , d)_{j\in \N}$
are called the {\bf
	$L^p$ cohomology groups of $(M,A)$} and will be denoted by
$H^j _{p}(M,A)$ ($H^j_p(M)$ if $A=\emptyset$).
\end{subsection}

\begin{subsection}{The main theorems}
Given a definable subset $A\subset\delta M=\mba\setminus M$, we define the Lefschetz-Poincar\'e duality morphism as (recall that $M$ is oriented and $m=\dim M$):
$$\poi_{M,A}^j :\wca^j_{p} (M,A)\to \Hom (\wca^{m-j}_{p'}(M,\delta M \setminus  A),\R)  $$
\begin{equation}\label{eq_poima}\alpha\mapsto    [\beta \mapsto \poi_{M,A}^j(\alpha,\beta):=\int_{M} \alpha\wedge \beta],
\end{equation}
where $\Hom(\cdot,\cdot)$ stands for the corresponding space of homomorphisms of $\R$-vector spaces.
We will show in section \ref{sect_proof} for $A\subset\delta M$ definable:

\begin{thm}\label{thm_lefschetz_duality}(Lefschetz duality for $L^p$ cohomology) For $p\in [1,\infty]$ sufficiently large or sufficiently close to $1$, the $L^p$
  Lefschetz-Poincar\'e homomorphisms $$\poi_{M,A}^j : H^j_{p} (M,A)\to \Hom (H^{m-j}_{p'}(M,\delta M\setminus A),\R)  $$ 
   are isomorphisms for all $j$.
\end{thm}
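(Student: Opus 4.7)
The plan has three parts. First, use the density Theorems \ref{thm_dense_formes} and \ref{thm_pprime} to justify integration by parts between forms satisfying complementary boundary conditions and thereby see that the de Rham pairing $\poi^j_{M,A}$ descends to cohomology. Second, invoke the $L^p$ de Rham theorem (Theorem \ref{thm_derham}) to identify each side of the statement with a topological invariant of the compact pair $(\mba,A)$ or $(\mba,\delta M\setminus A)$. Third, conclude by classical Lefschetz duality applied to this compact subanalytic (hence triangulable) pair.

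For the first step, I work in the case $p$ large; the case $p$ close to $1$ is obtained symmetrically by exchanging the roles of $p$ and $p'$. Fix $\gamma\in \wca^{j-1}_p(M,A)$ and a closed $\beta\in \wca^{m-j}_{p'}(M,\delta M\setminus A)$. Theorem \ref{thm_dense_formes} supplies a sequence $\gamma_k\in\cc^{j-1,\infty}(\mba)$ vanishing near $A$ with $\gamma_k\to\gamma$ in $\wca^{j-1}_p(M)$. Each $\gamma_k$ has compact support in $M\cup(\delta M\setminus A)$, so the very definition of $\trd^{p'}\beta\equiv 0$ on $\delta M\setminus A$ combined with $d\beta=0$ yields
$$\int_M d\gamma_k\wedge\beta \;=\;\pm\int_M d(\beta\wedge\gamma_k)\;=\;\pm\langle\trd^{p'}\beta,\gamma_k\rangle'\;=\;0.$$
Letting $k\to\infty$ gives $\poi^j_{M,A}(d\gamma,\beta)=0$. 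The symmetric statement $\poi^j_{M,A}(\alpha,d\eta)=0$ for a closed $\alpha\in \wca^j_p(M,A)$ and $\eta\in \wca^{m-j-1}_{p'}(M,\delta M\setminus A)$ is proved the same way, now approximating $\eta$ by smooth forms vanishing near $\delta M\setminus A$; this is precisely where the companion density Theorem \ref{thm_pprime} on the $p'$-side is used. Hence $\poi^j_{M,A}$ induces a well-defined pairing on cohomology (this is the content of Corollary \ref{cor_lsp_A}).

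For the remaining parts, the $L^p$ de Rham theorem identifies $H^j_p(M,A)$ with a relative cohomology group of the compact pair $(\mba,A)$ with real coefficients, and symmetrically for $H^{m-j}_{p'}(M,\delta M\setminus A)$. Under these identifications, $\poi^j_{M,A}$ corresponds to the topological cup-product pairing evaluated against the fundamental class of $M$, i.e.\ the classical Lefschetz pairing for the pair $(\mba,\delta M)$ with boundary decomposed as $A\cup(\delta M\setminus A)$. A subanalytic triangulation of $\mba$ whose top-dimensional open simplices give an open dense subset of $M$ then lets us invoke the standard topological Lefschetz duality, which yields the claimed isomorphism.

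The main obstacle is clearly the first step, namely the justification of integration by parts across singularities of $\delta M$. It rests entirely on the density Theorems \ref{thm_dense_formes} and \ref{thm_pprime}, the former being the technical heart of the paper: without it one cannot reduce an element of $\wca^{j-1}_p(M,A)$ to a smooth compactly supported representative on which the weak boundary condition for $\trd^{p'}\beta$ is directly testable, and uncontrolled boundary contributions at singular points of $\delta M$ would prevent the de Rham pairing from descending to cohomology. Once these density results are in place, the remaining topological identification and the appeal to classical Lefschetz duality are essentially formal.
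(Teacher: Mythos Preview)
Your first step is essentially the paper's Corollary \ref{cor_lsp_A}, and the argument is correct (modulo the technicality that Theorem \ref{thm_dense_formes} assumes $M$ connected along $\delta M\setminus A$; the paper uses the normalization-based Corollary \ref{cor_densite_non_normal} instead, but this is minor).

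The genuine gap is in steps 2 and 3. First, Theorem \ref{thm_derham} identifies $H^j_p(M,A)$ with the \emph{collapsing} cohomology $\hch^j(M,A)$, not with $H^j(\mba,A)$; these agree only when $M$ is connected along $\delta M\setminus A$ (Proposition \ref{pro_normal_isom_cohomologie}). Also, Theorem \ref{thm_derham} is stated only for $p$ large, so you cannot invoke it on the $p'$-side. More fundamentally, your appeal to ``classical Lefschetz duality'' in step 3 fails: $\mba$ is \emph{not} a manifold with boundary. The frontier $\delta M$ can be an arbitrary subanalytic set with singularities of any codimension (for example, $\mba$ could be the suspension of a torus, or have cuspidal edges). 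Triangulability alone does not give Lefschetz duality; you would need $\mba$ to be a homology manifold with boundary, which it is not in general. What you would really need is a Lefschetz duality theorem for collapsing cohomology itself, and no such ``classical'' statement is available off the shelf. Finally, even granting such a topological duality, you have not verified that the de Rham isomorphisms carry the analytic pairing $\poi^j_{M,A}$ to the topological one.

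The paper avoids all of this by proving the duality directly via sheaf theory and induction on $m=\dim M$. After normalizing $M$, it shows that the presheaf $U\mapsto \Hom(\wca^{m-j}_{p',U}(U\cap M,U\cap(\delta M\setminus A)),\R)$ gives a flabby resolution of the same sheaf $\R_{M,A}$ as the presheaf $U\mapsto \wca^j_p(U\cap M,U\cap A)$ (the latter being a resolution by the Poincar\'e Lemma \ref{lem_poinc}). The only nontrivial local check is that $\Phi:H^m_{p',\mba^\ep_\xo}(M^\ep_\xo,Z^\ep_\xo)\to\R$, $\omega\mapsto\int\omega$, is an isomorphism; surjectivity is clear, and injectivity uses the homotopy operator $\rfr$ together with the induction hypothesis applied to the link $N^\ep$ (dimension $m-1$) to correct the antiderivative near $N^\ep$. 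The case $p$ close to $1$ then follows by symmetry once the pairing is known to be nondegenerate and the groups finitely generated.
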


 Given $j\le m$, $p\in [1,\infty]$, and $A$ as above, we denote by $ E^j_p(M,A)$ the space of {\bf $L^p$-exact $j$-forms} of $ \wca^j_{p}(M,A)$, i.e.:
 \begin{equation*}
 E^j_p(M,A):=\{d\omega :\omega \in \wca^{j-1}_{p}(M,A)\}.
 \end{equation*}
Our Lefschetz duality then immediately yields: 
 \begin{cor}\label{cor_image_fermee}(Poincar\'e inequality for $L^p$ forms) 
  For every $p\in [1,\infty]$ sufficiently large or sufficiently close to $1$, the space $E^j_p (M,A)$ is closed in $L^j_p(M)$ for each $j$, and consequently for every such $p$ there is a constant  $C$ such that for all $\omega\in \wca^j_p(M,A)$:
  \begin{equation}
   \inf_{\theta \in \wca^{j}_p(M,A), d\theta=0} ||\omega-\theta||_{L^p(M)} \le C||d\omega||_{L^p(M)}.
  \end{equation}
 \end{cor}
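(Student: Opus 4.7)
The plan is to derive the closedness of $E^j_p(M,A)$ in $L^j_p(M)$ directly from Theorem \ref{thm_lefschetz_duality}, and then deduce the stated inequality from the open mapping theorem. I would pick a sequence $(\omega_n)\subset \wca^{j-1}_p(M,A)$ such that $\eta_n:=d\omega_n$ converges in $L^j_p(M)$ to some $\eta$, and first check that $\eta\in \wca^j_p(M,A)$: the identity $d\eta_n=0$ passes to the distributional limit, so $d\eta=0$; moreover $\trd^p \eta\equiv 0$ on $A$ follows by approximation in (\ref{eq_lsp_j}), because for every test form $\beta\in \wca^{m-j-1}_{p',M\cup A}(M)$ each bracket $\langle \eta_n, d\beta\rangle$ vanishes (since $\eta_n=d\omega_n$ is $\wca^j_p(M,A)$-exact and $d\eta_n=0$) and the $L^p$--$L^{p'}$ pairing is continuous.

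To show $[\eta]=0$ in $H^j_p(M,A)$, by Theorem \ref{thm_lefschetz_duality} it suffices to verify $\poi^j_{M,A}([\eta])=0$, that is, $\int_M \eta\wedge\beta=0$ for every closed $\beta\in \wca^{m-j}_{p'}(M,\delta M\setminus A)$. For such $\beta$, Corollary \ref{cor_lsp_A} (well-definedness of the de Rham pairing on cohomology) gives $\int_M d\omega_n\wedge\beta=0$ for all $n$, and H\"older's inequality yields
$$\left|\int_M \eta\wedge\beta\right|=\left|\int_M (\eta-d\omega_n)\wedge\beta\right|\le \|\eta-d\omega_n\|_{L^p(M)}\,\|\beta\|_{L^{p'}(M)}\longrightarrow 0.$$
Hence $\int_M \eta\wedge\beta=0$, and Lefschetz duality forces $[\eta]=0$, i.e.\ $\eta\in E^j_p(M,A)$, proving closedness.

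For the Poincar\'e inequality itself, I would apply the open mapping theorem to the continuous surjection $d:\wca^j_p(M,A)\to E^{j+1}_p(M,A)$: the target is a closed subspace of $L^{j+1}_p(M)$ by the previous step applied at degree $j+1$, and the kernel is the closed subspace of closed forms in $\wca^j_p(M,A)$. The induced bijection from the Banach quotient onto $E^{j+1}_p(M,A)$ is therefore bicontinuous, producing a constant $C$ such that
$$\inf_{\theta\in \wca^j_p(M,A),\, d\theta=0}\|\omega-\theta\|_{\wca^j_p(M)}\le C\,\|d\omega\|_{L^p(M)}$$
for every $\omega\in \wca^j_p(M,A)$; since $\|\omega-\theta\|_{L^p(M)}\le \|\omega-\theta\|_{\wca^j_p(M)}$, the stated inequality is immediate.

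The entire substance of the corollary is absorbed by Theorem \ref{thm_lefschetz_duality} and Corollary \ref{cor_lsp_A}; the remaining arguments are soft functional analysis, so no genuine obstacle is expected beyond exploiting the $L^p$--$L^{p'}$ continuity of the pairing to pass to the limit.
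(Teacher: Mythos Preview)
Your proof is correct and follows essentially the same route as the paper: closedness of $E^j_p(M,A)$ is obtained from the injectivity of the Lefschetz--Poincar\'e map (Theorem~\ref{thm_lefschetz_duality}) together with Corollary~\ref{cor_lsp_A}, and the Poincar\'e inequality then follows from the open mapping theorem. Your version is slightly more explicit (a sequential argument, checking $\eta\in\wca^j_p(M,A)$ and restricting to closed $\beta$), whereas the paper simply notes that $E^j_p(M,A)$ is cut out inside the closed set $Z^j_p(M,A)$ of closed forms by the closed conditions $\langle\alpha,\beta\rangle=0$; the content is the same.
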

\begin{proof}
 By Theorem \ref{thm_lefschetz_duality}, $\alpha\in E^j_p (M,A)$ if and only if \begin{equation*}\label{eq_pf_PI}<\alpha,\beta>=0,
 \end{equation*}                                                                                    for all $\beta \in \wca^{m-j}_{p'}(M,\delta M \setminus A)$. This condition clearly defines a closed subspace, which yields the first part of the statement. The mapping $d: \wca^{j}_p(M,A) \to \wca^{j+1}_p(M,A)$, induced by the exterior differential operator, having closed image, it must give rise to an isomorphism of Banach spaces from  $\wca^{j}_p(M,A)/\ker d$ to $E^{j+1}_p (M,A)$, by the Inverse Mapping Theorem.
\end{proof}

We now come to our Hodge decomposition theorem. We denote by   $*$ the Hodge operator of $M$,
 and, given $\omega\in L_p^j(M)$, $p\in (1,\infty)$, $j\le m$,  we let for $x\in M$
  \begin{equation}\label{eq_*_p}
 *_p \omega(x):=\frac{|\omega(x)|^{p-2}}{||\omega||_{L^p(M)}^{p-2}}\cdot *\omega(x),
\end{equation}
with $*_p \omega(x) :=0$ if $\omega(x)=0$.
 It is worthy of notice that we have $*_2=*$ and that   $*_p*_{p'}$ is, up to sign, the identity map. Observe also that $||*_p\omega||_{L^{p'}(M)}=||\omega||_{L^p(M)}$.

As well-known, the $L^2$ coboundary operator is defined as $\delta \omega:=(-1)^{m(j-1)+1}*d*\omega$, if $\omega$ is a $j$-form on $M$. We shall  work with those forms $\omega$ for which $d *_p\omega$ is $L^{p'}$, so that  
\begin{equation}\label{eq_deltap}
	\delta_p \omega:=(-1)^{m(j-1)+1} *_{p'} d *_p\omega=(-1)^j*_p^{-1}d*_p \omega
	\end{equation}
    will be an $L^p$ form.
 We thus set
\begin{equation}\label{eq_wscpj}\wsc^p_j(M):=\{ \omega\in L^j_p(M):  d *_p\omega\in L^{m-j+1}_{p'}(M)\}=*_p\wca^{m-j}_{p'}(M),   \end{equation}
and more generally, given a subanalytic subset $A$ of $\delta M$ 
\begin{equation}\label{eq_wscpj_A}\wsc^p_j(M,A):=*_p\wca^{m-j}_{p'}(M,A).   \end{equation}
  The elements of $\wsc_j^p(M,A)$ will be said to satisfy the {\bf Neumann condition on $A$} (in the case of $1$-forms, if $p=2$, the required property is exactly the usual  Neumann condition). Clearly, $\delta_p \wsc_j^p(M,A)\subset \wsc_{j-1}^p(M,A)$.

   For simplicity, we denote by $\wca^j_p(M,A,\delta M \setminus  A)$ the set of forms of $\wca^j_p(M)$ satisfying the Dirichlet condition on $A$ and the Neumann condition on $\delta M\setminus  A$:
   $$\wca^j_p(M,A,\delta M \setminus  A):=\wca^j_p(M,A)\cap\wsc^p_j(M,\delta M\setminus A). $$

We will establish that, for $p$ large or close to $1$, each element of $\wca^j_p(M)$  can be decomposed as the sum of an exact form, a $p$-coexact form, and a $p$-harmonic form (i.e. $d\omega=0$, $\delta_p \omega=0$) with Dirichlet and Neumann conditions respectively (see section \ref{sect_hodge} for the proof):

\begin{thm}\label{thm_hodge}($L^p$ Hodge decomposition)
 For all $p\in (1,\infty)$ sufficiently large or sufficiently close to $1$ and each integer $j$, we have:
\begin{equation}\label{eq_hc}
 \wca^j_p(M)=E^j_p (M,A)\oplus cE^j_p (M,\delta M \setminus A) \oplus \mathscr{H}_p^j (M,A),
\end{equation}
where
$$\mathscr{H}_p^j (M,A):=\{ \,\omega \in \wca^j_p(M,A,\delta M\setminus  A):\;d\omega=0,\;\; \delta_p \omega=0\,\} $$
and
\begin{equation*}\label{eq_cE}
cE^j_p(M,\delta M \setminus A):=\{\omega\in \wca^j_p(M,\delta M \setminus  A): \exists \beta \in \wsc_{j+1}^p(M,\delta M\setminus  A), \,\, \omega=\delta_p \beta\}.
\end{equation*}
\end{thm}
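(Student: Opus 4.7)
The plan is to establish (i) that the proposed decomposition is a direct sum and (ii) that every $\omega\in\wca^j_p(M)$ admits such a decomposition. Three tools are crucial: Corollary~\ref{cor_image_fermee} (the closed-image/Poincar\'e inequality), Corollary~\ref{cor_lsp_A} (integration by parts between forms with complementary boundary conditions), and the strict convexity of the $L^p$-norm for $p\in(1,\infty)$.

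\textit{Direct sum.} The three pairwise intersections are trivial by pairing arguments of the form $\|\omega\|_{L^p}^2=\int_M\omega\wedge *_p\omega$ combined with Corollary~\ref{cor_lsp_A}. For example, if $\omega\in\mathscr{H}_p^j(M,A)\cap cE^j_p(M,\delta M\setminus A)$, write $\omega=\delta_p\beta$ and let $\theta:=*_p\beta\in\wca^{m-j-1}_{p'}(M,\delta M\setminus A)$; since $*_p*_{p'}=\pm\mathrm{id}$ and $*_p$ is odd, one computes $*_p\omega=\pm d\theta$, and the complementary boundary conditions together with $d\omega=0$ give $\|\omega\|^2=\pm\int_M\omega\wedge d\theta=\mp\int_M d\omega\wedge\theta=0$. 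For the three-way uniqueness, suppose $d\alpha+\delta_p\beta+\eta=0$; then $\delta_p\beta$ is closed (since $d(\delta_p\beta)=-d^2\alpha-d\eta=0$), $p$-coclosed (by direct computation $\delta_p^2=0$), lies in $\wsc^p_j(M,\delta M\setminus A)$ by definition of $cE$, and in $\wca^j_p(M,A)$ since both $d\alpha$ and $\eta$ are. Hence $\delta_p\beta\in\mathscr{H}_p^j(M,A)\cap cE^j_p(M,\delta M\setminus A)=\{0\}$; then $\eta=-d\alpha\in\mathscr{H}_p^j(M,A)\cap E^j_p(M,A)=\{0\}$; finally $d\alpha=0$.

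\textit{Existence.} First, $cE^j_p(M,\delta M\setminus A)$ is closed in $L^j_p(M)$: it equals $\pm\,*_{p'}\bigl(E^{m-j}_{p'}(M,\delta M\setminus A)\bigr)$, and Corollary~\ref{cor_image_fermee} applied at $p'$ combined with the fact that $*_{p'}$ is a continuous norm-preserving bijection $L^{m-j}_{p'}(M)\to L^j_p(M)$ with continuous inverse $\pm *_p$ yields closedness. Given $\omega\in\wca^j_p(M)$, parametrize $cE$ linearly by $\theta\in\wca^{m-j-1}_{p'}(M,\delta M\setminus A)$ via $\delta_p\beta=\pm *_{p'}d\theta$ (with $\theta=*_p\beta$), and minimize $\|\omega\mp *_{p'}d\theta\|_{L^p}^p$ over such $\theta$; coercivity comes from $\|*_{p'}d\theta\|_{L^p}=\|d\theta\|_{L^{p'}}$ together with Corollary~\ref{cor_image_fermee}, while strict convexity of $L^p$ yields a minimizer. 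Its Euler-Lagrange equation, upon integration by parts via Corollary~\ref{cor_lsp_A}, forces $\omega_0:=\omega-\delta_p\beta$ to be closed and to satisfy the Dirichlet condition on $A$. In turn, minimizing $\|\omega_0-d\alpha\|_{L^p}^p$ over $\alpha\in\wca^{j-1}_p(M,A)$ (again by strict convexity and Corollary~\ref{cor_image_fermee}) produces $\eta:=\omega_0-d\alpha$, whose Euler-Lagrange equation says it is closed, $p$-coclosed, and satisfies Neumann on $\delta M\setminus A$, i.e.\ $\eta\in\mathscr{H}_p^j(M,A)$. Combining, $\omega=d\alpha+\delta_p\beta+\eta$.

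\textit{Main obstacle.} The principal difficulty is the nonlinearity of $*_p$ (and hence $\delta_p$), which prevents $cE^j_p(M,\delta M\setminus A)$ from being a linear subspace and obstructs the classical $L^2$ Hilbert-space projection argument. It is circumvented by the linear parametrization of $cE$ through $*_{p'}$ applied to the linear space $E^{m-j}_{p'}(M,\delta M\setminus A)$, together with the Poincar\'e inequality (Corollary~\ref{cor_image_fermee}) and the strict convexity of the $L^p$-norm for $p\in(1,\infty)$, which together replace Hilbert-space orthogonal projection in the existence argument.
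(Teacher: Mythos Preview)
Your second minimization step (projecting $\omega_0$ onto the closed linear subspace $E^j_p(M,A)$ and reading off the Euler--Lagrange equation $[d\phi,\eta]_p=0$) is essentially correct and matches the paper's argument for splitting $Z^j_p(M,A)=E^j_p(M,A)\oplus\mathscr{H}^j_p(M,A)$. The problem is the first minimization.

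The functional $\theta\mapsto\|\omega-*_{p'}d\theta\|_{L^p}^p$ is \emph{not} convex in $\theta$, because $*_{p'}$ is nonlinear; equivalently, $cE^j_p(M,\delta M\setminus A)=*_{p'}\bigl(E^{m-j}_{p'}(M,\delta M\setminus A)\bigr)$ is a closed but non-convex subset of $L^j_p(M)$. Strict (or uniform) convexity of $L^p$ therefore does not yield a minimizer, and weak lower semicontinuity is unclear since $d\theta_n\rightharpoonup d\theta$ does not imply $*_{p'}d\theta_n\to*_{p'}d\theta$ in any useful sense. Even granting existence, the Euler--Lagrange equation you need is
\[
0=\frac{d}{dt}\Big|_{t=0}\int_M|\omega-*_{p'}(d\theta_0+td\phi)|^p,
\]
and the derivative $D*_{p'}(d\theta_0)[d\phi]$ involves the pointwise factor $|d\theta_0|^{p'-2}$ and the global normalization $\|d\theta_0\|_{L^{p'}}^{p'-2}$ in a way that does not reduce to $\langle d\omega_0,\phi\rangle=0$ or to the vanishing of $\trd^p\omega_0$ on $A$. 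So neither closedness of $\omega_0$ nor the Dirichlet condition on $A$ follows.

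The paper circumvents this by reversing the order: it projects $\omega$ onto the \emph{linear} closed subspace $Z^j_p(M,A)$ (closed forms with Dirichlet condition on $A$) using the nearest-point projection $P_Z$ in the semi-inner-product sense, which is well-defined by uniform convexity. The remainder $\omega-P_Z\omega$ lies in $Z^j_p(M,A)^{\perp_p}$, and the key identification $Z^j_p(M,A)^{\perp_p}=cE^j_p(M,\delta M\setminus A)$ is proved using Theorem~\ref{thm_lefschetz_duality}: if $[\alpha,\omega]_p=0$ for all $\alpha\in Z^j_p(M,A)$, then $*_p\omega$ is a closed form in $\wca^{m-j}_{p'}(M,\delta M\setminus A)$ annihilating every class in $H^j_p(M,A)$, hence exact by Lefschetz duality, i.e.\ $*_p\omega=d\gamma$ and $\omega=\pm\delta_p(*_{p'}\gamma)$. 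You invoke only Corollaries~\ref{cor_image_fermee} and~\ref{cor_lsp_A}; the missing ingredient is precisely this use of Lefschetz duality to identify the $\perp_p$-complement of $Z$ with the nonlinear set $cE$.
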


This decomposition is orthogonal in the sense of the semi-inner product introduced in (\ref{eq_bracket}). Let us emphasize that since $\delta_p$ and $\Delta_p$ are nonlinear operators, the last two spaces that appear in (\ref{eq_hc}) are not vector subspaces. The sum is direct in the sense that pairwise intersections are  $
\{0\}$ and the resulting decomposition of a form of $  \wca^j_p(M)$ is unique. We indeed have (again, see section \ref{sect_hodge} for the proof): 

\begin{cor}\label{cor_hodge} For all $p\in (1,\infty)$ sufficiently large or sufficiently close to $1$ and each integer $j$, every cohomology  class of $H^j_p(M,A)$ contains a unique element of $\mathscr{H}_p^j (M,A).$ This element minimizes the $L^p$ norm in the class.  As a matter of fact, $\mathscr{H}_p^j (M,A)$ is a $\cc^0$ manifold of dimension $\dim H^j_p(M,A)<\infty$.
\end{cor}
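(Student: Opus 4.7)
The plan is to derive Corollary \ref{cor_hodge} from Theorem \ref{thm_hodge} together with strict convexity of $L^p$ for $p\in(1,\infty)$. The main subtlety I anticipate is the bookkeeping of signs and of Dirichlet versus Neumann trace conditions in the two integration-by-parts identities used below; once those are justified via Corollary \ref{cor_lsp_A} and (\ref{eq_lsp_j}), the remainder is standard convex $L^p$ analysis.

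\emph{Existence of a harmonic representative.} Given a closed $\omega\in\wca^j_p(M,A)$, Theorem \ref{thm_hodge} writes $\omega=d\alpha+\delta_p\beta+h$ with $d\alpha\in E^j_p(M,A)$, $\delta_p\beta\in cE^j_p(M,\delta M\setminus A)$, and $h\in\mathscr{H}_p^j(M,A)$. Since $d\alpha$ and $h$ already lie in $\wca^j_p(M,A)$, so does $\delta_p\beta$; combined with its built-in Dirichlet condition on $\delta M\setminus A$, this forces its $p$-trace to vanish on all of $\delta M$. From $d\omega=0=dh$ one gets $d(\delta_p\beta)=0$, and the identity $*_p(\delta_p\beta)=\pm\,d(*_p\beta)$ (coming from $*_p*_{p'}=\pm\mathrm{Id}$) yields
$$||\delta_p\beta||_{L^p(M)}^2=\int_M \delta_p\beta\wedge *_p\delta_p\beta=\pm\int_M d(\delta_p\beta\wedge *_p\beta)=0,$$
the last equality being (\ref{eq_lsp_j}) applied to $\trd^p(\delta_p\beta)$ vanishing on $\delta M$. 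Hence $\delta_p\beta=0$ and $[\omega]=[h]$.

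\emph{Uniqueness and norm minimization.} For $h\in\mathscr{H}_p^j(M,A)$ and $\alpha\in\wca^{j-1}_p(M,A)$, the Dirichlet condition on $\alpha$ along $A$ and the Dirichlet condition on $*_ph\in\wca^{m-j}_{p'}(M,\delta M\setminus A)$ along $\delta M\setminus A$ are complementary, so Corollary \ref{cor_lsp_A} gives
$$\int_M d\alpha\wedge *_p h=(-1)^j\int_M \alpha\wedge d(*_p h)=0,$$
using $d(*_p h)=0$, which is equivalent to $\delta_p h=0$. Writing any representative of $[h]$ as $\omega=h+d\alpha$, the function $F(t):=||h+t\,d\alpha||_{L^p(M)}^p$ therefore satisfies $F'(0)=0$; since $p\in(1,\infty)$, the integrand $|h+t\,d\alpha|^p$ is strictly convex in $t$ wherever $d\alpha\neq 0$, so $F$ itself is strictly convex unless $d\alpha\equiv 0$, and its unique critical point $t=0$ is its global minimizer. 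This gives $||h||_{L^p(M)}\le ||\omega||_{L^p(M)}$ with equality iff $\omega=h$; applied to two cohomologous elements of $\mathscr{H}_p^j(M,A)$, this forces uniqueness.

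\emph{Manifold structure.} The map $\pi:\mathscr{H}_p^j(M,A)\to H^j_p(M,A)$, $h\mapsto[h]$, is bijective by the above. The target is a Banach space thanks to Corollary \ref{cor_image_fermee} and finite-dimensional by Corollary \ref{cor_finitely_generated} (itself following from Theorem \ref{thm_derham}). The map $\pi$ is continuous in the $L^p$ topology, and its inverse sends a class $c$ to the unique nearest point of the affine subspace $\omega_0+E^j_p(M,A)$ to the origin (for any representative $\omega_0$), which is continuous because $L^j_p(M)$ is uniformly convex. Hence $\pi$ is a homeomorphism onto $\R^{\dim H^j_p(M,A)}$, making $\mathscr{H}_p^j(M,A)$ a $\cc^0$ manifold of the claimed dimension.
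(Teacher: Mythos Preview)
Your proof is correct and follows essentially the same approach as the paper: both rely on the orthogonality $[d\alpha,h]_p=0$ from Corollary~\ref{cor_lsp_A} (this is one direction of the paper's identity (\ref{eq_HEperp})) together with the uniform convexity of $L^p$ to get uniqueness, minimization, and continuity of the inverse map. The only cosmetic difference is in the existence step: you start from the full decomposition of Theorem~\ref{thm_hodge} and kill the $\delta_p\beta$ term by a trace computation, whereas the paper's proof of Theorem~\ref{thm_hodge} already records the intermediate splitting $Z^j_p(M,A)=E^j_p(M,A)\oplus\mathscr{H}^j_p(M,A)$ and then simply reads off that $Q(\omega)=\omega-P_{E^j_p(M,A)}(\omega)$ descends to a continuous inverse of $\iota$.
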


That $\dim H^j_p(M,A)<\infty$  comes from our de Rham Theorem (see Corollary \ref{cor_finitely_generated}).  Corollary \ref{cor_hodge} shows that $*_p: \mathscr{H}_p^j (M,A)\to  \mathscr{H}_{p'}^{m-j} (M,\delta M\setminus A)$ as defined in (\ref{eq_*_p}) is an isomorphism that identifies the classes that are Lefschetz-Poincar\'e dual to each other.

\end{subsection}

\begin{section}{Some material from subanalytic geometry}\label{sect_sub}
\subsection{\L ojasiewicz's inequality and stratifications.} Many of our results about $L^p$ forms will be valid {\it for $p$ sufficiently large} (resp.  {\it  sufficiently close to $1$}), which means that there will be $p_0\in (1,\infty)$ such that the claimed fact will be true for all $p> p_0$ (resp. $p \in (1,p_0)$). We will specify if $p$ can be $1$ or infinite if it is not obvious from the context.  This number $p_0$ will often be provided by \L ojasiewicz's inequality, which is one of the main tools of subanalytic geometry. It originates in the fundamental work of S. \L ojasiewicz \cite{lojdiv}, who established this inequality in order to answer a problem  about distribution theory.  We shall make use of the following version whose proof can be found in  \cite[Theorem $2.2.5$]{livre}:


 \begin{pro}\label{pro_lojasiewicz_inequality}(\L ojasiewicz's inequality)
Let $f$ and $g$ be two globally subanalytic functions on a  globally subanalytic set $A$ with $\sup\limits_{x\in A} |f(x)|<\infty$. Assume that
 $\lim\limits_{t \to 0} f(\gamma(t))=0$ for every
globally subanalytic arc $\gamma:(0,\ep) \to A$ satisfying $\lim\limits_{t \to 0} g(\gamma(t))=0$.
Then there exist $\nu \in \N$ and $C \in \R$ such that for any $x \in A$:
$$|f(x)|^\nu \leq C|g(x)|.$$
\end{pro}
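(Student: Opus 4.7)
The plan is to reduce the desired inequality to the one-variable situation via an auxiliary supremum function. Define
$$\psi:(0,\infty)\to\R,\qquad \psi(t):=\sup\{|f(x)|:x\in A,\;|g(x)|\le t\}.$$
Since $\sup_A|f|<\infty$, $\psi$ is bounded. Since $A$, $f$, $g$ are definable and the globally subanalytic structure is o-minimal, the graph of $\psi$ is cut out by a first-order formula in this structure, so $\psi$ is itself definable. Finally $\psi$ is nondecreasing, so $\psi(0^+):=\lim_{t\to 0^+}\psi(t)$ exists.

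First I would show $\psi(0^+)=0$. Suppose for contradiction that $\psi(0^+)=c>0$. Then for every small $t>0$ the definable set
$$B_t:=\{x\in A:|g(x)|\le t,\;|f(x)|\ge c/2\}$$
is nonempty. By definable choice, a standard consequence of o-minimality, there exists a definable map $\gamma:(0,t_0)\to A$ with $\gamma(t)\in B_t$ for every $t\in(0,t_0)$. If $A$ is unbounded, one first transports the situation to $\hh_n(A)\subset (-1,1)^n$ to stay in a bounded globally subanalytic set, so the resulting arc extends continuously to $t=0$ in $\oba\cup\hh_n^{-1}(\text{``infinity''})$. Either way, $\gamma$ is a globally subanalytic arc along which $|g(\gamma(t))|\le t\to 0$ while $|f(\gamma(t))|\ge c/2$ stays bounded away from $0$, contradicting the hypothesis.

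Next I would invoke the polynomial boundedness of the globally subanalytic structure. A bounded definable function $\psi:(0,\infty)\to\R$ with $\psi(0^+)=0$ admits, by Puiseux's theorem for definable functions of one variable (equivalently, by the Monotonicity Theorem applied in a polynomially bounded o-minimal structure), an estimate $\psi(t)\le Ct^{q_0}$ on some interval $(0,t_0]$, for a rational number $q_0>0$ and a constant $C>0$. Pick $\nu\in\N$ with $\nu q_0\ge 1$, and assume $t_0\le 1$ without loss of generality. Then for every $x\in A$ with $|g(x)|\le t_0$,
$$|f(x)|^\nu\le\psi(|g(x)|)^\nu\le C^\nu|g(x)|^{\nu q_0}\le C^\nu|g(x)|.$$
For $x\in A$ with $|g(x)|>t_0$, the bound $|f(x)|\le\sup_A|f|=:M$ gives $|f(x)|^\nu\le M^\nu\le (M^\nu/t_0)|g(x)|$. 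Taking the maximum of the two constants finishes the proof.

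The main obstacle is the first step, where one must extract a single definable arc from the family $\{B_t\}_{t>0}$: this is precisely what converts the pointwise/sequential hypothesis phrased in terms of arcs into a uniform quantitative bound, and it is exactly where the o-minimality (through definable choice and, implicitly, curve selection) is indispensable. The second delicate point is that $A$ need not be relatively compact; this is why the statement is given in the \emph{globally} subanalytic category, where the compactification $\hh_n$ built into the definition automatically accommodates arcs going to infinity.
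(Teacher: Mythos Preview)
The paper does not prove this proposition; it is stated with a reference to \cite[Theorem $2.2.5$]{livre}. Your argument is correct and is essentially the standard proof of \L ojasiewicz-type inequalities in polynomially bounded o-minimal structures: reduce to a one-variable definable function via the supremum $\psi$, use definable choice to rule out $\psi(0^+)>0$, and then invoke the Puiseux expansion to extract a positive exponent.

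One minor remark: your digression about compactifying via $\hh_n$ when $A$ is unbounded is not needed here. The hypothesis only asks that $g(\gamma(t))\to 0$ along the arc, not that $\gamma$ itself converges in $\adh{A}$; your definable selection $\gamma(t)\in B_t$ already satisfies $|g(\gamma(t))|\le t\to 0$ and $|f(\gamma(t))|\ge c/2$, which contradicts the hypothesis directly regardless of whether $\gamma(t)$ stays bounded. (You should also tacitly handle the trivial case where $\{|g|\le t\}$ is empty for small $t$, in which $\psi$ is undefined near $0$ but the conclusion is immediate from the boundedness of $f$.)
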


Our smoothing process of differential forms near singularities will rely on stratification theory (in section \ref{sect_density}). We now introduce the needed material on this topic.
\begin{dfn}\label{dfn_stratifications}
	A {\bf
		stratification} of a subset of $ \R^n$ is a finite partition of it into
	definable $\cc^\infty$ submanifolds of $\R^n$, called {\bf strata}. A stratification is {\bf compatible} with a set if this set is the union of some strata. A {\bf refinement} of a stratification $\Sigma$ is a stratification $\Sigma'$ compatible with the strata of $\Sigma$.
	
	   If $\Sigma$ is a stratification of a set $X$, we will sometimes say that $(X,\Sigma)$ is a {\bf stratified space}.  A {\bf subspace} of $(X,\Sigma)$ is a set $A\subset X$ which is the union of some strata endowed with the induced stratification, that we will then denote by $\Sigma_A$.
\end{dfn}

As well-known \cite{livre},  given a definable set $X$, the set $X_{reg}$ of points of $X$ at which this set is an analytic manifold (the set of {\bf regular points of $X$})  is definable and  dense in $X$ \cite{tamm, kureg, livre}, which makes it possible to construct stratifications of a given definable set.  One can  produce  stratifications satisfying various regularity conditions \cite{livre} such as Whitney's ones or of Lipschitz type \cite{livre, ngv, mos, par}, like the following one.
\begin{dfn}\label{dfn_trivial}
	A  stratification $\Sigma$ of a set $X$ is {\bf locally definably bi-Lipschitz trivial} if for every $S\in \Sigma$, there are an open neighborhood $V_S$ of $S$ in $X$ and a smooth  definable retraction $\pi_S:V_S\to S$  such that every $x_0\in S$ has an open neighborhood $W$ in $S$ for which there is a definable bi-Lipschitz homeomorphism  $$\Lambda:\pi_S^{-1}(W)\to \pi_S^{-1}(x_0) \times W, $$ satisfying:
\begin{enumerate}[(i)]
	\item  $\pi_S(\Lambda^{-1}(x,y))= y$, for all $(x,y)\in \pi_S^{-1}(x_0)\times  W$.
	\item   $\Sigma_{x_0}:=\{  \pi_S^{-1}(x_0)\cap Y:Y\in \Sigma\} $ is a stratification of $ \pi_S^{-1}(x_0)$,  and $\Lambda(\pi_S^{-1}(W)\cap Y)=(\pi_S^{-1}(x_0)\cap Y)\times W$, for all $Y\in \Sigma$.
\end{enumerate}
\end{dfn}


\begin{thm}\label{thm_existence_stratifications}\cite[Corollary $3.2.16$]{livre}
	Every definable set $X$ admits a locally definably bi-Lipschitz trivial stratification. This stratification can be assumed to be compatible with finitely many given definable subsets of $X$.
\end{thm}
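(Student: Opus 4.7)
My plan is to argue by induction on $\dim X$, refining a Whitney stratification into something carrying enough Lipschitz control to admit trivializations. First, I would invoke the classical existence of a Whitney (b)-regular definable stratification of $X$ compatible with the given finite collection of subsets; this is standard in the subanalytic category. For each stratum $S$, a smooth definable retraction $\pi_S\colon V_S\to S$ then comes for free from a tubular neighbourhood of $S$ in $\R^n$ intersected with $X$.

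Next, I would refine this stratification until it satisfies a Lipschitz regularity condition in the spirit of Mostowski and Parusi\'nski. Proceeding inductively on skeleta, at each step the definable set of ``bad'' points --- where the distance to a deeper stratum and the variation of tangent planes fail to be comparable in a Lipschitz sense --- is put into a lower-dimensional definable subset; one refines the stratification along this subset and iterates. \L ojasiewicz's inequality (Proposition \ref{pro_lojasiewicz_inequality}) is the essential tool here, converting the o-minimal finiteness of the bad locus into quantitative estimates of the form $|\mathrm{error}| \lesssim d(\cdot,S)^\nu$ on the complement, which is exactly what is needed to upgrade smooth control to Lipschitz control.

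Once such a refined stratification is in hand, I would build $\Lambda$ by the usual lifting-and-flowing argument. Pick a chart of $S$ near $x_0$ and lift its coordinate vector fields to definable vector fields $\xi_1,\dots,\xi_{\dim S}$ on $\pi_S^{-1}(W)$ that are (a) tangent to every stratum of $\Sigma$ meeting $V_S$, (b) pushed forward by $d\pi_S$ onto the chart basis, and (c) uniformly bounded and Lipschitz down to $S$. The refined regularity from the previous paragraph is what guarantees such $\xi_i$ exist with the required uniform estimates. Their commuting flows then produce a stratum-preserving bi-Lipschitz homeomorphism $\Lambda\colon\pi_S^{-1}(W)\to \pi_S^{-1}(x_0)\times W$ satisfying (i) and (ii) of Definition \ref{dfn_trivial}.

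The hard part is clearly the middle step: producing a definable stratification with genuine Lipschitz control between strata, rather than merely the topological or smooth control that Whitney (b)-regularity supplies. The first step is classical, and the third reduces to a routine ODE argument once uniform Lipschitz bounds on the $\xi_i$ are in place. Compatibility with the prescribed finite list of definable subsets is preserved throughout, since the induction is started from a stratification already compatible with those sets and only subdivides existing strata.
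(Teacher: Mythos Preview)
The paper does not prove this theorem: it is quoted verbatim as a citation of \cite[Corollary $3.2.16$]{livre}, with no argument given in the present article. There is therefore no ``paper's own proof'' against which to compare your proposal.

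That said, your outline follows the standard route to such results --- starting from a Whitney stratification compatible with the given subsets, refining it to satisfy Lipschitz-type regularity conditions in the sense of Mostowski \cite{mos} and Parusi\'nski \cite{par} (or the o-minimal version \cite{ngv}), and then integrating stratified Lipschitz vector fields lifted from a chart on $S$ to produce the trivialization $\Lambda$. This is indeed the architecture underlying the cited result in \cite{livre}. Your identification of the ``hard part'' is accurate: the genuine work lies in the refinement step producing uniform Lipschitz control between strata, which in the references above occupies many pages and is far from routine. Your sketch is a correct high-level roadmap, but you should be aware that the phrase ``refine the stratification along this subset and iterates'' hides a substantial inductive construction with delicate bookkeeping of the Lipschitz constants; it is not something one can fill in without consulting those sources.
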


	\begin{rem}\label{rem_M_stratum} We can always assume that our definably bi-Lipschitz trivial stratifications of $\mba$ are such that the only stratum of maximal dimension is $M$, possibly taking together all the strata included in $M$ (see \cite[Remark $1.6$]{lprime}). \end{rem}

      \begin{rem}\label{rem_frontier}
      It directly follows from their definition that  locally definably bi-Lipschitz trivial stratifications  satisfy the frontier condition in the following sense: if $S$ and $S'$ are two distinct strata such that $\adh S\cap S'\ne \emptyset$ then $S'\subset \delta S$. For such a couple of strata, we will write $S'\prec S$. It is worthy of notice that if $(X,\Sigma)$ is a stratified space   then the maximal strata of $\Sigma$ for this partial strict order relation constitute an open dense subset of $X$.
      \end{rem}

\subsection{Stratified differential forms.}\label{sect_stokes_formula}
Stratified forms were introduced in \cite{stokes, livre} in order to generalize Stokes' formula on bounded subanalytic sets, possibly singular. We give definitions in this section and then recall this generalized Stokes' formula  which will be of service in section \ref{sect_density}. 

\begin{dfn}\label{dfn_stratified_form}
	Let  $(X,\Sigma)$ be a  stratified space. 	
	A {\bf stratified differential $0$-form on $(X,\Sigma)$}\index{stratified form} is a collection of functions $\omega_S:S\to \R$, $S \in \Sigma$, that glue together into a continuous function on $X$.  	
	A {\bf stratified differential $k$-form on $(X,\Sigma)$}, $k>0$, is a collection $(\omega_S)_{S \in \Sigma}$  where, for every $S$, $\omega_S$ is a continuous differential $k$-form on $S$ such that for any $(x_i ,\xi_i)\in \otimes^k  TS$, with $x_i$ tending to $x\in S'\in \Sigma$ and $\xi_i$ tending to $\xi \in \otimes^k  T_xS'$, we have
	\begin{equation}\label{eq_stratified_form}\lim \omega_S(x_i)\xi_i=\omega_{S'}(x)\xi.\end{equation}
	The {\bf support of a stratified form}\index{support of a stratified form} $\omega$ on $(X,\Sigma)$ is the closure in $X$ of the  set
	$$\bigcup_{S\in \Sigma}\{x\in S: \omega_S(x)\ne 0 \}.$$
	When this closure is compact, $\omega$ is said to be {\bf compactly supported}\index{compactly supported}.
	We say that a stratified form $\omega=(\omega_S)_{S \in \Sigma}$ is {\bf differentiable}  if $\omega_S$ is $\cc^1$ for every $S\in \Sigma$ and if $d\omega:=(d\omega_S)_{S\in \Sigma}$ is a stratified form.
\end{dfn}

	Given a stratified $j$-form $(\omega,\Sigma)$ on a stratified space $(X,\Sigma)$ and a definable subset $Y$ of $X$ of dimension not greater than $j$, we then can define the integral of $\omega$ on $Y$ as follows. Take a refinement $\Sigma'$ of $\Sigma$ compatible with $Y_{reg}$, denote by $\Sigma''$ the stratification of $Y_{reg}$ induced by $\Sigma'$, and set
	$$\int_Y\omega:=\sum_{S\in \Sigma'', \dim S=j}\int_S \omega_S  $$
	(here we assume that $Y_{reg}$ is oriented and that every stratum $S$ is endowed with the induced orientation). As explained in \cite{stokes,livre}, this integral
	is independent of the chosen stratifications.
	
	\begin{dfn}\label{dfn_normal}
We say that a subanalytic set $A$ is {\bf connected at $x\in \overline{A}$}\index{connected at $x$} if  $\bou(x,\ep)\cap A$ is connected for all  $\ep>0$ small enough, and that it is {\bf connected along $Z\subset \adh A$} if it is connected at each point of $Z$.
We say that $A$ is {\bf normal} if it is connected at each $x\in \overline{A}$, and that it is {\bf weakly normal} if it is connected along  some definable set $E\subset \delta A$ satisfying $\dim (\delta A\setminus E)\le \dim A-2$.  
\end{dfn}
We then have the following Stokes' formula that applies to any weakly normal definable manifold, with possibly singularities within its closure.

\begin{thm}\label{thm_stokes_leaves}
	Let $P$ be an oriented $k$-dimensional weakly normal definable $\cc^0$ manifold (without boundary) and let $\Sigma$ be a stratification of $\adh{P}$. For any compactly supported differentiable stratified $(k-1)$-form  $\omega$ on $(\adh{P},\Sigma)$,   we have:
	\begin{equation}\label{eq_stokes_stratified_leaf}
		\int_P d\omega =\int_{\delta P} \omega,
	\end{equation}
	where $(\delta P)_{reg}$ is endowed with the induced orientation.
\end{thm}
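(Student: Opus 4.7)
The plan is to reduce the formula to ordinary Stokes' theorem on the open $k$-dimensional strata of $\Sigma$ contained in $P$, and then to check that the resulting interior boundary contributions pair up and cancel while the contributions coming from the $(k-1)$-strata of $\delta P$ sum up to $\int_{\delta P}\omega$.

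First, using Theorem \ref{thm_existence_stratifications} I would refine $\Sigma$ to a locally definably bi-Lipschitz trivial stratification compatible with $P$, with $\delta P$, and with the set $E\subset \delta P$ from Definition \ref{dfn_normal}. Let $S_1,\ldots,S_\ell$ be the $k$-dimensional strata contained in $P$; their union $P^\ast$ is open and dense in $P$ and $P\setminus P^\ast$ has dimension $\le k-1$, hence carries no $\hn^k$-mass. Since $d\omega$ is a continuous stratified $k$-form and $\omega$ is compactly supported, this already yields
\begin{equation*}
\int_P d\omega \;=\; \sum_{i=1}^\ell \int_{S_i} d\omega_{S_i}.
\end{equation*}

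Next, on each top stratum $S_i$ --- an oriented definable $\cc^\infty$ submanifold of $\R^n$ on which $\omega_{S_i}$ is smooth, compactly supported in $\adh{S_i}$, and extends continuously to $\adh{S_i}$ via \eqref{eq_stratified_form} --- I would invoke the Stokes formula for smooth definable manifolds with subanalytic boundary recalled in \cite{stokes,livre} to obtain
\begin{equation*}
\int_{S_i} d\omega_{S_i} \;=\; \sum_{\substack{T \prec S_i\\ \dim T = k-1}}  \varepsilon(T,S_i)\int_T \omega_T,
\end{equation*}
where $\varepsilon(T,S_i)\in\{\pm1\}$ records the induced orientation on $T$ as part of $\delta S_i$, and strata of dimension $\le k-2$ contribute $0$ by a Hausdorff dimension argument applied to the continuous form $\omega_T$.

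I would then reorganize the resulting double sum by $(k-1)$-strata $T$. If $T\subset P$: since $P$ is a $\cc^0$ manifold of dimension $k$, the pair $(P,T)$ is locally homeomorphic to $(\R^k,\R^{k-1})$ at each point of $T$, so exactly two top strata of $P$ abut $T$, lying on opposite sides; as $P$ is oriented, these two strata induce opposite orientations on $T$, the two signs $\varepsilon$ are opposite, and the limit of $\omega_{S_i}$ onto $T$ is $\omega_T$ from both sides by \eqref{eq_stratified_form}, so the contributions cancel. If $T\subset \delta P$: because $\dim(\delta P\setminus E)\le k-2$ and $\dim T=k-1$, such a $T$ must lie in $E$; by weak normality $P$ is connected at each point of $T$, so all top strata of $P$ abutting $T$ lie on a single ``side'' of $T$, and the sum of their signed contributions equals $\int_T\omega_T$ taken with the boundary orientation induced from $P$. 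Summing over all $(k-1)$-strata of $\delta P$ gives $\int_{\delta P}\omega$, as desired.

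The main obstacle I anticipate is the local analysis in the third step: verifying that in the interior of $P$ each $(k-1)$-stratum admits exactly two, oppositely oriented, top strata of $P$ abutting it, and that on $\delta P$ the signed contributions from the (possibly several) top strata abutting a $(k-1)$-stratum $T\subset E$ amalgamate coherently into the boundary orientation on $T$ inherited from $P$. This is precisely where weak normality is needed, and where the bound $\dim(\delta P\setminus E)\le k-2$ allows one to discard the non-normal part of the frontier as $\hn^{k-1}$-negligible.
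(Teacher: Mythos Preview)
The paper does not prove this theorem here; it is recalled from the author's earlier work \cite{stokes, livre} (see the opening of section~\ref{sect_stokes_formula}). Your outline is the standard strategy and matches what those references do: refine to a locally bi-Lipschitz trivial stratification compatible with $P$, $\delta P$, and $E$; apply a Stokes-type identity on each top stratum; then sort the $(k-1)$-dimensional contributions. Your local analysis is correct. For $T\subset P$, bi-Lipschitz triviality along $T$ identifies a neighborhood with $(\text{1-dim fiber})\times W$, and since $P$ is a $\cc^0$ $k$-manifold at points of $T$ the fiber is a full line, giving exactly two top strata with opposite induced orientations. For $T\subset\delta P$ with $\dim T=k-1$, compatibility with $E$ and $\dim(\delta P\setminus E)\le k-2$ force $T\subset E$; connectedness of $P$ at points of $E$ then makes the punctured transverse fiber a single half-arc, so exactly one top stratum abuts $T$ and the orientation is unambiguous.

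One point to tighten: the ``Stokes formula for a single smooth stratum $S_i$ with subanalytic frontier'' that you invoke is not a special case of the present theorem (the $S_i$ need not be weakly normal), and citing \cite{stokes,livre} for it is circular since that is precisely where the present theorem is established. This step can be handled directly by a cut-off argument: multiply $\omega_{S_i}$ by $\psi_\eta(d(\cdot,Z))$ with $Z$ the $(\le k-2)$-skeleton of $\delta S_i$ and $\psi_\eta$ as in \eqref{eq_psieta}, apply the classical Stokes formula on the resulting manifold-with-boundary, and use that the $\eta$-neighborhood of a definable set of dimension $\le k-2$ in $S_i$ has $\hn^{k}$-measure $O(\eta^2)$ (and $\hn^{k-1}$-measure $O(\eta)$ on the level sets) to kill the error terms as $\eta\to 0$. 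With that step made explicit, your sketch constitutes a complete proof.
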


We will say that a differential form $\beta$ on $M$ is {\bf stratifiable} if there is a stratified form  $(\omega, \Sigma)$ such that $\omega=\beta$ on the strata of dimension $\dim M$ (which constitute a dense subset). 
Stratifiable forms arise naturally when we pull-back a smooth differential form $\omega$ on a manifold $M$ under a definable Lipschitz mapping $h:M'\to M$, $M'$ definable manifold. Since definable Lipschitz mappings can be horizontally $\cc^1$ stratified \cite[Proposition $2.6.12$]{livre}, the pull-back $h^*\omega$ (which is only defined almost everywhere when $h$ is not smooth) is then stratifiable (see \cite{livre} right after Definition $4.6.3$ for more). By (\ref{eq_stokes_stratified_leaf}), $h^*d\omega=dh^*\omega$ if $\omega \in \cc^{j,\infty}(M)$. In particular,   a definable bi-Lipschitz mapping $h:M'\to M$ induces (by density of smooth forms),  linear isomorphisms  \begin{equation}\label{eq_pullback}h^*: \wca^j_p(M)\to \wca^j_p(M'), \quad \omega \mapsto h^*\omega,\end{equation}
that yield  isomorphisms in cohomology.

Note also that if $h:[0,1]\times M \to M$ is a  definable Lipschitz homotopy and if we set  for $x\in M$ and  $\omega\in \cc^{j,\infty}(M)$:
$$\hn\omega(x):=\int_0 ^1 (h^*\omega)_{\pa_t}(t,x)dt,$$
where  $\pa_t$ denotes the constant vector field $(1,0)$ on $[0,1]\times M$,
then (\ref{eq_stokes_stratified_leaf}) yields
\begin{equation}\label{eq_chain_homotopy}
	d \hn \omega+\hn d\omega=h_1^* \omega -h_0 ^*\omega,
\end{equation}
where $h_i:M\to M$, $i=0,1$, is defined by $h_i(x)=h(i,x)$.

\subsection{Normalizations.}In the case where the underlying manifold is not normal, we can ``normalize'' it. This will be of service in section \ref{sect_lp_coh}.

\begin{dfn}\label{dfn_normalization}
	{\bf A $\cc^\infty$ normalization of $M$ }
	is  a definable $\cc^\infty$ diffeomorphism $h: \mc\to M$ satisfying $\sup_{x\in \mc} |D_x h|<\infty$ and  $\sup_{x\in M} |D_x h^{-1}|<\infty$,  with $\mc$ normal $\cc^\infty$ submanifold of $\R^k$, for some $k$. We will sometimes say that $\mc$ is a normalization of $M$.
\end{dfn}

The following proposition gathers Propositions $3.3$ and $3.4$ of \cite{trace}, yielding existence and uniqueness of $\cc^\infty$ normalizations.

\begin{pro}\label{pro_normal_existence}\begin{enumerate}
		\item\label{item_normal_existence}
		Every bounded definable $\cc^\infty$ manifold admits a $\cc^\infty$ normalization.
		\item  \label{item_unique}   Every $\cc^\infty$ normalization  $h: \mc\to M$  extends continuously to a mapping from $\adh{\mc}$ to $\mba$.
		Moreover, if $h_1:\mc_1 \to M$ and $h_2:\mc_2 \to M$ are two $\cc^\infty$ normalizations of $M$, then $h_2^{-1} h_1$  extends to a homeomorphism between $\adh{\mc_1}$ and $\adh{\mc_2}$.                             \end{enumerate}
\end{pro}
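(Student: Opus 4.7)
The plan is to treat (1) and (2) by exploiting, respectively, the local triviality of definable stratifications to perform a branch separation, and a \L ojasiewicz-type inner-versus-outer distance estimate on normal definable sets.

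\textbf{Existence (1).} The set $N\subset\delta M$ of points at which $M$ fails to be connected is definable, since the property ``$\mathbf{B}(x,\ep)\cap M$ is connected for every small $\ep$'' is o-minimally definable. I would fix a locally definably bi-Lipschitz trivial stratification $\Sigma$ of $\mba$ compatible with $M$ and $N$ (Theorem~\ref{thm_existence_stratifications}), arranged so that $M$ is the unique top stratum (Remark~\ref{rem_M_stratum}). For each stratum $S\subset N$ the local trivialization of Definition~\ref{dfn_trivial} exhibits the tubes $\pi_S^{-1}(W)\cap M$ as products of $W$ with a fixed finite disjoint union $\pi_S^{-1}(x_0)\cap M=B_1^S\sqcup\cdots\sqcup B_{k_S}^S$. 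Using these local identifications I would construct a bounded definable map $\phi\colon M\to\R^N$ with $\sup_M|\pa\phi|<\infty$ that takes $k_S$ distinct limits at each point of $S$, obtained by smoothly interpolating between labels $1,\dots,k_S$ on the branches via a bounded-gradient definable partition of unity subordinate to a shrinking of the tubes. The graph $\mc:=\{(x,\phi(x)):x\in M\}\subset\R^{n+N}$ with projection $h$ onto the first $n$ coordinates then gives a $\cc^\infty$ diffeomorphism with $|Dh|\le 1$ and $|Dh^{-1}|=|(\mathrm{id},\pa\phi)|$ bounded; the separation of branches by $\phi$ forces $\mc$ to be normal.

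\textbf{Continuous extension.} For any normalization $h\colon\mc\to M$, the bound $\sup|D_xh|\le K$ implies $|h(y)-h(y')|\le K\cdot d^{\mathrm{in}}_{\mc}(y,y')$ by integrating along smooth paths, so $h$ is Lipschitz in the inner metric on $\mc$. Normality of $\mc$ combined with Proposition~\ref{pro_lojasiewicz_inequality}, applied to the definable functions $(y,y')\mapsto d^{\mathrm{in}}_\mc(y,y')$ and $(y,y')\mapsto|y-y'|$ near the diagonal over each $\hat x\in\delta\mc$, yields constants $C,\nu$ such that $d^{\mathrm{in}}_\mc(y,y')\le C|y-y'|^{1/\nu}$ for $y,y'$ in a fixed $\mathbf{B}(\hat x,\ep)\cap\mc$; the hypothesis of \L ojasiewicz's inequality holds precisely because connectedness of $\mathbf{B}(\hat x,\ep)\cap\mc$ forces the inner diameter to tend to $0$ along every subanalytic arc on which $|y-y'|\to 0$. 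Consequently $h$ is uniformly continuous in the euclidean metric near each boundary point and extends continuously to $\adh\mc$.

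\textbf{Uniqueness.} Given two normalizations $h_i\colon\mc_i\to M$ of $M$, the composition $g:=h_2^{-1}h_1\colon\mc_1\to\mc_2$ is again a definable $\cc^\infty$ diffeomorphism with $\sup|Dg|<\infty$ and $\sup|Dg^{-1}|<\infty$. Applying the preceding extension argument to both $g$ and $g^{-1}$ gives continuous extensions $\bar g\colon\adh{\mc_1}\to\adh{\mc_2}$ and $\overline{g^{-1}}\colon\adh{\mc_2}\to\adh{\mc_1}$, which are mutual inverses on the dense interiors $\mc_1,\mc_2$ and therefore on the closures by continuity, giving the required homeomorphism.

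\textbf{Main obstacle.} The genuinely delicate step is the construction of $\phi$ in (1): although distinct branches of $M$ at a non-normal point can be euclidean-close, one must nevertheless separate them smoothly with a \emph{pointwise} bounded derivative. This is not obstructed only because the normalization definition requires pointwise bounds on $|Dh|$ and $|Dh^{-1}|$ rather than ambient Lipschitzness of $h^{-1}$; the bi-Lipschitz local triviality of $\Sigma$ converts the global separation problem into a product problem on tubes, where the existence of a bounded separating function is automatic by definability. The \L ojasiewicz comparison in the extension step is comparatively routine once normality of $\mc$ is available.
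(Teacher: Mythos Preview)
The paper does not prove this proposition; it simply cites Propositions~3.3 and~3.4 of \cite{trace}. So there is no in-paper argument to compare against, and your task is really to produce a self-contained proof.

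Your treatment of (2) is sound and is essentially the standard argument. The one point that deserves a word of justification is the hypothesis of \L ojasiewicz's inequality: you assert that normality forces $d^{\mathrm{in}}_{\mc}(y(t),y'(t))\to 0$ along any definable arc with $|y(t)-y'(t)|\to 0$, but connectedness of $\mathbf{B}(\hat x,\ep)\cap\mc$ alone does not give this; you need that the inner diameter of $\mathbf{B}(\hat x,\ep)\cap\mc$ tends to $0$ with $\ep$. For definable sets this is true (e.g.\ via the Lipschitz conic structure, Theorem~\ref{thm_local_conic_structure}, which shows the link has inner diameter $\lesssim\ep$), and once stated the \L ojasiewicz step goes through. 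You should also note that definability of the inner metric is being used. The uniqueness part is then immediate, as you say.

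For (1) your graph-of-a-separating-function strategy is the natural one, and your identification of the obstacle is accurate. But the resolution you offer is too quick. Two points remain unaddressed. First, the local branch labels $\phi_\alpha$ are locally constant on $V_S\cap M$ and hence automatically $\cc^\infty$ with zero gradient there, but to globalize you need cutoffs $\rho_S$ that are $\cc^\infty$ (not merely Lipschitz) with bounded derivative, equal to $1$ near $S$ and supported in $V_S$; the bi-Lipschitz trivializations give no smoothness, and a cutoff built from the distance to $S$ is only Lipschitz. You must explain how to produce such smooth cutoffs with a uniform gradient bound when the tubes $V_S$ for different strata interact near $\delta S$. Second, you should verify that the resulting graph $\mc$ is actually normal: this amounts to checking that along each single branch of $M$ at a boundary point your $\phi$ has a \emph{unique} limit (so that no branch is artificially split), and that distinct branches give limits that are bounded apart. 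The first holds because each component of $\phi$ is locally constant along branches; the second requires that at each $\hat x\in S$ the cutoff $\rho_S$ satisfies $\rho_S(\hat x)>0$, which is exactly what your ``shrinking of the tubes'' must arrange. These are the details that carry the weight of the argument and that \cite{trace} works out.
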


\subsection{Trace operators of subanalytic manifolds.}\label{sect_trace_operators}	
As well-known \cite{adams}, Sobolev spaces of functions are very well-behaved on domains that have metrically conical boundary. Part of the theory was extended to subanalytic manifolds \cite{poincwirt, trace, poincfried, lprime, gupel}, which  may be cuspidal. We briefly mention some results of \cite{trace, lprime, gupel}  in this section about the case  ``$p$ large''  that we will generalize to differential $j$-forms in sections \ref{sect_density}  and \ref{sect_further}. Since functions are $0$-forms, this corresponds to the case $j=0$, on which we will rely when  arguing by induction on $j$ in section \ref{sect_proof_dense_formes}.
\begin{thm}\label{thm_trace}\cite{trace}
	Assume that $M$ is normal and let $A$ be any subanalytic subset of $\delta M$. For all $p\in [1,\infty)$ sufficiently large, we have:
	\begin{enumerate}[(i)]
		\item
		$\cc^\infty(\adh{M})$ is dense in $W^{1,p}(M)$.
		\item The linear operator
		\begin{equation*}\label{trace}
			\cc^\infty(\adh{M})\ni \varphi \mapsto \varphi_{|A}\in L^p(A,\hn^k), \qquad k:=\dim A,
		\end{equation*}
		is continuous in the norm $||\cdot ||_{W^{1,p}(M)}$ and thus extends to a mapping $\tra_A:W^{1,p}(M)\to L^p(A,\hn^k)$.
		\item If  $\st$ is a stratification of $A$, then $\cc^\infty_{\adh{M}\setminus\adh{A}}(\mba)$ is a dense subspace of
		$\bigcap\limits_{Y\in\st}\ker \tra_Y$.
\end{enumerate}\end{thm}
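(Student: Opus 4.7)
My plan is to treat the three parts simultaneously by reducing to a local study near the strata of a locally definably bi-Lipschitz trivial stratification $\Sigma$ of $\mba$ (Theorem \ref{thm_existence_stratifications}) that is compatible with $A$ and $\delta M\setminus A$ and for which $M$ is the unique maximal stratum (Remark \ref{rem_M_stratum}). Near any $x_0\in S\subset\delta M$, the trivialization furnishes a definable bi-Lipschitz homeomorphism $\Lambda:\pi_S^{-1}(W)\to \pi_S^{-1}(x_0)\times W$; normality of $M$ ensures that the fiber $\pi_S^{-1}(x_0)\cap M$ is connected at $x_0$, ruling out the two-sided pathologies that would break any Sobolev extension. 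All quantitative control will come from applying \L ojasiewicz's inequality (Proposition \ref{pro_lojasiewicz_inequality}) to the distance $d(\cdot,\delta M)$ and to the jacobian of $\pi_S|_M$, in particular yielding a tube volume estimate of the form $\hn^m(\{d(\cdot,\delta M)<\eta\})\lesssim \eta^\nu$ for some $\nu>0$.

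\textbf{Part (i).} Given $\varphi\in W^{1,p}(M)$, I would mollify $\varphi$ along the fibers of each $\pi_S$ by pulling a standard mollifier on $\pi_S^{-1}(x_0)\cap M$ back through $\Lambda$, and glue the mollification to $\varphi$ itself far from $\delta M$ using the cutoff $1-\psi_\eta\circ d(\cdot,\delta M)$ from (\ref{eq_psieta}). The resulting error is $|\nabla\psi_\eta|\cdot|\varphi|$, supported in the annulus $T_\eta:=\{\eta/2<d(\cdot,\delta M)<\eta\}$. Using (\ref{eq_psieta_ineq}), H\"older, and the tube volume estimate,
\begin{equation*}
||(\nabla\psi_\eta)\varphi||_{L^p(M)}\lesssim \eta^{-1}\hn^m(T_\eta)^{1/p'}||\varphi||_{L^\infty(M\cap T_\eta)}\lesssim \eta^{-1+\nu/p'}||\varphi||_{L^\infty(M)},
\end{equation*}
which vanishes as $\eta\to 0$ provided $p$ is large and $\varphi$ is bounded; a truncation-and-diagonal argument then removes boundedness. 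Iterating over $\dim S$ in increasing order yields the required density.

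\textbf{Part (ii).} Fix $Y\in\st$ of dimension $k$ and $x_0\in Y$, and pick a definable Lipschitz section $\sigma:W\to M$ of the local retraction $\pi_Y$. For $\varphi\in\cc^\infty(\mba)$ and $y\in W$, write
\begin{equation*}
\varphi(y)=\varphi(\sigma(y))-\int_0^1\nabla\varphi(\gamma_y(t))\cdot\gamma'_y(t)\,dt,
\end{equation*}
where $\gamma_y$ is a definable Lipschitz path from $\sigma(y)$ to $y$ inside $\mba$. Raising to the $p$-th power, integrating over $W$ with respect to $\hn^k$, and applying Fubini to the change of variables $(y,t)\mapsto\gamma_y(t)$ converts the right-hand side into $||\nabla\varphi||_{L^p(M)}$ times an $L^{p'}$-norm of the jacobian of $\pi_Y|_M$ on a fiber-like set. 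This $L^{p'}$-integrability is precisely a \L ojasiewicz inequality for that jacobian, and holds once $p$ is large enough; combined with part (i) it extends $\tra_A$ to $W^{1,p}(M)$ by continuity.

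\textbf{Part (iii) and main obstacle.} The inclusion $\cc^\infty_{\mba\setminus\adh A}(\mba)\subset\bigcap_{Y\in\st}\ker\tra_Y$ is immediate from part (ii). For the converse, given $\varphi\in\bigcap_Y\ker\tra_Y$, I would use (i) to assume $\varphi\in\cc^\infty(\mba)$ and then multiply by $1-\psi_\eta\circ d(\cdot,\adh A)$ to push the support off $\adh A$. The whole problem then reduces to showing
\begin{equation*}
||(\nabla\psi_\eta)\varphi||_{L^p(M)}\longrightarrow 0\quad\mbox{as}\quad \eta\to 0.
\end{equation*}
This is a Hardy-type estimate: the condition $\tra_Y\varphi=0$ allows one to replace $\varphi(x)$ by an integral of $\nabla\varphi$ along a definable radial curve joining $x$ to the nearest point of $\adh A$, converting the weighted $L^p$-mass on an $\eta$-tube around $\adh A$ into $||\nabla\varphi||_{L^p(M)}$ times a \L ojasiewicz-bounded weight. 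The \emph{main technical obstacle} is executing this argument coherently over the whole stratification $\st$: the tubes around strata of different dimensions must be tiled together so that the Hardy weight remains uniformly bounded as one crosses from one stratum to the next. I expect this will require a second induction on $\dim Y$, each step absorbing the trace on one further stratum via the local product structure provided by $\Lambda$, with the base case being exactly the estimate of part (ii).
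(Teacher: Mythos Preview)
This theorem is not proved in the present paper; it is quoted from \cite{trace}, and the paper only indicates (in the paragraph introducing Theorem \ref{thm_local_conic_structure}) that the proof in \cite{trace} relies on the Lipschitz Conic Structure theorem. That proof is organised around the retraction $r_s$ and the integral operator $\rfr$ of (\ref{eq_rfr}): one uses the identity (\ref{eq_tra_et_r}), $u=\tra\,u(0)+\rfr\,du$, together with the estimate (\ref{eq_r_borne}) to control the boundary value by the interior $W^{1,p}$ norm, and the same operator drives the approximation arguments. So there is no ``paper's own proof'' to compare against here, only the indication that the correct toolkit is the conic retraction rather than sections and path integrals.

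Your sketch contains a genuine error in Part (i). The displayed bound should carry the exponent $1/p$, not $1/p'$: one has $||(\nabla\psi_\eta)\varphi||_{L^p}\lesssim \eta^{-1}\hn^m(T_\eta)^{1/p}||\varphi||_{L^\infty}$, and with $\hn^m(T_\eta)\lesssim\eta^\nu$ the resulting power $\eta^{-1+\nu/p}$ \emph{blows up} as $\eta\to 0$ once $p>\nu$, so the argument goes the wrong way for $p$ large. More structurally, gluing a fiberwise mollification near $\delta M$ to the unmollified $\varphi$ away from $\delta M$ cannot yield an element of $\cc^\infty(\mba)$. In Part (ii), the term $\varphi(\sigma(y))$ is uncontrolled: restricting a $W^{1,p}$ function to the $k$-dimensional image of a section is not bounded by $||\varphi||_{W^{1,p}(M)}$ unless one first proves a Morrey-type embedding into $C^0$---and that embedding (mentioned right after the theorem, via \cite{lprime}) is itself one of the nontrivial outputs of the conic-structure machinery, not an input. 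The approach in \cite{trace} avoids both pitfalls by averaging over the whole fiber with $\rfr$ instead of evaluating on a single section; the smallness near $\delta M$ then comes from the factor $\eta^{(p-1)/p}$ in (\ref{eq_r_borne}), which genuinely improves as $p$ grows.
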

It was shown in \cite{lprime} that for $p$ large, elements of $W^{1,p}(M)$ are always H\"older continuous with respect to the inner metric of $M$, which is the metric on $M$  given by the length of the shortest path joining two given points.  This entails that a function $u \in W^{1,p}(M)$ may only have $\cfr_M(\xo)$ asymptotic values at a point $\xo$ of $\delta M$, where $\cfr_M(\xo)$ is the number of connected components of $M\cap \bou(\xo,\ep)$, $\ep>0$ small (one asymptotic value per connected component, see \cite[Corollary $1.3$]{kp} or \cite[Proposition 3.1.23]{livre}). It makes it possible to define an operator
\begin{equation}\label{eq_trace_fn_plarge}
	\tra: W^{1,p}(M)\to L^\infty(\delta M)^l,
\end{equation}
where $l=\max \cfr_M(\xo)$ (see \cite[section $5.2$]{gupel} for more details).

\subsection{Local conic structure of subanalytic sets.}	The proof of Theorem \ref{thm_trace} given in \cite{trace} relies on a theorem achieved in \cite{gvpoincare} (see also \cite[Theorem $3.4.1$]{livre} for full details). Since this result will also be needed to construct our homotopy operators in section \ref{sect_some_local_operators}, we recall it in this section (Theorem \ref{thm_local_conic_structure} below) and derive some consequences in the next one. 

In this theorem, $x_0* (\sph(x_0,\ep)\cap X)$ stands for the cone over $\sph(x_0,\ep)\cap X$ with vertex at $x_0$.

	\begin{thm}[Lipschitz Conic Structure]\label{thm_local_conic_structure}
		Let  $X\subset \R^n$ be subanalytic and $x_0\in X $. 
		For $\ep>0$ small enough, there exists a Lipschitz subanalytic homeomorphism
		$$H: x_0* (\sph(x_0,\ep)\cap X)\to  \Bb(x_0,\ep) \cap X,$$  
		satisfying $H_{| \sph(x_0,\ep)\cap X}=Id$, preserving the distance to $x_0$, and having the following metric properties:
		\begin{enumerate}[(i)] 
			\item\label{item_H_bi}     The natural retraction by deformation onto $x_0$ $$r:[0,1]\times  \Bb(x_0,\ep)\cap X \to \Bb(x_0,\ep)\cap X,$$ defined by $$r(s,x):=H(sH^{-1}(x)+(1-s)x_0),$$ is Lipschitz.   
			Indeed, there is a constant $C$ such that  for every fixed $s\in [0,1]$, the mapping $r_s$ defined by $x\mapsto r_s(x):=r(s,x)$, is $Cs$-Lipschitz.
			\item \label{item_r_bi}  For each $\delta>0$,
			the restriction of $H^{-1}$ to $\{x\in X:\delta \le |x-x_0|\le \ep\}$ is Lipschitz and, for each $s\in (0,1]$, the map  $r_s^{-1}:\Bb(x_0,s\ep) \cap X\to \Bb(x_0,\ep) \cap X$ is Lipschitz. 
		\end{enumerate}
	\end{thm}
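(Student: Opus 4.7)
The plan is to construct $H$ as a reparametrized flow of a stratified Lipschitz radial vector field. First I would apply Theorem \ref{thm_existence_stratifications} to obtain a locally definably bi-Lipschitz trivial stratification $\Sigma$ of $\adh X\cap\Bb(x_0,\ep_0)$ compatible with $X$ and with $\{x_0\}$. A standard application of the curve selection lemma together with Proposition \ref{pro_lojasiewicz_inequality}, applied to the subanalytic angle between $T_xS$ and the radial direction $(x-x_0)/|x-x_0|$, shows that for $\ep$ small enough this angle is bounded away from $\pi/2$ on every positive-dimensional stratum $S\in\Sigma$, and in particular that $\rho(x):=|x-x_0|$ is a submersion on each such $S\cap(\bou(x_0,\ep)\setminus\{x_0\})$.

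Next, on each such stratum $S$, I would take $\xi_S$ to be the unique vector in $T_xS$ of minimal norm satisfying $d\rho_{|S}(\xi_S)=1$ (the orthogonal projection of $(x-x_0)/|x-x_0|$ onto $T_xS$, suitably rescaled). The definable bi-Lipschitz trivializations supplied by Theorem \ref{thm_existence_stratifications} allow the stratum-wise fields to be assembled into a single definable stratified vector field $\xi$ on $(X\cap\Bb(x_0,\ep))\setminus\{x_0\}$ whose flow $\Phi_\tau$ carries $X\cap\sph(x_0,r)$ bi-Lipschitzly onto $X\cap\sph(x_0,r+\tau)$. I then define
\[ H(x_0+t(y-x_0)):=\Phi_{(t-1)\ep}(y)\quad\text{for}\quad y\in\sph(x_0,\ep)\cap X,\; t\in(0,1], \]
extended by $H(x_0):=x_0$. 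By construction $H$ is the identity on $\sph(x_0,\ep)\cap X$, preserves the distance to $x_0$, and the retraction $r_s$ appearing in (\ref{item_H_bi}) coincides with $\Phi$ reparametrized in $s$.

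The main obstacle is to extract the Lipschitz estimates (\ref{item_H_bi}) and (\ref{item_r_bi}) from $\xi$. A Gronwall-type integration of the ODE $\dot\gamma=\xi\circ\gamma$ reduces these to the pointwise bounds $|\xi(x)|\lesssim 1$ and $|\xi(x)-\xi(x')|\lesssim |x-x'|/|x-x_0|$ on each stratum, together with analogous bounds for $-\xi$ away from $x_0$ (for item (\ref{item_r_bi})). The first bound is immediate from the lower bound on the radial angle. The second encodes precisely the definable bi-Lipschitz triviality of $\Sigma$ in the radial direction: the distortion constants of the trivializations are definable functions of $|x-x_0|$, hence controlled by a power of $|x-x_0|$ by another application of Proposition \ref{pro_lojasiewicz_inequality}, a power that is absorbed after normalizing $\xi$ to have unit radial speed. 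The genuinely delicate point, and the technical heart of the statement, is gluing the stratum-wise fields $\xi_S$ across adjacent strata while maintaining these uniform estimates; this is precisely where the bi-Lipschitz (as opposed to merely Whitney) triviality of $\Sigma$ is essential.
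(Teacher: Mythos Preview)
The paper does not prove this theorem at all: it is quoted from \cite{gvpoincare} (see also \cite[Theorem 3.4.1]{livre}), as stated in the paragraph introducing it. So there is no proof in the present paper to compare your attempt against; you are supplying an argument where the author simply cites the literature.

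That said, your outline is the right general shape---integrate a stratified radial lift of $\partial_\rho$ and read off the Lipschitz bounds from Gronwall---but there is a genuine mismatch between the hypothesis you invoke and the estimate you need. The pointwise bound $|\xi(x)-\xi(x')|\lesssim |x-x'|/|x-x_0|$ that you identify as the crux is precisely Mostowski's condition for a \emph{Lipschitz stratification} \cite{mos,par,ngv}, and it is this condition (together with an isotopy lemma for such stratifications) that delivers the $Cs$-Lipschitz constant for $r_s$. The definable bi-Lipschitz triviality of Theorem~\ref{thm_existence_stratifications}, however, only trivializes \emph{along} each stratum: it gives a bi-Lipschitz product structure $\pi_S^{-1}(W)\simeq \pi_S^{-1}(x_0)\times W$, which controls motion tangent to $S$, not the radial behaviour transverse to all strata at once. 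Your sentence ``encodes precisely the definable bi-Lipschitz triviality of $\Sigma$ in the radial direction'' conflates these two notions. To make your argument go through you would have to either (a) invoke Lipschitz stratifications directly, which exist for subanalytic sets by \cite{par}, or (b) follow the route actually taken in \cite{livre,gvhandbook}, which proceeds via regular vectors and Lipschitz cell decompositions rather than by gluing stratum-wise gradient lifts.
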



  This theorem is the Lipschitz counterpart of the $\cc^0$ conic structure of sets definable in an o-minimal structure (see \cite[Theorem $4.10$]{costeomin}). It can be proved for sets definable in a polynomially bounded o-minimal structure.
 \begin{rem}\label{rem_preserve}
	It follows from \cite[Remark $3.4.2$ or  Corollary $3.4.5$]{livre} that we can require $r_s$ to preserve some given definable subset-germs of $X$ for all $s\in (0,1)$.
\end{rem}

\subsection{The retractions $r_s$ and $R_t$.}\label{sect_the_retraction_r_s_and_R_t}
We now wish to explain in which setting we will apply the above theorem and then mention a few facts that were established in \cite{trace, lprime}.
Let us assume that $0_{\R^n}\in\mba$ and 
set for $\eta>0$: \begin{equation}\label{eq_metaneta}M^{\eta}:=\bou(0_{\R^n},\eta)\cap M\;\; \et\;\; N^{\eta}:=\sph(0_{\R^n},\eta)\cap M.\end{equation}

 Apply Theorem \ref{thm_local_conic_structure} with $X=M\cup\{0_{\R^n}\}$ and $x_0=0_{\R^n}$. Fix $\ep>0$ sufficiently small for the statement of the theorem to hold and for $N^\ep$ to be a $\cc^\infty$ manifold, and  let $r$ as well as $H$ denote the mappings provided by this theorem. We will assume for convenience $\ep<\frac{1}{2}$. Observe  that since $r$ is subanalytic, it is $\cc^\infty$ almost everywhere.

  Since $r_s$ is bi-Lipschitz for every $s>0$, $\jac\, r_s$ can only tend to zero if $s$ is itself going to zero (see \cite[Remark $1.5$]{trace}). Hence, by \L ojasiewicz's inequality (Proposition \ref{pro_lojasiewicz_inequality}),
there are  $\nu\in \N$ and  $\kappa>0$ such that for all $s\in (0,1)$ we have for almost all $x\in M^\ep$:
\begin{equation}\label{eq_jacr_s}
 \jac \,r_s(x) \ge \frac{s^{\nu}}{\kappa}.  
\end{equation}

The definition of $r_s(x)$ that we gave in Theorem \ref{thm_local_conic_structure} actually makes sense for each $(s,x)\in [0,\infty)\times \mep$ satisfying $s\le \frac{\ep}{|x|}$. We therefore can define a mapping $R:P \to M$, where $P=\{(t,x)\in \R\times M: 1\le t\le \frac{\ep}{|x|} \}$, by 
$$R(t,x):=H(tH^{-1}(x)).$$
We will denote by $r^\eta_s:N^\eta \to N^{s\eta}$ and $R^\eta_t: N^{\eta} \to N^{t\eta}$ the respective restrictions of $r(s,\cdot)$ and
 $R(t,\cdot)$.
We do not use the notation $r(s,x)$ when $s>1$,  since the properties of $R$ will be very different from the properties of $r$ listed in  Theorem \ref{thm_local_conic_structure}. 
The mapping $R$ should rather be regarded as an inverse,  $R_t^\eta:N^\eta\to N^{t\eta}$   being the inverse of  $r_{1/t}^{t\eta}: N^{t\eta}\to N^{\eta}$.  In particular, by (\ref{eq_jacr_s}), we have
\begin{equation}\label{eq_jacRt}
	\jac R_t \le \kappa\, t^\nu.
\end{equation}
It is worthy of notice that since $r$ has bounded derivative, (\ref{eq_jacr_s}) holds for $r_s^\eta$, and hence (\ref{eq_jacRt}) holds to $R_t^\eta$, i.e.
\begin{equation}\label{eq_jac_rReta}
 \jac \,r_s^\eta(x) \ge \frac{s^{\nu}}{\kappa}\et 	\jac R_t^\eta \le \kappa\, t^\nu,
\end{equation} 
for some possibly bigger constant $\kappa$ (independent of $\eta<\ep$).

\subsection{A few facts about the conic structure.}\label{sect_key_facts}  There is a constant $C$ such that:

\begin{enumerate}
\item For almost all $x\in M^\ep$ we have for almost all $s\in (0,1)$,  \begin{equation}\label{eq_der_r_s}
       \left|\frac{\pa r}{\pa s}(s,x)\right|\le C|x|,
      \end{equation}
     and,  for almost all $t\in [1,\frac{\ep}{|x|}]$,
\begin{equation}\label{eq_par} 
 \left|\frac{\pa R}{\pa t}(t,x)\right| \le C|x|.
\end{equation}
       \item For each $v\in L^{p}(M^\ep)$,  $p\in [1,\infty)$, we have for all $\eta\in (0,\ep]$, \begin{equation}\label{eq_coarea_sph}
    \left(\int_0 ^\eta ||v||_{L^p (N^\zeta)}^p d\zeta \right)^{1/p}  \le    ||v||_{L^p (M^\eta)} \leq C \left(\int_0 ^\eta ||v||_{L^p (N^\zeta)}^p d\zeta \right)^{1/p}.
       \end{equation}
\item\label{item_lambda} The angle between $\frac{\pa r}{\pa s}(s,x)$ (resp. $\frac{\pa R}{\pa t}(t,x)$) and the tangent space to $N^{s|x|}$ (resp. $N^{t|x|}$) at $r_s(x)$ (resp. $R_t(x)$) is bounded below away from zero independently of almost every $s$ (resp. $t$) and $x$.
\end{enumerate}
\begin{proof} Inequalities (\ref{eq_der_r_s}) and (\ref{eq_coarea_sph}) were proved in \cite[Inequalities (2.2) and (2.3)]{trace}, while (\ref{eq_par}) is equivalent to Inequality $(3.2)$ of \cite{lprime}.  To show 
 (\ref{item_lambda}), notice that, if $\rho:M\to \R$ stands for the restriction of the distance function to the origin, then, as $\rho(R(t,x))=t |x|$, we must have:
 \begin{equation}\label{eq_dero}
 	<\pa \rho(R_t(x)), \frac{\pa R}{\pa t}(t,x)>= |x|\overset{(\ref{eq_par})}\ge \frac{ \left|\frac{\pa R}{\pa t}(t,x)\right|}{C} ,
 \end{equation} 
yielding the needed fact for $R$. The same applies to $r$, using  (\ref{eq_der_r_s}).
\end{proof}

\end{section}


\begin{section}{Local homotopy operators for $L^p$ forms}\label{sect_some_local_operators}
Throughout this section, we will work locally near the origin, assuming $\orn \in \mba$. 	We are going to define some homotopy operators   for differential forms (see (\ref{eq_rfr}) and (\ref{eq_Rfr})) and show that they preserve $\wca_p^j$ forms vanishing on a set (Proposition \ref{pro_r_preserve_vanishing}). This will be needed to establish our $L^p$ Poincar\'e Lemma (Lemma \ref{lem_poinc}) as well as our Lefschetz duality  (section \ref{sect_proof}).   

 Given a vector field $\xi$ and a $j$-form $\omega$ on  $M$, we denote by $\omega_\xi$ the differential $(j-1)$-form defined for every $x\in M$ and $\zeta \in \otimes^{j-1} T_x M$ by \begin{equation}\label{eq_om_xi}\omega_\xi(x)\zeta:=\omega(x)\xi(x) \otimes \zeta,\end{equation}
 with the convention that this form is zero if $j=0$. We will sometimes use this notation when $\xi$ is a vector (instead of a vector field) regarding $\xi$ as a constant vector field.

\begin{subsection}{The operators $\rfr$ and $\Rfr$} Let $M^\eta$, $N^\eta$, $\ep$, $r$, and $R$, be as in subsection  \ref{sect_the_retraction_r_s_and_R_t} (we recall that we assume $\orn \in \mba$ throughout section \ref{sect_some_local_operators}).
 The retractions $r$ and $R$  give rise to operators on $L^p$ differential forms as follows. For $\omega\in L^j_p(\mep)$, $p\in [1,\infty]$, $j\ge 0$, and almost every $x\in \mep$, let
\begin{equation}\label{eq_rfr}
 \rfr\omega(x) :=\int_0 ^1 (r^* \omega)_{\pa_s}(s,x)\,ds,
\end{equation}
as well as
\begin{equation}\label{eq_Rfr}
 \Rfr\omega(x) :=\int^1 _\frac{\ep}{|x|} (R^* \omega)_{\pa_t}(t,x)\,dt ,
\end{equation}
where $\pa_s$ and $\pa_t$ respectively stand for the gradient of $[0,1]\times \mep\ni (s,0)\mapsto s$  and $[1,\infty) \times \mep\ni (t,x)\mapsto t$.
In particular, $\rfr\omega=\Rfr \omega=0$ if $j=0$.

It follows from Theorem \ref{thm_local_conic_structure} that $R_t$ is bi-Lipschitz for every $t\ge 1$ with a constant that stays bounded away from infinity if $t$ remains bounded. Hence, by (\ref{eq_par}), $(R^*\omega)_{\pa_t}$ is $L^1$ on $[1,\frac{\ep}{|x|} ]$ for almost every $x\in \mep$,  for all $\omega\in L^j_p(\mep)$, $p\ge 1$, which  means that $\Rfr\omega$ is well-defined for all such $\omega$. We will however mainly use it in the study of the case ``$p$ is close to $1$'' (see Proposition \ref{pro_borne_R}).
 Formula (\ref{eq_rfr}) needs the function $s\mapsto |(r^*\omega)_{\pa_s}(s,x)|$ to be  $L^1([0,1])$ (for
 almost every $x$), which is true  for  $\omega\in L^j_p(\mep)$ provided $p$ is sufficiently large:
\begin{lem}\label{lem_r_well_dfnd}
For $p\in [1,\infty)$ sufficiently large, if $\omega\in L^j_p(\mep)$, $j\ge 1$, then $|(r^*\omega)_{\pa_s}(s,x)|\in L^1( [0,1] \times\mep)$, and hence $\rfr\omega$ is well-defined.  Moreover, if $j=m$ then this is true for all $p\in [1,\infty)$.
\end{lem}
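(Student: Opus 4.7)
The strategy is to bound $|(r^*\omega)_{\pa_s}(s,x)|$ pointwise by $|\omega(r_s(x))|$ times explicit factors in $s$ and $|x|$, change variables $y=r_s(x)$, and show that the resulting power of $s$ is integrable on $[0,1]$ for $p$ large enough.

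From the definitions of contraction and pull-back, one has the pointwise inequality
$$|(r^*\omega)_{\pa_s}(s,x)| \;\le\; |\omega(r_s(x))|\cdot \left|\frac{\pa r}{\pa s}(s,x)\right|\cdot |Dr_s(x)|^{j-1}.$$
Plugging in (\ref{eq_der_r_s}) together with the $Cs$-Lipschitzness of $r_s$ (Theorem \ref{thm_local_conic_structure}(\ref{item_H_bi})) yields $|(r^*\omega)_{\pa_s}(s,x)| \lesssim s^{j-1}\,|x|\, |\omega(r_s(x))|$. Raising to the $p$-th power, integrating in $x\in \mep$, and changing variable $y=r_s(x)$ (in which step (\ref{eq_jacr_s}) gives $dx \le \kappa s^{-\nu}\,dy$ while $|x|=|y|/s\le \ep$) leads to
$$\|(r^*\omega)_{\pa_s}(s,\cdot)\|_{L^p(\mep)} \;\lesssim\; s^{\,j-1-\nu/p}\,\|\omega\|_{L^p(\mep)}.$$
Since $\mep$ has finite measure, H\"older's inequality converts this to the same bound for the $L^1(\mep)$ norm. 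Integrating in $s$, the integral $\int_0^1 s^{\,j-1-\nu/p}\,ds$ converges precisely when $p>\nu/j$, so for such $p$ the function $(s,x)\mapsto |(r^*\omega)_{\pa_s}(s,x)|$ lies in $L^1([0,1]\times \mep)$, and Fubini's theorem ensures that $\rfr\omega$ is well defined.

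The moreover statement requires reaching every $p\ge 1$ when $j=m$, whereas the above only yields $p>\nu/m$. The key improvement for top forms is an \emph{equality} rather than an inequality: setting $\xi_s:=Dr_s^{-1}(\pa r/\pa s)$, the identity $(r^*\omega)_{\pa_s}=(r_s^*\omega)_{\xi_s}$, combined with the fact that for the top-degree form $r_s^*\omega$ on the $m$-manifold $M$ one has $|(r_s^*\omega)_{\xi_s}| = |r_s^*\omega|\cdot |\xi_s|$, yields
$$|(r^*\omega)_{\pa_s}(s,x)| \;=\; |\omega(r_s(x))|\cdot \jac r_s(x)\cdot |\xi_s(x)|.$$
The explicit form $r(s,x)=H(sH^{-1}(x))$ and the chain rule give $\xi_s(x)=\tfrac{1}{s}\,DH(H^{-1}(x))\cdot H^{-1}(x)$, so that the Lipschitzness of $H$ combined with its distance-preserving property $|H^{-1}(x)|=|x|$ produces $|\xi_s(x)|\lesssim |x|/s$. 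Changing variable $y=r_s(x)$ now \emph{exactly absorbs the Jacobian factor}, and using $|x|=|y|/s$ one is left with
$$\int_0^1\!\int_{M^{s\ep}} |\omega(y)|\,\frac{|y|}{s^2}\,dy\, ds.$$
Fubini and the elementary computation $\int_{|y|/\ep}^1 s^{-2}\,ds \le \ep/|y|$ reduce this to $\ep\,\|\omega\|_{L^1(\mep)}$, which is finite for every $p\ge 1$ since $\mep$ has finite measure.

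The main difficulty is this last refinement: getting control for $j=m$ down to $p=1$ forces us to replace the coarse inequality $|(r_s^*\omega)_{\xi_s}|\le |r_s^*\omega|\cdot|\xi_s|$ of the general argument by an equality — available only when $\omega$ is top-degree — and then to exploit the very explicit radial structure of $r$ through $H$ to cancel the potentially divergent Jacobian $\jac r_s$ (which may behave like $s^\nu$ with $\nu$ arbitrarily large) against the change of variable, leaving only the mild $1/s^2$ singularity that is integrable against $|y|$ on $[|y|/\ep,1]$.
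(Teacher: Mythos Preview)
Your proof is correct, and in both parts it proceeds somewhat differently from the paper's argument.

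For the general case $j\ge 1$, the paper bounds the full pull-back $|r^*\omega|$ by $|\omega\circ r_s|$ using only that $r$ is Lipschitz, then applies Minkowski and the Jacobian lower bound (\ref{eq_jacr_s}) to reach the threshold $p>\nu+1$. You instead exploit the sharper $Cs$-Lipschitzness of $r_s$ to extract the factor $s^{j-1}$ in the pointwise bound, which yields the better threshold $p>\nu/j$. Your route is more elementary (a single pointwise estimate followed by a change of variable) and gives a strictly smaller $p_0$ for $j\ge 2$.

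For the case $j=m$, the paper works through the spherical foliation: it uses fact (\ref{item_lambda}) of Section~\ref{sect_key_facts} (the angle between $\partial r/\partial s$ and $T N^{s|x|}$ is bounded below) to decompose $\omega(r_s(x))=\xi_s^*\wedge\omega_s'$ with $\omega_s'$ tangent to the spheres, obtaining the key estimate $|(r^*\omega)_{\partial_s}|\lesssim |x|\,|\omega(r_s(x))|\,\jac r_s^{|x|}(x)$ in terms of the \emph{spherical} Jacobian, and then integrates first over $N^\eta$ and then over $\eta$ via (\ref{eq_coarea_sph}). Your argument is purely algebraic: the identity $(r^*\omega)_{\partial_s}=(r_s^*\omega)_{\xi_s}$ with $\xi_s=Dr_s^{-1}(\partial r/\partial s)$, the top-degree equality $|(r_s^*\omega)_{\xi_s}|=|r_s^*\omega|\,|\xi_s|$, and the explicit formula $\xi_s(x)=\tfrac{1}{s}DH(H^{-1}(x))\cdot H^{-1}(x)$ together give an exact expression in which the full Jacobian $\jac r_s$ cancels in the change of variable, leaving the integrable singularity $|y|/s^2$. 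Both arguments hinge on the same phenomenon---that for top forms the Jacobian appears linearly rather than through the lower bound (\ref{eq_jacr_s})---but yours reaches it by a direct computation with $H$, while the paper's spherical decomposition has the advantage of being reused verbatim in the proofs of Propositions~\ref{pro_borne_r} and \ref{pro_borne_R}.
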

\begin{proof}
Since $r_s$ is  Lipschitz, thanks to Minkowski's integral inequality, we have for every $\omega\in L^j_p(\mep)$, $j\le m$, $p\in [1,\infty)$, and almost every $x\in \mep$:
\begin{eqnarray*}
  || \int_0 ^1|r^*\omega|(s,x)ds||_{L^p(\mep)} &\le&  \int_0 ^1 ||r^*\omega(s,x)||_{L^p(\mep)}ds\lesssim \int_0 ^1 ||\omega\circ r_s||_{L^p(\mep)}ds\end{eqnarray*}
$$\overset{(\ref{eq_jacr_s})}\lesssim\int_0 ^1 s^{-\frac{\nu}{p}}\left( \int_{\mep} |\omega\circ r_s|^p\cdot \jac r_s\right)^{\frac{1}{p}}ds=\int_0 ^1 s^{-\frac{\nu}{p}}||\omega||_{L^p(M^{s\ep})}ds,$$
which, by H\"older's inequality, gives if $p> \nu+1$ (for such $p$, the function $s^{-\frac{\nu}{p}} $ is  $L^{p'}$)
$$ || \int_0 ^1|r^*\omega|(s,x)ds||_{L^p(\mep)}  \le ||s^{-\frac{\nu}{p}}||_{L^{p'}([0,1])}\cdot\left(\int_0^1 ||\omega||_{L^p(M^{s\ep})}^pds\right)^\frac{1}{p}\lesssim  ||\omega||_{L^p(M^{\ep})}.$$
 This implies that
$r^*\omega\in L^p((0,1)\times \mep)$, which, in virtue of H\"older's inequality,  means that  it is an $L^1$ form.

We now prove the last statement, i.e. we assume $j=m$. Fix $\omega\in L^m_p(\mep)$. 
Let $\xi_s(x):=\frac{\frac{\pa r}{\pa s}(s,x)}{| \frac{\pa r}{\pa s}(s,x)|}$ and let $\xi_s^*$ be the $1$-form defined by this vector field.   It follows from
 (\ref{item_lambda}) of section \ref{sect_key_facts} that if $\zeta_1,\dots,\zeta_{m-1}$ is an orthonormal basis of $T_{r_s(x)} N^{s\eta}$ (for some $(s,x)\in (0,1)\times N^\eta$, $\eta<\ep$) then the distance between $\xi_s^*$ and the space $Z$ generated by $\zeta_1^*,\dots,\zeta_{m-1}^*$ is bounded below away from zero by some constant independent of $(s,x)$ and $\eta$. It makes it possible to decompose the $m$-form $\omega(r_s(x))$ as
 \begin{equation}\label{eq_xi}\omega(r_s(x))= \xi_s^*(x)\wedge\omega_{s}'(r_s(x)),\end{equation}
   with for every $s\le 1$,   $ \omega_{s}'$  $(m-1)$-form on $M^{s\eta}$  satisfying
\begin{equation}\label{eq_ome12r} 
\omega_{s,\pa \rho}'(r_s(x))=0,
\end{equation}
 where  $\rho:\mep\to \R$ is the restriction of the distance to the origin (i.e. $ \omega_{s}'\in \wedge^{m-1} Z$), and \begin{equation}\label{eq_omega2r}|\omega_{s}'(r_s(x))| \lesssim |\omega(r_s(x))|, \end{equation}
where the constant, which only depends on the bound given by (\ref{item_lambda}) of section \ref{sect_key_facts}, is  independent of both $\omega$ and $s$.
For almost every $x\in \mep$ and $s\in (0,1)$, we now can write if	$\eta:=|x|$ (we recall that $r_s^\eta:N^\eta\to N^{s\eta}$ is the mapping induced by $r_s$):
		\begin{eqnarray*}	\left|(r^*\omega)_{\pa s}(s,x)\right|
		 &\overset{(\ref{eq_xi})}\lesssim& \left| \frac{\pa r}{\pa s}(s,x) \right|\cdot  \left| r_s^{*}\omega_{s}'(x)\right|
		\overset{(\ref{eq_der_r_s})} \lesssim	\eta\cdot  \left| r_s^{*}\omega_{s}'(x)\right|\\
	&\overset{(\ref{eq_ome12r})}	\lesssim& 	\eta\cdot 	   \left| r_s^{\eta*}\omega_{s}'(x)\right|
				 	=	\eta\cdot \left|\omega_{s}'(r_s(x))\right|\cdot \jac r_s^{\eta}(x), \end{eqnarray*}
				 	and therefore, by (\ref{eq_omega2r}),
				 	\begin{eqnarray}\label{eq_estimate_r}
				\left|(r^*\omega)_{\pa s}(s,x)\right| 	&\lesssim&	  	\eta\cdot \left|\omega(r_s(x))\right|\cdot \jac r_s^{\eta}(x).
		 \end{eqnarray}
		Integrating with respect to $(s,x)\in [0,1]\times  N^\eta$, we get:
	$$ \int_{x\in N^\eta}	\int_0 ^1 \left|(r^*\omega)_{\pa s}(s,x)\right|ds 
\lesssim 	\eta \int_{x\in N^\eta}	\int_0 ^1 \left|\omega(r_s(x))\right|\cdot \jac r_s^{\eta}(x)ds =	\eta\int_0 ^1||\omega ||_{L^1(N^{s\eta})}ds. $$
Integrating with respect to $\eta$, by   (\ref{eq_coarea_sph}), we conclude that
$$ ||(r^*\omega)_{\pa s}||_{L^1([0,1]\times\mep)} \lesssim \int_0^\ep\eta\int_0 ^1||\omega ||_{L^1(N^{s\eta})}ds\, d\eta \overset{(\ref{eq_coarea_sph})}{\le} \int_0^\ep||\omega ||_{L^1(M^{\eta})}d\eta, $$
	 which is finite when $\omega$ is at least $L^1$.
 \end{proof}
\end{subsection}

\begin{subsection}{$L^p$ bounds for $\rfr$ and $\Rfr$.}\label{sect_lp_bounds} It is worthy of notice that the just above computations have also shown that $\rfr$ is continuous in the $L^p$ norm for $p> \nu+1$. The next proposition provides a related estimate (see also Remark \ref{rem_borne_Rr} below).   We will then give  analogous bounds for $\Rfr$ in the case where $p$ is close to $1$ (Proposition \ref{pro_borne_R}). We  will derive from these bounds that $\Rfr$ and $\rfr$  induce continuous homotopy operators on $\wca^j_p$ forms for $p$ sufficiently large or close to $1$ (Propositions \ref{pro_rR_def_sur_wj} and \ref{pro_r_preserve_vanishing}).
	 \begin{pro}\label{pro_borne_r}
 There is a constant $C$ such that for any $p\in [1,\infty)$ large enough  we have for each $\omega\in L^j_p(\mep)$, $j\ge 0$,  and  every $\eta\le \ep$:
\begin{equation}\label{eq_r_borne}
||\rfr \omega||_{L^p(N^\eta)}\le C \,\eta^\frac{p-1}{p} \,||\omega||_{L^p(M^\eta)} .\end{equation}
Moreover, when $j=m$, this inequality is true for all $p\in [1,\infty)$.
\end{pro}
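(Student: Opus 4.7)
The case $j=0$ is immediate since $\rfr\omega\equiv 0$ by definition. So fix $j\ge 1$ and $x\in N^\eta$, i.e.\ $\eta=|x|$. Expanding the pullback,
\[
|(r^*\omega)_{\pa_s}(s,x)|\;\le\; \left|\frac{\pa r}{\pa s}(s,x)\right|\cdot |Dr_s(x)|^{j-1}\cdot |\omega(r_s(x))|\;\lesssim\;\eta\,s^{j-1}\,|\omega(r_s(x))|,
\]
where I use (\ref{eq_der_r_s}) for the radial factor and the $Cs$-Lipschitz property from Theorem \ref{thm_local_conic_structure}\,(\ref{item_H_bi}) to bound $|Dr_s|\le Cs$. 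This is the key pointwise estimate.

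Next, apply Minkowski's integral inequality in the $s$-variable to control
\[
||\rfr\omega||_{L^p(N^\eta)}\;\lesssim\;\eta\int_0^1 s^{j-1}\,||\omega\circ r_s||_{L^p(N^\eta)}\,ds.
\]
Change variables via $r_s^\eta:N^\eta\to N^{s\eta}$, using the Jacobian lower bound in (\ref{eq_jac_rReta}), to get $||\omega\circ r_s||_{L^p(N^\eta)}\lesssim s^{-\nu/p}||\omega||_{L^p(N^{s\eta})}$. Now I invoke H\"older's inequality in $s$: the factor $s^{j-1-\nu/p}$ lies in $L^{p'}([0,1])$ precisely when $(j-1-\nu/p)p'>-1$; the worst case is $j=1$, which forces $p>\nu+1$ and thereby fixes the lower bound on $p$. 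The remaining factor $\bigl(\int_0^1 ||\omega||_{L^p(N^{s\eta})}^p\,ds\bigr)^{1/p}$ is evaluated by the substitution $\zeta=s\eta$ together with (\ref{eq_coarea_sph}), yielding $\eta^{-1/p}||\omega||_{L^p(M^\eta)}$. Multiplying by the prefactor $\eta$ produces exactly $\eta^{(p-1)/p}||\omega||_{L^p(M^\eta)}$.

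To cover $j=m$ for \emph{every} $p\in[1,\infty)$, the crude bound $|Dr_s|^{m-1}\lesssim s^{m-1}$ is not enough to absorb $s^{-\nu/p}$ once $p$ gets close to $1$. I replace it by the refined estimate (\ref{eq_estimate_r}) already established in the proof of Lemma \ref{lem_r_well_dfnd}, namely $|(r^*\omega)_{\pa_s}(s,x)|\lesssim \eta\,|\omega(r_s(x))|\,\jac r_s^\eta(x)$. Splitting $(\jac r_s^\eta)^p=(\jac r_s^\eta)^{p-1}\cdot\jac r_s^\eta$, I bound the first factor using the Lipschitz upper bound $\jac r_s^\eta\lesssim s^{m-1}$, and change variables again to obtain
\[
||(\omega\circ r_s)\,\jac r_s^\eta||_{L^p(N^\eta)}\;\lesssim\; s^{(m-1)(p-1)/p}\,||\omega||_{L^p(N^{s\eta})}.
\]
Since $(m-1)(p-1)p'/p=m-1\ge 0$, the H\"older step is now free of integrability constraints, and the rest of the computation is identical, proving the proposition for all $p$.

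The \emph{main obstacle} is the tension between the singular weight $s^{-\nu/p}$ produced by the \L ojasiewicz-type Jacobian bound and the need for $s$-integrability at $0$. For $j<m$ there is no obvious way to replace the coarse bound $|Dr_s|^{j-1}$ by one that, like in the top-degree case, exposes the full Jacobian of $r_s^\eta$ and thereby cancels the weight $(\jac r_s^\eta)^{-1}$ after change of variables. This is precisely why the restriction $p>\nu+1$ intrudes only in intermediate degrees.
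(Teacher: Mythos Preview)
Your proof is correct and follows the paper's argument almost verbatim: Minkowski in $s$, the Jacobian lower bound (\ref{eq_jac_rReta}) to change variables on $N^\eta$, H\"older in $s$, and (\ref{eq_coarea_sph}) to recover $||\omega||_{L^p(M^\eta)}$; for $j=m$ both you and the paper invoke (\ref{eq_estimate_r}) and then use that $\jac r_s^\eta$ is bounded. The only cosmetic difference is that you keep the factor $s^{j-1}$ coming from the $Cs$-Lipschitz bound on $Dr_s$, whereas the paper simply uses $|Dr_s|\lesssim 1$; since the worst case $j=1$ still forces $p>\nu+1$, this refinement changes nothing in the statement.
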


\begin{proof} Given $\omega\in L^j_p (M^\ep)$, $j\ge 1$, we have for $\eta\le \ep$:
\begin{eqnarray*}\label{eq_mk1}
  \nonumber ||\rfr \omega||_{L^p(N^\eta)} &= & \left(\int_{x\in N^\eta}\left| \int_0 ^1 \left(r^*\omega\right)_{\pa_s}(s,x) ds\right|^p \right)^{1/p} \\
   \nonumber &\le&\int_0 ^1 \left(\int_{x\in N^\eta}\left| (r^*\omega)_{\pa_s} (s,x)\right|^p \right)^{1/p}ds \quad \mbox{(by Minkowski's  inequality)}\\
    \nonumber &\overset{(\ref{eq_der_r_s})}\lesssim&\eta \int_0 ^1\left( \int_{N^\eta}\left| \omega\circ r_s \right|^p \right)^{1/p}ds \\
    \nonumber &\overset{(\ref{eq_jac_rReta})}\lesssim&\eta \int_0 ^1  s^{-\frac{\nu}{p}}\left( \int_{N^\eta}\left| \omega\circ r_s \right|^p \cdot \jac r_s ^\eta \right)^{1/p}ds \\
    \nonumber &\lesssim& \eta \left( \int_0 ^1\int_{N^\eta}\left| \omega\circ r_s \right|^p \cdot \jac r_s ^\eta \,ds \right)^{1/p} \quad \mbox{(by H\"older, for $\frac{\nu}{p-1}\le \frac{1}{2}$)}\\
        \nonumber &\overset{(\ref{eq_coarea_sph})}\le& \eta^\frac{p-1}{p} ||\omega||_{L^p(M^{\eta})}.
  \end{eqnarray*}
 
To show that this estimate holds  for {\it all} $p$ in the case $j=m$, take $\omega\in L^m_p (M^\ep)$  and write 
     \begin{eqnarray*}\label{eq_mk2}
  \nonumber ||\rfr \omega||_{L^p(N^\eta)}   &= & \left(\int_{x\in N^\eta}\left| \int_0 ^1 (r^*\omega)_{\pa_s}(s,x) ds\right|^p \right)^{1/p} \\
   &\le& \int_0 ^1\left( \int_{x\in N^\eta}| (r^*\omega)_{\pa_s}(s,x) |^p \right)^{1/p}ds \quad \mbox{(by Minkowski's inequality)}\\
      &\overset{(\ref{eq_estimate_r})}\lesssim&\eta \int_0 ^1\left( \int_{N^\eta}| \omega\circ r_s|^p\cdot ( \jac r_s^\eta )^p \right)^{1/p}ds \\
     \nonumber &\le& \eta \left( \int_0 ^1\int_{N^\eta}\left| \omega\circ r_s \right|^p \cdot\left( \jac r_s^\eta  \right)^p  ds \right)^{1/p} \quad \mbox{(by H\"older)}\\
        \nonumber &\overset{(\ref{eq_coarea_sph})}\lesssim& \eta^\frac{p-1}{p} ||\omega||_{L^p(M^{\eta})} \quad \mbox{(since $ (\jac r_s^\eta)^p\lesssim \jac r_s^\eta$)}.
  \end{eqnarray*}
  \end{proof}
  \begin{rem}\label{reb_est_neta_infty}
  	In the case $p=\infty$, we have for any $\omega\in L^j_\infty(\mep)$, $j\le m$, and $\eta\le \ep$:
  	\begin{equation}\label{eq_neta_infty}
  		||\rfr\omega||_{L^\infty(N^\eta)} \le \int_0 ^1 || \left(r^*\omega\right)_{\pa_s}(s,x)||_{L^\infty(N^\eta)}  ds  \overset{(\ref{eq_der_r_s})}\lesssim  \eta || \omega||_{L^\infty(M^\eta)}.
  	\end{equation}
  	\end{rem}

 Inequality (\ref{eq_r_borne}) does not hold for $\Rfr$ (even if $p$ is close to $1$). We however have the following weaker inequalities. In this proposition (and in the sequel), we write $||\omega||_{L^p(M^{\sqrt{\eta}})}$ although $\omega$ is just defined on $\mep$ (and maybe $\sqrt{\eta}\ge \ep$) as a shortcut for $||\omega||_{L^p(M^{\sqrt{\eta}}\cap \mep)}$.
  
  \begin{pro}\label{pro_borne_R}
 For any  $p\in [1,\infty)$  close enough to $1$ and $ j< m$, we have for each $\omega\in L^j_p(\mep)$ and $\eta\le  \ep$:
\begin{equation}\label{eq_R_borne} ||\Rfr \omega||_{L^p(N^\eta)}\le C \,\eta^\frac{p-1}{p} \, \left(||\omega||_{L^p(M^{\sqrt{\eta}})} +\eta^\sigma||\omega||_{L^p(M^{\ep})}\right) ,
\end{equation}
where $C$ and $\sigma$ are positive real numbers (independent of $\eta$ and $\omega$). Moreover, for all $p\in [1,\infty)$ we have: if $j=1$ and $m>1$
\begin{equation}\label{eq_R_borne1} ||\Rfr \omega||_{L^p(N^\eta)}\le C \,\eta^\frac{\min(m,p)-1}{p} \,||\omega||_{L^p(\mep)}  ,
\end{equation}
and, if $j=m$  (with $m\ge 1$),
\begin{equation}\label{eq_R_borne_j=m} ||\Rfr \omega||_{L^p(N^\eta)}\le C\eta^\frac{-\nu}{p'}\cdot ||\omega||_{L^p(\mep)},
\end{equation}
where $\nu$ is given by (\ref{eq_jacRt}).
\end{pro}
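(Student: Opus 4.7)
The plan is to mimic Proposition~\ref{pro_borne_r}: apply Minkowski's integral inequality, bound $|(R^*\omega)_{\pa_t}(t,x)|$ pointwise, change variables via $y=R_t(x)$, and finish with H\"older in $t$. Three ingredients distinguish the $R$-case from the $r$-case: the interval $[1,\ep/|x|]$ blows up as $|x|\to 0$; the Jacobian bound (\ref{eq_jacRt}) is an \emph{upper} bound $\jac R_t^\eta\le \kappa t^\nu$; and the change of variables picks up the Jacobian of $r_{1/t}^{t\eta}$, which is controlled from above by $\mathrm{Lip}(r_{1/t}^{t\eta})\le C/t$ (Theorem~\ref{thm_local_conic_structure}(i)), giving $\jac r_{1/t}^{t\eta}\le (C/t)^{m-1}$ and, after the change of variables, $\|\omega\circ R_t\|_{L^p(N^\eta)}\le (C/t)^{(m-1)/p}\|\omega\|_{L^p(N^{t\eta})}$.

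For (\ref{eq_R_borne_j=m}) ($j=m$) I would repeat the decomposition from Lemma~\ref{lem_r_well_dfnd}, writing $\omega(R_t(x))=\xi_t^*\wedge\omega_t'$ with $\omega_{t,\pa\rho}'=0$ via the angle bound (\ref{item_lambda}) of section~\ref{sect_key_facts}, which gives $|(R^*\omega)_{\pa_t}(t,x)|\lesssim\eta\cdot|\omega(R_t(x))|\cdot\jac R_t^\eta$. Splitting $(\jac R_t^\eta)^p=(\jac R_t^\eta)^{p-1}\cdot\jac R_t^\eta$, bounding the first factor by $(\kappa t^\nu)^{p-1}$ and using the change of variables on the second, H\"older in $t$ produces the prescribed factor $\eta^{-\nu/p'}$. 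For (\ref{eq_R_borne1}) ($j=1$) the contraction $(R^*\omega)_{\pa_t}$ is a function, so simply $|(R^*\omega)_{\pa_t}(t,x)|\lesssim\eta|\omega(R_t(x))|$; after the change of variables and substitution $\zeta=t\eta$, H\"older splits into the regime $p>m$ (where $\zeta^{-(m-1)p'/p}$ is integrable near $0$ and yields $\eta^{(m-1)/p}$) and the regime $p\le m$ (where the truncated integration picks up an extra $\eta^{(p-m)/p}$, giving $\eta^{(p-1)/p}$), in both cases matching $\eta^{(\min(m,p)-1)/p}$.

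The substantive case (\ref{eq_R_borne}) requires controlling the product of the top $j-1$ singular values of $DR_t$, since this is what controls the operator norm of the pullback of a $j$-form. Combining $\sigma_1\cdots\sigma_m=\jac R_t^\eta\le\kappa t^\nu$ with the lower bound $\sigma_m(DR_t^\eta)\ge t/C$ (also coming from $\mathrm{Lip}(r_{1/t}^{t\eta})\le C/t$) produces
\[\sigma_1\cdots\sigma_{j-1} \;\le\; \frac{\jac R_t^\eta}{\sigma_m^{m-j+1}} \;\lesssim\; t^{\nu-m+j-1},\]
hence the pointwise estimate $|(R^*\omega)_{\pa_t}(t,x)|\lesssim\eta\cdot t^{\nu-m+j-1}|\omega(R_t(x))|$. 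Feeding this into the general scheme of the first paragraph yields $\|\Rfr\omega\|_{L^p(N^\eta)}\lesssim \eta\int_1^{\ep/\eta} t^A\|\omega\|_{L^p(N^{t\eta})}dt$ with $A:=\nu-m+j-1-(m-1)/p$.

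To extract the prescribed two-term bound I would split this $t$-integral at $t=1/\sqrt\eta$. After the substitution $\zeta=t\eta$, the near piece $t\in[1,1/\sqrt\eta]$ uses $\omega$ only on $M^{\sqrt\eta}$, and H\"older under the hypothesis $Ap'<-1$ (which is \emph{exactly} where ``$p$ close to $1$'' is used: for every fixed $j<m$ this forces $A$ strictly below $-1/p'$ once $p-1$ is small enough) produces $\eta^{(p-1)/p}\|\omega\|_{L^p(M^{\sqrt\eta})}$; the far piece $t\in[1/\sqrt\eta,\ep/\eta]$, whose $\zeta$-integration starts at $\sqrt\eta$ instead of $\eta$, yields the same factor times an additional positive power $\eta^\sigma$ with $\sigma$ equal to half the gap $-A-1/p'$. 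Summing the two pieces gives the statement. The main obstacle is precisely the singular-value inequality leading to the pointwise bound in case (\ref{eq_R_borne}); once that is in hand, the H\"older bookkeeping and interval-splitting are routine.
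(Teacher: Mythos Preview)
Your handling of the cases $j=1$ and $j=m$ matches the paper, and the overall architecture (Minkowski, pointwise estimate, change of variables, H\"older, split at $t=1/\sqrt\eta$) is correct. The gap is in the pointwise estimate for the main case (\ref{eq_R_borne}).

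You bound $\sigma_1\cdots\sigma_{j-1}\le\jac R_t/\sigma_{\min}^{m-j+1}\lesssim t^{\nu-m+j-1}$, having already spent the upper bound $\jac R_t\le\kappa t^\nu$ entirely at this stage, and then perform a \emph{separate} change of variables bringing in a further factor $t^{-(m-1)/p}$. The resulting exponent $A=\nu-m+j-1-(m-1)/p$ satisfies $Ap'=(\nu-2m+j)\,p'+(m-1)$; as $p'\to\infty$ this tends to $+\infty$ whenever $\nu\ge 2m-j$, so the required condition $Ap'<-1$ fails and no choice of $p$ close to $1$ rescues the H\"older step (the split at $1/\sqrt\eta$ does not help: the near piece already diverges). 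Since $\nu$ comes from \L ojasiewicz's inequality and carries no a priori bound in terms of $m$, this is a genuine obstruction.

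The paper avoids this by not spending the Jacobian at the pointwise step. It uses the decomposition $\omega(R_t(x))=\lambda_t^*\wedge\omega_t'$ with $\omega'_{t,\pa\rho}=0$ (the same decomposition you invoke for $j=m$) to reduce the contraction to the pullback of a $(j-1)$-form \emph{tangential to the level spheres} via $R_t^\eta$; the singular-value inequality then reads
\[
|R_t^{\eta*}\omega_t'(x)|\ \lesssim\ \frac{\jac R_t^\eta(x)}{t^{\,m-j}}\,|\omega(R_t^\eta(x))|.
\]
Only now is $\jac R_t^\eta$ split as $(\jac R_t^\eta)^{1/p'}(\jac R_t^\eta)^{1/p}$: the $1/p'$-part is bounded by $(\kappa t^\nu)^{1/p'}$, while the $1/p$-part is \emph{retained} and becomes exactly the change-of-variables factor in the $L^p(N^\eta)$ norm. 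This yields the exponent $l=\nu/p'-(m-j)$, and $lp'=\nu-(m-j)p'<-1$ holds for every $\nu$ once $p'>(\nu+1)/(m-j)$. The essential point is that $\nu$ enters only through the power $1/p'$, so it is damped by taking $p$ close to $1$. Your repair is therefore simple in principle: carry the $\lambda_t^*\wedge\omega_t'$ decomposition through to general $j$ and postpone the bound on $\jac R_t^\eta$ until after isolating the $(\jac R_t^\eta)^{1/p}$ needed for the change of variables.
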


\begin{proof}
We first focus on (\ref{eq_R_borne}).
 Let us first notice that, because $r_s$ is $Cs$-Lipschitz for all $s$, with $C$ independent of $s$, no eigenvalue of  $^{\bf t} DR^\eta_t(x)DR_t^\eta(x) $ can be smaller than $(Ct)^2$, for almost all $x\in \mep$ and $t< \frac{\ep}{|x|}$ (since $R_t^\eta$ is the inverse of $r^{t\eta}_{\frac{1}{t}}$), which entails that if $\alpha$ is any $(j-1)$-form on $N^\eta$   and $(t,x)\in(1,\frac{\ep}{\eta}) \times N^\eta $, $\eta<\ep$, then:
   \begin{equation}\label{eq_jac_m_2}
    \left| R_t ^{\eta*}\alpha\right(x)| \lesssim  \frac{\jac R_t^\eta(x)}{t^{m-j}}  |\alpha(R_t(x))|.
   \end{equation} 

Let now  for $(t,x)\in \R\times \mep\setminus \{0\}$ satisfying $t<\frac{\ep}{|x|}$, $\lambda_t(x):=\frac{\frac{\pa R}{\pa t}(t,x)}{\left|\frac{\pa R}{\pa t}(t,x)\right|},$ and let $\lambda^*_t$ be the differential $1$-form defined by $\lambda_t$.  It follows from
(\ref{item_lambda}) of section \ref{sect_key_facts} that if $\zeta_1,\dots,\zeta_{m-1}$ is an orthonormal basis of $T_{R_t(x)} N^{t\eta}$ (for some $x\in  N^\eta$ and $t\le \frac{\ep}{\eta}$) then the distance between $\lambda_t^*$ and the space $Z$ generated by $\zeta_1^*,\dots,\zeta_{m-1}^*$ is bounded below away from zero by some constant independent of (almost every) $(t,x)$ and $\eta$. If $\omega$ is a $j$-form on $\mep$, $j>0$, it makes it possible to decompose  $\omega(R_t(x))$ as
   \begin{equation}\label{eq_lambda}\omega(R_t(x))= \lambda_t^*(x)\wedge\omega_t'(R_t(x)),\end{equation}
   with for every $t$,   $ \omega_t'(R_t(x))$  $(j-1)$-form  satisfying
   \begin{equation}\label{eq_ome12}  \omega_{t,\pa \rho}'(R_t(x))=0,\end{equation}
 where  $\rho:\mep\to \R$ is the restriction of the distance to the origin   (i.e. $\omega'_t\in \wedge^{j-1} Z$), and \begin{equation}\label{eq_omega2}|\omega_t'(R_t(x))| \lesssim |\omega(R_t(x))|. \end{equation}

      For $\omega\in L^j_p(\mep)$ and almost any $x\in \mep$ and $t< \frac{\ep}{|x|}$, we now can write, setting for simplicity $\eta:=|x|$,
   \begin{eqnarray}\label{eq_Romega2}
   \left| (R^*\omega)_{\pa_t}(t,x)  \right|&	\overset{(\ref{eq_lambda})}\le & \left| R_t^{\eta*}  \omega_t'(x)   \right| \cdot \left|\frac{\pa R}{\pa t} (t,x) \right|\overset{(\ref{eq_par})}\lesssim \eta\left| R_t^{\eta*} \omega_t' (x)  \right|,\nonumber\\
&	\overset{(\ref{eq_jac_m_2})}\lesssim &\eta	\left|\omega_t' (  R_t^\eta(x))\right|\cdot  \frac{\jac R_t^\eta(x)}{t^{m-j}}
\overset{(\ref{eq_jacRt})}{\lesssim} \eta \,t^{l} \cdot \left|\omega_t' (R_t^\eta(x))\right|\cdot  (\jac R_t^\eta (x))^{1/p},
\end{eqnarray} where $l:= \frac{\nu}{p'}-m+j$ (with $l:=-m+j$ if  $p=1$),
which, by (\ref{eq_omega2}), gives,
\begin{equation}\label{eq_omega2R}  
	 \left| (R^*\omega)_{\pa_t}(t,x)  \right|\lesssim  \eta \,t^{l} \cdot \left|\omega (R_t^\eta(x)  )\right|\cdot  (\jac R_t^\eta(x))^{1/p}.
\end{equation}
   For  $\omega\in L^j_p(\mep)$, we now can write, using Minkowski's inequality:
    \begin{eqnarray*}||\Rfr\omega||_{L^p(N^\eta)} =  \left(\int_{N^\eta}\left| \int_1^{\frac{\ep}{\eta} } \left(R^*\omega\right)_{\pa_t}(t,x) dt\right|^p \right)^{1/p} 
   	\le \int_1 ^{\frac{\ep}{\eta} }\left(\int_{N^\eta}\left| (R^*\omega)_{\pa_t} (t,x)\right|^p \right)^{1/p}dt
   \end{eqnarray*} 
       \begin{eqnarray}\label{eq_K1_neta}
       	\overset{(\ref{eq_omega2R})}\lesssim  \eta\int_1 ^{\frac{\ep}{\eta} }t^l\left(  \int_{N^\eta}\left|\omega \circ  R_t^\eta\right|^p\cdot  \jac R_t ^\eta\right)^{1/p}dt
       	       	= \eta\int_1 ^{\frac{\ep}{\eta} }t^l||\omega||_{L^p(N^{t\eta})}\,dt.
       	   \end{eqnarray} 
As, for $j<m$ and $p'$ large enough, we have $lp'=\nu-(m-j)p'<-1$, we see that $||t^l||_{ L^{p'}([1,\infty))}$ is bounded independently of $p$ sufficiently close to $1$ for such $j$. As a matter of fact,  we can write for $\eta <\ep^2$ (assuming $j<m$, $p'$ large enough), using H\"older's inequality (if $p=1$, just write $t^l\lesssim 1$):
\begin{equation}\label{eq_K1_neta1}
\int_1 ^{\frac{1}{\sqrt \eta} }t^l||\omega||_{L^p(N^{t\eta})}\,dt \lesssim  \left(\int_1 ^{\frac{1}{\sqrt\eta} }||\omega||_{L^p(N^{t\eta})}^p\,dt\right)^{\frac{1}{p}}\overset{(\ref{eq_coarea_sph})}\lesssim \eta^{-\frac{1}{p}}   ||\omega||_{L^p(M^{\sqrt\eta})}.
  \end{equation}
Moreover,  by H\"older's inequality (if $p=1$, just use that $t^l \le \eta^{-l/2}$), we have for $\eta \le \ep^2$:
\begin{equation*}
 \int_{\frac{1}{\sqrt\eta} } ^{\frac{\ep}{\eta} }t^l||\omega||_{L^p(N^{t\eta})}\,dt \le  \left(\int_{\frac{1}{\sqrt\eta} } ^{\frac{\ep}{\eta} } t^{lp'} \, dt\right)^\frac{1}{p'} \left(\int_{\frac{1}{\sqrt\eta} } ^{\frac{\ep}{\eta} }||\omega||^p_{L^p(N^{t\eta})}\,dt\right)^\frac{1}{p}
\overset{(\ref{eq_coarea_sph})} \lesssim  \eta^{\sigma-\frac{1}{p}}\cdot ||\omega||_{L^p(M^{\ep})},
\end{equation*}
where $\sigma=-\frac{1}{2}(\frac{1}{p'}+l)$, which is positive for $p$ sufficiently close to $1$. Together with   (\ref{eq_K1_neta}) and (\ref{eq_K1_neta1}), this yields (\ref{eq_R_borne}) for $\eta\le\ep^2$. If $\eta\in [\ep^2,\ep]$ then (\ref{eq_K1_neta1}) with $\frac{\ep}{\eta}$ instead of $\frac{1}{\sqrt\eta}$ (together with  (\ref{eq_K1_neta})) already gives the desired estimate.

Let us now check (\ref{eq_R_borne1}). As observed at the very beginning of the proof, no eigenvalue of the mapping $^{\bf t} DR^\eta_t(x)DR_t^\eta(x) $ can be smaller than $(Ct)^2$, for almost all $x\in \mep$ and $t< \frac{\ep}{|x|}$, which entails that
\begin{equation}\label{eq_jac_R_below}
t^{m-1} \lesssim	\jac R_t^\eta(x). 
\end{equation}
 When $j=1<m$, $\omega_t'$ is a $0$-form (see (\ref{eq_lambda}) for $\omega'_t$), which makes it possible to rewrite the above computation (\ref{eq_Romega2}) as
   \begin{eqnarray}\label{eq_Romega21}
	\left| (R^*\omega)_{\pa_t}(t,x)  \right|&	\overset{(\ref{eq_lambda})}\le & \left| R_t^{\eta*}  \omega_t'(x)   \right| \cdot \left|\frac{\pa R}{\pa t} (t,x) \right|\overset{(\ref{eq_par})}\lesssim \eta\left| R_t^{\eta*} \omega_t' (x)  \right|=\eta	\left|\omega_t' (  R_t^\eta(x))\right|,\nonumber \\
	&	\overset{(\ref{eq_jac_R_below})}\lesssim &\eta	\left|\omega_t' ( R_t^\eta(x))\right|  \frac{(\jac R_t^\eta (x))^{1/p}}{t^\frac{m-1}{p}}
		\overset{(\ref{eq_omega2})}\lesssim \eta	\left|\omega (  R_t^\eta(x))\right|  \frac{(\jac R_t^\eta (x))^{1/p}}{t^\frac{m-1}{p}}.
\end{eqnarray} 
Repeating the computation (\ref{eq_K1_neta}) with now $l=\frac{1-m}{p}$ (replacing (\ref{eq_omega2R}) with (\ref{eq_Romega21})),  we deduce
\begin{eqnarray*}||\Rfr\omega||_{L^p(N^\eta)} \lesssim \eta\int_1 ^{\frac{\ep}{\eta} }t^\frac{1-m}{p}||\omega||_{L^p(N^{t\eta})}\,dt\le  \eta\cdot ||t^{\frac{1-m}{p}}||_{L^{p'}([1,\frac{\ep}{\eta}])}\cdot\left(\int_1 ^{\frac{\ep}{\eta} }||\omega||^p_{L^p(N^{t\eta})}\,dt \right)^\frac{1}{p},
\end{eqnarray*} 
using H\"older's inequality. A straightforward computation of integration shows that $$ ||t^{\frac{1-m}{p}}||_{L^{p'}([1,\frac{\ep}{\eta}])}\lesssim \eta^{\frac{\min(m,p)}{p}-1},$$ which together with (\ref{eq_coarea_sph}) enables us to derive the desired fact from the  above estimate.

 It remains to show (\ref{eq_R_borne_j=m}). Let $\omega$ be an $L^p$ $m$-form on $\mep$. Thanks to Minkowski's integral inequality, we have
 \begin{eqnarray*}||\Rfr\omega||_{L^p(N^\eta)} =  \left(\int_{N^\eta}\left| \int_1^{\frac{\ep}{\eta} } \left(R^*\omega\right)_{\pa_t}(t,x) dt\right|^p \right)^{1/p} 
 	\le \int_1 ^{\frac{\ep}{\eta} }\left(\int_{N^\eta}\left| (R^*\omega)_{\pa_t}(t,x) \right|^p \right)^{1/p}dt
 \end{eqnarray*}
 \begin{eqnarray*}\overset{(\ref{eq_lambda})}\lesssim  \int_1 ^{\frac{\ep}{\eta} }\left(\int_{N^\eta} \left|\frac{\pa R}{\pa t}\right|\cdot \left| R_t^{\eta*}\omega'_t  \right|^p \right)^{1/p}\,dt\overset{(\ref{eq_par})}{\lesssim}  \eta\int_1 ^{\frac{\ep}{\eta} }\left( \int_{N^\eta} \left|\omega' _t\circ  R_t^\eta\right|^p\cdot  (\jac R_t ^\eta)^p \right)^{1/p}\,dt.
 \end{eqnarray*} 
 As a matter of fact by (\ref{eq_omega2}), we get
 \begin{eqnarray*}\label{eq_K1_neta_bis}\nonumber||\Rfr\omega||_{L^p(N^\eta)}
 	&\lesssim &\eta\int_1 ^{\frac{\ep}{\eta} }\left( \int_{N^\eta} \left|\omega \circ  R_t^\eta\right|^p\cdot  (\jac R_t ^\eta)^p \right)^{1/p}\,dt \\
 	&\overset{(\ref{eq_jacRt})}\lesssim & \eta\int_1 ^{\frac{\ep}{\eta} }t^\frac{\nu(p-1)}{p}\left(  \int_{N^\eta}\left|\omega \circ  R_t^\eta\right|^p\cdot  \jac R_t ^\eta\right)^{1/p}dt\\
 	\nonumber &=& \eta\int_1 ^{\frac{\ep}{\eta} }t^\frac{\nu(p-1)}{p}||\omega||_{L^p(N^{t\eta})}\,dt \\
 	\nonumber &\le& \eta\left(\int_1 ^{\frac{\ep}{\eta} }t^{\nu}\,dt\right)^{1/p'}\cdot \left(\int_1 ^{\frac{\ep}{\eta} }||\omega||^p_{L^p(N^{t\eta})}\,dt\right)^{1/p}\qquad\mbox{(by H\"older)}\\
 	\nonumber &\lesssim& \eta^{1-\frac{\nu+1}{p'}}\cdot \left(\int_1 ^{\frac{\ep}{\eta} }||\omega||^p_{L^p(N^{t\eta})}\,dt\right)^{1/p}\\
 	&\overset{ (\ref{eq_coarea_sph})}\lesssim& \eta^{1-\frac{\nu+1}{p'}-\frac{1}{p}}\cdot  ||\omega||_{L^p(\mep)},
 \end{eqnarray*}
 which is exactly (\ref{eq_R_borne_j=m}) (in the case $p=1$, we assumed $\frac{1}{p'}=0$).
  \end{proof}

   \begin{rem}\label{rem_borne_Rr}
Using (\ref{eq_coarea_sph}), we immediately deduce from the preceding propositions that there is a constant $C$ such that:
 \begin{enumerate}[(i)]
\item\label{item_r_borne} For any $p\in [1,\infty]$ large enough (use (\ref{eq_neta_infty}) for $p=\infty$),  we have for each $\omega\in L^j_p(\mep)$, $j\ge 0$,  and every $\eta\le \ep$:
\begin{equation}\label{eq_r_borne_m}
||\rfr \omega||_{L^p(M^\eta)}\le C \,\eta \,||\omega||_{L^p(M^\eta)} .\end{equation}
Moreover, when $j=m$, this inequality holds for all $p$. 
\item\label{item_R_borne} For any  $p\in[1,\infty)$  close enough to $1$, we have for each  $\omega\in L^j_p(\mep)$, with $0\le j< m$,  and every $\eta\le \ep$:
\begin{equation}\label{eq_R_borne_m} ||\Rfr \omega||_{L^p(M^\eta)}\le C \,\eta \, \left(||\omega||_{L^p(M^{\sqrt{\eta}})} +\eta^\sigma||\omega||_{L^p(M^{\ep})}\right) ,
\end{equation}
where $\sigma$ is a positive real number (independent of $\eta$ and $\omega$).  In the case $j=1<m$, (\ref{eq_R_borne1}) establishes that $\Rfr$ is continuous on $L^1_p(\mep)$ for all $p\in [1,\infty)$, and, in addition,  we have for every $\omega\in L^1_\infty(\mep)$
\begin{equation}\label{eq_R_borne_infty} ||\Rfr \omega||_{L^\infty(\mep)}\overset{(\ref{eq_par})}\lesssim \mbox{ess sup}_{x\in \mep} \left(|x| \,\int_1^\frac{\ep}{|x|} || \omega||_{L^\infty(M^\ep)}dt\right)\le || \omega||_{L^\infty(M^\ep)}.
\end{equation}
 In the case $j=m$, (\ref{eq_R_borne}) is not true, and indeed, it is not always true that $||\Rfr\omega||_{L^p(N^\eta)}$ tends to $0$ as $\eta \to 0$, even if $p$ is close to $1$. Nevertheless, by (\ref{eq_R_borne_j=m}),  $\Rfr$ is continuous on $L^m_p(\mep)$ for  $p\ge 1$ sufficiently close to $1$.
\end{enumerate}
\end{rem}

  \begin{pro}\label{pro_rR_hom_op}
 If $\omega\in \wca^j_1(\mep)$, $0\le j<m$, is identically equal to $0$ near $N^\ep$ then:
\begin{equation}\label{eq_R_homot_operator}d\Rfr\omega+\Rfr d\omega=\omega.\end{equation}
 If $p\in [1,\infty]$ is large enough and $j\ge 1$, then we have  for all  $\omega \in \wca_{p} ^j (\mep)$:
\begin{equation}\label{eq_r_homot_operator}d\rfr\omega+\rfr d\omega=\omega.\end{equation}
Furthermore, if $j=m$ then (\ref{eq_r_homot_operator}) is true for all $p\in [1,\infty]$.
\end{pro}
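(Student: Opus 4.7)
My plan is to deduce both identities from the chain-homotopy formula (\ref{eq_chain_homotopy}) applied to the Lipschitz retractions $r$ and $R$, first for smooth forms via the stratified Stokes' formula (Theorem \ref{thm_stokes_leaves}), then extended by density using the $L^p$ bounds already established for $\rfr$ and $\Rfr$.

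\textbf{Smooth case for $\rfr$.} Take $\omega\in\cc^{j,\infty}(\mep)$ with $j\ge 1$. Since $r_1=\mathrm{Id}$ and $r_0\equiv\orn$ is a constant map, we have $r_1^*\omega=\omega$ while $r_0^*\omega=0$ (because the pullback of a form of positive degree by a constant map vanishes). Applying (\ref{eq_chain_homotopy}) with $h=r$ on the product $[0,1]\times\mep$ therefore gives $d\rfr\omega+\rfr d\omega=\omega$. In the case $j=m$, $d\omega\equiv 0$ trivially, so the formula reduces to $d\rfr\omega=\omega$ and does not require the $L^1$-integrability restrictions of Lemma \ref{lem_r_well_dfnd} beyond what is already assumed.

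\textbf{Smooth case for $\Rfr$.} Take $\omega\in\cc^{j,\infty}(\mep)$ with $j<m$ that vanishes on $\mep\setminus M^{\eta_0}$ for some $\eta_0<\ep$. To handle the variable upper limit $\frac{\ep}{|x|}$, I reparametrize: set $h(\tau,x):=R_{1+\tau(\frac{\ep}{|x|}-1)}(x)$ on $[0,1]\times(\mep\setminus\{\orn\})$, so $h_0=\mathrm{Id}$ and $h_1(x)\in N^\ep$. A direct change of variables $t=1+\tau(\frac{\ep}{|x|}-1)$ identifies the operator $\hn$ associated to $h$ with $-\Rfr$, the minus sign reflecting the reversed limits in (\ref{eq_Rfr}). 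Because $\omega$ vanishes on a neighborhood of $N^\ep$, $h_1^*\omega\equiv 0$, so (\ref{eq_chain_homotopy}) gives $-d\Rfr\omega-\Rfr d\omega=0-\omega$, i.e.\ (\ref{eq_R_homot_operator}).

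\textbf{Extension by density.} Smooth forms are dense in $\wca^j_p(\mep)$ (via mollification inside the $\cc^\infty$ manifold $\mep$); moreover, if $\omega\in\wca^j_1(\mep)$ vanishes on $\mep\setminus M^{\eta_0}$, a mollification supported in a strictly smaller neighborhood of that set produces a sequence $\omega_k\in\cc^{j,\infty}(\mep)$ that still vanishes on $\mep\setminus M^{\eta_0}$ and converges to $\omega$ in $\wca^j_1$. Proposition \ref{pro_borne_r} together with Remark \ref{rem_borne_Rr}(\ref{item_r_borne}) provides continuity of $\rfr\colon L^j_p(\mep)\to L^{j-1}_p(\mep)$ in each of the required ranges of $p$; hence $\rfr\omega_k\to\rfr\omega$ and $\rfr d\omega_k\to\rfr d\omega$ in $L^p$, and the identity $d\rfr\omega_k+\rfr d\omega_k=\omega_k$ passes to the limit as distributions, yielding (\ref{eq_r_homot_operator}). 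The same argument, with Proposition \ref{pro_borne_R} and Remark \ref{rem_borne_Rr}(\ref{item_R_borne}) replacing Proposition \ref{pro_borne_r}, gives (\ref{eq_R_homot_operator}).

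\textbf{Main difficulty.} The principal subtlety is that $R$ is defined on the non-product region $P=\{(t,x):1\le t\le \frac{\ep}{|x|}\}$, so (\ref{eq_chain_homotopy}) — stated for homotopies on a cylinder $[0,1]\times M$ — does not apply to $R$ directly. The reparametrization absorbs the variable upper limit into the homotopy itself, but the resulting endpoint map $h_1$ is no longer defined globally on $\mba$; the hypothesis that $\omega$ vanishes near $N^\ep$ is precisely what makes the endpoint contribution harmless and allows the Stokes-type argument to close. Keeping this boundary condition during the mollification step is routine but is the reason why (\ref{eq_R_homot_operator}) is stated under that vanishing assumption rather than for arbitrary $\wca^j_1(\mep)$ forms.
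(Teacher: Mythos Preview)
Your overall strategy (density plus chain homotopy for smooth forms) matches the paper's, but the argument for $\rfr$ has a genuine gap. When $\orn\in\delta M$, the map $r_0\equiv\orn$ sends $\mep$ to a point \emph{outside} $\mep$, so you cannot invoke (\ref{eq_chain_homotopy}) on $[0,1]\times\mep$ as stated: that formula is for a homotopy with values in the manifold, and ``$r_0^*\omega=0$ since $r_0$ is constant'' is suggestive but not a proof---$\omega$ is only smooth on $\mep$, not at $\orn$. The paper handles this by truncating: with $\rfr_{s_0}\omega:=\int_{s_0}^1(r^*\omega)_{\pa_s}\,ds$, (\ref{eq_chain_homotopy}) legitimately gives $d\rfr_{s_0}\omega+\rfr_{s_0}d\omega=\omega-r_{s_0}^*\omega$, and the crux is then the estimate $\|r_{s_0}^*\omega\|_{L^p(\mep)}\to 0$. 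This is precisely where the hypotheses on $p$ enter: using that $r_s$ is $Cs$-Lipschitz together with (\ref{eq_jacr_s}) one gets $\|r_{s_0}^*\omega\|_{L^p}\lesssim s_0^{1-\nu/p}\|\omega\|_{L^p(M^{s_0\ep})}$ for $1\le j<m$ (hence $p\ge\nu$ is needed), while for $j=m$ the full Jacobian factor gives $\|r_{s_0}^*\omega\|_{L^p}\lesssim\|\omega\|_{L^p(M^{s_0\ep})}\to 0$ for every $p$. Your write-up attributes the restriction on $p$ only to the $L^p$-continuity of $\rfr$; the endpoint estimate is an independent and essential ingredient.

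A smaller omission: you do not treat the case $\orn\in M$ for $\Rfr$. There $R$ is undefined at $\orn$, your reparametrized $h$ lives on $\mep\setminus\{\orn\}$, and the identity you obtain is only as currents on $\mep\setminus\{\orn\}$; the paper closes the gap across the origin with a cutoff $\hat\psi_\eta$ and (\ref{eq_R_borne_m}). Your reparametrization is otherwise a legitimate variant of the paper's ``replace $\ep/|x|$ by a nearby $t_0$'' device, but note that your $h$ is not globally Lipschitz (its $x$-derivative carries a factor $\sim 1/|x|$), so (\ref{eq_chain_homotopy}) is really being applied locally on compact subsets away from $\orn$.
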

\begin{proof}We start with the case where $\orn\in \delta M$. As $\cc^{j,\infty}(\mep)\cap \wca^j_p(\mep)$ is dense in $\wca^j_p(\mep)$ for all $p$  and since we are interested in cases where $\Rfr$ and $\rfr$ are continuous in the $L^p$ norm (see Remark \ref{rem_borne_Rr}), it suffices to prove these identities for a smooth form.
 If $\omega\in \wca^j_1(\mep)$ is a smooth form that vanishes near $\nep$ then for almost each (nonzero) $x\in \mep$  and  each $t_0>0$ sufficiently close to $\frac{\ep}{|x|}$ (and not greater), we have $$\Rfr\omega(x) =\int_1 ^{t_0} (R^* \omega)_{\pa t}(t,x)\,dt,  $$
which means that (\ref{eq_R_homot_operator}) follows from (\ref{eq_chain_homotopy}).  Moreover, (\ref{eq_chain_homotopy}) also entails that if, for $s_0>0$, we set
 $$ \rfr_{s_0}\omega(x) :=\int_{s_0} ^1 (r^* \omega)_{\pa s}(s,x)\,ds,
$$
then $$d\rfr_{s_0}\omega+\rfr_{s_0}  d\omega =\omega-r_{s_0}^*\omega,$$
which means that (\ref{eq_r_homot_operator})  reduces to check that $r_{s_0}^*\omega$ tends to zero in the $L^p$ norm as $s_0$ goes to zero (for $p$ large).

In the case $1\le j<m$, we have for $s>0$ (since $r_s$ is $Cs$-Lipschitz for some $C$ independent of $s$)
$$||r_{s}^* \omega||_{L^p (\mep)} \lesssim \left(\int_{\mep} s^p |\omega\circ r_{s}|^p \right) ^{1/p}\overset{(\ref{eq_jacr_s})}{\lesssim} \left(\int_{\mep} s^{p-\nu}|\omega\circ r_{s}|^p  \jac r_{s}\right) ^{1/p} = s^{1-\frac{\nu}{p}} || \omega||_{L^p (M^{s\ep})},$$
 which tends to $0$ as $s$ tends to $0$ for each $p\ge \nu$, as needed. 

 In the case $j=m$, as $\jac r_s$ is bounded independently of $s$, we have for all $p\ge 1$:
$$||r_{s}^* \omega||_{L^p (\mep)}=\left(\int_{\mep} (\jac r_{s})^p\cdot  |\omega\circ r_{s}|^p \right) ^{1/p} \lesssim \left(\int_{\mep} \jac r_{s}\cdot  |\omega\circ r_{s}|^p \right) ^{1/p}=|| \omega||_{L^p (M^{s\ep})},$$
which tends to $0$ as $s$ tends to $0$, for all $p\in [1,\infty)$.

If $\orn\in M$ then, as $r$ is a Lipschitz retraction by deformation in the vicinity of $0$, the above argument still applies to show (\ref{eq_r_homot_operator}). Since $R$ is not defined at $0$, (\ref{eq_R_homot_operator}) is more delicate in this case, and  is indeed  not true as currents on $\mep$ if $j=m$. By the above, we nevertheless know that if  $\omega \in \wca^j_1(\mep)$ vanishes near $\nep$ then (\ref{eq_R_homot_operator}) holds on $M':=\mep\setminus \{0\}$. Hence, if for $\eta>0$ small we define a smooth function on $\mep$ by $\hat{\psi}_\eta(x):=\psi_\eta (|x|)$, where $\psi_\eta$ is as in (\ref{eq_psieta}),   (\ref{eq_R_homot_operator}) for  $ \omega_{|M'}$ entails that
$$  d\hat{\psi}_\eta\Rfr \omega+\hat{\psi}_\eta\Rfr d \omega= \hat{\psi}_\eta  \omega+ d\hat{\psi}_\eta\wedge \Rfr\omega,$$
as currents on $M'$. Since all these forms are $0$ nearby the origin, this equality continues  to hold on $\mep$. As $\hat\psi_\eta$ tends to $1$ (pointwise), it thus suffices to show that $d\hat{\psi}_\eta\wedge \Rfr\omega$ goes to zero (as current on $\mep$) as $\eta\to 0$, which is clear since (\ref{eq_psieta_ineq}) and (\ref{eq_R_borne_m}) establish that this form goes to zero in the $L^1$ norm for each $j<m$.
\end{proof}
\begin{rem}\label{rem_r_hom_op_j=0}
 Equality (\ref{eq_r_homot_operator}) is not true for $j=0$. It however follows from Lemmas 2.1 and 2.4 of \cite{trace} that when $u$ extends continuously at $\orn$ (which happens as soon as $M$ is connected at $\orn$, see the paragraph just above (\ref{eq_trace_fn_plarge})) we have for all $p\in [1,\infty]$ sufficiently large and all $u\in \wca^0_p(\mep)$ (recall that $\rfr u=0$ by definition):
 \begin{equation}\label{eq_tra_et_r}
 \rfr du= u-\tra\, u(0),
 \end{equation}
where $\tra$ is the trace operator defined in (\ref{eq_trace_fn_plarge}). In particular, (\ref{eq_r_homot_operator}) holds if and only if $\tra u(0)=0$. It is also worthy of notice that the above proof has established that (\ref{eq_R_homot_operator}) holds for $j=m$ if $\orn\in \delta M$. Moreover, if it is true that proofs  can   be significantly simplified  when $\orn\in M$, we on the other hand wish to emphasize  that the argument we used still applies if $M$ is nonsmooth and $\orn$ is a singular point of it (which is a more delicate case). This means that Proposition \ref{pro_rR_hom_op} can  be helpful to integrate closed $L^1$ currents on singular varieties, which  can be valuable to investigate Plateau problems.
\end{rem}

\begin{pro}\label{pro_rR_def_sur_wj}
 Let $\mba^\ep:=\bou(\orn,\ep)\cap \mba$. For all $p\in [1,\infty]$ sufficiently large, $\rfr$ and $\Rfr$ induce for each $1\le j<m$ continuous operators
$$\rfr:\wca^j_p(\mep)\to \wca_{p}^{j-1}(\mep) \et \Rfr:\wca^{j}_{p',\mba^\ep}(\mep)\to \wca_{p',\mba^\ep}^{j-1}(\mep).$$
Moreover, $\rfr:\wca^m_p(\mep)\to \wca_{p}^{m-1}(\mep)$ is well-defined and continuous for all $p\in [1,\infty]$.
  \end{pro}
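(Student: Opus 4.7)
The plan is to combine the $L^p$ estimates from Propositions \ref{pro_borne_r} and \ref{pro_borne_R} (as packaged in Remark \ref{rem_borne_Rr}) with the homotopy formulas of Proposition \ref{pro_rR_hom_op}. For each of the two operators I have to check three things: that the image has the required $L^p$ integrability, that its exterior differential does too, and, for $\Rfr$, that compact support in $\mba^\ep$ is preserved.

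For $\rfr$ the norm bound $\|\rfr\omega\|_{L^p(\mep)}\lesssim \|\omega\|_{L^p(\mep)}$ follows at once from (\ref{eq_r_borne_m}) with $\eta=\ep$, valid for $p$ large (or all $p$ when $j=m$). The homotopy identity (\ref{eq_r_homot_operator}) then gives $d\rfr\omega=\omega-\rfr d\omega$; since $d\omega$ is a $(j+1)$-form with $j+1\le m$, a second application of (\ref{eq_r_borne_m}) (possibly raising the threshold on $p$ to accommodate both degrees simultaneously) controls $\rfr d\omega$ in $L^p$. In the last claim $j=m$, $d\omega$ vanishes automatically and the homotopy formula reduces to $d\rfr\omega=\omega$, while the bound (\ref{eq_r_borne_m}) for $j=m$ is already valid for every $p\in[1,\infty]$; continuity is immediate from the linearity of all the estimates.

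For $\Rfr$ the membership $\omega\in \wca^j_{p',\mba^\ep}(\mep)$ says that $\supp_\mba\omega$ is a compact subset of the relatively open set $\mba^\ep=\bou(\orn,\ep)\cap\mba$, so there exists $\eta_0<\ep$ with $\omega(x)=0$ whenever $|x|>\eta_0$. Because $H$ preserves the distance to $\orn$ we have $|R_t(x)|=t|x|$, and therefore $(R^*\omega)_{\pa_t}(t,x)$ vanishes as soon as $t|x|>\eta_0$; consequently $\Rfr\omega(x)=0$ for $|x|>\eta_0$, and $\supp_\mba \Rfr\omega$ is compact in $\mba^\ep$. This same vanishing near $\nep$ is precisely the hypothesis required to invoke the homotopy identity (\ref{eq_R_homot_operator}), which yields $d\Rfr\omega=\omega-\Rfr d\omega$. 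The $L^{p'}$ estimate on $\Rfr\omega$ then comes from (\ref{eq_R_borne_m}) with $\eta=\ep$ (applicable because $p'$ is close to $1$ when $p$ is large), while the $L^{p'}$ estimate on $\Rfr d\omega$ uses the same inequality when $j+1<m$ or inequality (\ref{eq_R_borne_j=m}) in the boundary case $j+1=m$.

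I do not anticipate a real obstacle: the analytic work has been done in the three previous propositions, and this statement is essentially a bookkeeping combination of them. The only mild subtlety is to verify that the various thresholds ``$p$ sufficiently large'' arising from each application of the bounds can be met simultaneously, which presents no difficulty since only finitely many conditions (one per degree in $\{j-1,j,j+1\}$ and one per operator) are involved.
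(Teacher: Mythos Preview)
Your proof is correct and follows essentially the same approach as the paper's: use the $L^p$ bounds from Remark \ref{rem_borne_Rr} to control $\rfr\omega$ (resp.\ $\Rfr\omega$), then invoke the homotopy identity (\ref{eq_r_homot_operator}) (resp.\ (\ref{eq_R_homot_operator})) to express $d\rfr\omega$ (resp.\ $d\Rfr\omega$) as $\omega$ minus $\rfr d\omega$ (resp.\ $\Rfr d\omega$), which is again controlled by the same bounds. You are in fact somewhat more explicit than the paper in verifying that compact support in $\mba^\ep$ is preserved by $\Rfr$ (so that (\ref{eq_R_homot_operator}) applies) and in handling the boundary case $j+1=m$ via (\ref{eq_R_borne_j=m}); the paper simply says ``the same argument applies to $\Rfr$''.
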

  \begin{proof}
   Proposition \ref{pro_borne_r} shows that $\rfr \omega$  is $L^p$ if so is $\omega$, for all $p$ sufficiently large (for all $p$ if $j=m$), as explained in Remark \ref{rem_borne_Rr}.  By (\ref{eq_r_homot_operator}), $d\rfr \omega=\omega-\rfr d\omega$ for all $\omega \in \wca^j_p(\mep)$ with $p$ sufficiently large (for any $p$ if $j=m$), showing that $d\rfr \omega$ is $L^p$ for all such $\omega$. The same argument applies to $\Rfr$, using (\ref{eq_R_homot_operator}).
  \end{proof}

\end{subsection}

\begin{subsection}{Some more properties of $\rfr$ and $\Rfr$.} We are still in the setting of subsection \ref{sect_the_retraction_r_s_and_R_t}.  We are going to provide formulas for $\Rfr$ and $\rfr$ (Proposition \ref{pro_Rfr_2eme_forme}) that will be useful to show that these operators are somehow dual to each other (Proposition \ref{pro_dual_Rr}), from which we will derive that they preserve forms vanishing on a subset  of the boundary (Proposition \ref{pro_r_preserve_vanishing}).

 Theorem \ref{thm_local_conic_structure}  provides for $\ep>0$ small a Lipschitz  subanalytic homeomorphism
  $$H: 0_{\R^n}* (\sph(\orn,\ep)\cap X)\to  \Bb(\orn,\ep) \cap X,\quad \mbox{with $X=\{\orn\} \cup M$},$$
preserving the distance to the origin and possessing properties $(i)$ and $(ii)$ of this theorem.
  In particular, $H$ gives rise to a  globally subanalytic homeomorphism:
  \begin{equation}\label{eq h pour K_nu}
h:(0,1) \times N^\ep \to \mep\setminus \{0\}, \qquad (\tau,z)\mapsto h(\tau,z):=H(\tau z).\end{equation}
We then denote by $\pa_\tau $ the constant vector field $(1,0_{\R^n})$   on $\R\times N^\ep$.
 
 \begin{pro}\label{pro_Rfr_2eme_forme} For $h$ be as above, we have
  \begin{enumerate}[(i)]
   \item for every $\omega\in L_1^j(\mep)$, $j\ge 1$:
   $$h^* \Rfr \omega(\tau,z)=\int^\tau _1 \left(h^* \omega (t,z)\right)_{\pa_\tau}dt.$$
   \item for each $p\in [1,\infty]$ sufficiently large and each $\omega\in L_p^j(\mep)$, $j\ge 1$: 
   \begin{equation*}\label{eq_r_deuxieme_forme}h^* \rfr \omega(\tau,z)=\int_0^\tau ( h^* \omega (t,z))_{\pa_\tau}dt.\end{equation*}
  \end{enumerate}
  Moreover, the  latter equality holds true for all $p\in [1,\infty]$  if $j=m$.
 \end{pro}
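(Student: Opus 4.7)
The plan is to reduce both identities to a one-variable change of variable, exploiting the fact that, in the coordinates $(\tau,z)$ provided by $h$, the deformations $r$ and $R$ simply rescale the first coordinate. Indeed, since $r_s(x)=H(sH^{-1}(x))$ and $R_t(x)=H(tH^{-1}(x))$ by construction, and since $H^{-1}(h(\tau,z))=\tau z$, we have the semiconjugacies
$$r(s,h(\tau,z))=h(s\tau,z)\quad\text{and}\quad R(t,h(\tau,z))=h(t\tau,z).$$
Moreover, because $H$ preserves distance to the origin, $|h(\tau,z)|=\tau\ep$, so $\ep/|h(\tau,z)|=1/\tau$, which identifies the range of integration in $\Rfr\omega(h(\tau,z))$ with $t\in[1,1/\tau]$.

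The first step is to verify both formulas when $\omega$ is smooth. Unfolding the definition of $\rfr$ and using the semiconjugacy, $\tfrac{\pa r}{\pa s}(s,h(\tau,z))=\tau\,\tfrac{\pa h}{\pa\tau}(s\tau,z)$, while the remaining slots of $r^{*}\omega$ pull back via $Dr_s\circ Dh=D(r_s\circ h)$, which agrees with $Dh$ applied at $(s\tau,z)$ on the $N^\ep$-directions (the $\pa_\tau$-component is irrelevant since $\pa_\tau$ already occupies the first slot). Performing the change of variable $t=s\tau$, the factor $\tau$ coming from $\tfrac{\pa r}{\pa s}$ cancels the factor $1/\tau$ produced by $ds=dt/\tau$, and we obtain
$$h^{*}\rfr\omega(\tau,z)=\int_{0}^{\tau}(h^{*}\omega(t,z))_{\pa_\tau}\,dt.$$
The identical argument applied to $\Rfr$ with $t\in[1,1/\tau]$ yields, after the same substitution $t'=t\tau\in[\tau,1]$ (and noting the orientation convention in (\ref{eq_Rfr})), the formula in (i).

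The second step is to extend the identities from smooth forms to $L^{p}$ forms in the appropriate ranges. Since $h$ is subanalytic and bi-Lipschitz from $(0,1)\times N^\ep$ onto $\mep\setminus\{0\}$, the pullback $h^{*}$ induces a continuous linear map between the relevant Lebesgue spaces, and $\rfr$ and $\Rfr$ are continuous in the $L^p$ norm in the stated ranges by Propositions \ref{pro_borne_r} and \ref{pro_borne_R}; the right-hand sides define operators continuous in the same ranges by the Minkowski-type estimates already used in those propositions. Density of $\cc^{j,\infty}(\mep)\cap\wca^j_p(\mep)$ in $L^{j}_{p}(\mep)$ then transfers the formulas to all of $L^{j}_{p}(\mep)$. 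The restrictions on $p$ are exactly those ensuring that $\rfr$ makes sense, i.e. Lemma \ref{lem_r_well_dfnd} (and the case $j=m$ works for every $p$).

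The only minor technical subtlety is that $h$ is merely Lipschitz, so the pullbacks must be handled as stratifiable forms; this is, however, the standard setting described after Theorem \ref{thm_stokes_leaves}, and all the change-of-variable manipulations above remain valid in this framework, so no further difficulty arises.
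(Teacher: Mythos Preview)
Your approach is essentially the paper's: the semiconjugacies $r_s\circ h(\tau,z)=h(s\tau,z)$ and $R_t\circ h(\tau,z)=h(t\tau,z)$ together with the substitution $t'=s\tau$ (resp.\ $t'=t\tau$) are exactly the core of the paper's argument, and your remark that the $\partial_\tau$-component of $\xi$ is irrelevant is precisely the content of the paper's explicit decomposition $\xi=\partial_\tau\otimes\xi_1+\xi_2$ and the ensuing cancellation. The paper performs the computation directly for general $\omega$ rather than first for smooth forms and then by density, but that is only a matter of organisation.

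There is, however, one point where you are too quick. For (ii) the right-hand side $\int_0^\tau (h^*\omega(t,z))_{\partial_\tau}\,dt$ integrates down to $t=0$, where $h$ degenerates; that $(h^*\omega(\cdot,z))_{\partial_\tau}$ is $L^1$ on $[0,\tau]$ for almost every $z$ is \emph{not} a consequence of Propositions~\ref{pro_borne_r} or~\ref{pro_borne_R}, which bound $r^*\omega$ and $R^*\omega$, not $h^*\omega$. The paper establishes this separately via \L ojasiewicz's inequality ($\tau^l\lesssim\jac h(\tau,z)$) followed by Minkowski and H\"older, and this verification occupies about half of the paper's proof. Your density argument needs exactly this estimate, both to give meaning to the right-hand side for general $\omega$ and to justify passing to the limit; so you should supply it rather than defer to the earlier propositions.
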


 \begin{proof} Observe first that it follows from the definitions of $R$ and $h$ that \begin{equation}\label{eq_rfr_h} R(t,h(\tau,z))=h(t\tau,z)= H(t\tau z),\end{equation}
from which it follows that:
\begin{equation}\label{eq_rfr_dh}
 DR_t (h(\tau,z))\frac{\pa h}{\pa \tau}(\tau,z)=	t\frac{\pa h}{\pa \tau}(t\tau,z)\et DR_t (h(\tau,z))\frac{\pa h}{\pa z}(\tau,z)=\frac{\pa h}{\pa z}(t\tau,z),
\end{equation}
as well as 
\begin{equation}\label{eq_rfr_dh2}
 \frac{\pa R}{\pa t}(t,h(\tau,z))=\tau\frac{\pa h}{\pa \tau}(t\tau,z).
\end{equation}

Let $\omega\in L_1^j(\mep)$, $j\ge 1$. By definition of $\Rfr$, we have for  all   $\zeta \in \otimes^{j-1} T_x \mep$, $x\in \mep\setminus \{0\}$:
$$\Rfr \omega(x)\zeta=\int^1 _\frac{\ep}{|x|} \omega(R(t,x))\left(\frac{\pa R}{\pa t}(t,x) \otimes DR_t(x) \zeta\right) dt.$$
 Hence, pulling back via $h$ (which is bi-Lipschitz on $[\eta,1]\times N^\ep$ for every $\eta>0$), we get for $(\tau,z)\in (0,1)\times N^\ep$ and each  $(j-1)$-multivector $\xi$ of $\R\times T_z N^\ep$ (note that if $h(\tau,z)=x$ then, since $|z|=\ep$, we must have $\tau=\frac{|x|}{\ep}$, which accounts for the fact that the new bounds of the integral below are $\frac{1}{\tau}$ and $1$):
 \begin{eqnarray}\label{eq_pullback_hrR1} h^*\Rfr \omega(\tau,z)\xi&=&\int^1 _\frac{1}{\tau} \omega(R(t,h(\tau,z)))\left(\frac{\pa R}{\pa t}(h(\tau,z)) \otimes DR_t(h(\tau,z)) Dh(\tau,z)\xi \right) dt\nonumber\\
  &\overset{(\ref{eq_rfr_dh2})}=& \int^1 _\frac{1}{\tau} \tau\cdot \omega(h(t\tau,z))\left(\frac{\pa h}{\pa \tau}(t\tau,z) \otimes  DR_t(h(\tau,z)) Dh(\tau,z)\xi \right) dt.
 \end{eqnarray}
Let us  decompose $\xi$ as $\pa_\tau\otimes\xi_1 +\xi_2$, with $\xi_1$ and $\xi_2$ respectively $(j-2)$ and $(j-1)$-multivectors of $ T_z N^\ep$. As
\begin{equation*}\label{eq_pullback_hrR2}
DR_t (h(\tau,z))Dh(\tau,z)\xi =DR_t (h(\tau,z)) (\frac{\pa h}{\pa \tau}(\tau,z)\otimes\frac{\pa h}{\pa z}(\tau,z)\xi_1) +DR_t (h(\tau,z))\frac{\pa h}{\pa z}(\tau,z) \xi_2,
\end{equation*}
 by (\ref{eq_rfr_dh}), we see that
\begin{equation}\label{eq_dRhxi}
	DR_t (h(\tau,z))Dh(\tau,z)\xi =t \frac{\pa h}{\pa \tau}(t\tau,z)\otimes\frac{\pa h}{\pa z}(t\tau,z)\xi_1 +\frac{\pa h}{\pa z}(t\tau,z) \xi_2.
\end{equation}
 Plugging it into (\ref{eq_pullback_hrR1}), we get (the first term of the resulting sum is equal to $0$):
  \begin{eqnarray*} h^*\Rfr \omega(\tau,z)\xi&=&\int^1 _\frac{1}{\tau} \tau\cdot \omega(h(t\tau,z))\left(\frac{\pa h}{\pa \tau}(t\tau,z) \otimes \frac{\pa h}{\pa z}(t\tau,z)\xi_2 \right) dt\\
 &=&\int^1 _\frac{1}{\tau} \tau\cdot \omega(h(t\tau,z))Dh(t\tau,z)(\pa_\tau \otimes\xi)  dt, \end{eqnarray*}
  since $\pa_\tau \otimes \xi=\pa_\tau\otimes \pa_\tau\otimes \xi_1 +\pa_\tau\otimes \xi_2$ (so that again, one term is zero).
 After a change of variable $t':=\tau t$ in this Riemannian integral, we already get $(i)$.
 
  The proof of $(ii)$ is analogous, just replace $R$ with $r$. The argument however requires to check that $\left(h^*\omega (\cdot, z)\right)_{\pa_\tau}$ is $L^1$ on $[0,1]$, which holds for every $L^p$ form $\omega$ and almost every $z\in \nep$, provided $p$ is sufficiently large (for all $p$ if $j=m$). This fact can be seen as follows. 
 
We start with the case $j<m$.  Since $h$ is bi-Lipschitz on $(\eta,1)\times N^\ep$ for each $\eta>0$, by \L ojasiewicz's inequality, there is an integer $l$ such that for almost all  $(\tau,z)\in (0,1) \times \nep$:
 \begin{equation}\label{eq_jac_htau}\tau ^l \lesssim \jac h(\tau,z). \end{equation}
 Consequently, thanks to Minkowski's integral inequality, we can write:
 \begin{eqnarray*}
 	\left(\int_{\nep} \left(\int_0 ^1 |h^*\omega(\tau,z)|d\tau\right)^p \right)^{\frac{1}{p}}
 	&\le& \int_0 ^1 \left( \int_{\nep} |h^*\omega(\tau,z)|^p  \right)^\frac{1}{p}d\tau\\\
& \overset{(\ref{eq_jac_htau})}\lesssim& \int_0 ^1 \tau^\frac{-l}{p}\left( \int_{\nep}|\omega (h(\tau,z))|^p\jac h(\tau,z) \right)^\frac{1}{p}d\tau\\
& \le& || \tau^\frac{-l}{p}||_{L^{p'}([0,1])}\left( \int_0 ^1\int_{\nep}|\omega (h(\tau,z))|^p\jac h(\tau,z) d\tau \right)^\frac{1}{p},
\end{eqnarray*}
by H\"older's inequality. This yields the needed fact for all $p$ sufficiently large for  $\tau^\frac{-l}{p}$ to be $L^{p'}$ on $[0,1]$.

Finally, in the case $j=m$, analogously, we have, by Minkowski's integral inequality,
 	$$\left(\int_{\nep} \left(\int_0 ^1 |h^*\omega(\tau,z)|d\tau\right)^p \right)^{\frac{1}{p}}\le \int_0 ^1 \left( \int_{\nep} |h^*\omega(\tau,z)|^p  \right)^\frac{1}{p}d\tau$$
$$ = \int_0 ^1 \left( \int_{\nep}|\omega (h(\tau,z))|^p\jac h(\tau,z)^p\right)^\frac{1}{p}d\tau
  \lesssim \left( \int_0 ^1\int_{\nep}|\omega (h(\tau,z))|^p\jac h(\tau,z)d\tau \right)^\frac{1}{p}<\infty,$$
applying again H\"older's inequality (as $h$ is Lipschitz we have  $\jac h(\tau,z)^p\lesssim \jac h(\tau,z)$). 
\end{proof}

\begin{pro}\label{pro_dual_Rr}
For all $p\in [1,\infty]$ sufficiently large and $j\ge 1$, if $\alpha\in L^j_p(\mep)$ and $\beta\in L^{m-j+1}_{p'}(\mep)$ then:
\begin{equation}\label{eq_rR_adjoints}
 <\rfr\alpha,\beta>=(-1)^j<\alpha,\Rfr \beta>.
\end{equation}
This formula holds for all $p$ if $j=m$. 
\end{pro}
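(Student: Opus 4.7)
My plan is to pull everything back via the cone parametrization $h:(0,1)\times N^\ep\to M^\ep\setminus\{0\}$ from Proposition \ref{pro_Rfr_2eme_forme} and reduce the identity to a Fubini-style exchange of the order of integration on a triangular region in the $(t,\tau)$-plane. The main input will be the explicit formulas
$$h^*\rfr\alpha(\tau,z)=\int_0^\tau \tilde\alpha_{\pa_\tau}(t,z)\,dt\et h^*\Rfr\beta(\tau,z)=\int_1^\tau\tilde\beta_{\pa_\tau}(t,z)\,dt,$$
where $\tilde\alpha:=h^*\alpha$ and $\tilde\beta:=h^*\beta$, together with the observation that both pullback forms are horizontal, i.e., contain no $d\tau$ in their canonical decomposition.

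Concretely, I would first decompose $\tilde\alpha=d\tau\wedge\tilde\alpha_{\pa_\tau}+\tilde\alpha_0$ and $\tilde\beta=d\tau\wedge\tilde\beta_{\pa_\tau}+\tilde\beta_0$ with $\tilde\alpha_0,\tilde\beta_0$ purely horizontal. Since $\dim N^\ep=m-1$, any purely horizontal $m$-form on $(0,1)\times N^\ep$ must vanish, so only a single cross-term survives in each wedge product:
\begin{align*}
	h^*(\rfr\alpha\wedge\beta) &= (-1)^{j-1}\, d\tau\wedge h^*\rfr\alpha\wedge\tilde\beta_{\pa_\tau},\\
	h^*(\alpha\wedge\Rfr\beta) &= d\tau\wedge\tilde\alpha_{\pa_\tau}\wedge h^*\Rfr\beta,
\end{align*}
the sign $(-1)^{j-1}$ coming from commuting the $(j-1)$-form $h^*\rfr\alpha$ past $d\tau$. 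Substituting the integral representations above and applying Fubini, I expect both $<\rfr\alpha,\beta>$ and $<\alpha,\Rfr\beta>$ to reduce, after renaming dummy variables in the second expression, to the same triple integral of $\tilde\alpha_{\pa_\tau}(t,z)\wedge\tilde\beta_{\pa_\tau}(\tau,z)$ over $N^\ep\times\{0<t<\tau<1\}$. The factor $(-1)^j$ will then arise as the product of the $(-1)^{j-1}$ above with the $-1$ hidden in the reversal $\int_1^\tau=-\int_\tau^1$ in the definition of $\Rfr$.

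The only step requiring real care is the justification of Fubini, which is where the hypothesis on $p$ enters. For $p$ large (or any $p$ when $j=m$), Proposition \ref{pro_borne_r} together with Remark \ref{rem_borne_Rr} gives $\rfr\alpha\in L^p(M^\ep)$, while Proposition \ref{pro_borne_R} applied to $\beta$ (an $(m-j+1)$-form, so in the regime $j-1<m$, or in the case $j=1<m$ when $j=m$) gives $\Rfr\beta\in L^{p'}(M^\ep)$. H\"older's inequality then ensures that both $\rfr\alpha\wedge\beta$ and $\alpha\wedge\Rfr\beta$ are in $L^1(M^\ep)$. Pulling back through $h$, whose Jacobian is bounded below by a power of $\tau$ via \L ojasiewicz's inequality (exactly as in the proof of Proposition \ref{pro_Rfr_2eme_forme}), shows that the integrands on $(0,1)\times N^\ep$ are also absolutely integrable, so the iterated integrals in $(t,\tau)$ may be freely interchanged, completing the argument.
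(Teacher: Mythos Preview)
Your approach is essentially the paper's: pull back via $h$, use the explicit formulas of Proposition~\ref{pro_Rfr_2eme_forme}, and reduce both sides to the same triple integral over $N^\ep\times\{0<t<\tau<1\}$. The decomposition into horizontal and $d\tau$-parts and the sign bookkeeping are exactly right.

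There is, however, a gap in your Fubini justification. Knowing that $\rfr\alpha\wedge\beta$ and $\alpha\wedge\Rfr\beta$ are $L^1$ on $M^\ep$ tells you only that
\[
\int_{(0,1)\times N^\ep}\Bigl|\int_0^\tau\tilde\alpha_{\pa_\tau}(t,z)\wedge\tilde\beta_{\pa_\tau}(\tau,z)\,dt\Bigr|\,d\tau\,dz<\infty,
\]
not that the integrand is absolutely integrable over the three-dimensional region $N^\ep\times\{0<t<\tau<1\}$: the inner integral in $t$ could rely on cancellation. The paper handles this by first reducing to \emph{bounded} $\alpha$ and $\beta$ via density of $L^\infty$ forms in $L^p$ (together with the continuity of $\rfr$ and $\Rfr$ in the relevant norms, Remark~\ref{rem_borne_Rr}). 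For bounded forms, $h^*\alpha$ and $h^*\beta$ are bounded since $h$ is Lipschitz, so $\tilde\alpha_{\pa_\tau}(t,z)\wedge\tilde\beta_{\pa_\tau}(\tau,z)$ is bounded on a set of finite measure and Fubini is immediate; one then passes to the limit. You should insert this reduction at the start.
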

\begin{proof}
By density of $L^\infty$ forms in the $L^p$ spaces (and continuity of $\rfr$ and $\Rfr$ for suitable $j$ and $p$, see Remark \ref{rem_borne_Rr}), we can assume that $\alpha$ and $\beta$ are bounded, which means that so are $h^*\alpha$ and $h^*\beta$. Observe then that for bounded forms $\alpha$ and $\beta$,  sufficient integrability conditions hold to write (for relevant orientation on $\nep$) 
\begin{eqnarray*}
 <\rfr\alpha,\beta>&=&\int_{(0,1)\times \nep} h^*\rfr\alpha  \wedge h^* \beta \qquad(\mbox{pulling back via $h$}) \\
 &=& \int_{(0,1)\times \nep} \left(\int_0 ^\tau  (h^*\alpha(t,x))_{\pa_\tau}\wedge h^*\beta(\tau,x)dt\right)\quad\mbox{(by Proposition \ref{pro_Rfr_2eme_forme} $(ii)$})\\
 &=&(-1)^j \int_{x\in \nep} \left(\int_0^1 \int_0 ^\tau  (h^*\alpha)_{\pa \tau}(t,x)\wedge (h^*\beta)_{\pa_\tau}(\tau,x) \,dt\,d\tau \right)\\
  &=&(-1)^j \int_{x\in \nep} \int_{0<t\le \tau<1}  (h^*\alpha)_{\pa_\tau}(t,x)\wedge(h^*\beta)_{\pa_\tau}(\tau,x)\,dt\,d\tau.
\end{eqnarray*}
Making the same computation for $< \alpha ,\Rfr\beta>$  and applying Fubini's Theorem then yields (\ref{eq_rR_adjoints}).
\end{proof}

The latter two propositions make it possible to establish the following relative version of Proposition \ref{pro_rR_def_sur_wj}. We recall that $\mba^\ep=\mba\cap \bou(0,\ep)$ and set $A^\ep :=A\cap \bou(0,\ep)$.


\begin{pro}\label{pro_r_preserve_vanishing} Let $A$ be a definable subset-germ of $\delta M$ (at $\orn\in \mba$). If $r_s$ preserves $A^\ep$ for all $s\in (0,1)$ then for every $p\in [1,\infty]$ sufficiently large and all $j\ge 1$, $\rfr$ and $\Rfr$ induce continuous operators $$\rfr:\wca^{j}_p (M^\ep,A^\ep)\to\wca^{j-1}_p (M^\ep,A^\ep) \et \Rfr:\wca^{m-j}_{p',\mba^\ep} (M^\ep,A^\ep)\to\wca^{m-j-1}_{p',\mba^\ep} (M^\ep,A^\ep).$$
	Moreover, in the case $j=m$,  $\rfr$ actually induces such an operator for every $p\in [1,\infty]$.
\end{pro}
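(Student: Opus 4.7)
\medskip

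\noindent\textbf{Proof proposal.} The plan is to reduce the statement to two separate checks: (i) that $\rfr$ and $\Rfr$ send forms having the appropriate compact support condition to forms having the same support condition, and (ii) that they preserve the vanishing of the $p$-trace on $A^\ep$. Continuity and the fact that the image lies in the right $\wca$ space are already provided by Proposition \ref{pro_rR_def_sur_wj} (valid for all $p$ when $j=m$), so there is nothing to add on that front.

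\medskip

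\noindent\emph{Step 1 (support preservation).} The hypothesis that $r_s$ preserves $A^\ep$ for all $s\in(0,1)$ means, via the homeomorphism $H$ from Theorem \ref{thm_local_conic_structure}, that $H^{-1}(A^\ep)$ is a cone in $\orn * (\sph(\orn,\ep)\cap \mba)$. Since $R(t,x)=H(tH^{-1}(x))$ is obtained from the same cone scaling (with $t\ge 1$), it follows at once that $R_t(x)\in A^\ep$ if and only if $x\in A^\ep$ (whenever the expression is defined). From the integral formulas (\ref{eq_rfr}) and (\ref{eq_Rfr}), the support of $\rfr\alpha$ (resp.\ $\Rfr\beta$) consists of those $x$ whose $r$-orbit (resp.\ $R$-orbit) meets $\supp\alpha$ (resp.\ $\supp\beta$). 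Using the continuous extension of $H$ to the closure of the cone and the invariance just established, any accumulation of $\supp\rfr\alpha$ on $\delta M^\ep$ lies in $A^\ep$, and the analogous assertion holds for $\Rfr\beta$. Moreover $\Rfr\beta$ vanishes close to $\sph(\orn,\ep)$ whenever $\beta$ does (because the integration interval $[\tfrac{\ep}{|x|},1]$ shrinks to a point as $|x|\to \ep$), so $\Rfr$ sends forms with support in $M^\ep\cup A^\ep$ and compactly supported in $\mba^\ep$ to forms of the same type.

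\medskip

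\noindent\emph{Step 2 (Dirichlet condition via duality).} Let $\alpha\in \wca^j_p(M^\ep,A^\ep)$ and pick any test form $\gamma\in \wca^{m-j}_{p',M^\ep\cup A^\ep}(M^\ep)$. Since $\gamma$ vanishes near $N^\ep$, Proposition \ref{pro_rR_hom_op} gives $d\rfr\alpha=\alpha-\rfr d\alpha$ and $d\Rfr\gamma+\Rfr d\gamma=\gamma$. Combining this with the duality identity (\ref{eq_rR_adjoints}) yields
\begin{align*}
<d\rfr\alpha,\gamma>+(-1)^{j-1}<\rfr\alpha,d\gamma>
&=<\alpha,\gamma>-(-1)^{j+1}<d\alpha,\Rfr\gamma>-<\alpha,\Rfr d\gamma>\\
&=(-1)^j<d\alpha,\Rfr\gamma>+<\alpha,d\Rfr\gamma>.
\end{align*}
By Step 1, $\Rfr\gamma\in \wca^{m-j-1}_{p',M^\ep\cup A^\ep}(M^\ep)$, so this last expression vanishes thanks to the assumption $\trd^p\alpha\equiv 0$ on $A^\ep$ applied to the test form $\Rfr\gamma$. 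This proves $\trd^p\rfr\alpha\equiv 0$ on $A^\ep$, as required.

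\medskip

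\noindent\emph{Step 3 ($\Rfr$ case).} The argument for $\Rfr:\wca^{m-j}_{p',\mba^\ep}(M^\ep,A^\ep)\to \wca^{m-j-1}_{p',\mba^\ep}(M^\ep,A^\ep)$ is symmetric: one takes a test form $\gamma\in \wca^{j}_{p,M^\ep\cup A^\ep}(M^\ep)$, uses again (\ref{eq_rR_adjoints}) together with both homotopy formulas (the formula for $\Rfr$ applies because $\beta$ has compact support in $\mba^\ep$ and therefore vanishes near $N^\ep$), and reduces the required vanishing to the fact that $\rfr\gamma\in \wca^{j-1}_{p,M^\ep\cup A^\ep}(M^\ep,A^\ep)$, which is again ensured by Step 1.

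\medskip

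The main technical obstacle will be Step 1, i.e.\ giving a clean proof that $A^\ep$-invariance of $r_s$ transfers to $R_t$ and then to the supports of the integrated forms, paying attention to the origin (which may or may not belong to $A$) and to the fact that $\supp_{\mba^\ep}\Rfr\beta$ must avoid $\sph(\orn,\ep)\cap\mba$. Once this is in place the rest is a formal calculation based on (\ref{eq_rR_adjoints}) and the homotopy formulas of Proposition \ref{pro_rR_hom_op}.
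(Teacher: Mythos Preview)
Your overall strategy---use the duality $<\rfr\alpha,\gamma>=(-1)^j<\alpha,\Rfr\gamma>$ together with the homotopy identities to reduce the vanishing of $\trd^p\rfr\alpha$ on $A^\ep$ to the vanishing of $\trd^p\alpha$ tested against $\Rfr\gamma$---is exactly the paper's approach, and your algebra in Step~2 is correct. The gap is in Step~1.

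You assert that if $\gamma\in\wca^{m-j}_{p',M^\ep\cup A^\ep}(M^\ep)$ then $\Rfr\gamma$ again has compact support in $M^\ep\cup A^\ep$. This fails at the origin: for any $x\in M^\ep$ the integrand in (\ref{eq_Rfr}) involves $\gamma$ along the entire outgoing ray $\{R_t(x):1\le t\le\ep/|x|\}$, so $\Rfr\gamma(x)$ is generically nonzero for $x$ arbitrarily close to~$\orn$ whenever $\gamma\not\equiv0$. If $\orn\notin A$ (e.g.\ $\orn\in M$, or $\orn\in\delta M\setminus A$) then $\orn\notin M^\ep\cup A^\ep$, and $\Rfr\gamma$ does \emph{not} have compact support there; you therefore cannot invoke $\trd^p\alpha\equiv0$ on $A^\ep$ with $\Rfr\gamma$ as test form. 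Your closing paragraph flags the origin as ``the main technical obstacle,'' but the body of Step~1 treats the matter as already settled.

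The paper closes this gap by inserting the cutoff $\hat\psi_\eta(x)=\psi_\eta(|x|)$: the form $\hat\psi_\eta\Rfr\gamma$ vanishes near $\orn$ and hence \emph{is} a legitimate test form, so the Dirichlet condition on $\alpha$ yields $<\omega,d(\hat\psi_\eta\Rfr\gamma)>=(-1)^{j+1}<d\omega,\hat\psi_\eta\Rfr\gamma>$. One then passes to the limit $\eta\to0$: the estimate (\ref{eq_R_borne_m}) gives $\|\Rfr\gamma\|_{L^{p'}(M^\eta)}\lesssim\eta$, which combined with (\ref{eq_psieta_ineq}) forces $\hat\psi_\eta\Rfr\gamma\to\Rfr\gamma$ in $\wca^{m-j-1}_{p'}(M^\ep)$. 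The same device, with $\rfr$ in place of $\Rfr$ and (\ref{eq_r_borne_m}) in place of (\ref{eq_R_borne_m}), handles your Step~3. Without this limiting argument the proof is incomplete.
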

\begin{proof}We first focus on $\rfr$.
	By Proposition \ref{pro_rR_def_sur_wj},
	we just have to show that (for suitable $p$), given $\omega \in \wca^{j}_p (M^\ep,A^\ep) $, $j\ge 1$,  $\trd^p \rfr\omega$  vanishes on $A^\ep$.
	Let for this purpose $\beta\in \wca^{m-j}_{p', \mep \cup A^\ep}(\mep)$, $1\le j\le m$. If $p$ is sufficiently large, we have for such $\omega$:
	\begin{equation}\label{eq_omega_beta}(-1)^j<\rfr\omega,d\beta>\overset{(\ref{eq_rR_adjoints})}{ = }<\omega,\Rfr d \beta>\overset{(\ref{eq_R_homot_operator})}{=}<\omega ,\beta> -<\omega,d \Rfr\beta>.\end{equation}
Let $\psi_\eta$ be the function defined in (\ref{eq_psieta}) and set $\hat\psi_\eta (x):=\psi_\eta(|x|)$.	Since  $r_s$ was required to preserve $A$, so does $R_t$,  and since $\beta$ is zero in the vicinity of $\delta \mba^\ep\setminus A^\ep$, so does $\hat\psi_\eta \Rfr \beta$ (by definition of $\Rfr$, see (\ref{eq_Rfr})). As $\trd^p \omega$ vanishes on $A^\ep$, we therefore must have:
	$$< \omega,d (\hat\psi_\eta\Rfr \beta)> =(-1)^{j+1}<d \omega, \hat\psi_\eta \Rfr\beta>.$$
	Thanks to (\ref{eq_R_borne_m}) and (\ref{eq_psieta_ineq}), we know that  $\hat\psi_\eta\Rfr \beta$ tends to $\Rfr \beta$ in the $\wca_{p'}^{m-j-1}$ norm for $p$ large (observe that $\hat\psi_\eta$ tends to $1$ pointwise). As a matter of fact, passing to the limit as $\eta\to 0$ in the above identity, we get:
	$$< \omega,d \Rfr \beta> =(-1)^{j+1}<d \omega,  \Rfr\beta>\overset{(\ref{eq_rR_adjoints})}{=}<\rfr d \omega,\beta>\overset{(\ref{eq_r_homot_operator})}= <\omega,\beta>-<d\rfr \omega, \beta >.$$
	Plugging this equality into (\ref{eq_omega_beta}) provides the desired fact (i.e. (\ref{eq_lsp_j}) for the pair $\rfr\omega,\beta$).
	
	To prove  the analogous fact for $\Rfr$
	just apply the same argument, replacing $\Rfr$ with $\rfr$, (\ref{eq_R_borne_m}) with (\ref{eq_r_borne_m}), and switch (\ref{eq_R_homot_operator}) with (\ref{eq_r_homot_operator}) (for $p=\infty$, if $\beta\in \wca^{j}_{p, \mep \cup A^\ep}(\mep)$ then $\hat\psi_\eta \rfr \beta$ then  does not converge in $\wca^{j-1}_\infty(\mep)$ but, by (\ref{eq_neta_infty}), is bounded in this space and converges pointwise, which is enough).
		Finally, in the case $j=m$, (\ref{eq_r_homot_operator}) holds for all $p$ which makes it possible to write for  $\beta \in \wca_{p', \mep\cup A^\ep}^{0}(M^\ep)$
	$$<\rfr\omega,d\beta>\overset{(\ref{eq_rR_adjoints})}=(-1)^m<\omega,\Rfr d\beta>\overset{(\ref{eq_R_homot_operator})}=(-1)^m <\omega,\beta >\overset{(\ref{eq_r_homot_operator})}=(-1)^m <d\rfr \omega,\beta >,$$
	showing that $\trd^p \rfr \omega$ vanishes on $A^\ep$.  
\end{proof}
%
\begin{rem}
If  $\omega\in \wca^{m}_{p',\mba^\ep} (M^\ep,A^\ep)$ (i.e. when $j=0$ in the latter proposition) then $\Rfr\omega$ may however fail to belong to $ \wca^{m-1}_{p'} (M^\ep,A^\ep)$, even if $p$ is large (the problem is at the origin, compare with Remark \ref{rem_r_hom_op_j=0}).
\end{rem}

\end{subsection}
\end{section}


\begin{section}{Density of $\cc^{j,\infty}(\mba)$ for $p$ large}\label{sect_density}
Our Lefschetz duality result requires to show that  $\poi_{M,A}$ induces a mapping on the cohomology groups, which amounts to show that no ``residue phenomenon'' arises at singularities when integrating by parts the product of an exact form $d\alpha$, $\alpha \in \wca^{j-1}_p(M,A)$, with a form $\beta\in \wca^{m-j}_{p'}(M,\delta M\setminus A)$ (where $A$ is a definable subset of $\delta M$). We will show it for $p$ sufficiently large or close to $1$ (Corollary \ref{cor_lsp_A}). This will be achieved  by establishing the density of smooth bounded forms that vanish near $A$ in the space $\wca^j_p(M,A)$ for each $p$ large (Corollary \ref{cor_densite_non_normal}). We first show that in the case where $M$ is connected along $\delta M\setminus A$ (Definition \ref{dfn_normal}), functions that vanish near $\adh A$ and that are smooth up to the boundary are dense in this Sobolev space (Theorem \ref{thm_dense_formes}), which can be regarded as a partial generalization of Theorem \ref{thm_trace}  to differential forms. This theorem will also be of service in section \ref{sect_further} to extend the other properties  listed in Theorem \ref{thm_trace} (like existence of natural trace operators) as well as to address the case where  $p$ is close to $1$ (subsection \ref{sect_pprime}) and to provide some more explicit characterizations of $ \wca^j_p (M,A)$  (Corollaries \ref{cor_vanishing_res} and \ref{cor_trace_formes}).

We recall that $\cc^{j,\infty}_U(\mba)$ is considered as a subspace of $\cc^{j,\infty}(M)$ (the smooth $j$-forms on $M$) and that the subscript $U$ means that we only take  forms that are compactly supported in $U$ (see (\ref{eq_ccjU})).

\begin{thm}\label{thm_dense_formes} Let $A$ and $E$ be definable subsets of $\delta M$. If $M$ is connected along $\delta M\setminus  A$ then
for every integer $j>\dim E$ and every $p\in [1,\infty)$ sufficiently large, the space $\cc^{j,\infty}_{\mba \setminus \adh {E\cup A}}(\mba)$ is dense in $ \wca^j_p (M,A)$.
\end{thm}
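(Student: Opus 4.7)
The plan is to reduce to showing that any $\omega \in \wca^j_p(M, A)$ can be approximated in $\wca^j_p$ by a form $\tilde \omega \in \wca^j_p(M, A)$ vanishing in an open neighborhood of $\adh{E \cup A}$ in $\mba$; a standard mollification on $\mba \setminus \adh{E \cup A}$, whose portion of $\delta M$ lies in $\delta M \setminus A$ (along which $M$ is connected by hypothesis, so the arguments underlying Theorem \ref{thm_trace} apply), then produces the desired element of $\cc^{j,\infty}_{\mba \setminus \adh{E \cup A}}(\mba)$. The case $j = 0$ is precisely Theorem \ref{thm_trace}, so I assume $j \geq 1$. I would fix a locally definably bi-Lipschitz trivial stratification $\Sigma$ of $\mba$ compatible with $A$ and $E$ (Theorem \ref{thm_existence_stratifications}, Remark \ref{rem_M_stratum}) and iterate a local cutoff construction over the finitely many strata of $\adh{E \cup A}$.

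The local step: given $\alpha \in \wca^j_p(M, A)$ and a stratum $S_0 \subset \adh{E \cup A}$, produce an approximation of $\alpha$ in $\wca^j_p$ that additionally vanishes in a neighborhood of $S_0$. Near each $x_0 \in S_0$, the bi-Lipschitz trivialization (Definition \ref{dfn_trivial}) furnishes a homeomorphism $\Lambda : \pi_{S_0}^{-1}(W) \cap \mba \to L_{x_0} \times W$ with $L_{x_0} := \pi_{S_0}^{-1}(x_0) \cap \mba$ carrying a conic structure at $x_0$ (Theorem \ref{thm_local_conic_structure}). Pulling back through $\Lambda$ via (\ref{eq_pullback}), I would apply the fiberwise analogue of the homotopy operator $\rfr$ from section 4, which is continuous $\wca^j_p \to \wca^{j-1}_p$ for $p$ large (Proposition \ref{pro_rR_def_sur_wj}) and preserves the Dirichlet condition on $A$ (Proposition \ref{pro_r_preserve_vanishing}, after adjusting the retraction via Remark \ref{rem_preserve}). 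With a smooth cutoff $\chi_\eta$ equal to $1$ in an $\eta/2$-neighborhood of $S_0$ and supported in its $\eta$-neighborhood, set
\[
\alpha_\eta := \alpha - d(\chi_\eta \rfr \alpha) - \chi_\eta \rfr d\alpha,
\]
which, by the homotopy identity (\ref{eq_r_homot_operator}) valid for $p$ large and $j \geq 1$, simplifies to $\alpha_\eta = (1 - \chi_\eta)\alpha - d\chi_\eta \wedge \rfr \alpha$ and similarly $d\alpha_\eta = (1 - \chi_\eta) d\alpha - d\chi_\eta \wedge \rfr d\alpha$, so that $\alpha_\eta$ vanishes on $\{\chi_\eta = 1\}$.

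Convergence $\alpha_\eta \to \alpha$ in $\wca^j_p(M, A)$ hinges on the estimate (\ref{eq_r_borne_m}), $\|\rfr \alpha\|_{L^p(M^\eta)} \lesssim \eta \|\alpha\|_{L^p(M^\eta)}$: combined with $|d\chi_\eta| \lesssim 1/\eta$, this gives that $\|d\chi_\eta \wedge \rfr \alpha\|_{L^p(M)}$ is dominated by the $L^p$ norm of $\alpha$ on the $\eta$-neighborhood of $S_0$, which tends to $0$ by absolute continuity of the integral, and similarly for $d\chi_\eta \wedge \rfr d\alpha$; the $(1-\chi_\eta)$-terms converge by dominated convergence. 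Iterating the construction over the strata of $\adh{E \cup A}$, say in order of increasing dimension and shrinking $\eta$ at each step so as not to disturb vanishing already achieved near smaller-dimensional strata (using the frontier property, Remark \ref{rem_frontier}), then produces the desired $\tilde \omega$.

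The principal anticipated obstacle is upgrading the conic-structure machinery of section 4 to the fibered setting $L_{x_0} \times W$: the estimate (\ref{eq_r_borne_m}) and the homotopy identity (\ref{eq_r_homot_operator}) must hold there with constants depending only on the bi-Lipschitz constants of $\Lambda$, not on the individual fibers. The uniformity built into Theorem \ref{thm_local_conic_structure}, together with the definability of all data and Remark \ref{rem_preserve}, should deliver this. The hypothesis $j > \dim E$ enters crucially in this fibered step: for each stratum $S_0 \subset E$, the fiber dimension $m - \dim S_0 \geq m - \dim E > m - j$ provides enough transverse room for the fiberwise $\rfr$ to effectively contract the $j$-form $\alpha$ near $S_0$; strata $S_0 \subset A \setminus E$ (carrying no dimensional constraint) are handled by the same construction, with the Dirichlet condition preserved by Proposition \ref{pro_r_preserve_vanishing}.
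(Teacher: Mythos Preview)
Your proposal contains two genuine gaps that together dissolve the argument.

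\textbf{The fibered homotopy identity does not hold as you use it.} Formula (\ref{eq_r_homot_operator}) is proved for the retraction $r_s$ on $M^\ep$, which contracts to a \emph{point}; for $j\ge 1$ one has $r_{s_0}^*\omega\to 0$ in $L^p$ because every factor of $Dr_{s_0}$ contributes an $O(s_0)$. Your ``fiberwise analogue'' near a stratum $S_0$ of dimension $k$ is the retraction $\tilde r_s(x,y)=(r_s(x),y)$ on $F^\ep\times W$, which is the identity in the $W$-direction. For a form with $dy$-components (type $>0$ in the paper's language), $\tilde r_{s_0}^*\alpha$ does \emph{not} go to $0$: a pure $W$-form $g(x,y)\,dy_1\wedge\dots\wedge dy_j$ pulls back to $g(r_{s_0}(x),y)\,dy_1\wedge\dots\wedge dy_j$. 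Hence $d\rfh\alpha+\rfh d\alpha=\alpha-(\text{trace term})$, and your formula $\alpha_\eta=(1-\chi_\eta)\alpha-d\chi_\eta\wedge\rfh\alpha$ is false in general. The trace term is precisely what the paper's construction $\mathbf{B}^{l,j}_\mu(\omega)=\alpha_\mu+\pi^*\beta_\mu$ captures; it is handled by a double induction on $j$ (Step~\ref{ste_type_0}) and on the type $l$ (Step~\ref{ste_induc_type}), with the base case $j=0$ using $\tra\,u$ explicitly (Step~\ref{ste_j=0}). The dimensional hypothesis $j>\dim E$ is used exactly here (Case~1 of Step~\ref{ste_final}): when $k=\dim S_0\le \dim E<j$, the trace $\beta_\mu$ is a $j$-form on a $k$-manifold, hence zero. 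Your account of why $j>\dim E$ matters (``transverse room'') does not identify this mechanism.

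\textbf{The ``standard mollification'' step is circular.} Suppose you have produced $\tilde\omega\in\wca^j_p(M,A)$ vanishing near $\adh{E\cup A}$. Approximating $\tilde\omega$ by an element of $\cc^{j,\infty}(\mba)$ is the case $A=E=\emptyset$ of the theorem (Corollary~\ref{cor_A_vide_forme}), which the paper \emph{derives from} Theorem~\ref{thm_dense_formes}, not the other way around. Your appeal to ``the arguments underlying Theorem~\ref{thm_trace}'' does not help: that theorem concerns $W^{1,p}(M)$, functions with all first partials in $L^p$, whereas $\wca^j_p(M)$ consists of $j$-forms with only the exterior derivative in $L^p$. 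The Sobolev extension and trace machinery behind Theorem~\ref{thm_trace} does not transfer to $\wca^j_p$; producing smooth-up-to-$\delta M$ approximants of forms is, as the paper states, ``the main technical difficulty of this article.'' The paper avoids the circularity by building smoothness into the inductive step itself: the term $\pi^*\beta_\mu$ is smooth because $\pi$ and $\beta_\mu$ are, and the remainder $\alpha_\mu$ has $\kappa(\alpha_\mu)>k$, so the decreasing induction on $\kappa$ (Step~\ref{ste_final}) eventually reduces to compactly supported forms in $M$, where de~Rham regularization applies.
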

 The proof of this result constitutes the main difficulty of this article  and occupies sections \ref{sect_j=0},  \ref{sect_mol}, and \ref{sect_proof_dense_formes}.   In section \ref{sect_j=0}, we address the case $j=0$,  in section \ref{sect_mol}, we construct ``mollifying operators'', to finally provide  in seven steps our approximation process (section \ref{sect_proof_dense_formes}). We first derive some consequences of the above theorem, among which are the aforementioned corollaries.

 A direct consequence of this theorem is the following fact, referred by the author as the {\it specialization property of the vanishing of the trace}: for any definable subset $A$ of $\delta M$ (assuming, as in the above theorem, that $M$ is connected along $\delta M\setminus A$) we must have for every $p\in [1,\infty)$ sufficiently large and all $j\le m$ (since the elements of $\cc^{j,\infty}_{\mba \setminus \adh { A}}(\mba)$  must vanish in the vicinity of $\adh A$)
\begin{equation}\label{eq_specialization}
	\wca^j_p (M,A)=\wca_p^j(M,\adh A).
\end{equation}

Note also that in the case   $A=E=\emptyset$, Theorem \ref{thm_dense_formes} gives:
\begin{cor}\label{cor_A_vide_forme}For every $p\in [1,\infty)$ sufficiently large, if $M$ is normal then  $\cc^{j,\infty}(\mba)$ is dense  in $ \wca^j_p (M)$ for all $j$.
\end{cor}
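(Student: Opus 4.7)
The plan is essentially to read off this corollary as the trivial specialization of Theorem \ref{thm_dense_formes} to $A = E = \emptyset$. I would verify, one by one, that each hypothesis and each piece of data of the theorem collapses to the corresponding ingredient of the corollary under this choice.

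First, the assumption ``$M$ is connected along $\delta M \setminus A$'' becomes ``$M$ is connected along $\delta M$'', which by Definition \ref{dfn_normal} is precisely the requirement that $M$ be normal. Second, the dimension condition $j > \dim E$ becomes $j > \dim \emptyset = -1$, using the convention $\dim \emptyset := -1$ fixed in the notations; thus the condition is vacuous and all integers $j \ge 0$ are admissible. Third, the space $\wca^j_p(M,A)$ reduces to $\wca^j_p(M)$ by the very definition (\ref{eq_wmz}) of $\wca^j_p(M,A)$ when $A = \emptyset$.

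It remains to identify the approximating class. By definition, $\cc^{j,\infty}_{\mba \setminus \adh{E \cup A}}(\mba) = \cc^{j,\infty}_{\mba}(\mba)$, and by (\ref{eq_ccjU}) together with the alternative characterization recorded just after it (``$\supp_U \omega$ is compact''), setting $U = \mba$ imposes no constraint at all: $\mba$ is compact because $M$ is bounded, so $\supp_\mba \omega$ is automatically compact for every $\omega \in \cc^{j,\infty}(\mba)$. Hence $\cc^{j,\infty}_{\mba}(\mba) = \cc^{j,\infty}(\mba)$. Plugging these identifications into the conclusion of Theorem \ref{thm_dense_formes} yields exactly the density of $\cc^{j,\infty}(\mba)$ in $\wca^j_p(M)$ for every integer $j$, provided $p$ is large enough. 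There is no genuine obstacle here — the entire content of the corollary rests on Theorem \ref{thm_dense_formes} itself, whose proof (occupying Sections \ref{sect_j=0}, \ref{sect_mol}, \ref{sect_proof_dense_formes}) is the actual technical work.
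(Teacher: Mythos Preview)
Your proposal is correct and is exactly the paper's approach: the corollary is stated immediately after Theorem \ref{thm_dense_formes} with the remark that it is the special case $A=E=\emptyset$, with no further proof given. Your careful verification of each identification (normality, the dimension convention $\dim\emptyset=-1$, and $\cc^{j,\infty}_{\mba}(\mba)=\cc^{j,\infty}(\mba)$ by compactness of $\mba$) is accurate and, if anything, more explicit than what the paper records.
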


In the case where $M$ is not connected along $\delta M \setminus  A$, it is not difficult to see that the conclusion of the above theorem never holds (see \cite[Corollary 3.10]{trace} and its proof).
Of course, we can always normalize $M$ (Proposition \ref{pro_normal_existence} (\ref{item_normal_existence})), which means that we have the following fact, satisfying for many purposes.
\begin{cor}\label{cor_densite_non_normal}
 Let $E$ and $A$ be  definable subsets of $\delta M$. If $\dim E<j$ then, for all $p\in [1,\infty)$ sufficiently large, $\wca^j_\infty(M)\cap \cc_{\mba \setminus A\cup E}^{j,\infty}(M)$ is dense in $\wca^j_p(M,A)$.
\end{cor}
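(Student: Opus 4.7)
The plan is to reduce to Theorem~\ref{thm_dense_formes} by normalizing $M$. By Proposition~\ref{pro_normal_existence}~(\ref{item_normal_existence}) one can fix a $\cc^\infty$ normalization $h:\mc\to M$ extending continuously to $\hat h:\adh{\mc}\to\mba$, with $\mc$ normal and $h$ bi-Lipschitz. Set $\tilde A:=\hat h^{-1}(A)$ and $\tilde E:=\hat h^{-1}(E)$, both definable subsets of $\delta\mc$. The crucial geometric fact to be used is that $\hat h$ has finite fibers over $\delta M$: at any $x\in\delta M$, Theorem~\ref{thm_local_conic_structure} forces $M\cap\bou(x,\ep)$ to have finitely many connected components for $\ep$ small, and because $\mc$ is normal each such component lifts to exactly one point of $\hat h^{-1}(x)$. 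This gives $\dim\tilde E=\dim E<j$, which is what is needed to invoke Theorem~\ref{thm_dense_formes} on $\mc$.

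Since $\mc$ is normal it is connected along $\delta\mc\setminus\tilde A$, so Theorem~\ref{thm_dense_formes} applied on $\mc$ to the pair $(\tilde A,\tilde E)$ will give, for all $p\in[1,\infty)$ sufficiently large, density of $\cc^{j,\infty}_{\adh{\mc}\setminus\overline{\tilde E\cup\tilde A}}(\adh{\mc})$ in $\wca^j_p(\mc,\tilde A)$. The pullback isomorphism $h^*:\wca^j_p(M)\to\wca^j_p(\mc)$ from (\ref{eq_pullback}) restricts to an isomorphism $\wca^j_p(M,A)\to\wca^j_p(\mc,\tilde A)$: given $\tilde\beta\in\wca^{m-j-1}_{p',\mc\cup\tilde A}(\mc)$, the form $(h^{-1})^*\tilde\beta$ has support in $\mba$ contained in $\hat h(\mc\cup\tilde A)\subset M\cup A$, so the Dirichlet condition on either side transports to the other via $h^*$.

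Given $\omega\in\wca^j_p(M,A)$, the idea is to approximate $h^*\omega$ by a sequence $\tilde\omega_n\in\cc^{j,\infty}_{\adh{\mc}\setminus\overline{\tilde E\cup\tilde A}}(\adh{\mc})$ in $\wca^j_p(\mc)$ and set $\omega_n:=(h^{-1})^*\tilde\omega_n$ on $M$. Then $\omega_n\to\omega$ in $\wca^j_p(M)$ by the inverse isomorphism; $\omega_n\in\cc^{j,\infty}(M)$ because $h^{-1}$ is $\cc^\infty$ on $M$; and $\omega_n,\,d\omega_n$ are bounded on $M$ (the smooth $\tilde\omega_n$ and $d\tilde\omega_n$ are bounded on the compact $\adh{\mc}$ and $h^{-1}$ is Lipschitz), hence $\omega_n\in\wca^j_\infty(M)$. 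As $\hat h$ is a closed map on the compact $\adh{\mc}$, one has $\supp_\mba\omega_n\subset\hat h(\supp_{\adh\mc}\tilde\omega_n)$, and this image cannot meet $A\cup E$, for otherwise $\supp_{\adh\mc}\tilde\omega_n$ would intersect $\hat h^{-1}(A\cup E)=\tilde A\cup\tilde E$, contradicting its containment in $\adh{\mc}\setminus\overline{\tilde A\cup\tilde E}$. The main obstacle is precisely the dimension bound $\dim\tilde E\le\dim E$, i.e.\ the finiteness of the fibers of the normalization over the singular locus; without it, Theorem~\ref{thm_dense_formes} would no longer apply on $\mc$ with the transported set $\tilde E$ and the whole reduction would collapse.
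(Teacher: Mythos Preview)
Your proof is correct and follows exactly the approach the paper intends: the paper's ``proof'' is simply the one-line remark preceding the corollary (``we can always normalize $M$''), and you have carefully filled in all the details that this remark leaves implicit --- the dimension bound $\dim\tilde E\le\dim E$ via finiteness of fibers, the fact that $h^*$ carries $\wca^j_p(M,A)$ isomorphically onto $\wca^j_p(\mc,\tilde A)$, and the verification that pushed-forward approximations land in $\wca^j_\infty(M)\cap\cc^{j,\infty}_{\mba\setminus(A\cup E)}(M)$.
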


\begin{rem}\label{rem_p=infty}
Theorem \ref{thm_dense_formes} and Corollary \ref{cor_densite_non_normal} (of course) do not hold when $p=\infty$. The successive steps of the proof of the latter theorem however provide, given $\omega\in \wca^j_\infty(M,A)$, families of forms $\omega_\mu$ tending to $\omega$ pointwise (with pointwise convergence of $d\omega_\mu$) and satisfying $||\omega_\mu||_{\wca^j_\infty (M)} \le C$ for some constant $C$ (depending on $\omega$ but not on $\mu$), the  final obtained family having the property of vanishing near $A\cup E$.   This is enough to derive (\ref{eq_specialization}), Corollary \ref{cor_lsp_A} below, as well as Lefschetz duality, in the case $p=\infty$ or $1$. Hodge theorem is less clear in these cases as $L^1$ and $L^\infty$ spaces are not uniformly convex.
\end{rem}

  We make a point here that the above corollary provides approximations in the space $\wca^j_\infty(M)\cap \cc_{\mba \setminus A\cup E}^{j,\infty}(M)$ which are therefore not necessarily smooth up to $\delta M$. This is due to the fact that normalizations do not need to extend smoothly to the closure, but just continuously. 
This however suffices to derive our Lefschetz duality pairing:
 \begin{cor}\label{cor_lsp_A}
 For every $p\in [1,\infty]$ sufficiently large or close to $1$,  we have for every couple of forms $\alpha\in \wca^{j-1}_p(M,A)$ and $\beta\in \wca^{m-j}_{p'}(M,\delta M\setminus A )$, $1\le j\le m$:
 \begin{equation}\label{eq_lsp_A}
 	<d\alpha,\beta>=(-1)^j<\alpha,d\beta>.
 \end{equation}
  Consequently,  $\poi_{M,A}^j$ (see (\ref{eq_poima})) induces a mapping $$\poi_{M,A}^j: H^j_{p} (M,A)\to \Hom (H^{m-j}_{p'}(M,\delta M\setminus A),\R), $$ for every $j$ and each such $p$.
 \end{cor}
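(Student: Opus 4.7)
The plan is to derive the integration-by-parts identity (\ref{eq_lsp_A}) by approximating whichever of $\alpha$ or $\beta$ lies in the Sobolev space indexed by the larger exponent by smooth forms vanishing near the appropriate part of $\delta M$, and then to promote the result to cohomology by a direct expansion. The only substantial input is the density result Corollary \ref{cor_densite_non_normal}; once that is in hand the rest is a routine H\"older argument together with a sign bookkeeping.

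For (\ref{eq_lsp_A}) itself I would split into two symmetric cases. When $p$ is large, $p'$ is close to $1$, and Corollary \ref{cor_densite_non_normal} (applied with $E=\emptyset$, so that the hypothesis $\dim E<j-1$ holds for $j\ge 1$) furnishes a sequence $(\alpha_k)\subset \wca^{j-1}_\infty(M)\cap\cc^{j-1,\infty}_{\mba\setminus A}(M)$ converging to $\alpha$ in the $\wca^{j-1}_p(M)$-norm. Each $\alpha_k$ has $\supp_\mba\alpha_k$ compact in $\mba\setminus A=M\cup(\delta M\setminus A)$, hence $\alpha_k\in \wca^{j-1}_{p,M\cup(\delta M\setminus A)}(M)$. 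The defining vanishing of $\trd^{p'}\beta$ on $\delta M\setminus A$ then forces $\int_M d(\alpha_k\wedge\beta)=0$, which, expanded using $d(\alpha_k\wedge\beta)=d\alpha_k\wedge\beta+(-1)^{j-1}\alpha_k\wedge d\beta$, is exactly $<d\alpha_k,\beta>=(-1)^j<\alpha_k,d\beta>$. Since $\alpha_k\to\alpha$ and $d\alpha_k\to d\alpha$ in $L^p$ and $\beta,d\beta\in L^{p'}$, H\"older's inequality lets us pass to the limit and obtain (\ref{eq_lsp_A}). When $p$ is close to $1$, the dual exponent $p'$ is large; I apply the same argument with the roles of $\alpha$ and $\beta$ exchanged, approximating $\beta$ by a sequence of smooth $(m-j)$-forms compactly supported in $\mba\setminus(\delta M\setminus A)=M\cup A$ (which exists by Corollary \ref{cor_densite_non_normal} since $\dim\emptyset<m-j$ for $j\le m$) and invoking the vanishing of $\trd^p\alpha$ on $A$ against each approximant.

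For the cohomology statement I pick representatives with $\alpha_1-\alpha_2=d\gamma$, $\gamma\in\wca^{j-1}_p(M,A)$, and $\beta_1-\beta_2=d\delta$, $\delta\in\wca^{m-j-1}_{p'}(M,\delta M\setminus A)$, and decompose
$$<\alpha_1,\beta_1>-<\alpha_2,\beta_2>\;=\;<d\gamma,\beta_1>+<\alpha_2,d\delta>.$$
Applying (\ref{eq_lsp_A}) in degree $j$ to the first term gives $<d\gamma,\beta_1>=(-1)^j<\gamma,d\beta_1>=0$ since $\beta_1$ is closed; applying (\ref{eq_lsp_A}) in degree $j+1$ (where the pairing reads $<d\alpha_2,\delta>=(-1)^{j+1}<\alpha_2,d\delta>$) together with $d\alpha_2=0$ shows $<\alpha_2,d\delta>=0$. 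The extreme cases $j=0$ and $j=m$ only require one of the two instances of (\ref{eq_lsp_A}) (since one of $\gamma$ or $\delta$ would have negative degree and thus be absent), so both are harmless.

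The main thing to watch is the matching of degrees and signs when invoking (\ref{eq_lsp_A}) in different degrees for the cohomology calculation; everything else reduces to quoting Corollary \ref{cor_densite_non_normal} and applying H\"older.
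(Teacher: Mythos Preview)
Your argument is correct and follows the same approach as the paper: approximate the form lying in the space with the large exponent via Corollary~\ref{cor_densite_non_normal} (with $E=\emptyset$), invoke the defining vanishing condition for the other form, and pass to the limit by H\"older. The paper compresses your two cases into one by observing that the statement is symmetric under $(p,A,\alpha,j)\leftrightarrow(p',\delta M\setminus A,\beta,m-j+1)$, so it suffices to treat $p$ large; your explicit treatment of both cases is of course equivalent. One small point you glossed over: Corollary~\ref{cor_densite_non_normal} is stated for $p\in[1,\infty)$, so the endpoint cases $p=\infty$ (approximating $\alpha$) and $p=1$ (approximating $\beta$, where $p'=\infty$) are not literally covered by your argument; the paper handles this via Remark~\ref{rem_p=infty}, which provides bounded pointwise-convergent approximants sufficient to pass to the limit by dominated convergence.
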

\begin{proof}
	It is of course enough to prove the result for $p$ large. By the preceding corollary (with $E:=\emptyset$), there are arbitrarily close approximations of any given $\alpha\in \wca^{j-1}_p(M,A)$ by forms that are compactly supported in $\mba \setminus A$. Since we can pass to the limit, it thus suffices to establish (\ref{eq_lsp_A}) for such a form, which directly comes down from the definition  of $ \wca^{m-j}_{p'}(M,\delta M\setminus A )$ (see Remark \ref{rem_p=infty} if $p=\infty$).
\end{proof}

\subsection{The case $j=0$.}\label{sect_j=0} The proof of Theorem \ref{thm_dense_formes}, which will be inductive on $j$ (see step \ref{ste_j=0} below), will require a lemma in the case $j=0$ (Lemma \ref{lem_traTra}), which  already establishes the theorem in this case (see Remark \ref{rem_traTra}) and will also be of service in section \ref{sect_lp_coh}.
 The following  observation will be useful.

\begin{rem}\label{rem_vanishing_local}
 Vanishing on a subset $A$ of $\delta M$ is a local condition in the following sense: given $\alpha\in \wca^j_p(M)$, $\trd^p \alpha$ vanishes on $A$  if every point $\xo$ of this set has an open neighborhood $\uxo$ in $\mba$ such that $\alpha$ satisfies (\ref{eq_lsp_j}) for all $\beta\in \wca^{m-j-1}_{p',\uxo\cap (M\cup A)}(M)$. This is due to the fact that we can use a partition of unity subordinated to a finite covering of the support (in $\mba$) of a given element of $ \wca^{m-j-1}_{p',M\cup A}(M)$.
	\end{rem}

The proof of Lemma \ref{lem_traTra} will require the following fact, which will also be helpful to define our trace operator in the proof of Theorem \ref{thm_trace_formes}.

\begin{lem}\label{lem_beta}
Assume $\orn\in \delta M$ and let $\ep, N^\eta, M^\eta$ be as in section \ref{sect_the_retraction_r_s_and_R_t}. There is a closed stratifiable differential form $\beta\in \wca^{m-1}_{p}(M^\ep),$ with $p>1$, vanishing in the vicinity of every point of $ \bou(0,\ep)\cap\delta M\setminus \{0\}$ and  such that  for $\eta\in (0,\ep)$:
	\begin{equation}\label{eq_beta_S_geta}
		\int_{N^\eta} \beta =1  \et 	\int_{N^\eta} |\beta| \lesssim 1.
	\end{equation}
\end{lem}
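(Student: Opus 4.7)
The plan is to construct $\beta$ as the pullback of a suitably chosen compactly supported top form on $N^\ep$ under the radial projection arising from the Lipschitz conic structure. Applying Theorem \ref{thm_local_conic_structure} with $X:=M\cup\{\orn\}$ and $x_0=\orn$ yields the definable bi-Lipschitz homeomorphism $h:(0,1)\times N^\ep\to\mep\setminus\{\orn\}$, $h(\tau,z):=H(\tau z)$, of (\ref{eq h pour K_nu}). Since $H|_{N^\ep}=\mathrm{id}$, the restriction $h_\tau:N^\ep\to N^{\tau\ep}$ coincides with $r_\tau^\ep$ in the notation of subsection \ref{sect_the_retraction_r_s_and_R_t}. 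Define $\pi:\mep\setminus\{\orn\}\to N^\ep$ by $\pi(h(\tau,z)):=z$, and fix a smooth nonnegative $(m-1)$-form $\omega_0=f\,d\mathrm{vol}_{N^\ep}$ on $N^\ep$, compactly supported in the topological interior of $N^\ep$ inside $\sph(\orn,\ep)\cap\mba$, with $\int_{N^\ep}\omega_0=1$. The candidate is $\beta:=\pi^*\omega_0$; since $h$ (and hence $\pi$) is definable Lipschitz, it is $\cc^1$-stratifiable by \cite[Proposition $2.6.12$]{livre}, so $\beta$ is a stratifiable $(m-1)$-form on $\mep$.

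The closedness, support condition, and first identity in (\ref{eq_beta_S_geta}) are essentially immediate from the construction. By the compatibility of $d$ with pullbacks of stratifiable forms (see the remark following Theorem \ref{thm_stokes_leaves}), $d\beta=\pi^*d\omega_0=0$ because $\omega_0$ is a top form on $N^\ep$. The homeomorphism $h$ extends continuously to closures, since it preserves distance to $\orn$, so every point of $\bou(\orn,\ep)\cap\delta M\setminus\{\orn\}$ is the image of some $(\tau_0,z_0)$ with $z_0\in\delta N^\ep$; the compact-support hypothesis on $\omega_0$ then forces $\beta$ to vanish in a neighborhood of any such point. Finally, $\pi\circ h_\tau=\mathrm{id}_{N^\ep}$ gives $h_\tau^*\beta=\omega_0$, whence $\int_{N^{\tau\ep}}\beta=\int_{N^\ep}\omega_0=1$ for every $\tau\in(0,1)$.

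The real content is the uniform bound $\int_{N^\eta}|\beta|\lesssim 1$ and the $L^p$-bound for some $p>1$. Set $f_0(\tau,z):=\frac{\pa h}{\pa\tau}(\tau,z)=DH(\tau z)\cdot z$. Since $D\pi(f_0)=0$, $\beta$ annihilates $f_0$, so $\beta$ is a scalar multiple of $\iota_{f_0}d\mathrm{vol}_M$; evaluating $h_\tau^*\beta=\omega_0$ on an orthonormal frame of $T_zN^\ep$ determines the scalar and produces the pointwise identity
\[|\beta(h(\tau,z))|=\frac{|f(z)|\cdot|f_0|}{\jac h(\tau,z)}.\]
Differentiating $\rho\circ h(\tau,z)=\tau\ep$ in $\tau$ yields $\pa\rho\cdot f_0=\ep$, which combined with the usual decomposition of $\jac h$ along the radial and spherical directions gives $\jac h(\tau,z)=\ep J_\tau(z)/|\pa\rho(h(\tau,z))|$, where $J_\tau:=\jac h_\tau$. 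Substituting and changing variables $y=h_\tau(z)$ with Jacobian $J_\tau$ causes $J_\tau$ to cancel, and
\[\int_{N^{\tau\ep}}|\beta|=\int_{N^\ep}|f(z)|\cdot\frac{|f_0|\cdot|\pa\rho|}{\ep}\,d\mathrm{vol}_{N^\ep}\lesssim\int_{N^\ep}|f|\lesssim 1,\]
using $|f_0|\lesssim\ep$ by the Lipschitz property of $H$ and $|\pa\rho|\le 1$. The same computation with $|\beta|^p$ in place of $|\beta|$ gives $\int_{N^{\tau\ep}}|\beta|^p\lesssim J_\tau^{1-p}\lesssim\tau^{-\nu(p-1)}$ by (\ref{eq_jacr_s}) together with $h_\tau=r_\tau^\ep$, and integrating in $\eta=\tau\ep$ via (\ref{eq_coarea_sph}) converges for $p<1+1/\nu$, establishing $\beta\in\wca^{m-1}_p(\mep)$ for any such $p$. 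The main technical obstacle is the algebraic reduction $\beta=c\,\iota_{f_0}d\mathrm{vol}_M$ and the resulting pointwise formula for $|\beta|$; once it is in hand, the remaining estimates are a bookkeeping exercise with the \L ojasiewicz exponent $\nu$ of (\ref{eq_jacr_s}).
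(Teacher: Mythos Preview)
Your proof is correct and follows the same construction as the paper: pull back a compactly supported top form $\beta_N$ on $N^\ep$ via the projection $\pi$ induced by the Lipschitz conic structure, i.e.\ $\beta=h^{-1*}\pi^*\beta_N$. Your norm computation is more explicit—you derive the pointwise identity $|\beta(h(\tau,z))|=|f(z)|\,|f_0|/\jac h(\tau,z)$ via the identification $\beta=c\cdot\iota_{f_0}d\mathrm{vol}_M$ and then decompose $\jac h$ into radial and spherical parts—whereas the paper simply observes that $\beta|_{N^\eta}=h_{\eta/\ep}^{-1*}\beta_N$ and changes variables directly; both routes lead to the same $L^p$ threshold $p<1+1/\nu$.
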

\begin{proof}
	The mapping $H:0*N^\ep \to M^\ep\cup \{0\}$ (provided by applying Theorem \ref{thm_local_conic_structure} to the germ of $M\cup \{0\}$, as explained in section  \ref{sect_the_retraction_r_s_and_R_t}) induces a mapping  $h :(0,1)\times N^\ep \to \mep$, $h (t,x):=H(t x)$. 
	Take then any $\beta_N\in \cc_0^{m-1,\infty}(N^\ep)$  satisfying $\int_{N^\ep}\beta_N=1$,  
	and set \begin{equation}\label{eq_beta_S}\beta:=h^{-1*}\,\pi^*\beta_N,\end{equation} 
	where $\pi: (0,1)\times N^\ep \to N^\ep $ is the canonical projection.
	As $\beta_N$ is compactly supported,  $\beta$ must vanish in the vicinity of every point of $ \bou(0,\ep)\cap\delta M\setminus \{0\}$, and since $ \beta_N$ is smooth, $\beta$ is a stratifiable form (see section \ref{sect_stokes_formula}). 
By construction, for each $\eta<\ep$ we have	$$\int_{N^\eta}\beta=\int_{N^\ep}\beta_N=1. $$
	  Observe also that for every $t \in (0,1) $, $ h$ induces a bi-Lipschitz map $h_{t}:N^\ep \to N^{t\ep}$, $x\mapsto  h(t, x)$. As a matter of fact,  since $\beta_N$ is an $(m-1)$-form on $N^\ep$, we have for almost every $x\in N^\eta$, with $\eta<\ep$
	\begin{equation}\label{eq_hetay}|\beta(x )|\overset{(\ref{eq_beta_S})}{=} | \beta_N \left(h^{-1}_{\eta/\ep}(x)\right)|\cdot  \jac h_{\eta/\ep}^{-1}(x),
	\end{equation} 
which yields the estimate claimed in (\ref{eq_beta_S_geta}). 

 It just remains to check that $\beta\in \wca^{m-1}_{p}(\mep)$ for some $p>1$, which, since this form is closed (for so has to be $\beta_N$), reduces to show that it is $L^{p}$ for such a $p$.
 Since $h$ is bi-Lipschitz on $(t_0 ,1)\times N^\ep$ for every $t_0\in (0,1)$,  in virtue of
	\L ojasiewicz's inequality, there must be $k>0$ such that for almost every $(t,x)\in  (0,1)\times N^\ep $
	\begin{equation}\label{eq_jac_heta}
		t^k\lesssim \jac h_{t}(x).                                                                                                                                                                                                                                                                                                                                                                                       \end{equation}
		We thus can estimate the $L^{p}$ norm of $\beta$ as
	\begin{eqnarray*}||\beta||^{p}_{L^{p}(\mep)}&\overset{(\ref{eq_coarea_sph})}{\lesssim}& \int_0^\ep \int_{N^\eta} |\beta(x)|^{p}\,dx\,d\eta \\
		&\overset{(\ref{eq_hetay})}\lesssim& \int_0^\ep \int_{ N^\eta} |\beta_N \left(h_{\eta/\ep}^{-1}(x)\right)|^{p} \cdot (\jac h_{\eta/\ep}^{-1}(x))^{p}\,dx\,d\eta \\
		&=& \int_0^\ep  \int_{ N^\ep} |\beta_N \left(x\right)|^{p}\cdot  \jac  h_{\eta/\ep}(x)^{1-p}\,dx\,d\eta\\
		& \overset{(\ref{eq_jac_heta})}\lesssim & \int_0^\ep  \eta^{k(1-p)} \left(\int_{x\in N^\ep} |\beta_N (x)|^{p} \,dx\right)d\eta\\
		& =&\left( \int_0^\ep  \eta^{k(1-p)}  d\eta\right)\cdot    ||\beta_N||_{L^{p}(N^\ep)}^{p},
	\end{eqnarray*}
	which is finite for $p<1+\frac{1}{k}$. 
\end{proof}

In   (\ref{eq_trace_fn_plarge}), we defined a trace operator for functions in the case ``$p$ large''. It is natural to wonder whether for such $p$, given $u\in \wca^0_p(M)$, the vanishing of $\trd^p u$ is equivalent to the vanishing of $\tra\, u$. The answer is  positive:

\begin{lem}\label{lem_traTra} Let $A$ be a definable subset of $\delta M$.
For each $p\in [1,\infty]$ sufficiently large we have for all  $u\in W^{1,p}(M)$: $\trd^p u\equiv 0$ on $A$ if and only if $\tra\, u=0$ on this set.
\end{lem}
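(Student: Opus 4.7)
The plan is to reduce to the normal case and then prove the two directions separately, using the density result of Theorem \ref{thm_trace} for $(\Leftarrow)$ and a residue computation built on Lemma \ref{lem_beta} for $(\Rightarrow)$. Take a $\cc^\infty$ normalization $h:\mc\to M$ (Proposition \ref{pro_normal_existence}) and let $\tilde u:=u\circ h$, $\tilde A:=h^{-1}(A)$ (via the continuous extension $\adh{\mc}\to\mba$ from Proposition \ref{pro_normal_existence} (\ref{item_unique})). Since $h$ is a bi-Lipschitz bijection from $\mc$ onto $M$, the pullback $h^*:W^{1,p}(M)\to W^{1,p}(\mc)$ is a topological isomorphism (see (\ref{eq_pullback})), compatible with the integration pairing (\ref{eq_pairing}) and intertwining the pointwise trace $\tra$ (defined via inner-metric H\"older continuity, see the paragraph above (\ref{eq_trace_fn_plarge})). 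Hence $\trd^p u\equiv 0$ on $A$ iff $\trd^p \tilde u\equiv 0$ on $\tilde A$, and $\tra\, u=0$ on $A$ iff $\tra\, \tilde u=0$ on $\tilde A$, so we may assume $M=\mc$ is normal and $A=\tilde A$.

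For $(\Leftarrow)$, assume $\tra\, u=0$ on $A$. With $M$ normal (so $l=1$ and $\tra\, u\in L^\infty(\delta M)$), for $p$ large the pointwise trace $\tra$ and the continuous $L^p$-trace $\tra_Y:W^{1,p}(M)\to L^p(Y,\hn^{\dim Y})$ from Theorem \ref{thm_trace} (ii) both extend $\varphi\mapsto \varphi_{|Y}$ from $\cc^\infty(\mba)$ (dense in $W^{1,p}(M)$ by Theorem \ref{thm_trace} (i)), and so coincide a.e.\ on every $Y$. Fix a stratification $\st$ of $A$: then $u\in \bigcap_{Y\in \st}\ker \tra_Y$, and Theorem \ref{thm_trace} (iii) provides $u_n\in \cc^\infty_{\adh M\setminus \adh A}(\mba)$ with $u_n\to u$ in $W^{1,p}(M)$. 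Each $u_n$ vanishes near $\adh A$, so $\trd^p u_n\equiv 0$ on $A$ holds trivially. Since $u\mapsto \trd^p u$ is continuous $W^{1,p}(M)\to \wca^{m-1}_{p'}(M)'$ (by H\"older), passing to the limit yields $\trd^p u\equiv 0$ on $A$.

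For $(\Rightarrow)$, fix $x_0\in A$; translate so $x_0=\orn$ and use the notation $M^\eta, N^\eta, \ep$ of subsection \ref{sect_the_retraction_r_s_and_R_t}. By normality $\tra\, u(\orn)$ is a single value $v$, which we must show is $0$. Apply Lemma \ref{lem_beta} to produce a closed stratifiable $\beta\in \wca^{m-1}_{p_0}(\mep)$ for some $p_0>1$, vanishing near $\bou(\orn,\ep)\cap\delta M\setminus\{\orn\}$, with $\int_{N^\eta}\beta=1$ and $\int_{N^\eta}|\beta|\lesssim 1$ for every $\eta<\ep$. For $p$ large enough that $p'\le p_0$, fix $\chi\in \cc^\infty([0,\ep))$ with $\chi\equiv 1$ on $[0,\ep/2]$ and $\supp\chi\subset [0,3\ep/4]$, and set $\beta_0(x):=\chi(|x|)\beta(x)$, extended by $0$ to $M$. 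Then $\beta_0\in \wca^{m-1}_{p',M\cup A}(M)$: its closure in $\mba$ meets $\delta M$ only at $\orn\in A$, $\beta_0$ inherits the $L^{p_0}$ bound of $\beta$, and $d\beta_0=\chi'(|x|)d|x|\wedge \beta$ is $L^\infty$ supported in $\{\ep/2\le|x|\le 3\ep/4\}$. The hypothesis applied to $\beta_0$ reads $\int_M d(u\beta_0)=0$. Splitting the integral on $P_\eta:=M^\ep\setminus M^\eta$ and on $M^\eta$, the $W^{1,p}$-version of Stokes on $P_\eta$ (combined with the vanishing of $\beta_0$ on $N^\ep$, as $\chi(\ep)=0$, and near $\delta M\setminus \{\orn\}$) gives $\int_{P_\eta}d(u\beta_0)=-\int_{N^\eta}u\beta$, while $\int_{M^\eta}d(u\beta_0)\to 0$ by H\"older as $\eta\to 0$. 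Hence
\begin{equation*}
0=\int_M d(u\beta_0)=-\lim_{\eta\to 0}\int_{N^\eta}u\beta=-v,
\end{equation*}
where the last equality uses the inner-metric H\"older continuity of $u$ (available for $p$ large) to replace $u$ by $v$ in the limit, together with $\int_{N^\eta}\beta=1$ and $||\beta||_{L^1(N^\eta)}\lesssim 1$. Thus $v=0$, as desired.

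The main obstacle is the construction and analysis of the test form $\beta_0$ in $(\Rightarrow)$: it must simultaneously be admissible for the condition $\trd^p u\equiv 0$ on $A$ (hence the cutoff $\chi$ enforcing compact support inside $\bou(\orn,\ep)$), remain closed on a neighborhood of $\orn$ so that the residue computation returns exactly the trace $v$, and satisfy $\int_{N^\eta}\beta_0=1$ for all small $\eta$. Once this normalized, admissible, locally-closed form is in hand, inner-metric H\"older regularity of $u$ (available only when $p$ is large) turns the passage to the limit into a routine estimate.
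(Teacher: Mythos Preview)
Your argument is correct and matches the paper for the normalization step and the $(\Leftarrow)$ direction. For $(\Rightarrow)$ you take a simpler pointwise route than the paper: the paper fixes a stratum $S\ni x_0$ of dimension $k$, uses the bi-Lipschitz trivialization $\Lambda:U_{x_0}\to F\times W_{x_0}$, applies Lemma \ref{lem_beta} to the $(m-k)$-dimensional fiber $F$, and tests against $\pi^*\varphi\wedge\beta$ for all $\varphi\in\cc_0^{k,\infty}(W_{x_0})$, obtaining the identity $\int_S\tra\,u\cdot\varphi=<\trd^p u,\pi^*\varphi\wedge\beta>'$ which recovers $\tra\,u_{|S}$ as a distribution on $S$. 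You instead apply Lemma \ref{lem_beta} to $M$ itself at each $x_0\in A$ and extract the single value $\tra\,u(x_0)$ by a residue limit over $N^\eta$. Your version avoids the trivialization entirely, which is a genuine simplification.

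Two points should be made explicit. First, the ``$W^{1,p}$-version of Stokes on $P_\eta$'' is not supplied by the paper: Theorem \ref{thm_stokes_leaves} requires differentiable stratified forms, and $u\beta_0$ is not one since $du$ is only $L^p$. The paper resolves the analogous step by approximating $u$ in $W^{1,p}$ by smooth $u_l\in\cc^\infty(\mba)$ (Theorem \ref{thm_trace} (i)), applying stratified Stokes to $u_l\beta_0$, and passing to the limit in $l$ first; you should do the same. Second, the exponent $p_0>1$ from Lemma \ref{lem_beta} depends on the germ of $M$ at $x_0$, so your threshold ``$p'\le p_0$'' a priori varies with $x_0\in A$; you need $\inf_{x_0\in A}p_0(x_0)>1$, which follows from definable bi-Lipschitz triviality (the germ is constant along each of finitely many strata, cf.\ the argument for (\ref{eq_pm_borne})). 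The paper's stratum-by-stratum organization handles this automatically, since there is only one threshold per stratum.
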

\begin{proof}Taking a normalization if necessary, we can assume that $M$ is normal. Of course, we have to take $p$ sufficiently large for $\tra$ to be well-defined on $W^{1,p}(M)$ (see (\ref{eq_trace_fn_plarge})).

We first focus on the ``if'' part.  By Theorem \ref{thm_trace}, for every sufficiently large  $p$,  each function $u\in W^{1,p}(M)$ that satisfies $\tra\, u=0$ on $A$ can be approximated by a sequence of $\cc^\infty$ functions $\varphi_\nu$ that vanish in the vicinity of $\adh{A}$. Hence, for each $\omega\in \wca^{m-1}_{p',M\cup A}(M)$, equality (\ref{eq_lsp_j}) holds for the pair $\omega , \varphi_\nu$  (see Remark \ref{rem_vanishing_local}). Passing to the limit in this equality shows that it also holds for the pair $\omega,u$ for each such $\omega$, and thus that $\trd^p u\equiv 0$ on $A$.
 
 To prove the converse,  take  a definably bi-Lipschitz trivial stratification $\Sigma$ of $\mba$ compatible with $A$ and $\delta M$.                                                                                                                                                                                                                                                                                                                                                     Fix  $\xo\in \delta M$ and denote by $S$ the stratum that contains this point.

  By definition of definable bi-Lipschitz triviality, there are an open neighborhood $\uxo$ in $\mba$ and a definable bi-Lipschitz homeomorphism $\Lambda: U_\xo \to (\pi^{-1}(\xo)\cap \adh{M})\times W_\xo$, where $\pi:U_\xo \to S$ is a $\cc^\infty$ retraction and $W_\xo$ is a neighborhood of $\xo$ in $S$ (here $\dim S$ may be $0$, in which case $\Lambda$ is the identity).
 We will identify $U_\xo \cap M$ with $F\times W_{x_0}$ and $\pi$ with the canonical projection.   Let $F:=\pi^{-1}(\xo)\cap M$, $F^\eta:=F\cap \bou(\xo ,\eta)$, and $G^\eta:=F\cap  \sph(\xo,\eta)$.
  
 Let $\beta\in \wca^{m-k-1}_{q}(F^\ep)$, $q>1$, be obtained by applying  Lemma \ref{lem_beta} with $M=F$, which is a smooth definable manifold-germ at $x_0$ of dimension $(m-k)$, where $k:=\dim S$.  Below, we sometimes regard $\beta$ as a form on $F^\ep\times W_\xo$, identifying it with $\mu^*\beta$, where $\mu:F^\ep\times W_\xo\to F^\ep $ is the canonical projection. As the result is local in $\delta M$ (we can use a partition of unity), we can assume that   $u$ is compactly supported in $U_\xo$. 
 By Theorem \ref{thm_trace}, $u$ admits arbitrarily  close approximations  $u_l\in \cc^\infty(\mba)$, $l\in \N$, that we may assume  to be compactly supported in $\uxo$ (since so is $u$).  For every $\varphi\in \cc_0^{k,\infty} (W_\xo)$,  we then can write, assuming $p\ge q'$:
 \begin{eqnarray}\label{eq_traphi}
 	\int_S \tra\, u\cdot  \varphi &=&    \lim_{l\to \infty} 	\int_S \tra \,u_l \cdot \varphi\nonumber\\
 	&\overset{(\ref{eq_beta_S_geta})}=&  \lim_{l\to \infty} \int_{y\in W_\xo} \int_{x\in G^\eta}  \tra\,u_l(y)\cdot  \beta(x)\wedge \varphi(y)\quad \mbox{ (for $\eta>0$ small)}\nonumber\\
 	 	&=&  \lim_{l\to \infty} \lim_{\eta \to 0} \int_{y\in W_\xo} \int_{x\in G^\eta}  u_l(x,y)\cdot \beta(x)\wedge \pi^*\varphi(x,y),
 	  \end{eqnarray}
since $u_l$ is smooth.  As  $\beta$ is  stratifiable, so is the form $ u_l(x,y)\cdot \beta(x)\wedge \pi^*\varphi(x,y)$ (since $u_l$ and $\varphi$ are smooth), which  means that it satisfies Stokes' formula (\ref{eq_stokes_stratified_leaf}). Note that for $\eta>0$ small, $P^\eta:=F\times W_\xo \setminus (F^\eta \times W_\xo)$ is a manifold with boundary $G^\eta \times W_\xo$ on which $u_l(x,y)\cdot \beta(x) \wedge \pi^*\varphi(x,y)$ is compactly supported (for so is  $u_l$ in  $\uxo$  and $\adh{\supp\, \beta} $ can only meet $\delta M$ at points of $S$).  By Stokes' formula  (\ref{eq_stokes_stratified_leaf}), we have \begin{eqnarray} \label{eq_tr_geta}
\lim_{\eta\to 0}	\int_{G^\eta \times W_\xo}u_l(x,y)\cdot \beta(x) \wedge \pi^*\varphi(x,y) =\lim_{\eta\to 0}\int_{P^\eta} d(u_l(x,y)\cdot \beta(x) \wedge \pi^*\varphi(x,y))\nonumber\\
=\int_{\uxo} d(u_l(x,y)\cdot \beta(x) \wedge \pi^*\varphi(x,y))=< \trd^p u_l, \pi^*\varphi \wedge\beta>',
\end{eqnarray}
by definition of $\trd^p$.  Passing to the limit as $l$ tends to infinity, we get thanks to (\ref{eq_traphi})
   \begin{eqnarray*}\label{eq_int_betauphi}
 	\int_S \tra\, u\cdot  \varphi	=  \lim_{l\to \infty} < \trd^p u_l, \pi^*\varphi \wedge\beta>'=< \trd^p u, \pi^*\varphi \wedge\beta>'.
 \end{eqnarray*}
We claim that if $\trd^p u\equiv 0$ on $S$ then the right-hand side is equal to $0$ for all $\varphi\in \cc_0^{k,\infty} (W_\xo)$, which will prove that $\tra u=0$ on $S$.  To see this, let $\phi:U_\xo \to \R$ be a $\cc_0^\infty$ function which is $\equiv 1$ near  $\supp_{\mba} u$. As $\phi\cdot \pi^*\varphi \wedge\beta$ has compact support in $M\cup S$ and $\trd^p u$ vanishes on $S$ by assumption, (\ref{eq_lsp_j}) holds true for the pair $u,\phi\cdot \pi^*\varphi \wedge\beta$. But
 since $\phi$ is $1$ near $\supp_\mba u$, this shows (\ref{eq_lsp_j})   for the pair $u,  \pi^*\varphi \wedge\beta$, which is our claim.
\end{proof}

\begin{rem}\label{rem_traTra}
 Since $\wca^0_p (M)=W^{1,p}(M)$, the above lemma entails, in virtue of Theorem \ref{thm_trace}, that Theorem \ref{thm_dense_formes} holds in the case $j=0$.
\end{rem}

\begin{subsection}{Mollifying with parameters.}\label{sect_mol}
One ingredient of the smoothing
process of the proof of Theorem \ref{thm_dense_formes} will be  a ``mollifying operator along the strata''. We give preliminary details in this subsection.

Let $F\subset\R^n$ be a subanalytic manifold  and $p\in [1,\infty)$.
For  $\omega\in L^j_p(F\times\R^k)$ and $\psi\in L^1(\R^k)$,
we define a $j$-form 	$\omega*_k\psi$ on $F\times\R^k $ by setting for almost every $(x,y)\in F\times\R^k$ and $\xi\in \otimes^j (T_x F\times \R^k)$
\begin{equation}\label{mol}
	\omega*_k\psi(x,y)(\xi)=\int_{\R^k}\omega(x,z)(\xi)\cdot \psi(y-z)\,dz=\int_{\R^k}\omega_{\xi}(x,z)\cdot \psi(y-z)\,dz.
\end{equation}

\begin{lem}\label{lem_conv} Let  $\omega\in L^j_p(F\times\R^k)$, $p\in [1,\infty)$.
	\begin{enumerate}[(i)]
		\item For  $\psi\in L^1(\R^k)$,
		we have\begin{eqnarray}\label{eq_young_k}
			||\omega*_k\psi||_{L^p(F\times\R^k)}\lesssim ||\omega||_{L^p(F\times \R^k)} ||\psi||_{L^1( \R^k)}.
		\end{eqnarray}
		Moreover,  for  $\beta\in L^{l+k-j}_{p'}(F\times \R^k)$, $l:=\dim F$, we have if we let $\adh\psi(y):=\psi(-y)$
		\begin{equation}\label{eq_conv_currents}
			<\omega*_k\psi ,\beta>=	<\omega,\beta*_k\adh\psi>.
		\end{equation}		
			\item	If $\psi\in W^{1,1}(\R^k)$ then for all $i\le k$
 \begin{equation}\label{eq_conv_der_psi}\frac{\pa }{ \pa y_i}(\omega*_k\psi)=\omega*_k\frac{\pa \psi}{ \pa y_i}.\end{equation}
\item If $d\omega$ (resp. $\frac{\pa \omega}{\pa y_i}$ for all $i\le k$) is $L^p$ then \begin{equation}\label{eq_conv_der}
d(\omega*_k \psi)=d\omega*_k \psi \qquad \quad\mbox{(resp. $\frac{\pa}{\pa y_i}(\omega*_k\psi)=\frac{\pa \omega}{\pa y_i}*_k \psi$).} 		                                                                                         \end{equation}

		 \item\label{item_conv_lem} Let $\varphi:
\R^k\to \R$ be a nonnegative smooth compactly supported function satisfying $\int \varphi =1$, and set $\varphi_\sigma(x):=\frac{1}{\sigma^k}\varphi(\frac{x}{\sigma})$. If $d\omega$  is $L^p$ then $\omega*_k\varphi_\sigma$ tends to $\omega$ in $\wca^j_p(F\times \R^k)$ as $\sigma\to 0$.
	\end{enumerate}
\end{lem}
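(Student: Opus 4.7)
The plan is to treat each part as a standard mollifier result adapted to the product structure $F\times\R^k$, working in local coordinates on $F$ to reduce to scalar convolution in the $\R^k$ variable.

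First, a setup paragraph. In local coordinates $(x,y)$, write $\omega=\sum_{|I|+|J|=j}\omega_{I,J}(x,y)\,dx^I\wedge dy^J$ with $\omega_{I,J}\in L^p(F\times\R^k)$. Because $\xi$ is evaluated in the natural splitting $T(F\times\R^k)=TF\times \R^k$, the definition (\ref{mol}) reduces coefficient-wise to the ordinary scalar convolution: $(\omega*_k\psi)(x,y)=\sum_{I,J}(\omega_{I,J}*_k\psi)(x,y)\,dx^I\wedge dy^J$, where $(f*_k\psi)(x,y)=\int_{\R^k} f(x,z)\psi(y-z)\,dz$. This reformulation reduces everything to known scalar facts.

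For (i), the pointwise norm estimate $|\omega*_k\psi|(x,y)\le (|\omega|(x,\cdot)*|\psi|)(y)$ (using any fixed basis of unit $j$-multivectors) combined with scalar Young's inequality in $y$ and Fubini in $x$ yields (\ref{eq_young_k}). The adjoint identity (\ref{eq_conv_currents}) is obtained by writing the pairing as $\int_{F\times\R^k}(\omega*_k\psi)\wedge\beta$, expanding the inner convolution, applying Fubini, and performing the change of variables $u=y-z$ to recognize $(\beta*_k\adh\psi)(x,z)=\int\beta(x,z+u)\psi(u)\,du$; the integrability required for Fubini follows from Young's inequality in (i).

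For (ii), differentiation under the integral sign applies coefficient-wise since $\psi\in W^{1,1}(\R^k)$ implies $\frac{\pa(f*_k\psi)}{\pa y_i}=f*_k\frac{\pa\psi}{\pa y_i}$ in the sense of distributions for every $f\in L^p$; this is the standard scalar fact, and the coordinate decomposition above transfers it directly to forms. For (iii), I would work at the level of currents: for any $\beta\in\cc_0^{l+k-j-1,\infty}(F\times\R^k)$,
\begin{align*}
\langle d(\omega*_k\psi),\beta\rangle' &= (-1)^{j+1}\langle\omega*_k\psi,d\beta\rangle \\
&= (-1)^{j+1}\langle\omega,d\beta*_k\adh\psi\rangle
\end{align*}
by (\ref{eq_conv_currents}). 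For a smooth $\beta$, $d\beta*_k\adh\psi=d(\beta*_k\adh\psi)$ follows by direct differentiation under the integral (using (ii) on the $y$-partials, and the fact that the $x$-partials of $\beta$ can also be pulled inside because $\beta$ is smooth and compactly supported, so dominated convergence applies). Thus the last expression equals $\langle d\omega,\beta*_k\adh\psi\rangle=\langle d\omega*_k\psi,\beta\rangle$, proving $d(\omega*_k\psi)=d\omega*_k\psi$ as currents. The analogous identity for $\pa/\pa y_i$ is the same argument restricted to one direction.

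For (iv), approximation of the identity: for each fixed $x$, the scalar mollifier $\varphi_\sigma$ satisfies $\omega_{I,J}(x,\cdot)*\varphi_\sigma\to\omega_{I,J}(x,\cdot)$ in $L^p(\R^k)$ for a.e.\ $x$, and by (\ref{eq_young_k}) $||\omega_{I,J}*_k\varphi_\sigma-\omega_{I,J}||_{L^p(F\times\R^k)}\to 0$ as $\sigma\to 0$ by the dominated convergence theorem in $x$ combined with uniform $L^p$-control ($||\varphi_\sigma||_{L^1}=1$). Summing over $(I,J)$ gives $\omega*_k\varphi_\sigma\to\omega$ in $L^p_j(F\times\R^k)$; applying the same fact to $d\omega\in L^p$ and invoking (iii) gives $d(\omega*_k\varphi_\sigma)=d\omega*_k\varphi_\sigma\to d\omega$ in $L^p_{j+1}$, which is the desired convergence in $\wca^j_p$. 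The only mild obstacle across the whole lemma is (iii), where I must resist the temptation to differentiate $\omega$ in $x$ (it has no classical $x$-derivatives) and instead keep all differentiation on the smooth test form, pushing it through convolution via (ii).
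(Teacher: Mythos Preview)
Your proof is correct and close in spirit to the paper's, but with two small methodological differences worth flagging. For (iii), the paper simply asserts that (\ref{eq_conv_der_psi}) and (\ref{eq_conv_der}) ``follow from the fact that we can differentiate under the integral (\ref{mol}), when suitable integrability conditions hold,'' whereas you route the $d$-identity through the adjoint formula (\ref{eq_conv_currents}) and test against smooth compactly supported $\beta$; your version is more explicit about avoiding any classical $x$-differentiation of $\omega$, though you should note that $\beta*_k\adh\psi$ need not be compactly supported, so one tacitly approximates $\psi$ in $L^1$ by compactly supported functions before invoking the distributional definition of $d\omega$. For (iv), the paper reduces to the case $\omega\in\cc^{j,\infty}_0(F\times\R^k)$ by density and then uses uniform convergence of mollifiers, while you argue directly via fiberwise $L^p$-convergence of mollifiers in $y$ combined with dominated convergence in $x$ (the dominating function being $2^p\|\omega_{I,J}(x,\cdot)\|_{L^p(\R^k)}^p$). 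Both routes are standard; yours avoids the density step, the paper's avoids the DCT bookkeeping.
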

\begin{proof}
	In this proof, given any mapping $\alpha$ on $F\times \R^k$ and $x\in F$ (resp. $y\in \R^k$), we denote by $\alpha(x,\cdot)$ (resp. $\alpha(\cdot,y)$) the mapping defined as $\R^k\ni z\mapsto \alpha(x,z)$ (resp. $F\ni z\mapsto \alpha(z,y)$). Notice that for each $x\in F$, (\ref{mol}) may be rewritten as
	\begin{equation}\label{eq_ompsi_norm}
	\omega*_k\psi(x,\cdot)(\xi)=\omega_\xi(x,\cdot) * \psi,
\end{equation}where $*$ is the usual convolution product, so that we can write, using Young's inequality,
 	\begin{equation*}
\int_{\R^k}|\omega*_k\psi(x,y)(\xi)|^pdy\overset{(\ref{eq_ompsi_norm})}= \int_{\R^k}|(\omega_\xi(x,\cdot) * \psi)(y)|^pdy\le || \omega_\xi(x,\cdot) ||^p_{L^p(\R^k)}\cdot ||\psi||^p_{L^1( \R^k)}.
 	\end{equation*}
 Since we can take $\xi$ successively equal to all the vectors of some basis of  $\otimes^j (T_x F\times \R^k)$, this entails for almost every $x$:
 $$\int_{\R^k}|\omega*_k\psi(x,y)|^p \,dy\lesssim || \omega(x,\cdot) ||^p_{L^p(\R^k)}\cdot ||\psi||_{L^1( \R^k)}^p,$$
with a constant  independent to $x$ and $\omega$.	By Fubini's Theorem, after integrating with respect to $x$, we get (\ref{eq_young_k}). Moreover,  Fubini's  Theorem also  yields (\ref{eq_conv_currents}) ((\ref{eq_young_k}) establishes that  the involved forms are integrable).

 Equalities (\ref{eq_conv_der_psi}) and (\ref{eq_conv_der}) follow from the fact that we can differentiate under the integral (\ref{mol}), when suitable integrability conditions hold. Thanks to (\ref{eq_conv_der}),  $(\ref{item_conv_lem})$ reduces to prove that $\omega*_k \varphi_\sigma$ tends to $\omega$ in the $L^p$ norm as $\sigma$ tends to zero for all $\omega\in L^j_p(F\times \R^k)$ ((\ref{eq_young_k}) guarantees that $ \omega *_k \varphi_\sigma$ is $L^p$). By (\ref{eq_young_k}) and the density of $\cc^\infty_0$ in the $L^p$ spaces, we can assume $\omega\in \cc^{j,\infty}_0(F\times \R^k)$. For such $\omega$, as $(\omega_\xi(x.\cdot)*\varphi_\sigma)(y)$ tends to $\omega_\xi(x,y)$ uniformly in $\xi$ (unit multivector), $x\in F$, and $y\in\R^k$ (see \cite{adams}, p. 37), the needed fact  follows from (\ref{eq_ompsi_norm}).
\end{proof}
\end{subsection}

\begin{subsection}{Proof of Theorem \ref{thm_dense_formes}}\label{sect_proof_dense_formes} Most of the arguments used below apply for $p$ large enough, and, for the sake of being more concise, we will not mention it.
 Fix a definably bi-Lipschitz trivial stratification $\Sigma$ of $\mba$  compatible with $\adh E, A, \adh A$, and $\delta M$.  We will assume that $M$ is a stratum of $\Sigma$ (see Remark \ref{rem_M_stratum}).

The proof is split into seven steps.  The first six steps will be purely local at a boundary point, that we will assume to be the origin in order to simplify notations. In the last step, we will glue together these local approximations  by means of a partition of unity.

Let us thus start by introducing some notations necessary to the first six steps. Assume $\orn\in \delta M$ and let $S\in\Sigma$ be the stratum containing this point. As our problem will be local, we will identify $S$ with  $\{0\}\times (-1,1)^k$, where $k:=\dim S$.

By definition of definable bi-Lipschitz triviality, there is an open neighborhood $U$ of $\orn$ for which we can find a definable bi-Lipschitz homeomorphism
      \begin{equation} \label{eq_Lambda}
  \Lambda: U \to (\pi^{-1}(0)\cap \adh{M})\times (-1,1)^k,
      \end{equation}
 satisfying $\Lambda(\pi(x))=\pi(\Lambda(x))$, where $\pi:U \to U\cap S$ is a $\cc^\infty$ definable retraction. We then set
     \begin{equation} \label{eq_F}
F:=\pi^{-1}(0)\cap M \;\;\et \;\;F^\eta:=F\cap \bou(0 ,\eta),\quad \eta>0,
    \end{equation}
    as well as
 \begin{equation}\label{eq_UV} U^\eta:=F^\eta\times (-1,1)^k \et V^\eta :=(\adh F\cap \bou(0,\eta))\times (-1,1)^k.  \end{equation}
   Apply Theorem \ref{thm_local_conic_structure} to the germ of $\adh F$ at $0$, and let $r_s^F:\adh F\cap \bou(0,\ep) \to \adh F\cap \bou(0,\ep) $, $\ep>0$, be a retraction by deformation given by this theorem. We will assume that $r_s^F$  preserves the strata (for $s\in (0,1]$, see Remark \ref{rem_preserve}), which entails that if we set
 \begin{equation}\label{eq_Aep}
 	A^\ep := V^\ep \cap \Lambda(A) \,\;\et\,\; A_0^\ep :=A^\ep \cap \pi^{-1}(0)
 \end{equation}
 then $r_s^F$ preserves $A_0$. Note that, by Definition \ref{dfn_trivial} (ii), $\Lambda$ trivializes $A$ along $S$, i.e.
 \begin{equation}\label{eq_A_trivial}
 	A^\ep=A_0^\ep\times (-1,1)^k.
 \end{equation}

  We  work on $U^\ep$ during the first $6$ steps and  pull-back by means of $\Lambda$ in step $7$ the approximations constructed in the preceding steps. The idea of steps $2-6$  is to focus on the differential $j$-forms on $U^\ep$ whose partial derivatives along $S$ are $L^p$ (step \ref{ste_dense} establishes that such forms are dense), constructing some approximations of these forms on $U^\ep$ by induction on $j$ (in step \ref{ste_type_0}) and on the ``type''  of the considered form (in step \ref{ste_induc_type}), which is defined as follows.
 
\medskip

 \noindent{\bf Forms of type $l$.} We denote by $e_1,\dots, e_k$ the canonical basis of $\{0_{\R^n}\}\times \R^k$.
 We will say that $\omega\in \wca^j_p(U^\ep)$ is {\bf of type $l$} if for every $i>l$ we have for all $(x,y)\in U^\ep$ and all  $\xi\in\otimes^{j-1} T_{(x,y)}U^\ep$:
$$\omega(x,y) \xi \otimes e_i= 0.$$
Of course, if $\omega$ is of type $l$, it is also of type $l'$ for all $l'>l$. 
We can regard a form of type $0$ as a family of forms $\omega_y$ on $F^\ep$, parameterized by $y\in (-1,1)^k$, setting 
 \begin{equation}\label{eq_type_param} \omega_y(x):=\omega(x,y),\quad \mbox{ for   $(x,y)\in F^\ep\times (-1,1)^k$}.\end{equation}


 Given a measurable differential form $\omega(x,y)$ on $U^\ep$ 
and a multi-index $d\in \N^k$, we  set (as usual) $\frac{\pa^{|d|} \omega}{\pa y^d}(x,y):=\frac{\pa^{d_1}}{\pa y_1^{d_1}}\cdots \frac{\pa^{d_k}\omega}{\pa y_k^{d_k}} (x,y),$
with $\frac{\pa^0 \omega}{\pa y_i^0} (x,y):=\omega$.
 As explained just above, we will work with forms that have $L^p$ partial derivatives with respect to $y$ (at all orders). Namely, given $0\le j \le m$, $p\in [1,\infty)$, and $0\le l\le k$, we introduce:

\noindent{\bf The space $\E^{l,j}_p$.}   We denote by $\E^{l,j}_p$ the space of $j$-forms  $\omega $ of type $l$ on $U^\ep$ satisfying
\begin{enumerate}[(i)]
\item\label{item_der_par} $\frac{\pa^{|d|} \omega}{\pa y^d}(x,y) \in  \wca^j_p(U^\ep, A^\ep)$, for all $d\in \N^k$.
\item\label{item_vi} $\omega \equiv 0$ near $\{(x,y)\in F^\ep \times [-1,1]^k: \exists i\le k ,\;\;  y_i=-1\}$.
\end{enumerate}
We then put $\E^{l,j}_{p,V^\ep}:=\{\omega \in \E^{l,j}_p: \omega\mbox{ has compact support in $V^\ep$}\}$.

  The above condition (\ref{item_vi}) is required to establish (\ref{eq_A_l_hom_op}) in step \ref{ste_induc_type}. The definition of  $\E^{l,j}_p$ does not demand forms to be compactly supported in $V^\ep$ (which would be less technical than the above  assumption (\ref{item_vi})) for this requirement would not be preserved by the operator $\mathbf{A}_l$ constructed in step \ref{ste_induc_type}.

 The strategy of the first step is that, since $U^\ep=F^\ep \times (-1,1)^k$, we can approximate every form of  $ \wca^j_{p,V^\ep}(U^\ep, A^\ep)$ by elements of  $\E^{k,j}_{p,V^\ep}$ using the mollifying operators constructed in section \ref{sect_mol}.


\medskip

\begin{ste}\label{ste_dense}
 $\E^{k,j}_{p,V^\ep}$ is dense in $ \wca^j_{p,V^\ep}(U^\ep, A^\ep)$.
\end{ste}
\begin{proof}Take  $\omega\in  \wca^j_{p,V^\ep}(U^\ep,A^\ep)$ and let $\varphi_\sigma$ be as in $(\ref{item_conv_lem})$ of Lemma \ref{lem_conv}. Thanks to this lemma, we just have to check that  $ \omega*_k \varphi_\sigma\in \E^{k,j}_{p,V^\ep}$ for $\sigma>0$ small (see (\ref{mol}) for $*_k$).  Since $\omega$ is compactly supported in $V^\ep$, so is $ \omega*_k \varphi_\sigma$ for $\sigma>0$ small,  which yields property (\ref{item_vi})  of the definition of $\E^{k,j}_{p}$ for  $ \omega*_k \varphi_\sigma$. Moreover, by (\ref{eq_conv_der_psi}) and (\ref{eq_conv_der}), $\frac{\pa^{|d|} }{\pa y^d}(\omega*_k \varphi_\sigma)$, as well as its exterior differential, is $L^p$, for all $d\in \N^k$.

It remains to show that $\trd^p\left(\frac{\pa^{|d|}}{\pa y^d}(\omega*_k  \varphi_\sigma)\right)\equiv 0$ on $A^\ep$ for  $\sigma>0$ small and $d\in \N^k$, which, thanks to (\ref{eq_conv_der_psi}), reduces to show that $\trd^p(\omega*_k  \phi)$ vanishes on $A^\ep$ for every  $\phi\in \cc_0^\infty(\R^k)$ supported in a small neighborhood of the origin. Take $\beta\in \wca^{m-j-1}_{p',U^\ep\cup A^\ep}(U^\ep)$ and such $\phi$, and note that since  $\beta$ is zero near $\delta U^\ep\setminus A^\ep$, by (\ref{eq_A_trivial}),  $ \beta *_k \phi$ must be zero as well near this set (if $\supp \phi$ is small enough). As $\trd^p \omega$ vanishes on $A^\ep$, we can write (see (\ref{eq_conv_currents}) for $\adh\phi$)
$$< \omega*_k \phi,d\beta>\overset{(\ref{eq_conv_currents})}=<\omega, d\beta*_k \adh\phi>\overset{(\ref{eq_conv_der})}=<\omega, d(\beta*_k \adh\phi)>$$
$$\overset{(\ref{eq_lsp_j})}=(-1)^{j+1}<d\omega, \beta*_k \adh\phi>\overset{(\ref{eq_conv_currents})}=(-1)^{j+1}<d\omega*_k \phi, \beta>\overset{(\ref{eq_conv_der})}=(-1)^{j+1}<d(\omega*_k \phi), \beta> ,$$
which shows that $\trd ^p (\omega*_k \phi)$ vanishes on $A^\ep$, completing  step \ref{ste_dense}.
\end{proof}


 To avoid any confusion, we will denote by $d^F$ (resp. $d^S$) the exterior differential operator of $F$ (resp. $S$), while $d$ will stand for the operator of $U^\ep$.
 Given a differential form $\omega(x,y)$  on $U^\ep$, we  denote by $\hat d^S$ the ``extension'' of $d^S$, defined as:
 $$\dsh \omega(x,y):=\sum_{i=1} ^k dy_i \wedge \frac{\pa \omega}{\pa y_i}(x,y).$$
 Set then
 \begin{equation}\label{eq_d_dlf}
 	\dfh \hskip-0.5mm\omega:=  d\omega-\dsh \omega.
 \end{equation}
 Recall that a form $\omega$ of type $0$ can be regarded as a family of forms $\omega_y$ on $F^\ep$, $y\in (-1,1)^k$ (see (\ref{eq_type_param})). Given such a form $\omega$, we then have:
 \begin{equation}\label{eq_dsf}\dfh \hskip-0.5mm\omega(x,y):=  d^F\omega_y (x), \quad \mbox{for}\quad (x,y)\in F^\ep \times (-1,1)^k.\end{equation}

Let us emphasize that if $\omega \in \E^{l,j}_p$, $l\le k$, then, due to (\ref{item_der_par}) of the definition of  $\E^{l,j}_p$, $\dsh \omega$ must be $L^p$, and thus, so is $ \dfh \hskip-0.5mm\omega$. We also have:

 \begin{ste}\label{ste_fa}
  If $\omega \in \E^{0,j}_p$ then for almost every $y\in (-1,1)^k$, we have $\omega_y \in \wca^j_p (F^\ep,A_0^\ep)$ (see (\ref{eq_type_param}) for $\omega_y$ and (\ref{eq_Aep}) for $A_0^\ep$).
 \end{ste}
\begin{proof}
 Take such a form $\omega$.  In virtue of Fubini's Theorem, $\omega_y \in \wca^j_p (F^\ep)$ for almost every $y\in (-1,1)^k$. We thus just have to check that it vanishes on $A_0^\ep$ (in the trace sense).
 
  For this purpose, take  $\beta\in \wca^{m-k-j-1}_{p', F^\ep \cup A_0^\ep}(F^\ep)$.
For $y\in S$ fixed and $\sigma>0$, define an $(m-j-1$)-form $\beta^y_\sigma$ on $F^\ep \times \R^k$ by setting
 \begin{equation}\label{eq_betaeta}\beta_{\sigma}^{y}(x,z):=\varphi_\sigma (y-z)\cdot \beta(x) \wedge dy_1\wedge \dots \wedge dy_k,  \end{equation}
   where $\varphi_\sigma$ is as in Lemma \ref{lem_conv} $(\ref{item_conv_lem})$.
 Observe that (since  $\dsh \beta_\sigma^y =0$) \begin{equation}\label{eq_dbetat}
 d \beta_{\sigma}^{y}(x,z)\overset{(\ref{eq_d_dlf})}=\dfh  \beta_\sigma^{y}(x,z)= \varphi_\sigma (y-z)d^F\beta(x) \wedge dy_1\wedge \dots \wedge dy_k,
             \end{equation}
so that, by (\ref{eq_A_trivial}),  $\beta^y_\sigma$ must belong to $ \wca^{m-k-j-1}_{p', U^\ep \cup A^\ep}(U^\ep)$, for $\sigma>0$ small.                             As $\trd^p \omega$ vanishes on $A^\ep$, this entails that  for almost every $y$ in $S$ we have
             \begin{equation}\label{eq_fep_van_sigma}<\omega,d\beta_\sigma^{y}>=(-1)^{j+1} <d\omega,\beta_\sigma^{y}>\overset{(\ref{eq_d_dlf})}=(-1)^{j+1}<\dfh \omega, \beta_\sigma^{y}>,             \end{equation}
since $\dsh \omega\wedge\beta_\sigma^y =0$. 
We first show the desired fact for the form $\omega*_k \varphi_\sigma$, as follows:
\begin{eqnarray}\label{eq_*y}
<(\omega*_k \varphi_\sigma)_y,d^F\beta>_{F^\ep}&\overset{(\ref{mol})}=&\int_{x\in F^\ep}\int_{\R^k} \varphi_\sigma(y-z)\omega(x,z)\wedge d^F\beta(x)\, dz \;\;\nonumber\\
&\overset{(\ref{eq_dbetat})} =&\int_{(x,z)\in F^\ep\times\R^k}\omega(x,z)\wedge d\beta_\sigma^y(x,z)\nonumber\\
 &\overset{(\ref{eq_fep_van_sigma})} =&(-1)^{j+1}\int_{ F^\ep\times\R^k }\dfh\omega\wedge \beta_\sigma^y \nonumber\\
 &\overset{(\ref{eq_betaeta})} =&(-1)^{j+1}\int_{x\in F^\ep} \left(\int_{\R^k}\varphi_\sigma(y-z)\dfh \omega_z(x)\wedge \beta(x) \,  dz\right)\nonumber\\
&\overset{(\ref{mol})}=&(-1)^{j+1}<(\dfh \omega*_k \varphi_\sigma)_{ y}, \beta>_{F^\ep}.
\end{eqnarray}
   Lemma \ref{lem_conv} $(\ref{item_conv_lem})$ shows that $\omega*_k \varphi_\sigma$  tends to $\omega$  in  $\wca_p^j(U^\ep)$, which entails that $ (\omega*_k \varphi_\sigma)_{y}$  tends to $\omega_{y}$  in  $\wca^j_p(F^\ep)$  for almost every $y$ in $(-1,1)^k$.
 Passing to the limit as $\sigma\to 0$ in (\ref{eq_*y}) thus yields $\omega_{y} \in \wca^j_p (F^\ep,A_0^\ep)$, for almost every $y$, completing  step \ref{ste_fa}.
%
 \end{proof}

Recall that we have applied Theorem \ref{thm_local_conic_structure} to $\adh F$, which provided a retraction by deformation $r^F$, which now gives rise  to an operator  $\rfr^F$ (defined as in  (\ref{eq_rfr}) with $r=r^F$).
Since forms of type $0$ can be regarded as families of forms on $F^\ep$ (like in  (\ref{eq_type_param})),  $\rfr^F$ induces in turn
  an operator $\rfh$ on $\E^{0,j}_p$  for all $j$ defined by:
 \begin{equation}\label{eq_rfh_dfn}
(\rfh \omega)_y:= \rfr^F\hsk \omega_y, \quad y\in(-1,1)^k.
 \end{equation}
Furthermore, by (\ref{eq_r_borne_m}) and Proposition \ref{pro_rR_hom_op},  there is  $C>0$ such that for each $\omega \in \E^{0,j}_p$, $j\ge 1$, we have for almost all $y\in (-1,1)^k$ and all $\eta\le \ep$:
\begin{equation}\label{eq_rf_ueta}||\rfr^F \hsk\omega_y||_{L^p(F^\eta)} \le C\eta ||\omega_y||_{L^p(F^\eta)}
\et  ||d^F\rfr^F\hsk \omega_y||_{L^p(F^\eta)} \le C ||\omega_y||_{\wca^j_p(F^\eta)}.\end{equation}
By (\ref{eq_rfh_dfn}) and (\ref{eq_dsf}), integrating with respect to $y$, we get for each such $\omega$:
\begin{equation}\label{eq_rfh_ueta}
 ||\rfh \omega||_{L^p(U^\eta)} \le C \eta  ||\omega||_{L^p(U^\eta)}  \et
 ||\dfh \rfh \omega||_{L^p(U^\eta)} \le C   ||\omega||_{\wca^j_p(U^\eta)}.
\end{equation}


\begin{ste}\label{ste_df}If  $\omega\in \E^{0,j}_p $, $j\ge 0$, then $\dfh \omega\in \E^{0,j+1}_p $ and $\rfh \omega\in \E^{0,j-1}_p $.
\end{ste}
\begin{proof} Fix  $\omega\in \E^{0,j}_p $, $ j\ge 0$. We first focus on $  \rfh  \omega$.
 As $\rfh$ commutes with $\frac{\pa^{|d|} }{\pa y^d} $ for all $d\in \N^k$,  (\ref{eq_rf_ueta}) implies that for $y\in (-1,1)^k$ and  all such $d$ we have:  $$||\,\big{(}\frac{\pa^{|d|}\rfh \omega}{\pa y^d}\big{)}_y\,||_{\wca^{j-1}_p(F^\ep)}\lesssim ||\,\big{(}\frac{\pa^{|d|} \omega}{\pa y^d}\big{)}_y\,||_{\wca^j_p(F^\ep)},$$
with a constant independent of $y$.
 This entails that  $\frac{\pa^{|d|}\rfh  \omega }{\pa y^d}$ belongs to $L^{j-1}_p(U^\ep)$ for all $d$.  This also shows that $\dsh \rfh  \omega$ as well as all its partial derivatives with respect to $y$ are $L^p$, which, thanks to (\ref{eq_d_dlf}) and (\ref{eq_rfh_ueta}), implies  that so is $d\rfh  \omega$ (as well as its partial derivatives).  
 
 It thus just remains to check that $\trd^p (\frac{\pa^{|d|}}{\pa y^d} \rfh  \omega)$  vanishes on $A^\ep$ for all $d\in \N^k$. As $\rfh$ commutes with  $\frac{\pa}{\pa y_i}$, we can assume $d=0$.
 Thanks to step \ref{ste_fa}, we already know that $\trd^p \omega_y\equiv 0$  on $A_0^\ep$ for almost every $y$, which,  by Proposition \ref{pro_r_preserve_vanishing} (applied with $M=F$ and hence $\rfr=\rfr^F$), entails that $\trd^p (\rfh \omega)_y\equiv 0$  on $A_0^\ep$. Hence, if $\alpha \in \wca^{m-k-j}_{p',F^\ep \cup A^\ep_0}(F^\ep)$ then we have for almost every $y\in S$:
\begin{equation}\label{eq_vanish0}
 <d^F\hsk(\rfh\omega)_y,\alpha>= (-1)^j < (\rfh \omega)_y,d^F\hsk\alpha >.
\end{equation}
We have to prove that for all $\beta\in\wca^{m-j}_{p',U^\ep\cup A^\ep} (U^\ep)$,
\begin{equation}\label{eq_step_df}
 <d\,\rfh  \omega,\beta>=(-1)^j<\rfh \omega, d \beta>.
\end{equation}
 Take such a form $\beta$ having compact support in $V^\ep$ (see Remark \ref{rem_vanishing_local}). Possibly replacing it with $\beta*_k \varphi_\sigma$, where $*_k$  is the convolution defined in (\ref{mol}) and $ \varphi_\sigma$ is as in Lemma \ref{lem_conv} $(\ref{item_conv_lem})$, and eventually passing to the limit as $\sigma\to 0$ (see Lemma \ref{lem_conv}), we see that we can assume that $\frac{\pa^{|d|} \beta}{\pa y^d}$ is $L^p$ for all $d\in \N^k$.  Moreover,   thanks to (\ref{eq_d_dlf}), it suffices to show (\ref{eq_step_df}) for the operators $\dfh$ and $\dsh $ (instead of $d$).  For the latter operator, as $\rfh$ commutes with $\frac{\pa}{\pa y_i}$ for all $i$ and $\rfh \omega$ is smooth with respect to $y$ (and $\beta$ was assumed to be compactly supported in $V^\ep$), this easily follows by integration by parts with respect to the $y_i$'s. To show it for the operator $\dfh$, observe that,
 since $\omega$ is of type $0$,  if we set $\bar\beta:=\beta_{e_1\otimes \dots\otimes e_k}$ (we recall that $e_1, \dots, e_k$ is the canonical basis of $\{\orn\} \times \R^k$), we have:
  $$(\dfh\hskip0mm\rfh\,\omega \wedge\beta)_{e_1\otimes \dots\otimes e_k}= \dfh\hskip0mm\rfh\,\omega\wedge \bar\beta \et \rfh\omega\wedge \dfh\hskip-1mm\beta=(\rfh\omega\wedge \dfh\hskip-1mm\bar\beta)_{e_1\otimes \dots\otimes e_k} .$$
As a matter of fact, (\ref{eq_step_df}) for $\dfh$ ensues after applying (\ref{eq_vanish0}) with $\alpha=\bar\beta_y$, for each $y\in S$,  and integrating it with respect to $y$ (since $(\rfh  \omega)_y=\rfr^F \omega_y$ and  $(\dfh\rfh  \omega)_y=d^F\rfr^F \omega_y$, see (\ref{eq_rfh_dfn})).

We now show that $\dfh\hsk\omega\in \E^{0,j+1}_p$. Since, by  (\ref{item_der_par}) of the definition of $\E^{0,j}_p$,  $\frac{\pa  \omega}{\pa y_i}(x,y) \in \wca^j_p(U^\ep, A^\ep)$, 
for all $i$, we see that $\dsh \omega$ belongs to $\wca^{j+1}_p(U^\ep, A^\ep)$, and consequently, so does $\dfh\hsk\omega=d\omega -\dsh \omega$. As $\dfh$ commutes with $\frac{\pa }{\pa y_i} $ for all $i$, the same argument applies to show $\frac{\pa^{|d|} \dfh \hsk\omega}{\pa y^d}(x,y) \in \wca^{j+1}_p(U^\ep, A^\ep)$, for all $d\in \N^k$, completing step \ref{ste_df}.
\end{proof}


  Given a (nonzero) differential form $\omega$ on an open subset of $M$, let then
\begin{equation}\label{eq_kappa}
  \kappa(\omega):= \min \{ i\in\{0,\dots,m\} :  \supp_\mba \, \omega  \cap S\ne\emptyset, \mbox{ with $S\in \Sigma$ and $\dim S=i$} \}.
\end{equation}
  In step \ref{ste_final}, we will argue by decreasing induction on $\kappa(\omega)\in \{0,\dots,m \}$. Below, we will use $\kappa(\omega)$ although $\omega$ is defined on $U^\ep$ (instead of $M$) to in fact mean $\kappa(\Lambda^* \omega)$ where $\Lambda $ is as defined in (\ref{eq_Lambda}). This is because we identify strata with their images under $\Lambda$.
 
 As step \ref{ste_final} will be carried out by decreasing induction on $\kappa(\omega)$,  we will then know, given a form $\omega$ on $U^\ep$, that we can approximate the forms $\alpha$ satisfying $\kappa (\alpha)>\kappa (\omega)=k=\dim S$.  This is why steps \ref{ste_j=0}, \ref{ste_type_0}, and \ref{ste_induc_type} consist of  proving by induction on $l$ (see (\ref{item_kappa}) below)
   that we can approximate a form $\omega\in \E^{l,j}_p$ by a form $\blj(\omega)$ that can be written as the sum of a form $\alpha_\mu$ satisfying $\kappa(\alpha_\mu)>k$ and a smooth form $\beta_\mu$:
 
 \medskip 
 
 \noindent {${(\mathbf H_l})$.} For each $j\in \{0,1,\dots, m\}$ and each $\mu>0$, there is a linear mapping $\blj: \E^{l,j}_p\to \E^{l,j}_p$ such that for every $\omega\in \E^{l,j}_p$, we have:
 \begin{enumerate}
    \item\label{item_conv} $ \blj(\omega)$ tends to  $\omega$ in ${\wca^j_p(U^\ep)}$  as $\mu$ tends to zero.
\item\label{item_kappa}
  $\blj(\omega)=\alpha_\mu +  \pi^*\beta_\mu  ,$
  with $\alpha_\mu \in\wca^j_p(U^\ep,A^\ep)$ such that $\kappa(\alpha_\mu)>k=\dim S$ and $\beta_\mu\in \cc^{j,\infty}                                                                                                                   (S)$. Moreover, $\beta_\mu=0$ if $S\subset \adh{A}$ ($\beta_\mu$ is indeed the trace of $\blj(\omega)$ on $S$).
  \item \label{item_pay} $\blj$ commutes with $\frac{\pa }{\pa y_i}$ for all $i$, that is to say, $\frac{\pa }{\pa y_i} \blj (\omega)=\blj (\frac{\pa \omega}{\pa y_i} )$.
 \end{enumerate}

We will first  define $\mathbf{B}^{0,j}_\mu$ claimed in {${(\mathbf H_{0}})$} by induction on $j$ (in steps  \ref{ste_j=0} and \ref{ste_type_0}) and then  define  $\mathbf{B}_\mu^{l,j}$  by induction on $l$ (in step  \ref{ste_induc_type}). We recall that the assumption ``$p$ large enough'' is omitted in this proof. The definition of $\mathbf{B}^{l,j}_\mu$  will involve the  ``cut-off function'' $\hat\psi_\mu:U^\ep \to \R$, defined for $(x,y)\in U^\ep$ as:
\begin{equation}\label{eq_hatpsi_mu}
 \hat\psi_\mu(x,y):=\psi_\mu(|x|),
\end{equation}
where $\psi_\mu$ is as displayed in (\ref{eq_psieta}).


 \begin{ste}\label{ste_j=0}  We construct the operator  $\mathbf{B}_\mu^{0,j}$ appearing in {${(\mathbf H_{0}})$} in the case $j=0$.
 \end{ste}
 \begin{proof}
            We claim that for each $\mu>0$ small the operator
$$ \mathbf{B}_\mu^{0,0}(u)(x,y):=\hat\psi_\mu(x,y) \cdot  \rfh\, \dfh u(x,y)+\tra\, u(y),\qquad u\in  \E^{0,0} _p,$$
for $(x,y)\in F^\ep \times (-1,1)^k$,  has the required properties (see (\ref{eq_hatpsi_mu}) for $\hat\psi_\mu$).

Note that it follows from step \ref{ste_df} (and Lemma \ref{lem_traTra}, which entails  that $\tra\, u=0$ on $S$ when $S\subset A$) that $ \mathbf{B}_\mu^{0,0}(u)\in  \E^{0,0} _p$.
 In order to check condition (\ref{item_conv}) of {${(\mathbf H_{0}})$}, let us first prove that
\begin{equation}\label{eq_lim_hatpsi_prB}
	\lim_{\mu \to 0} ||\hat\psi_\mu \cdot \rfh  \,\dfh u-\rfh \dfh u||_{\wca^0_p(U^\ep)}=0.
\end{equation}
 Indeed, since $\hat\psi_\mu \cdot \rfh  \dfh u$ tends to $\rfh \dfh u$ in the $L^p$ norm ($ \rfh  \dfh u$ is $L^p$, by  step \ref{ste_df}, and $\hat \psi_\mu$ tends to $1$ pointwise), it suffices to show the analogous fact for their respective exterior differentials, i.e. that  $d(\hat\psi_\mu\cdot  \rfh  d u)$ tends to $d\rfh \dfh u$.  Notice that, as  $\dsh \hat\psi_\mu=0$, we have
 $$d( \hat\psi_\mu \cdot \rfh  \dfh u) \overset{(\ref{eq_d_dlf})}=\dfh\hat\psi_\mu \cdot \rfh  \dfh u + \hat\psi_\mu \cdot d \rfh  \dfh u.$$
 It follows from (\ref{eq_psieta_ineq}) and the first inequality of (\ref{eq_rfh_ueta}) that the first term of the right-hand-side tends to zero in the $L^p$ norm as $\mu\to 0$. It follows from step \ref{ste_df} (which entails that  $d \rfh  \dfh u$ is $L^p$) and Lebesgue's theorem (since $\hat \psi_\mu$ tends to $1$ pointwise) that the second term converges to the desired limit, yielding (\ref{eq_lim_hatpsi_prB}).

We claim that $u$ extends continuously on $S$. Indeed, this is true if $S\subset \delta M\setminus A$, since $M$ is connected along this set by assumption of the theorem (see the paragraph right before (\ref{eq_trace_fn_plarge}); it is worthy of notice that this is the only place in the proof where we make use of the assumption ``$M$ connected along $\delta M\setminus A$'', which is yet essential), and this is also true if $S\subset A$, since then $\tra u=0$ on $S$, which means that $u$ extends by zero. This yields our claim which entails that the function $F^\ep \ni x\mapsto u(x,y)$ satisfies  (\ref{eq_tra_et_r}) for almost each $y$.

Equality  (\ref{eq_lim_hatpsi_prB}) and the definition of $\mathbf{B}_\mu^{0,0}$ imply that  $\mathbf{B}_\mu^{0,0}(u)(x,y)$ tends to
$$\rfh \dfh u(x,y)+\tra\, u(y)\overset{ (\ref{eq_tra_et_r})}=u(x,y)$$ in the $\wca^0_p$ norm, yielding (\ref{item_conv}) of {${(\mathbf H_{0}})$} for $\mathbf{B}_\mu^{0,0}$.

The first sentence of (\ref{item_kappa}) follows from the definition of $\mathbf{B}^{0,0}_\mu$ and step \ref{ste_df}. Moreover, $(iii)$ of Theorem \ref{thm_trace} and Lemma \ref{lem_traTra} entail $\tra u=0$ on $S$ when $S\subset \adh A$, yielding the second sentence.
As $\rfh$ and $\dfh$ commute with  $\frac{\pa}{\pa y_i}$, (\ref{item_pay}) is also fulfilled, and step \ref{ste_j=0} is  complete.
\end{proof}


Let us emphasize that, due to (\ref{eq_r_homot_operator}),  we have for every $\omega\in \E_p ^{0,j}$, $j\ge 1$:
\begin{equation}\label{eq_r_hom}
 \rfh \dfh\hskip-1mm \omega +\dfh\rfh \omega=\omega.
\end{equation}


\begin{ste}\label{ste_type_0} We prove
 $({\mathbf H_{0}})$.
 \end{ste}
 \begin{proof}
We proceed by induction on $j$.  The case $j=0$ being established by step \ref{ste_j=0}, we fix $j\ge 1$ and assume that $\mathbf{B}^{0,j-1}_\mu$ has been constructed.
We are going to prove that for $\mu>0$
$$\mathbf{B}^{0,j}_\mu(\omega):=\hat\psi_\mu\cdot  \rfh \dfh\hskip-1mm \omega +\dfh\mathbf{B}^{0,j-1}_\mu(\rfh\omega), \qquad \omega\in \E^{0,j}_p,$$
  has the required properties (see (\ref{eq_hatpsi_mu}) for $\hat\psi_\mu$). Note that step \ref{ste_df} ensures that $\rfh\omega$ belongs to $\E^{0,j-1}_p$, and consequently, that $\mathbf{B}^{0,j}_\mu(\omega)$ is well-defined. 
  Step \ref{ste_df} also yields $\mathbf{B}^{0,j}_\mu(\omega)\in \E^{0,j}_p$.
  
We turn to check conditions (\ref{item_conv}), (\ref{item_kappa}), and (\ref{item_pay}) of $(\mathbf{H}_{0})$. Since $\hat\psi_\mu$ vanishes in the vicinity of $S$, condition (\ref{item_kappa}) of $(\mathbf{H}_{0})$, which holds true for $\mathbf{B}^{0,j-1}_\mu$, also holds for $\mathbf{B}^{0,j}_\mu $. Moreover,
as $\rfh, \mathbf{B}^{0,j-1}_\mu$, as well as $\dfh$, commute with $\frac{\pa }{\pa y_i}$ for all $i$, and since $\hat\psi_\mu(x,y)$ is constant with respect to $y$, it is clear that condition (\ref{item_pay}) of $(\mathbf{H}_{0})$ is fulfilled as well.

To check  (\ref{item_conv}), observe first that, as $\hat\psi_\mu\cdot  \rfh \dfh\hskip-1mm \omega$ converges to $\rfh \dfh\hskip-1mm \omega$ in the $L^p$ norm as $\mu$ goes to zero (since $ \rfh \dfh\hskip-1mm \omega$ is $L^p$, by (\ref{eq_rfh_ueta})) and $\dfh\mathbf{B}^{0,j-1}_\mu(\rfh\omega)$ tends to $\dfh\rfh\omega$ (by (\ref{item_conv}) for $\mathbf{B}^{0,j-1}_\mu$),  (\ref{eq_r_hom}) shows that $\mathbf{B}^{0,j}_\mu(\omega)$  tends  to $\omega$ in this norm. We thus just have to establish the $L^p$ convergence of $d\mathbf{B}^{0,j}_\mu(\omega)$ to $d\omega$. As $\frac{\pa }{\pa y_i}$ commutes with $ \mathbf{B}^{0,j-1}_\mu$, $\dfh $, and $\rfh $ for all $i$ we have:
$$d\mathbf{B}^{0,j}_\mu(\omega)\overset{(\ref{eq_d_dlf})}=d\hat\psi_\mu\wedge  \rfh \dfh\hskip-1mm \omega +\hat\psi_\mu\cdot   \dfh\hskip-1mm \omega +\sum_{i=1}^kdy_i\wedge\left(\hat\psi_\mu\cdot  \rfh \dfh \frac{\pa \omega}{\pa y_i}+\sum_{i=1}^k  \dfh\hskip-0.8mm \mathbf{B}^{0,j-1}_\mu(\rfh\frac{\pa \omega}{\pa y_i})\right).$$
For simplicity, we will denote the first term of the above sum by $a_\mu$, the second by $b_\mu$, and the remaining part by $c_\mu$.  Observe now that, by (\ref{eq_rfh_ueta}) for $\dfh\hskip -0.5mm  \omega$ and (\ref{eq_psieta_ineq}), $||a_\mu||_{L^p(U^\ep)}$  and $||b_\mu-\dfh\hskip -0.5mm \omega||_{L^p(U^\ep)}$ both go to zero as $\mu$ goes to zero. Note also that it  follows from (\ref{eq_r_hom})  and our induction on $j$ that $c_\mu$ tends to $\sum_{i=1}^k dy_i \wedge \frac{\pa \omega}{\pa y_i} $, from which we conclude that $d\mathbf{B}^{0,j}_\mu(\omega)$ tends to
\begin{equation*}
 \dfh\hskip -0.5mm \omega+\sum_{i=1}^k dy_i \wedge \frac{\pa \omega}{\pa y_i} =d\omega,
\end{equation*}
which  completes step \ref{ste_type_0}.
\end{proof}


 \medskip
 
 \begin{ste}\label{ste_induc_type} We prove $(\mathbf{H}_{l})$ for all $l\le k $.

\end{ste}
 \begin{proof}
We argue by induction on $l$, the case $l=0$ being provided by the preceding step.  Fix $j\le m$, and let us define $\blj$, for $l>0$.

Take for this purpose any $\alpha\in \E^{l,j'}_p$, with $j'\in \N$ (the integer $j'$ is going to stand below for $j$ or $(j+1)$).   We denote by $\mathbf{A}_l \alpha$ the partial integral of  $\alpha$ in the direction $e_l$, namely, we define a $(j'-1)$-form by setting for almost every $(x,y)\in F^\ep\times (-1,1)^k$:
 $$ \mathbf{A}_l \alpha(x, y):=\int_{-1} ^{y_l}\alpha_{ e_l} (x,y_1,\dots,y_{l-1},t,y_{l+1},\dots, y_k) \,dt.$$
We first make sure that  $ \mathbf{A}_l\alpha\in \E^{l-1,j'-1}_p$. It follows from Minkowski's integral inequality that $ \mathbf{A}_l\alpha$ is $L^p$. As $ d\mathbf{A}_l\alpha\overset{(\ref{eq_chain_homotopy})}=\alpha-\mathbf{A}_ld\alpha$ (by (\ref{item_vi}) of the definition of $\E^{l,j'}_p$, $\alpha(x,y)=0$ if $y_l$ is close to $-1$), we deduce that $\mathbf{A}_l\alpha\in \wca^{j'-1}_p(U^\ep)$ .  In order to show that $\trd^p\mathbf{A}_l\alpha$ vanishes on $A^\ep$, take a smooth form  $\beta \in \wca^{m-j'}_{p',U^\ep \cup A^\ep}(U^\ep)$. Note that if $f:[-1,1]^2\to \R$ is any $L^1$ measurable function then
$$\int_{-1}^1\int_{-1}^t f(s,t)\,ds\,dt=\iint_{-1\le s\le t\le 1}f(s,t)\,ds\,dt=\int_{-1}^1\int^{1}_s f(s,t)\,dt\,ds. $$
Consequently, for every $\gamma\in L^{m-j'+1}_{p'}(U^\ep)$,
\begin{equation}\label{eq_Astar1}<\mathbf{A}_l \alpha, \gamma>=(-1)^{j'}< \alpha, \mathbf{A}_l^*\gamma>, \end{equation}
where, for $y\in (-1,1)^k$
$$\mathbf{A}_l^*\gamma(x,y_1,\dots,y_k)=\int_{1} ^{y_l}\gamma_{ e_l} (x,y_1,\dots,y_{l-1},t,y_{l+1},\dots, y_k) \,dt.$$
We claim that 
\begin{equation}\label{eq_Astar2}< \alpha, d\mathbf{A}_l^*\beta>=(-1)^{j'+1}< d\alpha, \mathbf{A}_l^*\beta>. \end{equation}
To see this, take a $\cc^\infty$ function $\phi:\R^n \to \R$ which is $1$ on a neighborhood of  $\supp_{\adh{U^\ep}}\alpha$ and $0$ near $\{(x,y)\in F^\ep \times [-1,1]^k: \exists i\le k ,\;\;  y_i=-1\}$ (we recall that (\ref{item_vi}) of the definition of  $\E^{l,j'}_p$ requires that $\alpha\equiv 0$ near this set), and notice that    $\phi \mathbf{A}_l^*\beta$ has compact support in $U^\ep \cup A^\ep$. As $\trd^p \alpha$ vanishes on $A^\ep$, we thus know that (\ref{eq_lsp_j}) holds for the pair $\alpha, \phi \mathbf{A}_l^*\beta$. But since $\phi$ is $1$ on a neighborhood of    $\supp_{\adh{U^\ep}}\alpha$, this indeed yields  (\ref{eq_lsp_j})  for the pair $\alpha,  \mathbf{A}_l^*\beta$, which is precisely (\ref{eq_Astar2}).

To prove that $\trd^p\mathbf{A}_l\alpha$ vanishes on $A^\ep$,   it now suffices to write
$$(-1)^{j'}<\mathbf{A}_l \alpha, d\beta>\overset{(\ref{eq_Astar1})}=<\alpha,\mathbf{A}_l^* d\beta> \overset{(\ref{eq_chain_homotopy})}=<\alpha, \beta>-< \alpha, d\mathbf{A}_l^*\beta>$$
$$\overset{(\ref{eq_Astar2})}=<\alpha, \beta>+(-1)^{j'}< d\alpha, \mathbf{A}_l^*\beta>\overset{(\ref{eq_Astar1})}=<\alpha-\mathbf{A}_l d\alpha, \beta> \overset{(\ref{eq_chain_homotopy})}=<d\mathbf{A}_l \alpha, \beta>,$$
which establishes that  $\trd^p \mathbf{A}_l\alpha\equiv 0$ on $A^\ep$. We then deduce that so does  $\trd^p \frac{\pa^{|d|} \mathbf{A}_l\alpha}{\pa y^d}=\trd^p \mathbf{A}_l\frac{\pa^{|d|} \alpha}{\pa y^d}$  for all $d$ (since $\frac{\pa^{|d|} \alpha}{\pa y^d}$ also belongs to $\E^{l,j'}_p$),   which means that
 $ \mathbf{A}_l\alpha\in \E^{l-1,j'-1}_p$.
 
Let then for simplicity
\begin{equation}\label{eq_dlf}
	d_l\alpha:=\dfh\hskip -0.5mm \alpha+ dy_l\wedge \frac{\pa \alpha}{\pa y_l}\overset{(\ref{eq_d_dlf})}{=}d\alpha-\sum_{i=1,i\ne l}^k dy_i\wedge \frac{\pa \alpha}{\pa y_i} ,
\end{equation}
and notice that if $\alpha\in  \E^{l,j'}_p$ then,
as $ \alpha$ vanishes near $y_l=-1$ (in virtue of (\ref{item_vi}) of the definition of $\E^{l,j'}_p$ ), we have
\begin{equation}\label{eq_A_l_hom_op}
	\mathbf{A}_l d_l \alpha+d_l\mathbf{A}_l\alpha=\alpha.
\end{equation}

Define now the desired operator $\blj$ by setting for
 $\omega\in \E^{l,j}_p$
$$\blj(\omega):= \mathbf{B}_\mu ^{l-1,j}\left( \mathbf{A}_ld_l\omega\right) + d_l\mathbf{B}_\mu ^{l-1,j-1}\left(\mathbf{A}_l\omega\right) ,$$
and let us check that it satisfies  conditions (\ref{item_conv}), (\ref{item_kappa}), and (\ref{item_pay}) of $(\mathbf{H}_{l})$.

 Condition (\ref{item_kappa}) obviously follows from $(\mathbf{H}_{l-1})$ (since $\pi^*$ commutes with $d_l$), and,
since $ \mathbf{A}_l$ commutes with $\frac{\pa }{\pa y_i}$ for all $i$,   so does (\ref{item_pay}). To check (\ref{item_conv}),
observe first that thanks to (\ref{eq_A_l_hom_op}) together with $(\mathbf{H}_{l-1})$, we already know that $\blj(\omega)$ tends to $\omega$ in the $L^p$ norm. We thus just have to check the $L^p$ convergence of $d\blj(\omega)$ to $d\omega$. As $d_l \circ d_l =0$, by (\ref{eq_dlf}),
\begin{equation}\label{eq_dblj}
 d\blj(\omega)=d \mathbf{B}_\mu ^{l-1,j}\left( \mathbf{A}_ld_l\omega\right) + \sum_{i=1,i\ne l}^k dy_i\wedge d_l \mathbf{B}_\mu ^{l-1,j-1}\left(\mathbf{A}_l\frac{\pa \omega}{\pa y_i}\right).
\end{equation}
It directly follows from $(\mathbf{H}_{l-1})$ that the first term of the right-hand-side of this equality tends in the $L^p$ norm to $d\mathbf{A}_ld_l \omega$, and we have (since (\ref{eq_A_l_hom_op}) for $d_l \omega$ gives $d_l\mathbf{A}d_l\omega=d_l\omega$ and $\mathbf{A}_l$ commutes with $\frac{\pa }{\pa y_i}$ for all $i$):
 \begin{equation}\label{eq_conv1}
d\mathbf{A}_ld_l \omega\overset{(\ref{eq_dlf})}=d_l \omega+  \sum_{i=1,i\ne l}^kdy_i \wedge \mathbf{A}_l d_l\, \frac{\pa \omega}{\pa y_i}.
 \end{equation}
 Moreover, by $(\mathbf{H}_{l-1})$ again, the second term of the right-hand-side of (\ref{eq_dblj}) tends to 
  \begin{equation}\label{eq_conv2}
   \sum_{i=1,i\ne l}^k dy_i\wedge d_l\mathbf{A}_l \,\frac{\pa \omega}{\pa y_i}.
 \end{equation}
 Hence, passing to the limit in (\ref{eq_dblj}) as $\mu\to 0$ and plugging (\ref{eq_conv1}) and (\ref{eq_conv2}) into the resulting equality, we get, thanks to (\ref{eq_A_l_hom_op}), that  $d\blj(\omega)$ tends to $$ d_l\omega +\sum_{i=1,i\ne l}^k dy_i\wedge \frac{\pa \omega}{\pa y_i}\overset{(\ref{eq_dlf})}=d\omega,$$
 which completes step \ref{ste_induc_type}.
 \end{proof}


 \medskip

 Since we have established $(\mathbf{H}_k)$, we now can perform:
\begin{ste}\label{ste_final}
 We prove the statement of the theorem.
\end{ste}
\begin{proof}As explained before step \ref{ste_j=0}, we  are going to construct approximations of a form $\omega\in \wca^j_p (M,A)$, $j>\dim E$, arguing by decreasing induction on $\kappa(\omega)$ (see (\ref{eq_kappa})).
 For $\kappa(\omega)=m$,  $\omega$ has compact support in $M$, which means that the desired fact follows from the properties of  de Rham's regularization operators \cite{derham} (the $L^p$ estimates are proved in \cite{goldshtein}).
 Let us therefore take $k< m$ and assume the result to be true for all the differential forms $\alpha$ satisfying $\kappa(\alpha)>k$.

   As we can use a partition of unity (of $\mba$), it is enough to approximate the restriction of $\omega$ to $O\cap M$, where $O$ is
    a neighborhood of a point $\xo\in \delta M$ that we can choose arbitrarily small, and we can assume $\omega$ to have compact support in $O$. 
   Let us  denote by $S$ the stratum that contains $\xo$.
  Note that if $\dim S<k$ then $x_0$ does not belong to $\supp_\mba \omega$, in which case the needed statement is a tautology, and, if $\dim S>k$, the required fact comes down from our induction on $\kappa(\omega)$ (since $\omega$ is supported in a small neighborhood of $\xo$). We thus can assume $\dim S=k$. Without loss of generality, we can identify $\xo$ with $\orn$.

We are reduced to the situation described at the very beginning of the proof.
Let $\Lambda$ be the mapping defined in (\ref{eq_Lambda}) and
 let  $F^\ep$ and  $U^\ep$ be as defined in (\ref{eq_F}) and (\ref{eq_UV}), $\ep>0$ small.    As explained in section \ref{sect_stokes_formula}, $\Lambda$ gives rise by pull-back (for each $j$ and $p$) to an isomorphism $\Lambda^*: \wca^j_p(F^\ep\times (-1,1)^k)\to \wca^j_p(O\cap M)$, where $O:=\Lambda^{-1}(U^\ep)$.

  By step \ref{ste_dense}, we can find   arbitrarily close  approximations  of $\Lambda^{-1*}\omega$ by forms
  $\omega_i\in \E_{p,V^\ep}^{k,j}$. Possibly replacing $\omega$ with the successive elements of the sequence $\Lambda^* \omega_i$ (which tends to $\omega$), we can assume $\Lambda^{-1*}\omega \in \E_{p,V^\ep}^{k,j}$.
  
 Let $\mathbf{B}_\mu ^{k,j}$ be the operator given by $(\mathbf{H}_{k})$ (which was established by step \ref{ste_induc_type}). By (\ref{item_kappa}) of $(\mathbf{H}_{k})$, we have
  \begin{equation*}\label{eq_dec_approx}
  	\mathbf{B}_\mu ^{k,j}(\Lambda^{-1*}\omega):=\alpha_\mu+\pi^* \beta_\mu, 
  \end{equation*}
 with $\alpha_\mu\in\wca^j_p(U^\ep,A^\ep)$ satisfying $\kappa(\alpha_\mu)>k$ and $\beta\in \cc^{j,\infty} (S)$. By (\ref{item_conv}) of $(\mathbf{H}_{k})$, the  form $(\Lambda^*\alpha_\mu+\Lambda^*\pi^* \beta_\mu)$ tends to $\omega$ as $\mu\to 0$.

 By induction on $\kappa$, we know that we can approximate $\Lambda^*\alpha_\mu$ by smooth forms that vanish in the vicinity of $O \cap\adh{A\cup E}$.  It therefore suffices to show that $\Lambda^*\pi^* \beta_\mu$ is a smooth form on $O$ that is identically zero near $O \cap\adh{A\cup E}$.
 
 We first check that $\Lambda^*\pi^* \beta_\mu$ is  smooth. As $\Lambda$ commutes with $\pi$ and is the identity on $S$, we have  $\Lambda^* \pi^*\beta_\mu= \pi^*\Lambda_{|S}^*\beta_\mu=\pi^*\beta_\mu,$
 which is smooth, since so are both $\beta_\mu$ and $\pi$.
 
  To check  that $\Lambda^*\pi^* \beta_\mu$ is identically zero near $O \cap\adh{A\cup E}$, we will distinguish three cases, showing that actually either $\beta_\mu\equiv 0 $ near this set (cases $1$ and $3$ below) or $O \cap\adh{A\cup E}=\emptyset$ if $\ep$ was chosen small enough (case $2$):

\noindent{\it Case 1:}  $ k\le \dim E$. Then $j>k$ (for $j>\dim E$, by assumption of the theorem), which entails $\beta_\mu \equiv 0$ (since $\beta_\mu$ is a $j$-form on $S$ and $\dim S=k<j$).

\noindent{\it Case 2:} $k>\dim E$ and $S\cap \adh {A} =\emptyset$. In this case $O \cap\adh{A}=\emptyset$ (for $\ep >0$ small), and the compatibility of the stratification with  $E$ entails that   $O \cap\adh{ E}=\emptyset$ (for $\ep >0$ small), which means that the needed statement is vacuous.

\noindent{\it Case 3:}   $S \cap \adh {A} \neq \emptyset$. Since $\Sigma$ is compatible with $\adh A$, we must have  $S\subset \adh A$ (see Remark \ref{rem_frontier}). In this case, (\ref{item_kappa}) of $(\mathbf{H}_{k})$ specifies that $\beta_\mu\equiv 0$.
%
%
\end{proof}
   \end{subsection}

\end{section}

\begin{section}{De Rham theorem for relative $L^p$ cohomology and proof of the main results}\label{sect_lp_coh}
The first step of the proof of our Lefschetz duality theorem is to compute the relative $L^p$ cohomology in the case ``$p$ large''. We shall achieve it by proving a de Rham type theorem (Theorem \ref{thm_derham}) that relates these cohomology groups to  simplicial cohomology groups that we are going to introduce and that we will call the {\it collapsing cohomology groups} (see the definition just below). We will derive that the $L^p$ cohomology groups of $(M,A)$ are finitely generated and topological invariant of $(\mba,M,A)$ (Corollary \ref{cor_finitely_generated}).

 Given $X \subset \R^n$,  a {\bf definable singular $k$-simplex of $X$}\index{singular simplex} will be a continuous  definable mapping $\sigma: \Delta_k \to X$, $\Delta_k$ being the $k$-simplex spanned by
 $0,e_1,\dots,e_k$, where $e_1,\dots, e_k$ is the canonical basis of
 $\R^k$.    We denote by $C_k(X)$
 the vector space of definable singular $k$-chains of $X$, i.e., finite linear combinations (with real coefficients) of  singular definable simplices of $X$, and by
 $\partial c$  the boundary of a chain $c$.

\begin{subsection}{Collapsing cycles}  Given a definable subset $ Z$ of $\delta M$, we say that $c\in C_j(\mba)$ is  {\bf collapsing on} $Z$ if
 $$\dim |c|\cap Z \le j-2 .$$
 We denote by $\cch_{j} (M,A)$ the $\R$-vector space constituted by the $j$-chains $c\in C_j(\mba, A) $ collapsing on $\delta M\setminus A$ such that the  $(j-1)$-chain $\pa c$  is also collapsing on $\delta M\setminus  A$.
We  endow this chain complex with the boundary operator $\pa$ and we will write $\hch_j(M,A)$ for the resulting homology groups, that will be called the {\bf collapsing homology groups}.

By definition, these homology groups coincide with the singular homology groups of $(\mba,\delta M)$ if $A=\delta M$, and with intersection homology groups of $\mba$ \cite{ih1,ih2} when $A=\emptyset$ (if $\mba$ is a pseudomanifold). The motivation for introducing the collapsing homology  is that  
 it is indeed the geometric counterpart of relative $L^p$ cohomology for $p$ large:
\begin{thm}\label{thm_derham}(De Rham theorem for relative $L^p$ cohomology) Let $A$ be a definable subset of $\delta M$.
 For all $p\in [1,\infty]$ sufficiently large, we have  for all $j$:
 $$H_p^j (M,A)\simeq \hch^j \left(M,A\right) .$$

\end{thm}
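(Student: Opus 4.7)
The plan is to construct an explicit de Rham morphism
\begin{equation*}
\Phi^j:\wca^j_p(M,A)\longrightarrow \hom(\cch_j(M,A),\R),\qquad \omega\longmapsto \bigl(c\mapsto \int_c\omega\bigr),
\end{equation*}
and to show that for $p$ large it induces an isomorphism on cohomology. First I would fix a definable triangulation $K$ of $\mba$ compatible with $A$, $\delta M$, and the strata of a locally definably bi-Lipschitz trivial stratification $\Sigma$ provided by Theorem \ref{thm_existence_stratifications}. The collapsing complex $\cch_{*}(M,A)$ is then computed by the subcomplex of simplicial chains of $K$ whose support (and whose boundary's support) meets $\delta M\setminus A$ in dimension at most $j-2$.

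Next I would check that $\Phi^j$ is well defined and a cochain map. Well-definedness follows because a collapsing $j$-chain meets $\delta M$ in a definable set of Hausdorff dimension $<j$, so the integral $\int_c\omega$ reduces to integrating an $L^p$-form over a set of finite measure and is finite by H\"older's inequality. The cochain property $\int_c d\omega=\int_{\pa c}\omega$ then comes from Theorem \ref{thm_stokes_leaves} together with Corollary \ref{cor_densite_non_normal}: approximate $\omega\in\wca^j_p(M,A)$ by forms in $\cc^{j,\infty}_{\mba\setminus\adh{A\cup E}}(\mba)$ with $E=|\pa c|\cap(\delta M\setminus A)$ (of dimension $\le j-2<j$, so admissible), apply the stratified Stokes formula on the pulled-back simplex, and pass to the limit; the collapsing condition is what guarantees that no residue arises from $\delta M\setminus A$ in the limit.

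The isomorphism assertion would then be obtained by a Mayer--Vietoris argument over an open cover of $\mba$ indexed by the simplices of $K$ (for instance, open neighborhoods of the open stars in the barycentric subdivision). For each such piece $U_\sigma$, the local Lipschitz conic structure (Theorem \ref{thm_local_conic_structure}) together with the trivialization supplied by $\Sigma$ realizes $U_\sigma\cap\mba$ as a definable bi-Lipschitz product of a cone on a link with an open cell of the stratum containing the barycenter of $\sigma$. On such a product, the homotopy operators $\rfr$, $\Rfr$ of section \ref{sect_some_local_operators}, which preserve the Dirichlet boundary condition by Proposition \ref{pro_r_preserve_vanishing} and satisfy the chain homotopy identity of Proposition \ref{pro_rR_hom_op}, yield a local Poincar\'e lemma: $H^j_p(U_\sigma\cap M,U_\sigma\cap A)$ is concentrated in the degrees dictated by the dimension of $\sigma$ and by whether $\sigma$ lies in $A$ or not, and the surviving group is in each case canonically isomorphic, via $\Phi^j$, to the local collapsing cohomology. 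The usual double-complex argument associated with the cover then patches the local isomorphisms into the global statement.

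The main obstacle is the local computation on $U_\sigma$. The delicate point is that one must choose $p$ large enough that $\rfr$ and $\Rfr$ both act continuously on $\wca^j_p$ for every relevant degree $j$ along \emph{every} cone appearing in the stratification, while simultaneously Theorem \ref{thm_dense_formes} applies so that integration by parts and the Stokes argument above are legitimate. Once this is arranged, the combinatorial condition $\dim|c|\cap(\delta M\setminus A)\le j-2$ matches exactly the truncation of cone cohomology produced analytically by the homotopy operators, making the local comparison map an isomorphism in the correct range of degrees.
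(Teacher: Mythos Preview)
Your proposed integration map $\Phi^j(\omega)(c)=\int_c\omega$ is not well-defined as stated, and this is a genuine gap. An element $\omega\in\wca^j_p(M,A)$ is an $L^p$ $j$-form with respect to the $m$-dimensional measure on $M$; restricting it to, or integrating it over, a $j$-dimensional simplex of a triangulation makes sense for smooth forms but not for $L^p$ forms when $j<m$, since such a simplex has $\hn^m$-measure zero in $M$. Your justification (``integrating an $L^p$-form over a set of finite measure \dots\ is finite by H\"older's inequality'') conflates the $m$-dimensional and $j$-dimensional measures. One would need a trace theorem for $\wca^j_p$ forms along $j$-dimensional subsets of $M$, which is not available here; the paper's own trace results (Theorem~\ref{thm_trace_formes}) only produce currents on strata of $\delta M$, not values on arbitrary simplices inside $M$. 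The same issue undermines your Stokes argument: approximating $\omega$ by smooth forms via Corollary~\ref{cor_densite_non_normal} is fine, but passing to the limit in $\int_c\omega_i$ and $\int_{\pa c}\omega_i$ requires continuity of these functionals in the $\wca^j_p$ norm, which is precisely the missing trace estimate.

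The paper sidesteps this entirely by never constructing an explicit de Rham morphism. Instead it proves (Lemma~\ref{lem_poinc} and (\ref{eq_pm_borne})) that for $p$ large the sheafification of $U\mapsto\wca^j_p(U\cap M,U\cap A)$ on $\mba$ is a fine resolution of the sheaf $\lcs_{M,A}$ of locally constant functions on $M$ vanishing near $A$, and separately (Proposition~\ref{pro_loc_hch}, sequence~(\ref{eq_chh_res})) that the sheafified collapsing cochains give a soft resolution of the \emph{same} sheaf. The isomorphism $H^j_p(M,A)\simeq\hch^j(M,A)$ is then standard sheaf cohomology, with no integration over chains required. Your Mayer--Vietoris idea and your local Poincar\'e computation are morally the right ingredients---they are the hands-on shadow of this sheaf argument---but the explicit pairing $\int_c\omega$ is the wrong vehicle for $L^p$ forms.
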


The proof of  this theorem  given in section \ref{sect_proof} will rely on a sheaf theoretic argument, which requires some ``Poincar\'e lemmas'' for both $L^p$ cohomology and collapsing homology that are established in  section \ref{sect_local_computations} just below. Together with Proposition \ref{pro_normal_isom_cohomologie}, the above theorem yields for $A\subset \delta M$ definable:
 \begin{cor}\label{cor_finitely_generated}
 For all $p\in [1,\infty]$ sufficiently large, $H_p^j (M,A)$ is finitely generated and just depends on the topology of the triplet $(\mba, M,A)$ for all $j$.
 \end{cor}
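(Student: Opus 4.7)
The plan is to deduce the corollary directly from Theorem~\ref{thm_derham} combined with the (already cited) Proposition~\ref{pro_normal_isom_cohomologie}. Concretely, for $p \in [1,\infty]$ sufficiently large, Theorem~\ref{thm_derham} produces a natural isomorphism
\[
H^j_p(M,A)\simeq \hch^j(M,A),
\]
so the entire statement of the corollary is transferred from the analytic side to the purely combinatorial/topological collapsing cohomology. Thus it suffices to establish two properties of $\hch^\bullet(M,A)$: (a) topological invariance of $(\mba,M,A)$, and (b) finite generation.

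For (a), I would invoke Proposition~\ref{pro_normal_isom_cohomologie}, whose role is precisely to give this invariance for collapsing (co)homology. The key observation is that the defining data of $\cch_j(M,A)$ only refer to the chain $c$, its boundary $\partial c$, and the dimension of their intersections with $A$ and $\delta M\setminus A$; since a bi-Lipschitz (or even $\cc^0$) definable homeomorphism of triples $(\mba,M,A)\to (\mba',M',A')$ preserves dimension of definable sets, it induces a chain map $\cch_\bullet(M,A)\to \cch_\bullet(M',A')$ with a chain inverse, and hence isomorphisms at the level of $\hch^\bullet$. The content of Proposition~\ref{pro_normal_isom_cohomologie} is that this extends from the definable setting to arbitrary homeomorphisms of the triples (using a normalization and a definable triangulation argument).

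For (b), I would use the fact that any definable set admits a finite definable triangulation compatible with any finite family of definable subsets. Choose one for $\mba$ compatible with $M$, $A$, and $\delta M\setminus A$. The simplicial $j$-chains on this triangulation whose support meets $\delta M\setminus A$ only in simplices of codimension $\ge 2$ (and likewise for their boundaries) form a finite-dimensional subcomplex $\cch^{\mathrm{simp}}_\bullet(M,A)\subset \cch_\bullet(M,A)$. A standard subdivision-plus-approximation argument, applied in the definable category (using the fact that every definable singular chain can be refined so as to be simplicial with respect to an iterated barycentric subdivision of the given triangulation), shows that the inclusion induces an isomorphism in homology. Since the simplicial complex is finite-dimensional, each $\hch^j(M,A)$ is finite-dimensional, hence finitely generated.

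The main obstacle is the subdivision argument for (b): one must make sure that the collapsing dimension condition is preserved under barycentric subdivision and under the chain homotopy that relates a definable singular chain to its simplicial approximation. This is not automatic because the condition ``$\dim(|c|\cap Z)\le j-2$'' is dimensional rather than combinatorial, but it is stable under definable refinements compatible with the stratification induced by $\Sigma$ on $A$ and $\delta M\setminus A$, which is what makes the argument go through. Once this is granted, the corollary follows at once by combining (a), (b) and Theorem~\ref{thm_derham}.
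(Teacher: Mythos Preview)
Your overall strategy is right: the corollary is an immediate consequence of Theorem~\ref{thm_derham} together with Proposition~\ref{pro_normal_isom_cohomologie}. But your elaboration of both (a) and (b) misreads what that proposition actually provides, and as a result you work much harder than necessary.

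Proposition~\ref{pro_normal_isom_cohomologie} does not say that a homeomorphism of triples induces a chain map on $\cch_\bullet$, nor does it use a triangulation argument. What it gives is an identification
\[
\hch^j(M,A)\;\simeq\; H^j\big(\adh{\mc},\,\adh h^{-1}(A)\big),
\]
where $h:\mc\to M$ is a $\cc^\infty$ normalization. The right-hand side is ordinary singular cohomology of a compact pair, hence finitely generated; this is exactly the ``In particular'' clause at the end of the proposition. So your entire simplicial-approximation argument for (b), with its acknowledged obstacle about preserving the collapsing condition under subdivision, is unnecessary: finite generation is already part of the proposition's statement.

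For (a), the topological invariance likewise comes from this identification: the pair $(\adh{\mc},\adh h^{-1}(A))$ is determined, up to homeomorphism, by the topology of $(\mba,M,A)$ (Proposition~\ref{pro_normal_existence}(\ref{item_unique}) gives uniqueness of the normalization up to homeomorphism of closures), and singular cohomology is a topological invariant. Your sketch about definable homeomorphisms preserving dimension of intersections is a plausible alternative route to invariance in the definable category, but it is not the mechanism the paper uses, and it would not by itself handle arbitrary (non-definable) homeomorphisms.
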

\end{subsection}

\begin{subsection}{Some preliminary computations of the cohomology groups.}\label{sect_local_computations}
We are going to  compute the $L^p$ cohomology of the germ of $(M,A)$ at a point of $\mba$ (Lemma \ref{lem_poinc}), as well as  the collapsing homology of such a germ (Proposition \ref{pro_loc_hch}). We then derive a global computation of the collapsing cohomology in terms of the cohomology groups of a normalization (Proposition \ref{pro_normal_isom_cohomologie}).

Given $\xo\in \R^n$ and $\ep>0$, as well as a subset $Z\subset \R^n$, we set for simplicity:
\begin{equation}\label{eq_Zxoep}Z_\xo^\ep:= \bou(\xo,\ep)\cap Z.\end{equation}
We then let $\cfr_M(\xo,A)$ denote the number of connected components $E$ of $M_\xo^\ep$, $\ep>0$ small, for which $\adh E\cap A_\xo^\ep$ is empty.

\begin{lem}\label{lem_poinc}(Poincar\'e Lemma for relative $L^p$ cohomology)  Given $\xo\in \mba$,  we have for all $\ep>0$ sufficiently small and  $p\in [1,\infty]$ sufficiently large:
\begin{equation}\label{eq_poinc_0}
             H^0_p(M_\xo ^\ep, A_\xo^\ep)\simeq \R^k,\quad \mbox{where} \quad k=\cfr_M(\xo,A),
            \end{equation}
 as well as for all  $1\le j\le m$,
   \begin{equation}\label{eq_poinc_1}
  H^j_p (M_\xo ^\ep, A_\xo^\ep)\simeq 0\simeq H^{m-j} _{p',\mba_\xo^\ep}  (M_\xo ^\ep,A_\xo^\ep).
  \end{equation}
\end{lem}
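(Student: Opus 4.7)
The plan is to read off all three statements from the homotopy operators $\rfr$ and $\Rfr$ built in section~\ref{sect_some_local_operators}. After translating so that $x_0=\orn$, I would fix $\ep>0$ small enough for Theorem~\ref{thm_local_conic_structure} to apply to the germ of $X:=M\cup\{x_0\}$ at $x_0$, using Remark~\ref{rem_preserve} to further demand (since $A$ is definable) that the retraction $r_s$ preserves $A^\ep_{x_0}$ for every $s\in(0,1)$. This fixes the data $r,R,\rfr,\Rfr$ of section~\ref{sect_the_retraction_r_s_and_R_t} adapted to $A$.

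For the two vanishings in (\ref{eq_poinc_1}), fix $p$ large enough for Propositions~\ref{pro_rR_def_sur_wj}, \ref{pro_rR_hom_op}, and~\ref{pro_r_preserve_vanishing} to apply. Given a closed $\omega\in\wca^j_p(M^\ep_{x_0},A^\ep_{x_0})$ with $1\le j\le m$, the identity $d\rfr\omega+\rfr d\omega=\omega$ from Proposition~\ref{pro_rR_hom_op} gives $\omega=d\rfr\omega$, while Proposition~\ref{pro_r_preserve_vanishing} guarantees that $\rfr\omega\in\wca^{j-1}_p(M^\ep_{x_0},A^\ep_{x_0})$; thus $\omega$ is a coboundary. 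The compactly supported vanishing is entirely parallel: being compactly supported in $\overline{M}^\ep_{x_0}$ forces $\omega$ to vanish near $N^\ep$, so for $m-j<m$ the identity (\ref{eq_R_homot_operator}) yields $\omega=d\Rfr\omega$, and Proposition~\ref{pro_r_preserve_vanishing} places $\Rfr\omega$ in $\wca^{m-j-1}_{p',\overline{M}^\ep_{x_0}}(M^\ep_{x_0},A^\ep_{x_0})$. The extreme case $m-j=0$ is handled by the same formula, since $\Rfr$ is zero on $0$-forms, so the identity then reads $\omega=\Rfr d\omega=0$ whenever $d\omega=0$.

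The degree-zero case (\ref{eq_poinc_0}) is of a different nature and is the only place where the weak Dirichlet condition has to be turned back into a pointwise one. Any $u\in\wca^0_p(M^\ep_{x_0},A^\ep_{x_0})$ with $du=0$ is constant on each connected component of $M^\ep_{x_0}$, of which there are finitely many by o-minimal finiteness; write $u=\sum_E c_E\,\chi_E$. After passing to a $\cc^\infty$ normalization of each component (Proposition~\ref{pro_normal_existence}), Lemma~\ref{lem_traTra} converts the distributional condition $\trd^p u\equiv 0$ on $A^\ep_{x_0}$ into the pointwise condition $\tra u=0$ on $A^\ep_{x_0}$. Since $u|_E=c_E$ extends continuously to the normalization of $\overline{E}$, its trace on $\overline{E}\cap A^\ep_{x_0}$ is $c_E$: forced to be $0$ when this intersection is nonempty, and free otherwise. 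Counting the free parameters yields exactly $\cfr_M(x_0,A)$, which gives (\ref{eq_poinc_0}).

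The main obstacle is not in this local computation itself but in the inputs it draws upon: the whole machinery of section~\ref{sect_some_local_operators} (construction of $\rfr,\Rfr$, $L^p$ bounds, homotopy identities, preservation of the Dirichlet condition) for the positive-degree statements, and the weak-to-pointwise trace identification of Lemma~\ref{lem_traTra}, which itself relies on the density Theorem~\ref{thm_dense_formes}, for the degree-zero statement. Granted these, the proof of Lemma~\ref{lem_poinc} is essentially a bookkeeping matter.
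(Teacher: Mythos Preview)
Your proof is correct and follows essentially the same route as the paper's: the homotopy identities (\ref{eq_r_homot_operator}) and (\ref{eq_R_homot_operator}) combined with Proposition~\ref{pro_r_preserve_vanishing} for the vanishings in (\ref{eq_poinc_1}), and Lemma~\ref{lem_traTra} for the degree-zero computation. One minor correction to your closing commentary: Lemma~\ref{lem_traTra} relies on Theorem~\ref{thm_trace} (the function case), not on Theorem~\ref{thm_dense_formes}; the dependency in fact runs the other way, since the $j=0$ case of Theorem~\ref{thm_dense_formes} is deduced from Lemma~\ref{lem_traTra} (see Remark~\ref{rem_traTra}).
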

\begin{proof}
 Given a closed form $\omega\in \wca^j_p (M_\xo^\ep)$ (with $p$ large), by (\ref{eq_r_homot_operator}), we have $d\rfr \omega=\omega$,  so that, by Proposition \ref{pro_r_preserve_vanishing}, we see that the first  isomorphism of (\ref{eq_poinc_1}) is clear.   To prove the second one,  just replace $\rfr$ with $\Rfr$.
 Finally, a closed $0$-form  $u \in \wca^0_p(M_\xo^\ep,A^\ep_\xo)$ is an element of $W^{1,p}(M_\xo^\ep)$ which is constant on every connected component of $M_\xo^\ep$, and, 
 by Lemma \ref{lem_traTra}, has to satisfy $\tra u \equiv 0$ on $A^\ep_\xo$. Such a function  must be identically zero on each connected component   of $M_\xo^\ep$ whose closure meets $ A_\xo^\ep$, which yields (\ref{eq_poinc_0}).
\end{proof}

 For $U$ open subset of $\mba$, let
$$\cch_{j}^{M,A}(U):=\{c\in\cch_{j} (M,A):|c|\subset U  \}. $$ 
We stress the fact that the support of a chain $c\in \cch_j(M)$ is a subset of $\mba$. We then define a presheaf
$$\cch^{j}_{M,A}(U):=\Hom (\cch_{j}^{M,A}(U),\R),$$
 that we endow with the usual coboundary operator $\pa^*$.
We will denote by $\C^j_{M,A}$ and $\check{\C}_{M,A}^j$ the complex of sheaves respectively derived from the complexes of presheaves  $U\mapsto C^j(U,U\cap A)$ and  $U\mapsto \cch^j_{M,A}(U)$, $U$ open subset of $\mba$.

The sheaves $\C^j_{M,A}$ and $\check{\C}_{M,A}^j$ being soft, they are acyclic, and,  as well-known (see for instance \cite[Section I.7 p. 26]{bredon} or \cite[section 5.32]{warmer})   $H^j(\C^\bullet_{M,A}(\mba))\simeq H^j(\mba,A)$. Let us check that the same applies to  $\check{\C}_{M,A}^j$, i.e., that we have for all $j$:
	\begin{equation}\label{eq_cch_same_hom} H^j( \check{\C}_{M,A}^\bullet(\mba) )\simeq H^j(\check{C}^\bullet(M,A) )=\check{H}^j(M,A).\end{equation}
\begin{proof}[Proof of (\ref{eq_cch_same_hom}).]  If $\mathcal{U}$ is an open covering of $\mba$, we denote by $ \cch^j(\mathcal{U},A)$ the subspace of cochains based on $\mathcal{U}$-small chains of $\cch_{j} (M,A)$ (i.e. on the chains $\Sigma \alpha_i \sigma_i$, where $\sigma_i$ is for each $i$ a simplex that fits in some element of $\mathcal{U}$).  A subdivision argument (see \cite[section 5.32]{warmer} for instance) then shows that the natural surjection $j_\mathcal{U}: \cch^{j} (M,A) \to \cch^j(\mathcal{U},A)$ induces a cohomology isomorphism, which, via a classical exact sequence \cite[section I, Theorem 6.2]{bredon}, yields (\ref{eq_cch_same_hom}).
\end{proof}

\begin{pro}\label{pro_loc_hch}
Given $\xo \in \mba$, we have for $\ep>0$ sufficiently small:
  $$H^j ( \chh _{M, A, \xo}^\bullet )\simeq\begin{cases}\label{eq_loc_hch_j}
   0 \qquad \mbox{if } \quad j\ge 1,\\
  \R^k \quad \mbox{ if} \quad j=0,\quad \mbox{with} \quad k=\cfr_M(\xo,A),
  \end{cases}$$
  where  $\chh _{M, A, \xo}^\bullet$ stands for the stalk of the sheaf $ \chh^\bullet _{M, A}$ at $\xo$.
 \end{pro}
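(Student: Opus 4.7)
The plan is to mirror the proof of the $L^p$ Poincaré lemma (Lemma \ref{lem_poinc}) by exploiting the Lipschitz Conic Structure (Theorem \ref{thm_local_conic_structure}) to build a chain homotopy on $\cch_\bullet$ locally at $\xo$. Apply Theorem \ref{thm_local_conic_structure} with $X=\mba$ and invoke Remark \ref{rem_preserve} to choose the retraction $r_s$ so that it preserves the definable set-germs $M$, $A$, $\adh{A}$, $\delta M$, and $Z:=\delta M\setminus A$ for every $s\in(0,1]$. Fixing $\ep>0$ small enough, the map $r:[0,1]\times\mba_\xo^\ep\to\mba_\xo^\ep$ induces the standard prism operator $H:C_j(\mba_\xo^\ep)\to C_{j+1}(\mba_\xo^\ep)$ on definable singular chains, satisfying
\[
\partial Hc + H\partial c = c - r_{0*}c.
\]

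The central technical claim is that $H$ sends $\cch_j$ into $\cch_{j+1}$. Because $r_s$ is a bi-Lipschitz subanalytic homeomorphism preserving $Z$ for each $s\in(0,1]$, one has $r_s(|c|)\cap Z=r_s(|c|\cap Z)$, so that
\[
\dim|Hc|\cap Z\le\max\bigl(1+\dim|c|\cap Z,\,0\bigr)\le (j+1)-2.
\]
An analogous bound for $|\partial Hc|$ holds unless $\xo\in Z$, in which case the boundary $\partial Hc = c-r_{0*}c - H\partial c$ carries a $0$-dimensional contribution at $\xo$ that must be removed by a correction term. This is achieved by adding to $Hc$ an explicit degenerate singular chain $D$ supported at $\xo$: using the alternating-sum identity $\partial\bar\sigma_{j+1}=\bar\sigma_j$ (valid when the parity is right) together with the cycle relation forced by $\partial c\in C_\bullet(A)$ (which gives $\sum a_i=0$ in the opposite parity), one can solve $\partial D=r_{0*}c + \pi_\xo(H\partial c)$ modulo chains in $A$, and the corrected prism $Hc+D$ lies in $\cch_{j+1}$.

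With this preservation established, the chain homotopy identity shows that every relative cycle in $\cch_\bullet^{M,A}(\mba_\xo^\ep)$ is a boundary, treating three cases according to the location of $\xo$: (a) if $\xo\in M$ we recover the ordinary chain complex of a contractible manifold; (b) if $\xo\in A$ then $r_{0*}c\in C_\bullet(A)$ and the correction is unnecessary, yielding directly $\partial H+H\partial=\mathrm{id}$ on the relative complex; (c) if $\xo\in Z$ the degenerate correction above is required. Since $\R$ is injective, $H^j(\chh^\bullet_{M,A}(\mba_\xo^\ep))=\Hom(\hch_j,\R)=0$ for $j\ge 1$, and this vanishing is stable as $\ep\to 0$, yielding the stalk statement.

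For $j=0$, a relative $0$-cycle is a formal sum of points in $M\cup A$ modulo points in $A$, and two such points are equivalent iff connected by a definable $1$-chain in $M\cup A$. The conic retraction shows that each connected component of $M_\xo^\ep$ whose closure meets $A_\xo^\ep$ collapses to a point of $A$ in homology (hence is zero in the relative complex), while the remaining components (those with $\adh{E}\cap A_\xo^\ep=\emptyset$) give independent generators. This identifies $\hch_0$ locally with $\R^{\cfr_M(\xo,A)}$ and hence, by dualising, $H^0(\chh^\bullet_{M,A,\xo})\simeq\R^{\cfr_M(\xo,A)}$. The main obstacle is case (c) of the preservation argument: the naive prism forces $\xo$ into the support of $\partial Hc$ exactly when $\xo\in\delta M\setminus A$, and constructing the compensating degenerate chain $D$ requires a careful bookkeeping of parity and of the cycle condition mod $A$.
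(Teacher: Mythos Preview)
Your strategy coincides with the paper's: both use the local conic retraction $r_s$ (preserving $M$ and $A$ for $s\in(0,1]$) and the associated prism/cone to contract the local collapsing chain complex, and then count components for $j=0$. The paper's argument is far terser---it simply takes a cycle $c$, produces $a$ with $\partial a=c$ via the cone, and observes that $a$ is collapsing whenever $c$ is, without splitting into cases on the position of $\xo$.

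Your case analysis goes further than the paper's, and you correctly isolate the only delicate point: when $\xo\in Z=\delta M\setminus A$, the boundary $\partial Hc=c-r_{0*}c-H\partial c$ picks up a $0$-dimensional contribution at $\xo$. But your proposed fix does not work as stated. A chain $D$ with $|D|\subset\{\xo\}$ can only modify $\partial(Hc)$ by chains supported at $\{\xo\}$; this is enough to kill $r_{0*}c=(\sum a_i)\,\bar\sigma_j$ (your parity bookkeeping handles exactly this term), but the simplices making up $H\partial c$ are \emph{non-degenerate} prisms joining $\xo$ to points of $A$: their support is the whole segment, not just $\{\xo\}$, so no $D$ supported at $\{\xo\}$ can remove them from $\partial(Hc+D)$. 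Your expression ``$\pi_{\xo}(H\partial c)$'' has no well-defined meaning at the chain level.

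The paper avoids this difficulty by restricting to cycles from the outset: once one has $a$ with $\partial a=c$, the second collapsing condition on $a$ is just the hypothesis on $c$, and no correction is needed. If you want to run your chain-homotopy version, the honest route is not a degenerate correction but rather to use the prism only on the interval $[s,1]$ with $s>0$ (so that the homotopy stays inside $M\cup A$ and the prism genuinely lands in $\cch_{j+1}$), and then argue at the level of the stalk as $s\to 0$.
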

\begin{proof}We start with the case $j\ge 1$. 
   For $\ep>0$ small, there is a definable continuous retraction by deformation $r_s:\mba_\xo^\ep \to  \mba_\xo^\ep$, $s\in [0,1]$, $r_0 \equiv \xo$, $r_1=Id$, that may be required to  preserve $M$ and $A$ for $s\in (0,1]$  (applying Theorem \ref{thm_local_conic_structure} for instance, we will however not need the Lipschitzness properties of $r_s$, which are indeed the main features of this theorem). This retraction by deformation provides, given a cycle  $c\in C_j(\mba_\xo^\ep)$, $j\ge 1$, a chain $a$ satisfying $\pa a=c$. By definition of collapsing chains, if $c$ is collapsing on $\delta M\setminus  A$ then so is $a$, which means that for every $j\ge 1$ we have for $\ep>0$ small:
\begin{equation}\label{eq_ccch_loc}H^j( \cch^\bullet_{M,A} (\mba_\xo^\ep))\simeq 0.\end{equation}

To address the case where $j=0$, take a connected component $E$ of $M_\xo^\ep$, $\ep>0$ small.
 In the case $\adh E\cap  A\ne \emptyset$, every point of $E$ may be joint to some point of $A_\xo^\ep$ by  a definable continuous path $\gamma:[0,1]\to \mba_\xo^\ep$ satisfying $\gamma([0,1))\subset E$, which means that such a connected component does not contribute to $H^0 ( \chh _{M, A, \xo}^\bullet )$.  Assume thus $\adh E\cap  A= \emptyset$. Then every collapsing $0$-chain  $c$ in $\adh E$ satisfies $|c|\cap \delta E=\emptyset$. Consequently, if $c\in \cch^{M,A}_0 (\mba_\xo^\ep)$ satisfies $|c|\subset \adh E$ then $|c|\subset E$. As  any two points in $E$ can be joint by a continuous path in $E$, we therefore see that every such  $E$  provides an independent generator of the homology groups. Since $ \cfr_M(\xo,A)$ is by definition the number of such $E$, the result follows.
 \end{proof}

Given an open subset $U$ of $\mba$, we denote by $\mathcal{X}_{M,A}(U)$ the $\R$-vector space constituted by all the functions $u:U\cap M \to \R$ that are locally constant (locally in $U\cap M$, not in $U$) and that vanish in the vicinity of  $U\cap A$, and by $\lcs_{M,A}$ the sheaf resulting from this presheaf. Clearly, each element of the stalk $\lcs_{M,A,\xo}$ defines an element of $\chh_{M,A,\xo}^0$ which is a cocycle and conversely, every cocycle of $\chh_{M,A,\xo}^0$ defines a function-germ which is constant on every connected component of the germ of $M$ at $\xo$. As a matter of fact, the above proposition amounts to say that the sequence
 \begin{equation}\label{eq_chh_res}
  \lcs_{M,A}\to\chh _{M, A}^0  \to \chh _{M, A}^1  \to \chh _{M, A}^2\to \dots
 \end{equation}
is exact, and therefore that $\left(\chh _{M, A}^j\right)_{j\in \N} $ constitutes an acyclic resolution of the sheaf  $\lcs_{M,A}$. 

We will derive from Lemma \ref{lem_poinc}  that $\wca^j_p(M,A)$ also induces a resolution of  $\lcs_{M,A}$ (for $p$ large, see (\ref{eq_exact_sequence_plarge})). We first give extra properties of the collapsing homology that are analogous to those well-known for intersection homology \cite[section $4.3$]{ih1}.
  
 \begin{pro}\label{pro_normal_isom_cohomologie} Let $A$ be a subanalytic subset of $\delta M$.
 \begin{enumerate}
             \item \label{item_inclusion_collaps}  If $M$ is connected along $\delta M\setminus A$ then for all $j$ the natural mapping induced by restriction of cochains yields an isomorphism
             \begin{equation}\label{eq_incl_collaps}
               \hch^j(M,A)\simeq  H^j(\mba,A).
             \end{equation}
           
             \item \label{item_mc_collapse} If $h:\mc\to M$ is a normalization of $M$, then we have for all $j$ $$\hch^j(M,A) \simeq\hch^j(\mc,\adh h^{-1}(A)) \overset{(\ref{eq_incl_collaps})}\simeq H^j\left(\adh\mc,\adh h^{-1}(A)\right),$$
 \end{enumerate}
where $\adh h$ is the extension of $h$ (see Proposition \ref{pro_normal_existence} (\ref{item_unique})).
 In particular, the collapsing homology groups of $(M,A)$ are finitely generated.
 \end{pro}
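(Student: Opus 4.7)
The overall strategy is to exploit the acyclic resolution (\ref{eq_chh_res}) of $\lcs_{M,A}$ by $\chh^\bullet_{M,A}$, which yields $\hch^j(M,A)\cong H^j(\mba,\lcs_{M,A})$, and then to identify the sheaf $\lcs_{M,A}$ explicitly in the two situations of the statement.

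For part (1), I would identify $\lcs_{M,A}$ with the sheaf $j_!\R_{\mba\setminus\adh A}$ (where $j$ denotes the open inclusion), whose cohomology computes $H^j(\mba,A)$. The identification reduces to a stalk computation using the definition of $\cfr_M(\xo,A)$. Under the connectedness hypothesis, at $\xo\in M$ the stalk is $\R$ trivially; at $\xo\in\delta M\setminus\adh A$ the unique local branch $E$ of $M$ has $\adh E\cap A^\ep_\xo=\emptyset$ for $\ep$ small, so $\cfr_M(\xo,A)=1$ and the stalk is again $\R$; at $\xo\in A$ the stalk of $\lcs_{M,A}$ is $0$ by definition (functions must vanish near $A$); and at $\xo\in(\adh A\setminus A)\cap\delta M$ the unique branch of $M$ accumulates on $\xo\in\adh A$, forcing $\cfr_M(\xo,A)=0$. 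These stalks match those of $j_!\R_{\mba\setminus\adh A}$, and the isomorphism follows from standard sheaf theory applied to the acyclic resolution.

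For part (2), normality of $\mc$ makes $\mc$ connected along $\delta\mc\setminus\adh h^{-1}(A)$, so (1) applied to $\mc$ yields the second isomorphism. For the first one, I would apply the Leray spectral sequence for the continuous definable surjection $\adh h:\adh\mc\to\mba$, after establishing
\begin{equation*}
(\adh h)_*\lcs_{\mc,\adh h^{-1}(A)}\cong \lcs_{M,A}\quad\text{and}\quad R^i(\adh h)_*\lcs_{\mc,\adh h^{-1}(A)}=0\;\text{ for }\;i\ge 1.
\end{equation*}
The pushforward identification is a local stalk computation at $\xo\in\mba$: the fiber $\adh h^{-1}(\xo)=\{\tilde\xo_1,\dots,\tilde\xo_k\}$ is finite, and since $h:\mc\to M$ is a diffeomorphism with $\mc$ normal at each $\tilde\xo_i$, the $\tilde\xo_i$ correspond bijectively to the local connected components $E_i$ of $M^\ep_\xo$, with $\tilde\xo_i\notin\adh{\adh h^{-1}(A)}$ if and only if $\adh{E_i}\cap A^\ep_\xo=\emptyset$; summing over $i$ gives the correct stalk $\R^{\cfr_M(\xo,A)}$. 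The vanishing of higher direct images is given by Proposition \ref{pro_loc_hch} applied to $\mc$ at each $\tilde\xo_i$, since $\mc$-local collapsing cohomology vanishes in positive degrees. Finite generation of $\hch^j(M,A)$ then follows because $\adh\mc$ is a compact subanalytic set, so $H^j(\adh\mc,\adh h^{-1}(A))$ is finite-dimensional.

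The main obstacle is the sheaf identification in part (1), where one must handle carefully the case of $A$ not closed and check that $j_!\R_{\mba\setminus\adh A}$ indeed computes $H^j(\mba,A)$ under the hypotheses in force (this is where the connectedness along $\delta M\setminus A$ is genuinely used, eliminating ambiguity from multiple local branches). The Leray step in part (2) is then straightforward given the local computations already accomplished in Proposition \ref{pro_loc_hch}, so the heart of the argument is concentrated in the local branch-counting matching the stalks of $\lcs_{M,A}$ with the appropriate constant sheaf.
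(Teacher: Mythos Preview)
Your approach is correct and close in spirit to the paper's—both reduce everything to the sheaf cohomology of $\lcs_{M,A}$ via the resolution (\ref{eq_chh_res})—but the execution differs in both parts. For (\ref{item_inclusion_collaps}), the paper does not identify $\lcs_{M,A}$ as $j_!\R$; instead it shows that the sheafified relative singular cochain complex $\C^\bullet_{M,A}$ (derived from $U\mapsto C^j(U,U\cap A)$) is a \emph{second} acyclic resolution of $\lcs_{M,A}$, appealing only to local contractibility of definable sets, and then invokes the standard identification $H^j(\C^\bullet_{M,A}(\mba))\simeq H^j(\mba,A)$. Your route via $j_!\R_{\mba\setminus\adh A}$ is more explicit about the sheaf but, as you note, lands naturally on $H^j(\mba,\adh A)$; the subtlety at points of $\adh A\setminus A$ that you flag is present in both arguments. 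For (\ref{item_mc_collapse}), rather than pushing forward $\lcs_{\mc,\adh h^{-1}(A)}$ and invoking Leray, the paper pushes forward the \emph{resolution} $\chh^\bullet_{\mc,\adh h^{-1}(A)}$ itself to obtain a complex $\F^\bullet:=\adh h_*\chh^\bullet$ on $\mba$, and checks via Proposition~\ref{pro_loc_hch} at each point of the (finite) fiber $\adh h^{-1}(\xo)$ that $\F^\bullet$ again resolves $\lcs_{M,A}$. This is exactly your branch-counting computation, just packaged without a spectral sequence; note also that since $\adh h$ has finite fibers, the higher direct images in your formulation vanish automatically and do not require Proposition~\ref{pro_loc_hch}.
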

\begin{proof}We first justify (\ref{eq_incl_collaps}).
Thanks to (\ref{eq_cch_same_hom}), (\ref{eq_chh_res}), and  standard arguments of sheaf cohomology \cite[section II.4]{bredon}, it suffices to check that when $M$ is connected along $\delta M\setminus A$, $\left(\C^j_{M,A}\right)_{j\in \N}$ constitutes a resolution of the sheaf $\lcs_{M,A}$, which is clear since  definable sets are locally contractible.

To prove (\ref{item_mc_collapse}), let $h:\mc \to M$ be a normalization of $M$ (and $\adh h:\adh \mc\to \mba$ its continuous extension) and 
let  $\F^j:= \adh h_*\chh_{\mc,\adh h^{-1}(A)}^j$ be the direct image of the sheaf $\chh_{\mc,\adh h^{-1}(A)}^j$, i.e.  let $\F^j(U):= \chh_{\mc,\adh h^{-1}(A)}^j(\adh h^{-1}(U))$, for any open subset $U$ of $\mba$.
  We  again just  have to check that the $\F^j$'s constitute a resolution of the sheaf $\lcs_{M,A}$. It actually follows from the definition of the direct image sheaf that  for $\xo\in \mba$
  $$ H^j(\F^\bullet_{\xo})\simeq \oplus_{i=1}^k H^j(\chh_{\mc,\adh h^{-1}(A),x_i}^\bullet),$$
 where $x_1,\dots,x_k$ are the elements of $\adh h^{-1}(\xo)$ and the number $k$ must coincide with the number of connected components of $\bou(\xo,\ep)\cap M$ for $\ep>0$ small  (see \cite[Proposition $3.2$]{trace}). The desired fact thus follows after applying Proposition \ref{pro_loc_hch} to $\mc$ at every $x_i$, $i\ge 1$.
\end{proof}

\end{subsection}


\begin{subsection}{Proof of Theorems \ref{thm_derham} and \ref{thm_lefschetz_duality}}\label{sect_proof}
Lemma \ref{lem_poinc} holds {\it for $p\in [1,\infty]$  sufficiently large}  in the sense that there is $p_0\in [1,\infty)$ such that its statement holds for all $ p \in( p_0,\infty]$.  If we define $p_\xo$ as the smallest such real number $p_0$, this number of course depends on the geometry of $\mba$ near $x_0$ and may  vary on this set. We however have:
 \begin{equation}\label{eq_pm_borne}
   \sup_{x_0 \in \mba} p_\xo<\infty . 
 \end{equation} 
  \begin{proof}[Proof of (\ref{eq_pm_borne}).] By Theorem \ref{thm_existence_stratifications}, we know that there is a stratification of $\mba$ compatible with $M$, such that given any two points $x_0$ and $x_1$ in the same stratum, there is a (germ of) subanalytic bi-Lipschitz homeomorphism that maps the germ of $M$ at $x_0$ onto the germ of $M$ at $x_1$, which means (see (\ref{eq_pullback})) that the respective $L^p$ cohomology groups of the stalks must be isomorphic (for all $p$). Hence,  $p_\xo$ is constant on the strata of such a stratification and therefore can only take  finitely many values.
  \end{proof}

\begin{proof}[Proof of Theorem \ref{thm_derham}] Let $ \mathcal{G}^j_{p}$ be the sheaf on $\mba$ derived from the presheaf $U\mapsto  \wca^j_p(U\cap M , U\cap A)$.
 Lemma \ref{lem_poinc} and equality (\ref{eq_pm_borne}) establish that for $p$ sufficiently large the sequence
\begin{equation}\label{eq_exact_sequence_plarge}
\R_{M,A} \to  \mathcal{G}_{p}^0 \to  \mathcal{G}_{p}^1\to  \mathcal{G}_{p}^2\to \dots,  
\end{equation}
is exact, and therefore that $\left( \mathcal{G}_{p}^j \right)_{j\in \N}$ is a fine resolution of the sheaf $\R_{M,A}$.  Theorem \ref{thm_derham} then comes down from the exact sequence (\ref{eq_chh_res}) (see \cite[section II.4]{bredon}).
  \end{proof}

\begin{proof}[Proof of Theorem \ref{thm_lefschetz_duality}.]
Note that showing that $(\poi_{M,A}^j)_{j\in \N}$ is an isomorphism {\it for $p$ large}  establishes that the pairing $<\alpha,\beta>$, $\alpha \in H^j_{p} (M,A)$, $\beta\in H^{m-j}_{p'}(M,\delta M\setminus A)$, is nondegenerate for every such $p$, and therefore, in virtue of Corollary \ref{cor_finitely_generated}, that $H^{m-j}_{p'}(M,\delta M\setminus A)$ is finitely generated. It is well-known that in this situation, the homomorphisms derived from the pairing are then both isomorphisms, which means that proving that $(\poi_{M,A}^j)_{j\in \N}$ is an isomorphism for $p$ large  shows that $(\poi_{M,\delta M\setminus A}^j)_{j\in \N}$ is an isomorphism for $p$ close to $1$. We will therefore only focus on proving that $\poi_{M,A}^j $ is an isomorphism for $p$ large.

We perform the proof by induction on $m\ge 0$, the case $m=0$ being trivial. Taking a normalization if necessary, we can assume  $M$ to be normal. 
Set for $U$ open subset of $\mba$
$$\F^j_{p}(U):= \wca^j_{p,U}(U\cap M,U\cap \delta M \setminus  A), $$
 as well as
$$ \F_{p}^{*j}(U):=\Hom (\F^{m-j}_{p}(U),\R).$$
The spaces $\F_{p}^{*j}$, $j\in \N$, constitute a complex of flabby sheaves on $\mba$ (the $ \F^{j}_{p}$'s are sometimes called {\it cosheaves} in the literature, see for instance \cite[Propositions V.1.6 and V.1.10]{bredon}).   As $\poi^j_{M,A}$ is a chain map, by (\ref{eq_exact_sequence_plarge}), it suffices to show that the sequence
$$ \R_{M,A} \overset{\poi^0_{M,A}}\longrightarrow \F_{p'}^{*0}  \overset{d^*}\longrightarrow \F_{p'}^{*1}\overset{d^*}\longrightarrow\ \F_{p'}^{*2}\overset{d^*}\longrightarrow\dots,$$
is exact for $p$ large. 

   Lemma \ref{lem_poinc} yields $d\F_{p'}^{m-j-1}=\ker d_{| \F_{p'}^{m-j}}$ for all $j\ge 1$ (for $p$ large, see (\ref{eq_pm_borne})) and consequently $d^*\F_{p'}^{*j-1}=\ker d^*_{| \F_{p'}^{*j}}$. We thus simply need to deal with the image of the first arrow.
Since $M$ is normal, the elements of $\R_{M,A}$ are constant near each $\xo\in \mba$, which means that
 it suffices to prove that for $p$ large
  the mapping
 $$\Phi: H^m_{p',\mba^\ep_\xo} (M^\ep_\xo,Z_\xo^\ep)\to \R, \quad \omega\mapsto \int_{M^\ep_\xo} \omega,$$ is an isomorphism 
 for $\ep>0$ small, if $Z:= \delta M \setminus  A$ and $Z^\ep_\xo,M_\xo^\ep$, as well as $\mba^\ep_\xo$ are like explained in (\ref{eq_Zxoep}).

To simplify notations, we will assume $\xo=\orn$ and will no longer match $\xo$, so that we will be in the setting of section \ref{sect_the_retraction_r_s_and_R_t} (see (\ref{eq_metaneta})). The mapping $\Phi$ is obviously surjective.
To show injectivity, take $\omega\in \wca^m_{p',\mba^\ep}(M^\ep,Z^\ep)$ satisfying $\int_{M^\ep} \omega=0$.
  By (\ref{eq_r_homot_operator}),   $\rfr \omega$ is an antiderivative of $\omega$ which, by  Proposition  \ref{pro_r_preserve_vanishing}, must belong to $ \wca^{m-1}_{p'}(M^\ep,Z^\ep)$.

  It is however not enough to show that the cohomology class of $\omega$ is zero as $\rfr\omega$ may not have compact support in $\mba^\ep$.   We thus have to add to $\rfr\omega$ some closed form which is supported near $N^\ep:= \sph(0,\ep)\cap M$ in order to produce  an antiderivative of $\omega$ which vanishes near $\adh{\nep}$ (see (\ref{eq_antider_omega}) below).
 By Proposition \ref{pro_Rfr_2eme_forme} $(ii)$, we have for $(t,x)\in (0,1)\times N^\ep$:
\begin{equation}\label{eq_hstrfr}h^*\rfr\omega(t,x)= \int_0 ^t (h^*\omega)_{\pa_s}(s,x)ds, \end{equation}
where $h$ is as in (\ref{eq h pour K_nu}).
As $\omega$ is zero in the vicinity of $\adh {N^\ep}$,  $h^*\rfr\omega(t,x)$ is thus constant with respect to $t$ when $t$ is close to $1$.  Moreover, we have for such $t$:
$$\int_{\{t\}\times\nep}h^*\rfr\omega\overset{(\ref{eq_hstrfr})}=\int_{N^\ep}\int_0 ^t (h^*\omega)_{\pa_s}(s,x)ds=\int_{M^{t\ep}}\omega=\int_{M^\ep}\omega=0. $$
By induction on $m$, this entails that (for $p$ large) there is $\theta\in \wca^{m-2}_{p'}( N^\ep,Z\cap \sph(0,\ep))$ such that $d \theta(x)=h^*\rfr\omega(t,x)$ for $x\in N^\ep$ and $t\in (0,1)$ close to $1$, regarding  $\theta$ as a form on $(0,1)\times N^\ep$, constant with respect to the first variable.  As $\trd^{p'} \theta$ vanishes on $Z\cap \sph(0,\ep)$ and is constant with respect to  the first variable, it vanishes on $(0,1)\times \left(Z\cap \sph(0,\ep)\right)$.

Note that as $h$ is bi-Lipschitz on the complement of every neighborhood of the origin, the restriction of  $h^{-1*}\theta$ to   $M^\ep\setminus M^\epd$ is $L^{p'}$.  Hence, if $\phi$ is a smooth function on $\mep$ which equals $1$ near $N^\ep$ and  zero on $M^\epd$, the form
\begin{equation}\label{eq_antider_omega}\beta:=\rfr\omega-d (\phi h^{-1*} \theta)\end{equation}
 is $L^{p'}$, has compact support in $\mba^\ep$, vanishes on $Z^\ep$, and satisfies $d \beta=\omega$. This yields the injectivity of $\Phi$.
\end{proof}

\end{subsection}

\begin{subsection}{Hodge theory on subanalytic manifolds}\label{sect_hodge}
For $\alpha$ and $\beta$ in $L^{j}_p(M)$, $0\le j\le m$, $p\in (1,\infty)$, we set (see (\ref{eq_*_p}) for $*_p$)
\begin{equation}\label{eq_bracket}[\alpha,\beta]_p:=<\alpha,*_p \beta>.\end{equation}
In particular, $[\cdot,\cdot]_2$ is the usual $L^2$ inner product.
It is easy to check that  $(L^j_p(M),[\cdot,\cdot]_p)$ is a {\bf homogeneous semi-inner product space} \cite{giles,lumer}  ((\ref{eq_cauchyschwartz}) below follows from H\"older's inequality),
i.e.
for all $\alpha$ and $\beta$  in $L^j_p(M)$:
\begin{enumerate}\label{se}	
		\item  $[\alpha , \beta]_p$ is linear with respect to $\alpha$.
	\item $[\alpha,\alpha]_p^{1/2} =||\alpha ||_{L^p(M)}$.
		\item\label{eq_cauchyschwartz}$[\alpha,\beta]_p \le ||\alpha||_{L^p(M)}||\beta||_{L^p(M)} .$
	 \item(homogeneity)  $[\alpha , \lambda\beta]_p=\lambda[\alpha , \beta]_p$ .
		\end{enumerate}
It follows from \cite[section $3$]{giles} that	this  homogeneous semi-inner product space is {\bf continuous}, in the sense that for any couple $\alpha$ and $\beta$ 
\begin{enumerate}[(5)]	\item  $[\alpha , \beta+\lambda \alpha]_p$ tends to $[\alpha , \beta]_p$  as $\lambda\to 0$.		\end{enumerate}
Continuous homogeneous semi-inner product spaces share many features with inner product spaces, especially when they are uniformly convex Banach spaces \cite{giles}. It follows from Clarkson's inequalities \cite{clarkson} that $L^j_p(M)$ is uniformly convex for all $p\in(1,\infty)$ and $j\le m$, which is enough to ensure that, if $V$ is a {\it closed linear subspace} of $L^j_p(M)$  then there is a unique continuous mapping $P_V:L^j_p(M)\to V$  such that for all $\omega\in L^j_p(M)$, $||\omega-P_V(\omega)||_{L^p(M)}$ realizes the distance to $V$ \cite{fort}, i.e.
\begin{equation}\label{eq_proj_inf}||\omega-P_V(\omega)||_{L^p(M)}=\inf_{\alpha\in V} ||\omega-\alpha||_{L^p(M)}.\end{equation}
 The vector $P_V(\omega)$ is also the unique element of $ V$ such that for all $\alpha\in V$ \cite[Theorem 2]{giles}
\begin{equation}\label{eq_proj_sca}[\alpha,\omega - P_V(\omega)]_p=0 .\end{equation}
One major difference with inner product spaces is however that, since $[\cdot,\cdot]_p$ is not bi-linear,   $P_V:L^j_p(M)\to V$ may fail to be linear and
$$V^{\perp_p}:= \{\beta\in L^j_p(M):\forall \alpha \in V,  [\alpha ,\beta]_p=0\} $$
 is not necessarily a vector subspace of $L_p^j(M)$.
 In virtue of (\ref{eq_proj_sca}), we however have $V^{\perp_p}=P_V^{-1}(0)$. A consequence of these facts that will be useful for our purpose is that when a subspace $V$ of $ L^j_p(M)$ is closed,
 we have
\begin{equation}\label{eq_proj_sum}
 L^j_p(M)=V \oplus V^{\perp_p}
\end{equation} ($V^{\perp_p}$  being possibly nonlinear, the symbol $\oplus$ just means that every element of $ L^j_p(M)$ can be uniquely decomposed $v+w$, with $v \in V$ and $w \in V^{\perp_p}$).

\begin{proof}[Proof of Theorem \ref{thm_hodge} and Corollary \ref{cor_hodge}.]
 Note first that for all $j$ we have for every $\alpha\in \wca^{j-1}_p(M,A)$ and $\beta\in \wsc_j^p(M,\delta M\setminus  A)$ (see (\ref{eq_wscpj_A}) for $\wsc_j^p$) if $p\in (1,\infty)$ is sufficiently large or close to $1$
\begin{equation}\label{eq_lsp_crochet}
[d\alpha,\beta]_p=<d\alpha,*_p\beta>\overset{(\ref{eq_lsp_A})}=(-1)^j<\alpha,d*_p\beta>\overset{(\ref{eq_deltap})}= [\alpha, \delta_p\beta]_p.
\end{equation}

Fix $j\le m$, and denote for simplicity by $Z^j_p(M,A)$ the kernel of the restriction of $d$ to $\wca^j_p(M,A)$. It is clearly a closed subspace of $L^j_p(M)$. We claim that for $p$ large or close to $1$:
\begin{equation}\label{eq_zjp_perp}Z_p^{j}(M,A)^{\perp_p}= cE^j_p (M,\delta M\setminus A), \end{equation}
as subspaces of $L^j_p(M)$.
Indeed,
 if $\omega\in cE^j_p(M,\delta M\setminus A)$ then,  since $\omega=\delta_p\beta$ for some $\beta\in \wsc_{j+1}^p(M,\delta M\setminus A)$, we have 
  for all $\alpha \in Z^j_p(M,A)$ (for $p$ large or close to $1$):
  $$[\alpha,\omega]_p=[\alpha,\delta_p\beta]_p\overset{(\ref{eq_lsp_crochet})}=[d\alpha, \beta]_p=0.$$
   Conversely, if a form $\omega\in L^j_p(M)$ satisfies $[\alpha,\omega]_p=0$ for all $\alpha\in Z^j_p(M,A)$ then $*_p\omega$ is a closed form that belongs to $\wca^{m-j}_{p'}(M,\delta M\setminus A)$ (since $<d\theta,*_p\omega>=0$ for all $\theta \in \wca^{j-1}_p (M,A)\supset \wca^{j-1}_{p,\mba \setminus A} (M)$ ) and that must be in the kernel of Lefschetz-Poincar\'e duality homomorphism, which, by Theorem \ref{thm_lefschetz_duality}, is enough to ensure that $*_p\omega=d\gamma$ for some $\gamma\in \wca^{m-j-1}_{p'}(M,\delta M \setminus  A)$, if $p\in (1,\infty)$ is sufficiently large or close to $1$. By definition of $\delta_p$, this implies that $\omega=\pm\delta_p *_{p'} \gamma$, which, since $*_{p'} \gamma \in \wsc^p_{j+1}(M,\delta M\setminus  A)$, yields (\ref{eq_zjp_perp}).
   
 Thanks to (\ref{eq_proj_sum}), we immediately  derive from (\ref{eq_zjp_perp})  that:
\begin{equation}\label{eq_zce}
 \wca^j_p(M)=Z^j_p(M,A) \oplus cE^j_p (M,\delta M\setminus  A).
\end{equation}
We now claim that  (for $p$ large or close to $1$)
\begin{equation}\label{eq_HEperp}
	Z^j_p(M,A)\cap E^j_p(M,A)^{\perp_p}=\mathscr{H}^j_p(M,A).
\end{equation}
Indeed, if a closed form  $\omega\in \wca^j_p(M,A)$
satisfies $[d\varphi,\omega]_p=0$ for every $\varphi \in  \wca^{j-1}_{p,\mba \setminus  A}(M) \subset\wca^{j-1}_{p}(M,A)$, then  $\delta_p \omega=0$ and $\omega\in \wsc_j ^p(M,\delta M \setminus  A)$, which means that $\omega\in \mathscr{H}_p^j(M,A)$. Conversely, it directly follows from (\ref{eq_lsp_crochet}) that the elements of $\mathscr{H}^j_p(M,A)$ are $\perp_p$ to all the elements of $E^j_p(M,A)$, yielding (\ref{eq_HEperp}).

 Thanks to (\ref{eq_proj_sum}) again, we immediately can deduce from (\ref{eq_HEperp}) ($E^j_p(M,A) $ is closed for $p$ large or close to $1$, by Corollary \ref{cor_image_fermee})
\begin{equation}
 Z^j_p(M,A)= E^j_p(M,A)\oplus \mathscr{H}^j_p(M,A),
\end{equation}
which, by (\ref{eq_zce}), yields the decomposition claimed in Theorem \ref{thm_hodge}.

We now prove Corollary \ref{cor_hodge}. Since  $\mathscr{H}^j_p(M,A)$ only consists of closed forms, there is a natural mapping $\iota:\mathscr{H}^j_p(M,A)\to H^j_p(M,A)$ induced by inclusion.  We are going to show that $\iota$ is a homeomorphism for every  $p\in (1,\infty)$ sufficiently large or  close to $1$.  

By Corollary \ref{cor_image_fermee}, $E_p^j(M,A)$ is closed for $p$ sufficiently large or small.
Let  $P:L^j_p (M) \to E_p^j(M,A) $ denote  the (nonlinear) $\perp_p$ projection onto this space (defined as explained in (\ref{eq_proj_inf}) and (\ref{eq_proj_sca})). Denote then by $Q:Z^j_p(M,A)\to  \mathscr{H}^j_p(M,A)$  the mapping defined as $Q(\omega):= \omega-P(\omega)$ (see (\ref{eq_HEperp})). It directly follows from (\ref{eq_proj_inf}) that $P(\omega +\alpha)=P(\omega)+\alpha$ for each $\alpha \in  E_p^{j}(M,A) $, which implies that $Q(\omega +\alpha)=Q(\omega)$. Hence, $Q$ induces a continuous mapping on the quotient $Z_p^j(M,A) / E_p^j(M,A)=H^j_p(M,A)$, say $\adh Q:H^j_p(M,A)\to  \mathscr{H}^j_p(M,A)$.

 We claim that $\adh Q\circ \iota: \mathscr{H}^j_p(M,A)\to  \mathscr{H}^j_p(M,A)$ and $\iota \circ\adh Q: H^j_p(M,A)\to H^j_p(M,A)$ are both equal to the identity map. Let us denote by $\adh \omega$ the class in $H^j_p(M,A)$ of an element $\omega\in Z^j_p(M,A)$. By definition of $\adh Q$ and $\iota$, we have for $\omega \in   \mathscr{H}^j_p(M,A)$:
 $$\adh Q(\iota(\omega))=\adh Q(\adh \omega)=\omega-P(\omega)\overset{(\ref{eq_HEperp})}=\omega, $$
 which establishes that $\adh Q\circ \iota$ is the identity. Moreover, for every $\omega\in Z^j_p(M,A)$, we have
   $$\iota(\adh Q(\adh \omega))=\iota(\omega-P(\omega)) =\adh\omega, $$
   since $P(\omega)$ is exact.  As $\iota$ and $\adh Q$ are continuous, we conclude that $\iota$ is a homeomorphism and thus that $\mathscr{H}^j_p(M,A)$ is a $\cc^0$ manifold of dimension $\dim H^j_p(M,A)$ (which is finite by Corollary \ref{cor_finitely_generated}). That $\adh Q(\omega)$ is the unique element minimizing the class follows from  (\ref{eq_proj_inf}).
\end{proof}

\end{subsection}
\end{section}

\begin{section}{Further results}\label{sect_further}
We are going to derive some extra consequences of our density theorem (Theorem \ref{thm_dense_formes}). We fix for all this section
	 a definably bi-Lipschitz trivial stratification $\Sigma$  of $\mba$ of which $M$ is a stratum (see Remark \ref{rem_M_stratum}).

	 Recall that $\wca^j_p(M,A)$ was defined in section \ref{sect_trd} as the space of forms $\omega\in \wca^j_p(M)$ satisfying $\trd^p \omega\equiv 0$ on $A\subset \delta M$, where $\trd^p$ is a trace operator defined implicitly (by duality with $L^{p'}$ forms). In section \ref{sect_trace_forms}, we will make this condition more  concrete (Theorem \ref{thm_trace_formes}) by defining an {\it explicit} trace operator (naturally induced by restriction of forms), denoted $\tra_S$, and showing that for $p$ sufficiently large $\trd^p \omega \equiv 0$ on  $A$ amounts to $\tra_S \omega=0$ for all $S\subset A, S\in \Sigma$ (Corollary \ref{cor_trace_formes}), which can be seen as a generalization of Lemma \ref{lem_traTra} to differential forms.
	In the case where $p$ is close to $1$, the situation is more delicate for ``a residue phenomenon'' may appear near the singularities (see Example \ref{exa_f_hol}). We will define residues of $L^1$ forms (see (\ref{eq_dfn_res}) and Theorem \ref{thm_welldefined}) and give a residue formula for integration by parts  (Theorem \ref{thm_residue_formula}), from which we will derive an explicit characterization of $\wca^j_p(M,A)$ in terms of vanishing of residues for $p$ close to $1$ (Corollary \ref{cor_vanishing_res}). This will enable us to provide a Hodge decomposition theorem without boundary condition (Corollary \ref{cor_hodge_deltaM_ptit}).
	
	Before entering the precise mathematical content, let us give a simple example that will shed light on the just above described results and   will stress the difference between the vanishing of $\trd^p \omega$ in the case $p$ large or close to $1$. 
	\begin{exa}\label{exa_f_hol}
		Let $f$ be an $L^1$ meromorphic function-germ on a neighborhood $U$ of the origin in $\mathbb C$. We can regard such a function as a differential $1$-form on $U\setminus \{0\}$, which, by Cauchy-Riemann's equations, is closed, and, by definition of $\trd^p$, satisfies
		 $$\trd^1 f\equiv 0  \; \mbox{ on }\;  \{0\} \;  \;\Leftrightarrow\; \res_0 f=0,$$
		 where $\res_0f$ denotes the residue of $f$ at the origin.
		  This fact can also be observed for $\trd^p f$ when $p$ is close to $1$.  Note on the other hand that for $p$ large, 
		  $f$ is $L^p$ if and only if it extends at the origin, and in this case 
		$$\trd^p f\equiv 0 \; \mbox{ on }\;  \{0\} \;\Leftrightarrow \; f(0)=0.$$
%
	\end{exa}

 Furthermore, it is not difficult to show (see Remark \ref{sect_concluding} (\ref{item_res_l2})) that for $p$ close to $1$,  forms $\omega\in \wca^j_p(M)$ that satisfy $\trd^p \omega\ne 0$ on a nonempty open subset $A$ of $\delta M$ must go to infinity fairly fast  as we are drawing near the points of $A$  and therefore cannot be approximated by forms of $\cc^{j,\infty}(\mba)$, which means that
  Theorem \ref{thm_dense_formes} (or Corollary \ref{cor_densite_non_normal}) is not true  in the case ``$p$ close to 1''.
 We will prove a weaker density result (Theorem \ref{thm_pprime}) which yet will be sufficient to establish Corollary \ref{cor_vanishing_res}.


		\begin{subsection}{Density in the case $p$ close to $1$.}\label{sect_pprime} Although we are unable to approximate in this case elements of $\wca^j_p(M,A)$ by forms that are smooth on $\mba$,   we can approximate them by smooth forms (on $M$) that vanish near $A$, which is satisfying in many situations:

	\begin{thm}\label{thm_pprime} Let $A$ and $E$ be  definable subsets of $\delta M$. For every $p\in [1,\infty)$ sufficiently close to $1$, the space $\wca^j_{p}(M)\cap \cc^{j,\infty}_{\mba\setminus A\cup E} (M)$
		is dense in $\wca^j_{p}(M,A)$  for every $j<m-\dim E-1$.
	\end{thm}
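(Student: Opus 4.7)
The plan is to run the argument of Theorem \ref{thm_dense_formes} with the inner retraction $r$ replaced by the outer retraction $R$ from \S\ref{sect_the_retraction_r_s_and_R_t}. The operators $\Rfr$ and $d\Rfr$ possess the continuity and homotopy properties needed (Propositions \ref{pro_borne_R}, \ref{pro_rR_hom_op} and \ref{pro_r_preserve_vanishing}) precisely when the form degree is strictly less than $m$ and $p$ is close to $1$, so the machinery of \S\ref{sect_proof_dense_formes} transposes provided the exponent $\sigma$ in the tail estimate (\ref{eq_R_borne}) stays positive. In this sense the numerical hypothesis $j<m-\dim E-1$ plays the role dual to the condition $j>\dim E$ in Theorem \ref{thm_dense_formes}.

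Concretely, I would first fix a locally definably bi-Lipschitz trivial stratification $\Sigma$ of $\mba$ compatible with $M$, $A$, $\adh A$, $E$, $\adh E$ and $\delta M$, with $M$ as the top-dimensional stratum, and reduce by a partition of unity on $\mba$ to approximating $\omega\in\wca^j_p(M,A)$ supported in a small neighborhood of a point $x_0\in\delta M$ lying in a stratum $S$ of dimension $k$. Via the local trivialization $\Lambda\colon U\to F^\ep\times(-1,1)^k$ of (\ref{eq_Lambda}), the normal slice $F^\ep$ is a definable manifold of dimension $m-k$; the hypothesis $j<m-\dim E-1$ forces $j+1<m-k$ whenever $k\le\dim E$, which is exactly the regime where Proposition \ref{pro_borne_R} applies on $F^\ep$ simultaneously to $\omega$ (of degree $j$) and to $d\omega$ (of degree $j+1$), with positive exponent $\sigma$.

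I would then define a fiberwise outer retraction $\widehat{\Rfr}$ on forms over $U^\ep$ by $(\widehat{\Rfr}\omega)_y:=\Rfr^F\omega_y$, playing the role of $\rfh$ in step \ref{ste_df}. The four preliminary steps of \S\ref{sect_proof_dense_formes}---density of $\E^{k,j}_{p,V^\ep}$ (step \ref{ste_dense}), the fiberwise vanishing $\trd^p\omega_y\equiv 0$ on $A_0^\ep$ (step \ref{ste_fa}), stability of $\E^{0,j}_p$ under $\widehat{\Rfr}$ and $\dfh$ (step \ref{ste_df}), and the local homotopy identity---carry over with Proposition \ref{pro_borne_R} in place of Proposition \ref{pro_borne_r} and the first homotopy formula of Proposition \ref{pro_rR_hom_op} replacing (\ref{eq_r_homot_operator}). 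The construction of smoothing operators $\mathbf{B}^{l,j}_\mu$ by induction on the type $l$ and on $j$ (cf.\ steps \ref{ste_j=0}--\ref{ste_induc_type}), together with the decreasing induction on $\kappa(\omega)$ of step \ref{ste_final} and a final appeal to de Rham regularization \cite{derham, goldshtein} on the top-dimensional stratum, then produces the desired approximation in $\wca^j_p(M)\cap\cc^{j,\infty}_{\mba\setminus A\cup E}(M)$.

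The main obstacle will be the tail term $\eta^\sigma\|\omega\|_{L^p(M^\ep)}$ in (\ref{eq_R_borne}), which has no analog in the corresponding bound (\ref{eq_r_borne}) for $\rfr$. Unlike in the $p$-large proof, this global-in-$\ep$ remainder survives localization and propagates through the induction, and controlling it after multiplication by the cut-off $\hat\psi_\mu$ from (\ref{eq_hatpsi_mu})---whose differential only satisfies $|d\hat\psi_\mu|\lesssim 1/\mu$ on the annulus $\{\mu/2<|x|<\mu\}$---requires $\sigma$ to be strictly positive on every fiberwise slice encountered during the induction. This forces both $p$ close enough to $1$ and the dimension hypothesis $j<m-\dim E-1$; these two conditions are tight, and reconciling them through the inductive construction is the technical heart of the argument.
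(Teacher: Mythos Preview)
Your approach—rerunning the seven-step construction of \S\ref{sect_proof_dense_formes} with $\Rfr$ in place of $\rfr$—is plausible; the paper itself mentions it as a possible route (see the sentence immediately preceding Lemma~\ref{lem_repres_T}: ``this seems yet to be a possible proof, replacing $\rfr$ with $\Rfr$ in it''). But the paper deliberately avoids this repetition by means of a short duality argument. Proposition~\ref{pro_densite} (proved from the representation Lemma~\ref{lem_repres_T} for $\wca^j_p(M)'$) shows that for any partition $\delta M=A\sqcup B\sqcup E$ and $p\in(1,\infty)$, density of $\cc^{j,\infty}_{\mba\setminus(A\cup E)}(M)\cap\wca^j_p(M)$ in $\wca^j_p(M,A)$ is \emph{equivalent} to the analogous density statement in degree $m-j-1$ at exponent $p'$ with $A$ replaced by $B$. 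Taking $B:=\delta M\setminus(A\cup E)$, the latter is exactly Corollary~\ref{cor_densite_non_normal} in degree $m-j-1>\dim E$ at the large exponent $p'$, so Theorem~\ref{thm_pprime} follows in three lines (the endpoint $p=1$ via Remark~\ref{rem_p=1_densite}).

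The duality route thus recycles Theorem~\ref{thm_dense_formes} wholesale and sidesteps every obstacle you flag: the tail term in (\ref{eq_R_borne}), the absence of a trace term to anchor your analog of step~\ref{ste_j=0}, and—more seriously—the degeneration of the $\Rfr$-estimate on top-degree fiber forms (Remark~\ref{rem_borne_Rr}\,(\ref{item_R_borne}) warns that $\|\Rfr\omega\|_{L^p(N^\eta)}$ need not tend to $0$ in that case). The last point bites on strata $S\subset A$ with $\dim S>\dim E$, where your inequality $j+1<m-k$ is no longer guaranteed, and you have not said how the key convergence $d\hat\psi_\mu\wedge\widehat\Rfr\,\dfh\omega\to 0$ would survive there. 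Your sketch is a reasonable plan, but it remains a plan; the paper's argument is complete and an order of magnitude shorter.
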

  In order to avoid repeating the sophisticated approximation process involved in the proof of Theorem \ref{thm_dense_formes} (this seems yet to be a possible proof, replacing $\rfr$ with $\Rfr$ in it), the idea will be to derive Theorem \ref{thm_pprime} from the latter one by means of a duality argument (see Proposition \ref{pro_densite}).
 It is worth mentioning here that, in the case where $A$ is a union of connected components of $\delta M$, we can spare Theorem \ref{thm_dense_formes} and our argument will  provide a direct short proof of the above theorem that holds for all $p$ (Proposition \ref{pro_densite_A_deltaM}).
 The following lemma, which characterizes  $\wca^j_p(M)'$, will be needed.
		\begin{lem}\label{lem_repres_T}
			If $T:\wca^j_p(M)\to \R $ is a continuous linear functional, $j\le m$,  $p\in [1,\infty)$, there are $\theta_0\in L^{m-j}_{p'}(M)$ and $\theta_1\in L^{m-j-1}_{p'}(M)$ such that $T=\theta_0+d^*\theta_1$, as functionals, i.e., for all $\alpha\in  \wca^j_p(M)$ we have:
			\begin{equation*}\label{eq_repres_T}T(\alpha)=<\theta_0,\alpha> +<\theta_1,d\alpha>.\end{equation*}
		\end{lem}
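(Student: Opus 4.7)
The plan is to realize $T$ as a functional on the graph of the exterior differential inside a product of two $L^p$ spaces, extend it by Hahn--Banach, and then apply the $L^p$--$L^{p'}$ duality (in its de Rham pairing form) to obtain $\theta_0$ and $\theta_1$.

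More precisely, I would consider the linear embedding
\[
\Phi:\wca^j_p(M)\longrightarrow L^j_p(M)\oplus L^{j+1}_p(M),\qquad \Phi(\alpha):=(\alpha,d\alpha),
\]
which, in view of (\ref{eq_norm_wca}), is an isometry when the right-hand side is endowed with the norm $\|(\alpha,\beta)\|:=\|\alpha\|_{L^p(M)}+\|\beta\|_{L^p(M)}$. Its image $\Gamma:=\Phi(\wca^j_p(M))$ is the graph of $d:\wca^j_p(M)\to L^{j+1}_p(M)$, viewed as a closed subspace of $L^j_p(M)\oplus L^{j+1}_p(M)$ (closedness is immediate from the definition of $\wca^j_p(M)$ together with the fact that $d$ is continuous in the sense of currents). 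The functional $T\circ\Phi^{-1}:\Gamma\to\R$ is continuous, so by the Hahn--Banach theorem it extends to a continuous linear functional $\widetilde T$ on the whole $L^j_p(M)\oplus L^{j+1}_p(M)$.

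Next I would invoke the $L^p$--$L^{p'}$ duality in the form provided by de Rham's pairing (\ref{eq_pairing}): every continuous linear functional on $L^k_p(M)$ is represented by a unique element of $L^{m-k}_{p'}(M)$ via $\beta\mapsto\int_M\beta\wedge(\cdot)$. Applying this to each of the two factors gives forms $\theta_0\in L^{m-j}_{p'}(M)$ and $\theta_1\in L^{m-j-1}_{p'}(M)$ such that for every $(\xi,\eta)\in L^j_p(M)\oplus L^{j+1}_p(M)$,
\[
\widetilde T(\xi,\eta)=\langle \theta_0,\xi\rangle+\langle \theta_1,\eta\rangle.
\]
Restricting to $\Gamma$ by taking $(\xi,\eta)=(\alpha,d\alpha)$ yields the required identity $T(\alpha)=\langle\theta_0,\alpha\rangle+\langle\theta_1,d\alpha\rangle$ for all $\alpha\in\wca^j_p(M)$.

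There is no substantive obstacle in this argument; the only point that deserves care is verifying that the de Rham pairing does induce an isometric isomorphism $L^{m-k}_{p'}(M)\simeq L^k_p(M)'$, which follows from the pointwise $L^p$--$L^{p'}$ duality applied to the components of a form in a measurable orthonormal frame (the Hodge star exchanges $k$-forms and $(m-k)$-forms pointwise isometrically, so $\alpha\wedge\beta=\langle\alpha,*\beta\rangle\,\mathrm{vol}$ reduces the statement to the usual duality of $L^p$ scalar functions). The case $p=1$ is included since the dual of $L^1$ is $L^\infty$; no uniform convexity is needed here.
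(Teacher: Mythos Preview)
Your proof is correct and follows essentially the same approach as the paper: embed $\wca^j_p(M)$ isometrically into $L^j_p(M)\times L^{j+1}_p(M)$ via $\alpha\mapsto(\alpha,d\alpha)$, extend the functional, and represent it using $L^p$--$L^{p'}$ duality. The paper phrases the extension step as surjectivity of the dual map of the embedding rather than invoking Hahn--Banach explicitly, but this is the same content.
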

		\begin{proof}
			Let $G:\wca^j_p(M) \to L^j_p(M)\times L_p^{j+1}(M)$ be the linear mapping defined by $G(\alpha):= (\alpha,d \alpha)$. Clearly, $G$  induces an isomorphism of Banach spaces  from $\wca^j_p(M) $ onto its (closed) image. Its dual mapping $G': L^{m-j}_{p'}(M)\times L_{p'}^{m-j-1}(M)\to \wca^j_p (M)'$ is therefore surjective, which means that for every $T\in \wca^j_p (M)'$ there are $\theta_0\in L^{m-j}_{p'}(M)$ and $\theta_1\in L^{m-j-1}_{p'}(M) $ satisfying the property of the lemma.
		\end{proof}
		This lemma was needed to establish the following fact.
		\begin{pro}\label{pro_densite}
			Let $A,B$, and $E$ be  subsets of $\delta M$ partitioning this set. For each $p\in (1,\infty)$ and each $j$, the following statements are equivalent:
			\begin{enumerate}[(i)]
				\item  (\ref{eq_lsp_j}) holds for all $\alpha \in \wca^j_p(M,A)$ and all $\beta\in \wca^{m-j-1}_{p'}(M,B)$.
				\item $\cc^{j,\infty}_{\mba \setminus (A\cup E)} (M)\cap \wca^j_p(M)$ is dense in $\wca^j_p(M,A)$.
				\item $\cc^{m-j-1,\infty}_{\mba \setminus (B\cup E)} (M)\cap \wca^{m-j-1}_{p'}(M)$ is dense in $ \wca^{m-j-1} _{p'}(M,B)$.
			\end{enumerate}
		\end{pro}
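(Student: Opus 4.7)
The plan is to prove the ring of implications (ii)$\Rightarrow$(i), (iii)$\Rightarrow$(i), (i)$\Rightarrow$(ii), and (i)$\Rightarrow$(iii); the last two are mirror statements under the exchange $j\leftrightarrow m-j-1$, $p\leftrightarrow p'$, $A\leftrightarrow B$, so I shall only detail (i)$\Rightarrow$(ii).

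For (ii)$\Rightarrow$(i), given $\alpha\in\wca^j_p(M,A)$ and $\beta\in\wca^{m-j-1}_{p'}(M,B)$, I approximate $\alpha$ in the $\wca^j_p$-norm by $\alpha_n\in X:=\cc^{j,\infty}_{\mba\setminus(A\cup E)}(M)\cap\wca^j_p(M)$. Each $\alpha_n$ is then compactly supported in $M\cup B$, and the definition of $\trd^{p'}\beta\equiv 0$ on $B$ applied to the test form $\alpha_n$ yields (\ref{eq_lsp_j}) for the pair $(\alpha_n,\beta)$; passing to the limit as $n\to\infty$ gives (\ref{eq_lsp_j}) for $(\alpha,\beta)$. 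The implication (iii)$\Rightarrow$(i) is symmetric, with the roles of $\alpha$ and $\beta$ (and of $A,B,p,p'$) exchanged.

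For (i)$\Rightarrow$(ii), suppose $T\in\wca^j_p(M,A)'$ vanishes on $X$. Extending $T$ by Hahn-Banach to $\tilde T\in\wca^j_p(M)'$ and invoking Lemma \ref{lem_repres_T}, we obtain $\theta_0\in L^{m-j}_{p'}(M)$ and $\theta_1\in L^{m-j-1}_{p'}(M)$ with $\tilde T(\alpha)=<\theta_0,\alpha>+<\theta_1,d\alpha>$. Testing $\tilde T$ against smooth forms compactly supported in $M$ (all of which lie in $X$) forces, in the sense of currents on $M$, the relation $\theta_0=\pm\, d\theta_1$; hence $\theta_1\in\wca^{m-j-1}_{p'}(M)$ and $\tilde T(\alpha)=\pm\,<\trd^{p'}\theta_1,\alpha>'$ for every $\alpha\in\wca^j_p(M)$.

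The central obstacle is to promote $\theta_1$ to $\wca^{m-j-1}_{p'}(M,B)$. The vanishing of $\tilde T$ on all of $X$ already gives $<\trd^{p'}\theta_1,\varphi>'=0$ for every smooth $\varphi$ compactly supported away from $A\cup E$, and this must be extended to every $\varphi\in\wca^j_p(M)$ with compact support in $M\cup B$. A standard mollification procedure does the job: since $\supp_\mba\varphi$ is compact and disjoint from $A\cup E$, cover it by finitely many open subsets of $\R^n$ whose closures still avoid $A\cup E$, and convolve each local piece of $\varphi$ (in charts of $M$) against a smooth mollifier, the parameter chosen small enough that the approximants stay compactly supported away from $A\cup E$ and converge to $\varphi$ in $\wca^j_p$. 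Once $\theta_1\in\wca^{m-j-1}_{p'}(M,B)$ is in hand, applying hypothesis (i) to the pair $(\alpha,\theta_1)$ for any $\alpha\in\wca^j_p(M,A)$ yields $<\trd^{p'}\theta_1,\alpha>'=0$, so $\tilde T\equiv 0$ on $\wca^j_p(M,A)$, and Hahn-Banach concludes.
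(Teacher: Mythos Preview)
Your argument is correct and follows essentially the same route as the paper: both exploit the $A\leftrightarrow B$, $p\leftrightarrow p'$, $j\leftrightarrow m-j-1$ symmetry, prove (ii)$\Rightarrow$(i) from the definition of vanishing trace, and for (i)$\Rightarrow$(ii) represent a vanishing functional via Lemma~\ref{lem_repres_T} (after Hahn--Banach extension, which the paper uses implicitly), show $\theta_1\in\wca^{m-j-1}_{p'}(M,B)$, then invoke (i). The only difference is organizational: the paper first records that $X=\cc^{j,\infty}_{\mba\setminus(A\cup E)}(M)\cap\wca^j_p(M)$ is dense in $\wca^j_{p,M\cup B}(M)$ (citing de Rham regularization), so that $T$ vanishes on this larger space, and then both $\theta_0=\pm d\theta_1$ and the membership $\theta_1\in\wca^{m-j-1}_{p'}(M,B)$ follow at once; your mollification paragraph is exactly this density statement, placed after rather than before the identification of $\theta_0,\theta_1$.

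One small imprecision in that paragraph: since $A,B,E$ are arbitrary subsets partitioning $\delta M$, the set $A\cup E$ need not be closed, and if $\supp_\mba\varphi$ meets $\overline{A\cup E}\cap B$ then no finite cover by open subsets of $\R^n$ with \emph{closures} disjoint from $A\cup E$ exists. The remedy is to work inside $M$ rather than $\R^n$: the set $V=\{x\in M:d(x,\supp_M\varphi)<d(x,A\cup E)\}$ is open in $M$, contains $\supp_M\varphi$, and one checks $\overline{V}^{\mba}\cap(A\cup E)=\emptyset$; de Rham's regularization operators with small parameter then keep the support inside $V$, hence inside $M\cup B$. The paper leaves this point to the references \cite{derham,goldshtein}.
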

		\begin{proof} If  $(ii)$ is equivalent to $(i)$  for all $j$ and all such sets $A,B$, and $E$ then so is  $(iii)$, since we can interchange $A$ with $B$ and $j$ with $(m-j-1)$. 
			Observe also that the implication  $(ii) \Rightarrow (i)$ directly comes down from the definition of the space  $\wca^j_p(M,A)$, since $ \mba\setminus (A\cup E)= M \cup B$ (the space $\cc^{j,\infty}_{\mba \setminus (A\cup E)} (M)\cap \wca^j_p(M)$ is dense in $\wca^j_{p,\mba \setminus (A\cup E)}(M) $ by the properties of de Rham's regularization operators \cite{derham, goldshtein}).  We thus just need to show that $(i)$ implies $(ii)$. 
			
			Since we already know that $\cc^{j,\infty}_{\mba \setminus (A\cup E)} (M)\cap \wca^j_p(M)$ is dense in $\wca^j_{p,\mba \setminus (A\cup E)}(M) $, we just need to prove, assuming $(i)$, that every continuous linear functional $T:\wca^j_p(M,A)\to \R$ that vanishes on $ \wca^j_{p,\mba \setminus (A\cup E)}(M)$ is identically zero.
			
			By Lemma \ref{lem_repres_T}, such a functional $T$ can be written $\theta_0 +d^*\theta_1$, with $\theta_0 \in L^{m-j}_{p'}(M)$ and $\theta_1 \in L^{m-j-1}_{p'}(M)$. As $T\equiv 0$ on $ \wca^j_{p,\mba \setminus (A\cup E)}(M)$, we must have for all $\alpha$ in this space \begin{equation}\label{eq_alpu}<\theta_0,\alpha>=-<\theta_1,d \alpha>,\end{equation}
			which entails that $\theta_0=(-1)^{m-j-1}\,d \theta_1$,  and consequently that $\theta_1\in \wca^{m-j-1}_{p'}(M)$.  Equality (\ref{eq_alpu}) then shows that  (\ref{eq_lsp_j}) holds for every pair $\alpha,\theta_1$, as soon as $\alpha\in \wca^j_{p,\mba \setminus (A\cup E)}(M)=\wca^j_{p,M\cup B}(M)$, which exactly means that $\theta_1 \in \wca^{m-j-1}_{p'}(M,B)$. By $(i)$, we deduce that
		 (\ref{eq_lsp_j}) must actually hold for each pair $\alpha,\theta_1$, when  $\alpha\in \wca^j_p(M,A)$, and consequently $$T(\alpha)=(-1)^{m-j-1}<d \theta_1, \alpha>+<\theta_1, d \alpha>\overset{(\ref{eq_lsp_j})}=0,$$
			for all such $\alpha$.
		\end{proof}
		\begin{rem}\label{rem_p=1_densite}
		Equivalence $(i)\Leftrightarrow (ii)$ still holds true for $p=1$, with the same proof (since Lemma \ref{lem_repres_T} holds in this case).
		\end{rem}
		
\begin{proof}[Proof of Theorem \ref{thm_pprime}] Fix $j\le m$ and let $B:=\delta M\setminus (A\cup E)$.
		Since Corollary \ref{cor_densite_non_normal} establishes that the space $\cc^{m-j-1,\infty}_{\mba \setminus (B\cup E)} (M)\cap \wca^{m-j-1}_{p'}(M)$ is dense in $ \wca^{m-j-1} _{p'}(M,B)$ for all  $p\in (1,\infty)$ sufficiently close to $1$ and $m-j-1>\dim E$,	 Proposition \ref{pro_densite} implies that  $\cc^{j,\infty}_{\mba \setminus (A\cup E)} (M)\cap \wca^j_p(M)$ is dense in $\wca^j_p(M,A)$ for all such $p$ and $j$.	 The case $p=1$ follows from Remarks \ref{rem_p=infty} and \ref{rem_p=1_densite}.	\end{proof}

	\end{subsection}


\begin{subsection}{The case where  $A$ is both open (in $\delta M$) and closed.}  When $A$ is the union of some connected components of $\delta M$, it is possible to give a simple proof (i.e. which does not involve Theorem \ref{thm_dense_formes}) of Theorem \ref{thm_pprime} (with $E=\emptyset$) and Corollary \ref{cor_lsp_A}, which then hold for all $p$. We indeed have:

	\begin{pro}\label{pro_densite_A_deltaM} Let $A$ be an open subset of $\delta M$ and let $K$ be a compact subset of $A$. For each $j$ and $p\in [1,\infty)$, equality
		(\ref{eq_lsp_A}) holds for every $\alpha \in \wca^j_p(M,A)$ and $\beta\in   \wca^{m-j-1}_{p'}(M,\delta M\setminus K)$.
		As a matter of fact,  if $A$ is both open and closed in $\delta M$ then    $\cc_{\mba \setminus A}^{j,\infty}(M)\cap \wca_p^j(M)$ is dense in $\wca_p^j(M,A)$ for all $j\le m$ and the conclusion of Corollary \ref{cor_lsp_A} is valid, for all $p\in [1,\infty)$.
	\end{pro}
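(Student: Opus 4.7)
The plan is to derive the integration by parts formula via a single cut-off decomposition of the test form, and then to deduce the density assertion as a formal consequence of Proposition \ref{pro_densite}. Since $A$ is open in the compact set $\delta M$, the compact sets $K\subset A$ and $\delta M\setminus A$ are disjoint and therefore separated by positive Euclidean distance, so I can choose $\chi\in\cc^\infty(\R^n)$ equal to $1$ on a neighborhood of $K$ and vanishing on a neighborhood of $\delta M\setminus A$. I then split $\beta=\chi\beta+(1-\chi)\beta$. The smoothness and boundedness of $\chi$ immediately place $\chi\beta$ in $\wca^{m-j-1}_{p',M\cup A}(M)$, because its support in $\mba$ is compact and disjoint from $\delta M\setminus A$; symmetrically, $(1-\chi)\alpha\in\wca^j_{p,M\cup(\delta M\setminus K)}(M)$, as its support avoids a neighborhood of $K$.

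The core of the argument will then be a pair of dual integration by parts identities. The Dirichlet condition for $\alpha$ on $A$, tested against $\chi\beta$, yields $\int_M d(\alpha\wedge\chi\beta)=0$; the Dirichlet condition for $\beta$ on $\delta M\setminus K$, tested against $(1-\chi)\alpha$, gives $\int_M d(\beta\wedge(1-\chi)\alpha)=0$, which after permuting the wedge product (a sign that is irrelevant since the result is zero) reads $\int_M d(\alpha\wedge(1-\chi)\beta)=0$. Summing these two identities produces $\int_M d(\alpha\wedge\beta)=0$, and the Leibniz rule $d(\alpha\wedge\beta)=d\alpha\wedge\beta+(-1)^j\alpha\wedge d\beta$ turns this into the appropriate reindexed version of (\ref{eq_lsp_A}).

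For the second assertion, if $A$ is both open and closed in $\delta M$ then $A$ is itself compact, and applying the first part with $K:=A$ shows that (\ref{eq_lsp_j}) holds for every $\alpha\in\wca^j_p(M,A)$ and every $\beta\in\wca^{m-j-1}_{p'}(M,\delta M\setminus A)$. This is precisely condition $(i)$ of Proposition \ref{pro_densite} with the partition $B:=\delta M\setminus A$ and $E:=\emptyset$. The equivalence $(i)\Leftrightarrow(ii)$ of that proposition, valid for every $p\in[1,\infty)$ in view of Remark \ref{rem_p=1_densite}, then provides the density of $\cc^{j,\infty}_{\mba\setminus A}(M)\cap\wca^j_p(M)$ in $\wca^j_p(M,A)$, while the remaining conclusion of Corollary \ref{cor_lsp_A} is automatic from the integration by parts formula above. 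I do not foresee a real obstacle in this proof: the one point that requires attention is the verification of the correct compact supports of $\chi\beta$ and $(1-\chi)\alpha$ in $\mba$, and this is exactly where the hypothesis that $A$ is open in $\delta M$ enters in a decisive way.
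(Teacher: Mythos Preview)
Your proof is correct and follows essentially the same approach as the paper: where the paper localizes via a partition of unity to small neighborhoods of points of $\delta M$ and then invokes one of the two Dirichlet conditions depending on whether the point lies in $A$ or in $\delta M\setminus A$, you achieve the same splitting with a single global cut-off $\chi$ separating $K$ from $\delta M\setminus A$, which is a slightly more economical packaging of the identical idea. The deduction of the density statement from Proposition~\ref{pro_densite} (with Remark~\ref{rem_p=1_densite} for $p=1$) is exactly what the paper does as well.
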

	\begin{proof}  Let $\alpha\in \wca_p^j(M,A)$ and $\beta \in \wca_{p'}^{m-j-1}(M,\delta M\setminus K)$. Since we can use a partition of unity, it suffices to show, given $\xo\in \delta M$, that  (\ref{eq_lsp_A}) holds when  these two forms are supported in a small neighborhood of $\xo$. If $\xo\in A$ and $\beta$ is supported in the vicinity of $\xo$ then, since $A$ is open, we have $\beta\in  \wca_{p',M\cup A}^{m-j-1}(M)$ (if the support of $\beta$ is sufficiently small), which means that (\ref{eq_lsp_A}) follows from the definition of $ \wca^j_p(M,A)$. In the case where $\xo \in \delta M\setminus A\subset \delta M\setminus K$, since $\delta M\setminus K$ is open in $\delta M$, we have $\alpha\in  \wca_{p, M\cup K}^j(M)$ (if the support of $\alpha$ is sufficiently small), so that the needed fact comes down from the definition of $ \wca_{p'}^{m-j-1}(M,\delta M\setminus K)$. The last sentence then follows from   Proposition \ref{pro_densite} (and Remark \ref{rem_p=1_densite} in the case $p=1$), for we can choose $K=A$ if $A$ is closed.
	\end{proof}

	\end{subsection}


	\subsection{In the case  $p$ large, the trace  as a current.}\label{sect_trace_forms} As  explained in the introduction of this section, our aim is to provide explicit descriptions of $\wca^j_p(M,A)$ (Corollaries \ref{cor_vanishing_res} and \ref{cor_trace_formes}). We need for this purpose an explicit trace operator $\tra_S$ for $L^p$ differential forms (if $S$ is a stratum), which will be defined in this subsection (Theorem \ref{thm_trace_formes}), and a residue operator, which will be defined in the next one (Theorem \ref{thm_welldefined}).

	 For simplicity, we assume in Theorem \ref{thm_trace_formes}  that $M$  is normal. In the case where this assumption fails, one can always define a multi-valued trace using a normalization, as done in \cite{trace} in the case of Sobolev spaces of functions. We write $\dsc^j(M)$  for the space of $j$-currents on $M$.

We recall that throughout section \ref{sect_further}, $\Sigma$ stands for a definably bi-Lipschitz trivial stratification  of $\mba$ of which $M$ is a stratum, and that $\Sigma_{\delta M}$ then stands for the stratification induced on $\delta M$ (see Definition \ref{dfn_stratifications}).

\begin{thm}\label{thm_trace_formes} If $M$ is normal then
	for every $p\in [1,\infty)$ sufficiently large and  $S\in \Sigma_{\delta M}$, the mapping $\cc^{j,\infty } (\mba) \to  \cc^{j,\infty } (S) $, $\omega\mapsto   \omega_{|S} $, uniquely extends to a continuous mapping  $\tra_S:\wca^j_p (M)  \to  \dsc^j (S) $.
\end{thm}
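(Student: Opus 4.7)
The plan is to reduce to the local picture near a point $\xo\in S$ via a partition of unity, use the definably bi-Lipschitz trivialization of Definition \ref{dfn_trivial} to ``disintegrate'' the trace along the normal fibres, and express $\tra_S$ as the pairing of $\trd^p\omega$ with a suitable test form built from Lemma \ref{lem_beta}. Uniqueness of the extension is immediate from Corollary \ref{cor_A_vide_forme}: since $M$ is normal and $p$ is large, $\cc^{j,\infty}(\mba)$ is dense in $\wca^j_p(M)$, so a continuous extension of $\omega\mapsto\omega_{|S}$ is determined by its values on smooth forms.

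To construct $\tra_S$, fix $\xo\in S$, let $k=\dim S$, and use the trivializing chart $\Lambda:U_\xo\to F\times W_\xo$ with retraction $\pi:U_\xo\to S$, as in the proof of Lemma \ref{lem_traTra}. Let $\beta\in\wca^{m-k-1}_q(F^\ep)$ be the closed stratifiable form produced by Lemma \ref{lem_beta} applied to the normal fibre $F=\pi^{-1}(\xo)\cap M$ (with some $q>1$), pulled back under the first projection $F^\ep\times W_\xo\to F^\ep$. For $\varphi\in\cc^{k-j,\infty}_0(W_\xo)$, I would define
\[
\langle\tra_S^\xo\omega,\varphi\rangle:=\langle\trd^p\omega,\pi^*\varphi\wedge\beta\rangle'.
\]
For this to make sense, we need $\pi^*\varphi\wedge\beta\in\wca^{m-j-1}_{p',U_\xo\cup S}(M)$: its support meets $\delta M$ only in $S$ (by the vanishing property of $\beta$ in Lemma \ref{lem_beta} and the trivialization (\ref{eq_A_trivial})), it is $L^{p'}$ for $p$ large enough (so that $p'<q$), and its differential is $L^{p'}$ because $\beta$ is closed and $d\pi^*\varphi$ is smooth. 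The continuity of $\trd^p$ then yields the bound
\[
|\langle\tra_S^\xo\omega,\varphi\rangle|\lesssim \|\omega\|_{\wca^j_p(M)}\cdot C(\beta)\cdot\|\varphi\|_{\cc^1(W_\xo)},
\]
showing that $\tra_S^\xo\omega$ is indeed a $j$-current on $W_\xo$ depending continuously on $\omega$.

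Next I would verify that this definition agrees with restriction on smooth forms. For $\omega\in\cc^{j,\infty}(\mba)$ supported in $U_\xo$, the same Stokes' formula argument  used in (\ref{eq_tr_geta}) of the proof of Lemma \ref{lem_traTra} gives
\[
\langle\trd^p\omega,\pi^*\varphi\wedge\beta\rangle'=\lim_{\eta\to 0}\int_{G^\eta\times W_\xo}\omega\wedge\pi^*\varphi\wedge\beta,
\]
and, since $\int_{G^\eta}\beta=1$ for all small $\eta$ (Lemma \ref{lem_beta}) and $\omega$ is continuous up to $S$, this limit equals $\int_S\omega_{|S}\wedge\varphi$. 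Thus $\tra_S^\xo\omega$ coincides on smooth forms with the current of integration against $\omega_{|S}$, which is independent of $\beta$ and of the trivialization $\Lambda$. By the density statement of Corollary \ref{cor_A_vide_forme} together with the continuity bound above, this independence propagates to all of $\wca^j_p(M)$; consequently the local currents $\tra_S^\xo$ glue into a well-defined global current $\tra_S\omega\in\dsc^j(S)$ via a partition of unity on $S$.

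The main obstacle is ensuring that $\pi^*\varphi\wedge\beta$ really lies in $\wca^{m-j-1}_{p'}(M)$ with the required vanishing property near $\delta M\setminus S$; this is where the quantitative estimates from Lemma \ref{lem_beta} on the $L^q$ norm of $\beta$ (with $q>1$ dictated by the \L ojasiewicz exponent $k$ controlling $\jac h_t$) force $p$ to be taken sufficiently large. Once this integrability is in place, the construction is essentially dictated by what it must be, and the remaining verifications, namely independence of choices, the gluing, and the continuity, follow from the density of smooth forms and the definition of $\trd^p$.
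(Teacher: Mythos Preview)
Your approach is correct and essentially the same as the paper's: both localize via the bi-Lipschitz trivialization, invoke the test form $\beta$ of Lemma \ref{lem_beta}, and identify $\int_S\omega_{|S}\wedge\varphi$ with $\int_M d(\omega\wedge\pi^*\varphi\wedge\beta)$ via Stokes' formula on $(F^\ep\setminus F^\eta)\times W_\xo$ followed by the limit $\eta\to 0$. The only organizational difference is that you package the resulting continuity estimate as the explicit formula $\langle\tra_S\omega,\varphi\rangle=\langle\trd^p\omega,\pi^*\varphi\wedge\beta\rangle'$, whereas the paper states the inequality (\ref{eq_claim_tra_form}) directly and then extends by density; note that the passage from $\lim_{\eta\to 0}\int_{G^\eta\times W_\xo}\omega\wedge\pi^*\varphi\wedge\beta$ to $\int_S\omega_{|S}\wedge\varphi$ for smooth $\omega$ requires, beyond the $j=0$ argument of (\ref{eq_tr_geta}), the decomposition of $\omega\wedge\pi^*\varphi$ along $dy_1\wedge\dots\wedge dy_k$ that the paper carries out in (\ref{eq_om_moins_piom})--(\ref{eq_om}).
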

\begin{proof}
Since we can use a partition of unity, it suffices to prove that every $\xo\in S\in \Sigma_{\delta M}$ has a neighborhood $U$ in $\mba$ such that $\omega\mapsto \omega_{|S\cap U}$ induces a continuous mapping from $(\cc_0^{j,\infty}(U),||\cdot||_{\wca^j_p (U\cap M) })$ to   $\dsc^j(U\cap S)$ (for $p\in [1,\infty)$ sufficiently large). Uniqueness of the extension will then follow from Theorem \ref{thm_dense_formes}.  

Fix $x_0\in S\in \Sigma_{\delta M}$.   By definition of definable bi-Lipschitz triviality, $\xo$ has a neighborhood $\uxo$ in $\mba$ for which there is a definable bi-Lipschitz homeomorphism $\Lambda: U_\xo \to (\pi^{-1}(\xo)\cap \adh{M})\times W_\xo$, where $\pi:U_\xo \to S$ is a definable $\cc^\infty$ retraction and $W_\xo$ is a neighborhood of the origin in $S$ (here $\dim S$ may be $0$, in which case $\Lambda$ is the identity).

Up to a coordinate system of $S$,  we may assume that $W_\xo$ is a neighborhood of the origin in $\{0\} \times \R^k$ (where $k=\dim S$, we will sometimes regard $W_\xo$ as a subset of $\R^k$).  We let $F:=\pi^{-1}(0)\cap M$, as well as $F^\eta:=F\cap \bou (0,\eta)$ and
$G^\eta:=F\cap \sph (0,\eta)$.

Let $\omega\in \cc_0^{j,\infty}(U_\xo)$ and $\varphi\in \cc_0^{k-j,\infty}(W_\xo)$.  We are going to show that (if $p$ is large) \begin{equation}\label{eq_claim_tra_form}
	|\int_{W_\xo} \omega\wedge \varphi|\le C ||\omega||_{\wca^j_p(U_\xo \cap M)} \cdot  ||\varphi||_{\wca^{k-j}_{\infty}(W_\xo )}, 
\end{equation}
for some constant $C$ independent of $\omega$ and $\varphi$, which will clearly yield the desired fact (we will actually just need a continuity property of $\omega$ at points of $S$, so that this inequality indeed holds for $\omega$  differentiable stratified $j$-form  on $U_\xo$,  see (\ref{eq_stratified_form})).

 Let $\beta$ denote the form provided by applying Lemma \ref{lem_beta}   to the manifold $F$. We will regard this form as defined on $F\times W_\xo$, identifying it with $\mu^*\beta$, where $\mu:F\times W_\xo \to F$ is the canonical projection. We first check that
\begin{equation}\label{eq_om_moins_piom}
	\lim_{\eta \to 0} \int_{G^\eta \times W_\xo} \beta(x)\wedge \Lambda^{-1*}(\omega-\pi^*\omega_{|S})(x,y) \wedge \pi^*\varphi(x,y)=0.
\end{equation}
For $(x,y)\in F\times W_\xo$, write for simplicity
$$\Lambda^{-1*}(\omega-\pi^*\omega_{|S})\wedge \pi^*\varphi(x,y) =\omega_1(x,y)+\omega_2(x,y)\, dy_1\wedge \dots \wedge d y_k,  $$
with
  \begin{equation}\label{eq_om2_geta}\omega_{1,e_1\otimes\dots \otimes e_k} \equiv 0,\end{equation} 
  where $e_1,\dots, e_k$ is the canonical basis of $\{0\}\times \R^k\supset S$.
As $\beta_{e_i}\equiv 0$ for all $i=1,\dots k$, (\ref{eq_om2_geta}) entails (since $\dim G^\eta=k-1$) 
\begin{equation}\label{eq_beta_omega1_0}\left( \beta \wedge \omega_1\right)_{|G^\eta \times W_\xo}\equiv 0 .\end{equation}
Moreover, since $\omega$ is smooth and $\Lambda$ commutes with $\pi$, $\omega_2(x,y)$ must tend to zero as $x$ tends to $0$, so  that, by (\ref{eq_beta_S_geta})  we see that
$\lim_{\eta\to 0}	\int_{G^\eta \times W_\xo} \beta\wedge \omega_2\wedge dy_1\wedge \dots \wedge d y_k=0$,
which, together with (\ref{eq_beta_omega1_0}), yields (\ref{eq_om_moins_piom}).

We are ready to establish (\ref{eq_claim_tra_form}). Observe first that it follows from the definition of trivializations that $\Lambda^{-1*}\pi^*\omega_{|S}=\pi^*\Lambda^{-1*}\omega_{|S}=\pi^*\omega_{|S}$. 
As $\varphi$ is smooth,  $\pi^*\varphi(x,y)$ tends to  $\pi^*\varphi(0,y)$ as $x$ goes to zero, which implies that for almost every $y$
\begin{eqnarray}\label{eq_unif_conv_a}
	\lim_{x\to 0, x\in F} \Lambda^{-1*} \pi^*\omega_{|S}(x,y)\wedge \pi^*\varphi(x,y) =\omega_{|S}\wedge \varphi(y),
\end{eqnarray}
and hence,
 \begin{eqnarray*}
 \lim_{\eta\to 0}	\int_{G^\eta \times W_\xo} \beta \wedge\Lambda^{-1*}\pi^*\omega_{|S}\wedge \pi^*\varphi &=& \lim_{\eta\to 0}	\int_{G^\eta } \beta(x) \wedge\int_{y\in W_\xo} \Lambda^{-1*}\pi^*\omega_{|S}\wedge \pi^*\varphi(x,y)\\
 &=&  \int_{ W_\xo}\omega\wedge \varphi \quad \mbox{(using (\ref{eq_beta_S_geta}) and (\ref{eq_unif_conv_a})),} \end{eqnarray*}
which, together with (\ref{eq_om_moins_piom}),  entails:
\begin{equation}\label{eq_om}
	\lim_{\eta\to 0} \int_{G^\eta \times W_\xo} \beta \wedge\Lambda^{-1*}\omega\wedge \pi^*\varphi =\int_{ W_\xo}\omega\wedge \varphi.
\end{equation}
Note now that  $P^\eta:=(F^\ep \setminus F^\eta ) \times W_\xo $ is a manifold with boundary  $G^\eta  \times W_\xo $  on which $\beta\wedge \Lambda^{-1*}\omega\wedge \pi^*\varphi  $ induces a compactly supported stratifiable form (see section \ref{sect_stokes_formula}).  Stokes' formula (\ref{eq_stokes_stratified_leaf}) (applied to the interior part of this manifold with boundary) gives us 
$$\int_{G^\eta \times W_\xo} \beta \wedge\Lambda^{-1*}\omega\wedge \pi^*\varphi =\int_{P^\eta}  d(\beta \wedge\Lambda^{-1*}\omega\wedge \pi^*\varphi), $$
and consequently, using H\"older's inequality,
$$|\int_{G^\eta \times W_\xo} \beta \wedge\Lambda^{-1*}\omega\wedge \pi^*\varphi |\lesssim ||\beta||_{\wca^{m-k-1}_{p'}(U_\xo)}\cdot ||\omega||_{\wca^j_p(U_\xo)}\cdot ||\varphi||_{\wca^{k-j}_\infty(U_\xo)}, $$
which, via (\ref{eq_om}),  gives (\ref{eq_claim_tra_form}), after passing to the limit as $\eta \to 0$.
\end{proof}

\begin{rem}\label{rem_invariance_trace}
 Inequality (\ref{eq_claim_tra_form}) is true as soon as $\omega$ is a stratified form (as emphasized right after (\ref{eq_claim_tra_form})). It means that the identity $\tra_S \omega=\omega_{|S}$ actually holds for every stratified form $\omega$ on $\mba$ (see (\ref{eq_stratified_form})).
	\end{rem}


\subsection{Residues of $L^1$ forms}\label{sect_residues}We recall that, throughout section \ref{sect_further}, $\Sigma$ is a definably bi-Lipschitz trivial stratification  of $\mba$ of which $M$ is a stratum. The situation is more complicated in the case where $p$ is close to $1$ (see Example \ref{exa_f_hol}).  In this case, the vanishing of a form on a stratum of $\delta M$ will be characterized by the vanishing of a residue operator that we define in this subsection.  The residue of a form  $\omega \in \wca^{m-j-1}_1(M)$ (i.e. of codimension $(j+1)$ in $M$) on a stratum $S$ will be  a $(k-j)$-current on $S$, where $k:=\dim S$ (i.e. of codimension $j$ in $S$). 

 Let $S\in \Sigma_{\delta M}$  (see Definition \ref{dfn_stratifications} for $\Sigma_{\delta M}$) and let $\pi:U \to S$ be a $\cc^2$  definable retraction and $\rho :U \to \R$ a $\cc^2$ definable function satisfying $\rho^{-1}(0)=S$, where $U\supset S$ is an open subset of $\R^n$ (see  \cite[section 2.4]{livre} for existence of definable tubular neighborhoods).  
 
 Given  $\omega \in \wca^{m-j-1}_1(M)$ and $\varphi\in \cc^{j,\infty}_0(S)$, we set $V^\ep_{\rho}:=\{x\in M:\rho(x)=\ep\}$ as well as
\begin{equation}\label{eq_dfn_res}
	<\res_{_S} \omega,\varphi>:= \underset{\ep \to 0}{\esslim} \int_{V^\ep_{\rho} } \omega\wedge \pi^*\varphi  ,\end{equation}
where $\esslim$ stands for the essential limit.  This definition of course requires to show that this limit exists and  is independent of $(\pi,\rho)$. When this happens for all $\varphi$ supported in some neighborhood $U$ of a point $x_0\in S$, we will say that $\res_{_S} \omega$ is {\bf well-defined near $x_0$}.

We will show that $\res_{_S}  \omega$ is well-defined as soon as $\res_{_Y} \omega$ is well-defined as well and is in addition $L^1$ for each stratum $Y$ satisfying $S\prec Y$ (Theorem \ref{thm_welldefined}, see Remark \ref{rem_frontier} for $\prec$). Intuitively speaking, this means that we are able to define the residue of a form on a stratum provided it does not oscillate too much on the higher strata. We give a counterexample that shows that  the limit (\ref{eq_dfn_res}) does not always exist when this condition is not met (Remark \ref{sect_concluding} (\ref{item_resl1})).
\begin{dfn}\label{dfn_l1op}
	Given $Y\in \Sigma$, we will say that a linear functional $T:\cc_0^{j,\infty}(Y)\to \R$ is $L^1$ near a point $\xo\in \adh Y$  if this point has a neighborhood $U$ in $\R^n$ such that $T$ coincides with an element of $\alpha\in L^{l-j}_1(Y\cap U)$, $l=\dim Y$, on $\cc_0^{j,\infty} (Y\cap U)$, i.e. $T(\varphi)=<\alpha,\varphi>$ for all $\varphi\in \cc_0^{j,\infty} (Y\cap U)$.\end{dfn}

It is worthy of notice that in Example \ref{exa_f_hol}, i.e. in the case of a holomorphic function that has a pole of order $1$, the above definition of residues coincides with the usual notion. In this example,  the only stratum of the boundary of the domain near the origin being $\{0\}$, it is maximal (for the relation $\prec$), so that, unlike in (\ref{item_well_dfn}) of the theorem below, there is no integrability condition to put.
\begin{thm}\label{thm_welldefined}  Let $S\in \Sigma_{\delta M}$ and  $\xo\in S$. 
	\begin{enumerate}
		\item\label{item_well_dfn_max}	If $S$ is a maximal stratum of $\delta M$ then $\res_{_S} \omega$ is well-defined for all $\omega\in\wca_1^{m-j-1} (M)$. 
		\item\label{item_well_dfn}  More generally, if $\res_{_Y} \omega$ is (well-defined and) $L^1$ near $\xo$ for all   $S\prec Y \prec M$  then $\res_{_S} \omega$ is well-defined near $\xo$. 
		\item\label{item_res_cur} 	If we let 
$$ \dor^j_S:=\{\omega \in \wca^j_1 (M): \res_{_Y} \omega \mbox{ is (well-defined and) $L^1$ for all $S\prec Y\prec M$} \}, $$
 then  (\ref{eq_dfn_res}) provides a linear mapping $\res_{_S} :\dor_S^{m-j-1}\to \dsc^{k-j}(S),$ where $k:=\dim S. $  
	\end{enumerate}
	
\end{thm}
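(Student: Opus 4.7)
The plan is to apply the generalized Stokes formula (Theorem \ref{thm_stokes_leaves}) on shrinking tubes around $S$ and to control the resulting boundary contributions using the $L^1$ data. By a partition of unity it suffices to work locally near $\xo\in S$ with $\varphi\in\cc_0^{j,\infty}(W)$ supported in a small neighborhood $W$ of $\xo$ contained in a definable bi-Lipschitz trivialization chart for $\Sigma$. For $0<\ep_1<\ep_2$ small, set $T^{\ep_1,\ep_2}:=\{x\in\pi^{-1}(W)\cap M:\ep_1\le\rho(x)\le\ep_2\}$, and for each stratum $Y$ with $S\prec Y\prec M$ choose a definable tubular function $\rho_Y$ of $Y$. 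Truncating by $\{\rho_Y\ge\delta\}$ and applying Stokes on the resulting weakly normal region yields, after letting $\delta\to 0$, the key identity
\[
\int_{V^{\ep_2}_\rho}\omega\wedge\pi^*\varphi - \int_{V^{\ep_1}_\rho}\omega\wedge\pi^*\varphi = I(\ep_1,\ep_2) + \sum_{S\prec Y\prec M}B_Y(\ep_1,\ep_2),
\]
where $I(\ep_1,\ep_2):=\int_{T^{\ep_1,\ep_2}}\bigl(d\omega\wedge\pi^*\varphi+(-1)^{m-j-1}\omega\wedge\pi^*d\varphi\bigr)$ and $B_Y$ collects the lateral contributions from pieces of $\pa T^{\ep_1,\ep_2}_\delta$ approaching $Y$; the outer part in $\pi^{-1}(\pa W)$ gives nothing since $\pi^*\varphi$ vanishes there.

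For (\ref{item_well_dfn_max}), maximality of $S$ together with the frontier condition (Remark \ref{rem_frontier}) forces the index set of the sum to be empty, so the identity reduces to the $L^1$-bounded term $I(\ep_1,\ep_2)$, which satisfies $|I(\ep_1,\ep_2)|\lesssim\|\varphi\|_{\cc^1}\cdot(\|\omega\|_{L^1(T^{\ep_1,\ep_2})}+\|d\omega\|_{L^1(T^{\ep_1,\ep_2})})$ and tends to $0$ as $\ep_1,\ep_2\to 0$ by absolute continuity of the integral. Hence the net $\ep\mapsto\int_{V^\ep_\rho}\omega\wedge\pi^*\varphi$ is Cauchy through the full-measure set of regular values of $\rho$, giving existence of the essential limit in (\ref{eq_dfn_res}). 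The same estimate immediately yields a bound of the form $|<\res_S\omega,\varphi>|\le C_\omega\|\varphi\|_{\cc^1(W)}$, and linearity in $\omega$ is transparent, so $\res_S\omega\in\dsc^{k-j}(S)$.

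For (\ref{item_well_dfn}), the task reduces to identifying each lateral term $B_Y(\ep_1,\ep_2)$ as an absolutely convergent integral of $\res_Y\omega$ against a suitable restriction of $\pi^*\varphi$ to $Y$. Concretely, smoothly approximating the characteristic function of the slab $\{\ep_1\le\rho\le\ep_2\}$ and invoking the very definition of $\res_Y\omega$, which by hypothesis is $L^1$ near $\xo$ and can therefore be paired against bounded measurable forms, one obtains
\[
B_Y(\ep_1,\ep_2) = \pm\int_{Y\cap\{\ep_1\le\rho\le\ep_2\}}\res_Y\omega\wedge \bigl(\pi^*\varphi\bigr)_{|Y},
\]
the discrepancy between $\pi^*\varphi$ and its pullback via the retraction onto $Y$ contributing $o(1)$ as $\delta\to 0$ because it is $O(\rho_Y)$ on $\{\rho_Y=\delta\}$. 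By absolute continuity $|B_Y(\ep_1,\ep_2)|\to 0$ as $\ep_1,\ep_2\to 0$; combined with the bound on $I(\ep_1,\ep_2)$, this yields the Cauchy property and hence existence of $\res_S\omega$ near $\xo$. The resulting estimate $|<\res_S\omega,\varphi>|\le C\|\varphi\|_{\cc^1(W)}$, with $C$ depending only on $\|\omega\|_{\wca^{m-j-1}_1(M)}$ and on the local $L^1$ norms of the $\res_Y\omega$, proves the current property of (\ref{item_res_cur}), and linearity is again immediate. Independence of the tubular data $(\pi,\rho)$ follows by comparing two such data via a stratum-preserving definable bi-Lipschitz interpolation provided by the trivialization and applying the chain homotopy formula (\ref{eq_chain_homotopy}) to see that the difference of the two essential limits vanishes.

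The principal obstacle I anticipate is the precise identification of $B_Y(\ep_1,\ep_2)$: justifying the interchange of the limit $\delta\to 0$ with integration over the slab, and controlling the transverse drift $\pi^*\varphi-\pi_Y^*\bigl((\pi^*\varphi)_{|Y}\bigr)$, requires a careful Fubini-type argument inside the trivialization chart exploiting the product structure $\pi^{-1}(W)\cap M\cong F\times W$. This is precisely where the $L^1$ hypothesis for the residues on higher strata intervenes essentially; without it, the lateral terms would only be defined as currents on $Y$, and the combination $B_Y(\ep_1,\ep_2)$ would in general oscillate as $\ep_1,\ep_2\to 0$ (in accordance with Example \ref{exa_f_hol} and the counterexample mentioned in the paper).
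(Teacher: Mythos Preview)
Your overall strategy---establishing the Cauchy property of $\ep\mapsto\int_{V^\ep_\rho}\omega\wedge\pi^*\varphi$ via a Stokes-type identity on annular tubes, inducting over strata---is the paper's approach as well. The paper implements the Stokes step through smooth cutoffs $\psi^S_{\mu,\eta}(x)=\psi_\eta(\rho(x)-\mu)$ rather than a direct appeal to Theorem~\ref{thm_stokes_leaves}, thereby avoiding any regularity issue for $\omega$; and it proves Theorems~\ref{thm_welldefined} and~\ref{thm_residue_formula} simultaneously, the inductively known residue formula on higher strata being the engine that drives the limit.

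The genuine gap is exactly where you locate it, but your proposed fix does not work. For a smooth form $\alpha$ with $\alpha_{|Y}=0$ one indeed has $|\alpha|\lesssim\rho_Y$, so the lateral discrepancy is bounded by $\delta\int_{\{\rho_Y=\delta\}}|\omega|$. However $\omega\in L^1$ only yields $\int_0^\ep\big(\int_{\{\rho_Y=\delta\}}|\omega|\big)\,d\delta<\infty$, which does \emph{not} force $\delta\int_{\{\rho_Y=\delta\}}|\omega|\to 0$ even along a set of $\delta$'s of full measure (let $|\omega|$ concentrate on slabs $\rho_Y\in[2^{-n},2^{-n}+4^{-n}]$ with density $2^n$). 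So the ``$O(\rho_Y)$'' size estimate alone cannot close the argument, and your chain-homotopy sketch for independence of $(\pi,\rho)$ runs into the same obstruction.

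The paper replaces this failed pointwise estimate by a homotopy-cutoff construction. With $\tilde\pi(x,t)=tx+(1-t)\pi(x)$ and $\mathbf{A}\varphi(x):=\int_0^1(\tilde\pi^*\varphi)_{\pa_t}(x,t)\,dt$, one has $|\mathbf{A}\varphi|\lesssim\rho\cdot\|\varphi\|_\infty$; setting $\mathbf{B}_\eta\varphi:=\hat\psi_\eta\varphi+d\hat\psi_\eta\wedge\mathbf{A}\varphi$ and using $|d\hat\psi_\eta|\lesssim 1/\rho$, both $\mathbf{B}_\eta\varphi$ and $d\mathbf{B}_\eta\varphi=\mathbf{B}_\eta d\varphi$ are \emph{uniformly bounded in $\eta$} and converge pointwise to $\varphi$, $d\varphi$. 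Since $\mathbf{B}_\eta\varphi$ vanishes near $S$, the inductively established residue formula applies to the pair $(\omega,\mathbf{B}_\eta\varphi)$; dominated convergence then yields (\ref{eq_resint}) and (\ref{eq_only}), from which independence of $\rho$ (manifest in (\ref{eq_resint})) and of $\pi$ (this is (\ref{eq_only})) follow at once. In short, what substitutes for your missing $o(1)$ is the residue formula itself on the higher strata, bootstrapped through a cutoff whose badness $|d\hat\psi_\eta|\sim 1/\rho$ is exactly cancelled by the goodness $|\mathbf{A}\varphi|\lesssim\rho$.
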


The  operator $\res_{_S}$ will give us a natural and efficient way to check whether integrations by parts are licit. We give below a  formula providing information on this issue. We will say that a form $\omega\in \wca^j_1 (M)$ has {\bf $L^1$ residues} at $\xo\in \mba$ if  $\res_{_S}\omega$ is $L^1$ near $\xo$, for all $S\in \Sigma_{\delta M}$.
\begin{thm}\label{thm_residue_formula} (Residue formula) Let  $\varphi \in \cc^{j,\infty} (\mba)$. If  $\omega \in  \wca^{m-j-1}_1 (M) $ has $L^1$ residues  near every point of $\supp_\mba \varphi$ then
\begin{equation}\label{eq_residue_formula}
	<d\omega,\varphi>= (-1)^{m-j}<\omega,d\varphi> +\sum_{S\in \Sigma _{\delta M}} <\res_{_S} \omega, \varphi_{|S}>.
\end{equation}
\end{thm}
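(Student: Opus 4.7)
The idea is to apply the generalized Stokes formula of Theorem \ref{thm_stokes_leaves} on the complement of thin tubular neighborhoods of all strata of $\delta M$, and then convert the resulting boundary integrals into the residue currents by an iterated essential limit.

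First, using a $\cc^\infty$ partition of unity on $\mba$ subordinate to a sufficiently fine open cover, I may assume that $\varphi$ is supported in a small neighborhood of a single point $x_0 \in \mba$. If $x_0 \in M$, then $\varphi$ is compactly supported in $M$, every residue term vanishes, and (\ref{eq_residue_formula}) reduces to the definition of $d\omega$ as a current. So the interesting case is $x_0 \in S_0 \in \Sigma_{\delta M}$. Listing the strata $S_1, \dots, S_N$ of $\Sigma_{\delta M}$ meeting $\supp_\mba \varphi$ (ordered by \emph{decreasing} dimension), I choose for each $S_i$ a $\cc^2$ definable retraction $\pi_i : U_i \to S_i$ and a $\cc^2$ definable defining function $\rho_i : U_i \to \R$ as in section~\ref{sect_residues}.

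For $\vec\ep = (\ep_1,\dots,\ep_N)$ with each $\ep_i>0$ generic and small, set
\[
M(\vec\ep) := \{x \in M : \rho_i(x) > \ep_i \text{ for every $i$ such that } x \in U_i \}.
\]
For generic $\vec\ep$ the closure of $M(\vec\ep)$ in $M$ is a manifold with piecewise smooth boundary whose boundary pieces $\Gamma_i(\vec\ep) \subset \{\rho_i = \ep_i\}$ are outward-oriented toward $S_i$. Since $\omega \wedge \varphi$ is a compactly supported stratifiable form on $\adh{M(\vec\ep)}$ and $\omega$ has an $L^1$ exterior differential in the sense of currents, Theorem \ref{thm_stokes_leaves} combined with the Leibniz rule $d(\omega \wedge \varphi) = d\omega \wedge \varphi + (-1)^{m-j-1}\omega \wedge d\varphi$ gives
\[
\int_{M(\vec\ep)} d\omega \wedge \varphi + (-1)^{m-j-1}\int_{M(\vec\ep)} \omega \wedge d\varphi = \sum_{i=1}^N \int_{\Gamma_i(\vec\ep)} \omega \wedge \varphi.
\]
Since $\omega$ and $d\omega$ are $L^1$ on $M$ and $\varphi$, $d\varphi$ are bounded, dominated convergence sends the two integrals on the left side to $\langle d\omega,\varphi\rangle$ and $\langle \omega, d\varphi\rangle$ as $\vec\ep \to 0$.

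It remains to evaluate the essential limit of the boundary sum and identify each term with $\langle \res_{S_i} \omega, \varphi|_{S_i}\rangle$. I proceed by sending $\ep_1 \to 0$ first (largest-dimensional stratum), then $\ep_2$, etc. On each piece $\Gamma_i(\vec\ep)$ the smoothness of $\varphi$ on $\mba$ lets me replace $\varphi$ by $\pi_i^*(\varphi|_{S_i})$ up to an error that vanishes uniformly as $\ep_i \to 0$ (since $\varphi(x) - \pi_i^*(\varphi|_{S_i})(x) = O(\rho_i(x))$). The piece $\Gamma_i(\vec\ep)$ differs from the full level set $V^{\ep_i}_{\rho_i}$ by removing tubular neighborhoods of the other strata $S_j$; after first discarding the smaller-dimensional $S_j$ ($j > i$) via $\ep_j \to 0$, and using the essential-limit nature of the definition (\ref{eq_dfn_res}), this integral converges to $\langle \res_{S_i}\omega, \varphi|_{S_i}\rangle$. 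The principal obstacle is precisely this iterated essential limit: while it is straightforward for the maximal-dimensional strata $S_i$ directly from (\ref{eq_dfn_res}), for strata of smaller dimension the $L^1$-residue hypothesis $\omega \in \dor^{m-j-1}_{S_i}$ becomes indispensable, as it provides the integrability needed to carry out a Fubini-type interchange of the essential limit over $\ep_i$ with the integrations over tubular neighborhoods of deeper strata $S_j \prec S_i$, thus converting the geometric Stokes boundary contribution into the analytically defined current $\res_{S_i}\omega$ tested against $\varphi|_{S_i}$.
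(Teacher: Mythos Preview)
Your outline has the right geometric picture, but there are two genuine gaps that prevent it from being a proof.

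\textbf{First gap: Stokes is misapplied.} You invoke Theorem~\ref{thm_stokes_leaves} for $\omega\wedge\varphi$ on $\adh{M(\vec\ep)}$, but that theorem is stated for \emph{differentiable stratified forms} (Definition~\ref{dfn_stratified_form}), which requires continuity on each stratum. Your $\omega$ is only in $\wca^{m-j-1}_1(M)$: an $L^1$ form with $L^1$ exterior differential, with no continuity or stratifiability hypothesis. So Theorem~\ref{thm_stokes_leaves} does not apply. One can rescue a Stokes identity for a.e.\ $\vec\ep$ via Fubini/coarea arguments, but that is itself work (and is essentially what the paper does via the smooth cutoffs $\psi^S_{\mu,\eta}$ and the mollification argument leading to (\ref{eq_ab})).

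\textbf{Second and more serious gap: the replacement of $\varphi$ by $\pi_i^*(\varphi|_{S_i})$.} You claim the error vanishes because $\varphi-\pi_i^*(\varphi|_{S_i})=O(\rho_i)$. But to conclude $\int_{\Gamma_i(\vec\ep)}\omega\wedge(\varphi-\pi_i^*\varphi|_{S_i})\to 0$ you would need $\ep_i\int_{\Gamma_i(\vec\ep)}|\omega|\to 0$, and this does \emph{not} follow from $\omega\in L^1$: for $f\in L^1(0,1)$ the essential limit $\esslim_{\eta\to 0}\eta\!\int_{\rho=\eta}|\omega|$ need not be zero (take $f=\sum 2^n\mathbf{1}_{[2^{-n-1},2^{-n-1}+4^{-n}]}$ as a model). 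This replacement is exactly the content of (\ref{eq_only}) in the paper, and the paper's proof of (\ref{eq_only}) is the technical heart of the argument: it requires building the homotopy operator $\mathbf A$, the cutoff $\mathbf B_\eta\varphi=\hat\psi_\eta\varphi+d\hat\psi_\eta\wedge\mathbf A\varphi$, the uniform bound on $\mathbf B_\eta\varphi$ coming from (\ref{eq_u}), and crucially the \emph{inductive hypothesis} that (\ref{eq_residue_formula}) already holds for the pair $\omega,\mathbf B_\eta\varphi$ (which vanishes near $S$). Your iterated essential limit and ``Fubini-type interchange'' remarks gesture at this difficulty but do not supply the mechanism.

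The paper's proof avoids both problems by arguing simultaneously with Theorem~\ref{thm_welldefined} via induction on $\prec$, using smooth cutoffs $\psi^S_{\mu,\eta}$ rather than sharp level sets, and reducing the independence-of-$\pi$ step to (\ref{eq_only}) via the $\mathbf B_\eta$ construction. Your sharp-cutoff geometric approach could perhaps be made to work, but it would need essentially the same inductive machinery to justify the error term, at which point it is no longer simpler.
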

\begin{proof}[Proof of Theorems \ref{thm_welldefined} and \ref{thm_residue_formula}.] We first focus on the definition of residues, i.e. on (\ref{item_well_dfn_max}) and (\ref{item_well_dfn}) of  Theorem \ref{thm_welldefined}. As the statement of this theorem suggests, we shall argue by induction with respect to the partial strict order relation $\prec$. Moreover,
	the statement of this theorem being local, we may argue up to a finite partition of unity  subordinate to a covering of $\mba$, which means that it suffices to show the desired statement for forms that are supported in a small neighborhood of a point of $ \delta M$ (the statements are obvious near a point of $M$). 
	
	 We thus fix $\xo \in \delta M$ belonging to a stratum $S\in \Sigma$, take $\omega \in \wca^{m-j-1}_1 (M) $ supported in some small neighborhood $U_\xo$ of $\xo$  in $\R^n$, and will assume  $\res_{_Y} \omega$ to be well-defined and $L^1$ for every $Y\in \Sigma_{\delta M}$ satisfying $S\prec Y$ (this condition being empty if $S$ is maximal in $\delta M$, the first step of this induction is vacuous).
	
	If
	 $\supp_\mba\omega$ (that we can choose as small as we please) is sufficiently small, it will only meet strata that contain $S$ within their closure,  so that $\res_{_Y} \omega$ will be well-defined and $L^1$ for every such stratum by the inductive assumption. We will be proving (\ref{eq_residue_formula}) simultaneously.  Since this formula can also be proved up to a partition of unity, it suffices to prove it for a form $\omega$ as in the above paragraph,  and, as we can  also argue by induction on the  strata that meet $\supp_\mba \omega \cap \supp_\mba \varphi$ (with respect to the partial order relation $\prec$), we will assume that  (\ref{eq_residue_formula}) holds for all  pairs $\omega$, $\varphi$ supported in $\uxo$ for which $\supp_\mba\omega \cap \supp_\mba \varphi \cap S=\emptyset$.

  To check that the limit (\ref{eq_dfn_res}) exists, take  a tubular neighborhood $(\pi,\rho)$ of $S$ as well as $\varphi\in \cc_0^{j,\infty}(
  U_\xo)$  (although we just need to check that this limit  exists for a form $\pi^*\varphi$, with $\varphi\in \cc_0^{j,\infty}(U_\xo\cap S)$, we will carry out our computations for a form $\varphi\in \cc_0^{j,\infty} (U_\xo)$, which will be useful to establish (\ref{eq_residue_formula})).
 
 Let for $\eta$ and $\mu$ positive $\psi^S_{\mu,\eta}(x):=\psi_{\mu,\eta} (\rho(x))$, $x\in \uxo\cap M$, where $\psi_{\mu,\eta}(t):=\psi_\eta (t-\mu)$ (with $\psi_\eta$ as in (\ref{eq_psieta})), and 	set for simplicity 
		 \begin{equation*}
		f(\mu,\eta)=(-1)^{m-j}<\omega, d \psi^S_{\mu,\eta}\wedge \varphi>.
	\end{equation*}
 We first check that for almost every $\mu>0$ small
\begin{equation}\label{eq_ab}
	\lim_{\eta \to 0} f(\mu,\eta) =\int_{V_{\rho}^\mu} \omega\wedge \varphi, \quad \mbox{where }\;\; V_{\rho}^\mu:= \rho^{-1}(\mu)\cap M.
\end{equation}
 As $\rho$ is Lipschitz, by \cite[Theorem 7.6.3 and Proposition  7.6.5]{gvhandbook} or \cite[Corollary 3.2.12 and Proposition 3.2.14]{livre},  there are $\delta>0$ and a definable  homeomorphism  $$\Phi:\{x\in\uxo\cap M:\rho(x)<\delta\} \to V_{\rho}^\delta \times  (0,\delta)$$ of type $x\mapsto (\phi(x),\rho(x))$, for some mapping $\phi$, which is bi-Lipschitz on $\{x\in \uxo \cap M:\sigma<\rho(x)<\delta\}$ for every $\sigma\in(0,\delta)$.  We  denote by  $\tilde{\omega}$, $\tilde{\varphi},\tilde{\psi}^S_{\mu,\eta} $, the respective push-forwards under $\Phi$ of $\omega,\varphi,\psi^S_{\mu,\eta}$,  and set for $\mu<\delta$ \begin{equation}\label{eq_vtildemu}\tilde{V}^\mu:=\Phi(V_{\rho}^\mu)= V_{\rho}^\delta \times \{\mu\}.\end{equation}
  In particular, if we decompose $\tilde{\omega}\wedge \tilde{\varphi}$ as $\tilde{\gamma}_1+ dt\wedge\tilde{\gamma}_2 $ (i.e. $\tilde{\gamma}_2:=(\tilde{\omega}\wedge \tilde{\varphi}){_{\pa_t}}$) we have for  $x\in \tilde{V}^\mu$, setting $\theta_\eta (t):= \frac{d\psi_{\eta}}{dt}(-t)$, 
\begin{equation}\label{eq_getamu}\int_{t=0} ^\ep d \tilde{\psi}^S_{\mu,\eta}(t)\wedge\tilde{\omega}\wedge \tilde{\varphi}(x,t)= \int_0^{\ep }\tilde{\gamma}_1(x,t)\frac{d\psi_{\eta}}{dt}(t-\mu)dt\overset{(\ref{mol})}{=}(\tilde{\gamma}_1*_1\theta_\eta) (x,\mu ).\end{equation}
 But since $\int\theta_\eta (t)dt=1$,  Lemma \ref{lem_conv} $(\ref{item_conv_lem})$ implies that for $\sigma \in (0,\delta)$, 
\begin{equation*}\lim_{\eta\to 0}||\tilde{\gamma}_1-\tilde{\gamma}_1*_1\theta_\eta ||_{L^1( V_{\rho}^\delta\times[\sigma,\delta ])}=0,  \end{equation*}
 which entails for almost every $\mu\in (0,\delta)$
$$	\lim_{\eta \to 0}\int_{\tilde{V}^\mu}\tilde{\gamma}_1*_1\theta_\eta= \int_{\tilde{V}^\mu}\tilde{\gamma}_1\overset{(\ref{eq_vtildemu})}= \int_{\tilde{V}^\mu}\tilde{\omega}\wedge \tilde{\varphi},  $$
and consequently for such $\mu $ (integrating (\ref{eq_getamu}) and passing to the limit as $\eta\to 0$)
\begin{equation}\label{eq_lim_conv}\lim_{\eta\to 0} \int_{\tilde{V}^\mu} \int_0 ^\ep d \tilde{\psi}^S_{\mu,\eta}(t)\wedge\tilde{\omega}\wedge \tilde{\varphi}(x,t)=\int_{\tilde{V}^\mu}\tilde{\omega}\wedge \tilde{\varphi}.  \end{equation}
  We now can write
\begin{equation*}
	\lim_{\eta \to 0}(-1)^{m-j} <\tilde\omega, d\tilde{\psi}^S_{\mu,\eta}\wedge \tilde\varphi>=(-1)^{m-j}	\lim_{\eta \to 0}\int_{\tilde{V}^\mu} \int_0 ^\ep \tilde\omega\wedge d\tilde{\psi}^S_{\mu,\eta}\wedge \tilde\varphi\overset{(\ref{eq_lim_conv})}{=}\int_{\tilde{V}^\mu}\tilde{\omega}\wedge \tilde{\varphi},
\end{equation*}
which, up to a pull-back by $\Phi$, is exactly (\ref{eq_ab}).
As $\Phi$ induces a definable bi-Lipschitz homeomorphism between $V_{\rho}^\mu$ and $\tilde{V}^\mu$ for every $\mu$, this yields  (\ref{eq_ab}).

 Since $\psi_{\mu,\eta}^S$ vanishes near $S$, by induction, we know that (\ref{eq_residue_formula}) holds for $\omega$ and  $\psi_{\mu,\eta}^S\varphi$, which entails that	(setting for simplicity $i:=m-j+1$)
	\begin{equation*}\label{eq_rec_f} f(\mu,\eta)=	<d\omega,\psi^S_{\mu,\eta}\varphi>+(-1)^{i}<\omega,\psi^S_{\mu,\eta} d\varphi>-\sum_{S\prec Y\prec M } <\res_{_Y} \omega, \psi^S_{\mu,\eta} \varphi_{|Y}>.\end{equation*}
	As we assume that $\res_{_Y} \omega$ is  $L^1$  and   $  \varphi$ is $\cc_0^\infty$, the form $\res_{_Y} \omega\wedge   \varphi_{|Y}$ is $L^1$. As $\psi^S_{\mu,\eta}$ is uniformly bounded and tends  (pointwise) as $\eta \to 0$ to the function  $\psi^S_{\mu,0} $ defined as $\psi^S_{\mu,0}(t):=0 $ if $t\le \mu$ and $\psi_{\mu,0}(t):=1$ otherwise,
		 	 passing to the limit as $\eta\to 0$ in the above equality, we get thanks to (\ref{eq_ab}) and Lebesgue's Dominated Convergence Theorem
		$$\int_{V_{\rho}^\mu} \omega\wedge \varphi =<d\omega,\psi^S_{\mu,0}\varphi>+(-1)^{i}<\omega, \psi^S_{\mu,0}d\varphi>-\sum_{ S\prec Y\prec M } <\res_{_Y} \omega,\psi^S_{\mu,0}  \varphi_{|Y}>.$$
 Since the necessary integrability conditions hold by assumption for $\omega, \varphi,d\omega,d\varphi, \res_{_Y} \omega$, and $ \varphi_{|Y}$, passing now to the limit as $\mu \to 0$ shows that $\lim _{\mu \to 0}\int_{V_{\rho}^\mu} \omega\wedge \varphi$ exists  and 
  \begin{equation}\label{eq_resint}
  \lim _{\mu \to 0}\int_{V_{\rho}^\mu}   \omega\wedge \varphi =<d\omega,\varphi>+(-1)^{i}<\omega, d\varphi>-\sum_{S\prec Y\prec M  } <\res_{_Y} \omega,  \varphi_{|Y}>,
  \end{equation} 
which is independent of $\rho$.  Moreover, this equality also shows that, in order to prove that the limit displayed in  (\ref{eq_dfn_res}) is independent of $\pi$, 
we just need to check that 
\begin{equation}\label{eq_only}
\lim _{\mu \to 0}\int_{V_{\rho}^\mu} \omega\wedge (\varphi-\pi^*\varphi_{|S})=0
\end{equation}
(for then the limit (\ref{eq_dfn_res}) will be equal to the right-hand-side of (\ref{eq_resint}) for all $\pi$).
Clearly, we can assume that $\varphi_{|S}=0$. Because we can argue up to a coordinate system of $S$ and since it was established that the considered limit  is independent of $\rho$, we will assume that $S$ is a neighborhood of the origin in $ \{ 0_{\R^{n-k}}\}\times \R^k$, that $\rho$ is  the euclidean distance to $S$, and that $\pi$ is the orthogonal projection onto $S$ (as $\pi$ is a submersion at points of $S$, there is such a coordinate system).  Set then $\tilde{\pi}(x,t)=tx+(1-t)\pi(x)$, as well as
$$\mathbf{A} \varphi(x):=\int_0^1 (\tilde{\pi}^* \varphi)_{\pa_t}(x,t)\,dt.  $$
By Cartan's magic formula, we have (recall that we assume $\varphi_{|S}=0$):
\begin{equation}\label{eq_cmf}\mathbf{A} d\varphi+d\mathbf{A} \varphi =\varphi.\end{equation}
Moreover, since $|\frac{\pa \tilde{\pi}}{\pa t}(x,t)|= |x-\pi(x)|$, we have:
\begin{equation}\label{eq_u}
|\mathbf{A} \varphi(x)| \lesssim  |x-\pi(x)|\cdot ||\varphi||_{L^\infty(U_\xo)}.
\end{equation}
Define then a function $\hat \psi_\eta$ as $\hat \psi_\eta(x):= \psi_\eta(|x|)$ (where $\psi_\eta$ is as in (\ref{eq_psieta})), and set 
$$\mathbf{B}_\eta \varphi:=\hat\psi_\eta \varphi  +  d\hat\psi_\eta \wedge\mathbf{A} \varphi  .  $$
 Since $|d\hat\psi_\eta| \overset{(\ref{eq_psieta_ineq})}{\lesssim} 1/|x-\pi(x)| $, by (\ref{eq_u}), $||\mathbf{B}_\eta \varphi||_{L^\infty(U_\xo)}$ is bounded independently of $\eta$ for every $\varphi$. As $\mathbf{B}_\eta \varphi$ tends to $\varphi$ (pointwise), by Lebesgue's Dominated Convergence Theorem 
 $$\lim_{\eta \to 0}< d\omega ,\mathbf{B}_\eta \varphi >=<d \omega , \varphi> \et \lim_{\eta \to 0}<\res_{_Y} \omega, (\mathbf{B}_\eta \varphi)_{|Y} >=<\res_{_Y} \omega ,\varphi_{|Y} >,$$ 
 for every $Y\succ S$.  Moreover, as $d\mathbf{B}_\eta \varphi \overset{(\ref{eq_cmf})}=\mathbf{B}_\eta d\varphi$ which is also bounded independently of $\eta$ for the same reason,
applying again Lebesgue's Theorem,  we see that
$$\lim_{\eta \to 0}< \omega , d\mathbf{B}_\eta \varphi >=< \omega , d \varphi >.$$
As $\mathbf{B}_\eta \varphi$ is a smooth form that vanishes near $S$, by induction,  equality (\ref{eq_residue_formula}) holds for the pair $\omega,\mathbf{B}_\eta \varphi$. Writing this equality and passing to the limit as $\eta\to 0$, we get, thanks to  the three just above limits,  
$$ <d\omega,\varphi>=(-1)^{m-j}<\omega,d\varphi>+ \sum_{S\prec Y\prec M} <\res_{_Y} \omega ,  \varphi_{|Y}>. $$
 By (\ref{eq_resint}), this shows that  $\lim_{\mu\to 0} \int_{V_{\rho}^\mu} \omega\wedge \varphi =0$, yielding (\ref{eq_only}) (we are assuming $\varphi_{|S}=0$, see the paragraph right after (\ref{eq_only})), and establishing (\ref{item_well_dfn_max}) and (\ref{item_well_dfn}) of Theorem \ref{thm_welldefined}.
 
 To prove Theorem \ref{thm_residue_formula}, observe now that 
 $$\lim _{\mu \to 0}\int_{V_{\rho}^\mu} \omega\wedge \varphi\overset{(\ref{eq_only}) }=\lim _{\mu \to 0}\int_{V_{\rho}^\mu} \omega\wedge\pi^*\varphi_{|S}\overset{(\ref{eq_dfn_res}) }=<\res_{_S}\omega , \varphi_{|S}>, $$
 which means that (\ref{eq_residue_formula}) immediately follows from (\ref{eq_resint}).
 
 Finally, to check (\ref{item_res_cur}) of Theorem \ref{thm_welldefined}, observe that if $\omega\in \dor^{m-j-1}_S$  then $\res_{_S} \omega$ is well-defined (in virtue of (\ref{item_well_dfn_max}) and (\ref{item_well_dfn}) that we already checked).  We thus just have to check the continuity of $\res_{_S}\omega$ in the sense of currents, for such $\omega$. One more time, we can assume $\omega$ to be  supported in a small neighborhood $U_\xo$ of a point $\xo\in S$. Take a function $\phi\in \cc^\infty_0 (\uxo)$ which is equal to $1$ on some neighborhood of $\supp_\mba \omega$ and notice that for every $\varphi\in \cc_0^{j,\infty}(U_\xo\cap S)$, $\phi\pi^*\varphi$ extends (by $0$) to a smooth form on $M$.  We thus have, in virtue of (\ref{eq_residue_formula}) for the pair $\omega,\phi \pi^*\varphi$
 $$<\res_{_S}\omega, \varphi>=<d\omega,\phi \pi^*\varphi>+(-1)^{m-j-1}<\omega,d(\phi\pi^*\varphi)>-\sum_{S\prec Y\prec M} <\res_{_Y}\omega, \phi\pi^*\varphi> ,$$
 and consequently
 $|<\res_{_S}\omega, \varphi>| \lesssim ||\varphi||_{\wca^j_\infty (S)}, $
 where the constant just depends on $\phi$, $\omega$, and hence is independent of $\varphi$.
\end{proof}

\begin{rem}\label{rem_indep_rho}
	\begin{enumerate}
		\item\label{item_indep_rho} 	The proof has established that, in the situation of (\ref{item_well_dfn}), the limit  (\ref{eq_dfn_res}) is independent of $\rho:U\cap (M\cup S) \to [0,\infty)$, as soon as $\rho$ is a subanalytic, Lipschitz, and satisfies $\rho^{-1}(0)=S\cap U$.
		\item\label{item_trace_res}  If $S\in \Sigma$ is an $(m-1)$-dimensional stratum then $\tra_S \omega =\res_{_S} \omega$ for every $\omega\in \wca^j_p(M)$, if $p$ is sufficiently large (see also Remark \ref{sect_concluding} (\ref{item_res_l2})). This comes down from the definitions.
		\end{enumerate}
\end{rem}

The assumption about integrability of   residues holds in many situations. We illustrate this fact by giving two corollaries that will rely on our density theorems. 

\begin{cor}\label{cor_vanishing_res}
 Assume that $M$ is normal and	let $(Z,\Sigma_Z)$ denote an open subspace of $(\delta M,\Sigma_{\delta M})$. For every $p\in [1,\infty)$  sufficiently close to $1$, we have 
	\begin{equation}\label{eq_wmu_res}
		\wca^j_p (M,Z)=\{\omega\in \wca^j_p (M): \res_{_S}\, \omega=0 \mbox{ for all } S\in \Sigma_Z \}.
	\end{equation}
\end{cor}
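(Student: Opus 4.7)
We prove both inclusions. The case $j=m$ is vacuous: $\wca^m_p(M,Z)=\wca^m_p(M)$ because the Dirichlet condition is empty for top-degree forms, and $\res_S\omega=0$ automatically when $\deg\omega=m$ (the integrand in (\ref{eq_dfn_res}) then has degree exceeding $\dim V_\rho^\varepsilon=m-1$). We henceforth assume $j<m$ and repeatedly exploit the following observation: since $Z$ is open in $\delta M$ (which is closed in $\mba$), every $Y\in\Sigma_{\delta M}$ with $S\prec Y$ for some $S\in\Sigma_Z$ itself lies in $\Sigma_Z$.

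For the inclusion $\supseteq$, assume $\omega\in\wca^j_p(M)$ satisfies $\res_S\omega=0$ for every $S\in\Sigma_Z$. A downward induction on $\dim S$ via Theorem \ref{thm_welldefined} (invoking (\ref{item_well_dfn_max}) for maximal strata and (\ref{item_well_dfn}) otherwise) shows that each such residue is well-defined and, being zero, trivially $L^1$. Given $\beta\in\wca^{m-j-1}_{p',M\cup Z}(M)$, normality of $M$ together with Theorem \ref{thm_dense_formes} (applied with $A=\delta M\setminus Z$ and $E=\emptyset$; valid because $p'$ is large when $p$ is close to $1$) produce approximations $\beta_n\in\cc^{m-j-1,\infty}_{\mba\setminus(\delta M\setminus Z)}(\mba)$, smooth on $\mba$ with compact support in $M\cup Z$, converging to $\beta$ in the $\wca^{m-j-1}_{p'}$-norm. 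The residue formula (\ref{eq_residue_formula}) applied to $\omega$ and $\beta_n$ involves only strata of $\Sigma_Z$, whose contributions vanish by hypothesis, collapsing it to the plain integration-by-parts $<d\omega,\beta_n>=(-1)^{j+1}<\omega,d\beta_n>$. Passing to the limit $n\to\infty$ transfers this identity to $\beta$, proving $\omega\in\wca^j_p(M,Z)$.

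Conversely, assume $\omega\in\wca^j_p(M,Z)$, fix $S\in\Sigma_Z$, and argue by downward induction on $\dim S$; the inductive hypothesis is that $\res_Y\omega=0$ for every $Y\in\Sigma_Z$ with $S\prec Y$, which, combined with the openness of $Z$, makes all higher residues $L^1$, so Theorem \ref{thm_welldefined} ensures well-definedness of $\res_S\omega$. Pick $x_0\in S$, a tubular neighborhood $(\pi,\rho)$ of $S$, a test form $\varphi\in\cc_0^\infty(S)$ supported near $x_0$, and a smooth extension $\tilde\varphi:=\chi\cdot\pi^*\varphi\in\cc^{m-j-1,\infty}(\mba)$ with $\tilde\varphi_{|S}=\varphi$ near $\supp\varphi$ and compact support in $M\cup Z$. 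The principal obstacle is circularity: applying (\ref{eq_residue_formula}) directly to $\omega$ and $\tilde\varphi$ would require $\res_S\omega$ itself to be $L^1$, precisely what the induction seeks to establish. The fix is to apply (\ref{eq_residue_formula}) instead to $\omega$ and the cutoff $\psi_{\mu,\eta}^S\tilde\varphi$ with $\psi_{\mu,\eta}^S(x):=\psi_\eta(\rho(x)-\mu)$, $\mu>0$; this form vanishes near $S$ and therefore only requires $L^1$-ness of residues on strata $Y$ with $S\prec Y$, which is provided by the inductive hypothesis and makes those boundary terms vanish, leaving $<d\omega,\psi_{\mu,\eta}^S\tilde\varphi>=(-1)^{j+1}<\omega,d(\psi_{\mu,\eta}^S\tilde\varphi)>$. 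Expanding the right-hand side, invoking the internal limit identity $\lim_{\eta\to 0}(-1)^{m-j}<\omega,d\psi_{\mu,\eta}^S\wedge\tilde\varphi>=\int_{V_\rho^\mu}\omega\wedge\tilde\varphi$ (for a.e.\ $\mu>0$) established in the proof of Theorem \ref{thm_welldefined}, and then letting $\mu\to 0$ via the definition of $\res_S\omega$, one obtains (up to a sign) $<\res_S\omega,\varphi>=<d\omega,\tilde\varphi>-(-1)^{j+1}<\omega,d\tilde\varphi>$. This vanishes by the Dirichlet condition applied to $\tilde\varphi\in\wca^{m-j-1}_{p',M\cup Z}(M)$, completing the induction.
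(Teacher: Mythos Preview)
Your proof is correct, and the $\supseteq$ direction is essentially identical to the paper's. The $\subseteq$ direction, however, follows a genuinely different route.

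The paper proves $\subseteq$ by approximating $\omega\in\wca^j_p(M,Z)$ via Theorem~\ref{thm_pprime}: for $p$ close to $1$ there exist smooth forms $\omega_i\in\wca^j_{p,\mba\setminus Z}(M)$ converging to $\omega$, and since $Z$ is open the $\omega_i$ are (locally) compactly supported in $M$; a direct application of Stokes' formula to $\omega_i\wedge\pi^*\varphi$ on $\{\rho\ge\eta\}$ then shows $\int_{\rho=\eta}\omega_i\wedge\pi^*\varphi=-\int_{\rho\le\eta}d(\omega_i\wedge\pi^*\varphi)$, and passing to the limits $i\to\infty$, $\eta\to 0$ gives $\res_S\omega=0$. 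Your approach instead avoids Theorem~\ref{thm_pprime} altogether: you argue by downward induction on the strata of $\Sigma_Z$, use the cutoff $\psi^S_{\mu,\eta}\tilde\varphi$ to stay within the range where the residue formula applies (higher residues being zero by induction), and then pass to the limits $\eta\to 0$, $\mu\to 0$ using the identity (\ref{eq_ab}) extracted from the proof of Theorem~\ref{thm_welldefined}. What your approach buys is independence from the density result Theorem~\ref{thm_pprime}; what it costs is reliance on an identity proved \emph{inside} the proof of Theorem~\ref{thm_welldefined} rather than on a stated result, and a somewhat more intricate double limit argument. Both are valid, and your method has the conceptual merit of showing that the equivalence in Corollary~\ref{cor_vanishing_res} is essentially built into the residue machinery itself.
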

\begin{proof}
 For any $p\ge 1$, if $\omega\in \wca^j_p (M)$ has residues on $Z$ that are zero and $\varphi \in \cc^{m-j-1,\infty}_{M\cup Z}(\mba)$, then  (\ref{eq_residue_formula}) immediately yields (\ref{eq_lsp_j}) for the pair $\omega, \varphi$. If $p$ is sufficiently close to $1$, thanks to the density of $\cc^{m-j-1,\infty}_{M\cup Z}(\mba)$ in $\wca^{m-j-1}_{p',M\cup Z}(M)$ (Theorem \ref{thm_dense_formes}, see Remark \ref{rem_p=infty} if $p'=\infty$), this is enough to ensure $\omega \in \wca^j_p (M,Z)$.
 
 Conversely, take $\omega\in \wca^j_p (M,Z)$, $S\in \Sigma_Z$, and $\varphi\in \cc^{m-j-1,\infty}_0(S)$, as well as a definable tubular neighborhood $(\pi,\rho)$. As $Z$ is open, every point $\xo \in S$ has a neighborhood $U$ in $\mba$ that only meets $M$ and strata of   $ \Sigma_Z$.  As our problem is local, we may assume $\varphi$ and $\omega$ to be supported in this neighborhood. By Theorem \ref{thm_pprime}, there is a sequence of smooth forms $\omega_i\in \wca^{j}_{p,\mba \setminus Z}(M)$ that tends to $\omega$ (if $p$ is sufficiently close to $1$). Multiplying $\omega_i$ by a function which is $1$ on the support of $\omega$ if necessary, we can assume it to be compactly supported in $U$. As $U$ only meets $M$  and strata of $\Sigma_Z$, near which $\omega_i$ is zero, this means that $\omega_i$ is compactly supported in $M$, which, by Stokes' formula,  entails
 \begin{equation}\label{eq_omi}
 	\int_M d(\omega_i\wedge \pi^* \varphi) =0.
 \end{equation} 
Moreover, for $\eta>0$, applying Stokes' formula  on $\{x\in U\cap M:\rho(x)\ge \eta\}$, we have (for coherent orientations)
 \begin{equation*}
 	\int_{\rho=\eta} \omega_i\wedge \pi^* \varphi=\int_{\rho\ge \eta} d(\omega_i\wedge \pi^* \varphi)  \overset{(\ref{eq_omi})}=-\int_{\rho\le \eta} d(\omega_i\wedge \pi^* \varphi).
 \end{equation*}
Passing to the limit as $i\to \infty$, we get for almost every $\eta>0$ small:
\begin{equation*}
	\int_{\rho=\eta} \omega\wedge \pi^* \varphi=-\int_{\rho\le \eta} d(\omega\wedge \pi^* \varphi),
 \end{equation*}
which tends to zero as $\eta\to 0$, yielding $\res_{_S}\, \omega=0$ near $\xo$ (see (\ref{eq_dfn_res})).
\end{proof}
\begin{rem}
   Property (\ref{eq_specialization}) is not true for $p$ close to $1$, i.e. there is no specialization property of the vanishing of residues. Indeed, the counterpart of   (\ref{eq_specialization}) in the case  $p$ close to $1$, that may be derived from it, is, assuming $M$ to be connected along $A$,  $$\wca^j_p (M,A)=\wca^j_p (M,int_{\delta M}(A)),$$
where $int_{\delta M}(A)$ stands for the interior of $A$ in $\delta M$. This accounts for the fact that Corollary \ref{cor_vanishing_res} is stated for $Z$ {\it open} subspace of $(\delta M,\Sigma_{\delta M})$.
\end{rem}

 In the case ``$p$ large'', we have the following counterpart:

\begin{cor}\label{cor_trace_formes} 
 Assume that $M$ is normal. 	If $(A,\Sigma_A)$ is a  subspace of $(\delta M,\Sigma_{\delta M})$ then for every $p\in [1,\infty)$  sufficiently large
	\begin{equation}\label{eq_tra0}
		\wca^j_p (M,A)=\{\beta \in \wca^j_p (M): \tra_S \beta =0 \mbox{ for all } S\in \Sigma_A \},\end{equation}
	and consequently, $\cc^{j,\infty }_{\mba \setminus  \adh A} (\mba)$ is dense in $\bigcap_{S\in \Sigma_A} \ker \tra_S$.
\end{cor}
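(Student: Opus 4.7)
My plan is to first establish the equality
\begin{equation*}
\wca^j_p(M,A)\;=\;\bigcap_{S\in\Sigma_A}\ker\tra_S,\qquad (\ast)
\end{equation*}
after which the density assertion follows at once: Theorem \ref{thm_dense_formes} applied with $E=\emptyset$ (whose connectedness hypothesis holds because $M$ is normal) gives density of $\cc^{j,\infty}_{\mba\setminus\adh A}(\mba)$ in $\wca^j_p(M,A)$ for $p$ large, which by $(\ast)$ coincides with $\bigcap_{S\in\Sigma_A}\ker\tra_S$.

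The inclusion $\wca^j_p(M,A)\subseteq\bigcap_{S\in\Sigma_A}\ker\tra_S$ is a direct consequence of density and continuity. Given $\beta\in\wca^j_p(M,A)$, Theorem \ref{thm_dense_formes} produces approximations $\beta_\nu\in\cc^{j,\infty}_{\mba\setminus\adh A}(\mba)$ with $\beta_\nu\to \beta$ in $\wca^j_p(M)$; since each $\beta_\nu$ vanishes in a neighborhood of $\adh A\supseteq S$ for every $S\in\Sigma_A$, one has $\tra_S\beta_\nu=\beta_\nu|_S=0$, and the continuity of $\tra_S$ asserted in Theorem \ref{thm_trace_formes} yields $\tra_S\beta=0$ in the limit.

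The reverse inclusion $\bigcap_{S\in\Sigma_A}\ker\tra_S\subseteq\wca^j_p(M,A)$ constitutes the main content; it generalizes to $j$-forms the converse half of Lemma \ref{lem_traTra} (the case $j=0$). Fix $\beta$ in the intersection; one must verify (\ref{eq_lsp_j}) against every test form $\gamma\in\wca^{m-j-1}_{p',M\cup A}(M)$, and by a partition of unity it suffices to treat $\gamma$ supported in a small neighborhood $U_\xo$ of a point $\xo\in S\in\Sigma_A$ of dimension $k$ (the cases $\xo\in M$ or $\xo\in\delta M\setminus A$ being trivial by Stokes' formula or the compact support condition on $\gamma$). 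Using a definable bi-Lipschitz trivialization $\Lambda:U_\xo\to F\times W_\xo$ together with the closed $(m-k-1)$-form $\beta_F$ on $F^\ep$ supplied by Lemma \ref{lem_beta}, the very same Stokes-type computation that drives the proofs of Lemma \ref{lem_traTra} and Theorem \ref{thm_trace_formes} yields the identity
\begin{equation*}
\langle\tra_S\beta,\varphi\rangle\;=\;\pm\langle\trd^p\beta,\,\phi\,\Lambda^*(\pi^*\varphi\wedge\beta_F)\rangle'
\end{equation*}
for every $\varphi\in\cc_0^{k-j,\infty}(W_\xo)$ and every cut-off $\phi\in\cc_0^\infty(U_\xo)$ equal to $1$ on $\supp_\mba\beta\cap U_\xo$. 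Under the assumption $\tra_S\beta=0$, the right-hand side vanishes for every such $\varphi$, so $\trd^p\beta$ annihilates every test form of the privileged shape $\phi\,\Lambda^*(\pi^*\varphi\wedge\beta_F)$.

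The main obstacle is to bootstrap this partial vanishing into the full assertion $\trd^p\beta\equiv 0$ on $A$. The plan is to induct on the strata of $\Sigma_A$, processing a stratum $S$ only after every $S'\in\Sigma_A$ with $S\prec S'$ has been handled: at each inductive step, Theorem \ref{thm_pprime} (available because the conjugate exponent $p'$ is close to $1$) is invoked to approximate an arbitrary test form $\gamma\in\wca^{m-j-1}_{p',M\cup A}(M)$ localized near $S$ by smooth forms vanishing near $A$ away from $S$, which can then be split into a piece supported away from $S$ (absorbed by the inductive hypothesis on the strictly higher strata of $\Sigma_A$) and a piece expressible, via the decomposition of forms on $F\times W_\xo$ in coordinates adapted to $\beta_F$, as a finite sum of privileged test forms $\phi\,\Lambda^*(\pi^*\varphi\wedge\beta_F)$ controlled by the identity above.
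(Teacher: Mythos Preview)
Your easy inclusion and the density consequence are fine and match the paper. The reverse inclusion, however, has a genuine gap in its final step. The ``privileged'' test forms $\phi\,\Lambda^*(\pi^*\varphi\wedge\beta_F)$ span only a very thin subspace of $\wca^{m-j-1}_{p',M\cup A}(M)$: on each fiber $F$ you are only hitting multiples of the single fixed $(m-k-1)$-form $\beta_F$. An arbitrary test form $\gamma$ near $S$ has an essentially free $(m-j-1)$-form restriction on each fiber, and there is no mechanism to decompose it as a finite sum of forms $\pi^*\varphi\wedge\beta_F$. Your appeal to Theorem~\ref{thm_pprime} does not cure this: that theorem lets you approximate $\gamma$ by smooth forms vanishing near prescribed low-dimensional sets, but it does not force those approximations into the span of products with the fixed $\beta_F$.

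The paper circumvents this entirely with a different mechanism. First, the specialization property (\ref{eq_specialization}) reduces the task to showing $\beta\in\wca^j_p(M,S)$ for each \emph{maximal} stratum $S$ of $\Sigma_A$, so only one stratum is treated at a time with no induction on $\Sigma_A$. Then, instead of constraining the test form $\alpha\in\wca^{m-j-1}_{p',M\cup S}(M)$ to a special shape, one mollifies it along $S$ to $\alpha_\sigma:=\alpha*_k\varphi_\sigma$; this makes $\res_{_S}\alpha_\sigma=(\res_{_S}\alpha)*\varphi_\sigma$ a smooth compactly supported form on $S$ (and the residues on all other strata vanish because $S$ is maximal). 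Now the residue formula (\ref{eq_residue_formula}) applied to $\alpha_\sigma$ and smooth approximations $\beta_i$ of $\beta$ reads
\[
\langle d\alpha_\sigma,\beta_i\rangle+(-1)^{m-j-1}\langle\alpha_\sigma,d\beta_i\rangle=\langle\res_{_S}\alpha_\sigma,\beta_{i|S}\rangle,
\]
and since $\res_{_S}\alpha_\sigma\in\cc_0^{k-j,\infty}(S)$ one can pass to the limit $i\to\infty$ using the continuity of $\tra_S$ (Theorem~\ref{thm_trace_formes}) to get $\langle\res_{_S}\alpha_\sigma,\tra_S\beta\rangle=0$ on the right, and then let $\sigma\to0$. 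The key idea you are missing is thus to bring the residue formula to bear on the \emph{test} form, rather than trying to squeeze the test form into a shape adapted to $\beta_F$.
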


\begin{proof}
	If $\beta\in  \wca^j_p(M,A)$ then by Theorem \ref{thm_dense_formes}, $\beta$ can be approximated (for $p$ sufficiently large) by forms that are smooth up to the boundary and vanish near $A$. We immediately get $\tra_S \beta=0$ for $S\in \Sigma_A$, by definition of $\tra_S$, which shows the first inclusion of (\ref{eq_tra0}).
	
	Conversely, let $\beta\in  \wca^j_p(M)$ be satisfying $\tra_S \beta=0$, as current, for all $S\in \Sigma_A$. By (\ref{eq_specialization}), it suffices to show that  $\beta\in  \wca^j_p(M,A')$, with $A'$ dense in $A$, which reduces to show that  $\beta\in  \wca^j_p(M,S)$ for every maximal stratum $S$ of $\Sigma_A$ (for the relation $\prec$). Let $S$ be such a stratum and $\alpha\in
\wca^{m-j-1}_{p',M\cup S}(M)$.

Since our problem is local, we fix $\xo\in S$.
    There are a neighborhood $U_\xo$ of this point and a definable bi-Lipschitz homeomorphism $\Lambda:U_\xo\cap M \to F\times W_\xo$, with $F$ smooth manifold  and $W_\xo$ neighborhood of the origin in $\R^k$, $k=\dim S$.   Since $\Lambda$ identifies stratified forms, it identifies the respective Sobolev spaces of forms (see (\ref{eq_pullback})) as well as the kernels of the respective trace operators (see Remark \ref{rem_invariance_trace}).  We thus can identify $\uxo\cap M$ with $F\times W_\xo$, $S$ with $W_\xo$, and $\xo$ with $\orn$. 
We can also assume  $\alpha$ to be supported in $U_\xo$ (see Remark \ref{rem_vanishing_local}). Let $\alpha_\sigma:=  \alpha*_k \varphi_\sigma$, where $*_k$ is the convolution product defined in (\ref{mol}) and $\varphi_\sigma $ is as in Lemma \ref{lem_conv} $(\ref{item_conv_lem})$.  

  We claim that if $U_\xo$ is sufficiently small then $\alpha_\sigma$  has  $L^1$ residues  on every stratum for $\sigma$ small. Indeed,  as $S$ is maximal in $\Sigma_A$, it must be open in $A$, and therefore, if $U_\xo$ is chosen sufficiently small, it will not meet any other stratum of $\Sigma_A$ than $S$.
Hence, $\alpha$ vanishes near $\delta M\setminus S$ (because it is compactly supported in $(M\cap U_\xo)\cup S$), and thus, by  (\ref{mol}),  so does $\alpha_\sigma$, for $\sigma>0$ small enough. It means that $\alpha$ and $\alpha_\sigma$ have residues $0$ (and hence $L^1$) on the  strata that are different from $S$. It thus only remains to check that $\res_{_S} \alpha_\sigma$ is $L^1$, and we are indeed going to show that it belongs to $\cc_0^\infty(S)$. Thanks to Theorem \ref{thm_welldefined}, we already know that $\res_{_S} \alpha_\sigma$ and $\res_{_S} \alpha$ are well-defined and continuous in the sense of currents. As a matter of fact, $\res_{_S} \alpha_\sigma=(\res_{_S} \alpha)*\varphi_\sigma$  is a  smooth form on $S$. As $\res_{_S} \alpha_\sigma$ is compactly supported on $S$, this means that $  \alpha_\sigma$ has
  $\cc^\infty_0$ residue on $S$, yielding the claimed fact.

  By Theorem \ref{thm_dense_formes}, for $p$ sufficiently large, there is a sequence $\beta_i\in \cc^{j,\infty}(\mba)$  tending to $\beta$ in $\wca^j_p (M)$.
      The preceding paragraph having established that  $\res_{_Y}\alpha_\sigma=0$ for all $Y\ne S$ and $\res_{_S} \alpha_\sigma\in L^1(S)$, the residue formula  (\ref{eq_residue_formula})  entails
$$<d\alpha_\sigma,\beta_i>+(-1)^{m-j-1}<\alpha_\sigma,d\beta_i>=<\res_{_S} \alpha_\sigma,  \beta_{i|S}> . $$
Since we have actually shown $\res_{_S} \alpha_\sigma \in \cc^{k-j,\infty}_0(S) $ and because $\tra_S$ is continuous in the topology of currents (Theorem \ref{thm_trace_formes}), passing to the limit as $i\to \infty$ we get
$$<d\alpha_\sigma,\beta>+(-1)^{m-j-1}<\alpha_\sigma,d\beta>=<\res_{_S} \alpha_\sigma, \tra_S \beta>= 0. $$  
Passing then to the limit as $\sigma \to 0$  yields (\ref{eq_lsp_j}), from which we  conclude that $\beta\in\wca^j_p (M,A)$,  establishing (\ref{eq_tra0}). The density of $\cc^{j,\infty }_{\mba\setminus  \adh A} (\mba)$ then follows from Theorem \ref{thm_dense_formes}.
  \end{proof}

\begin{rem}		Corollary \ref{cor_trace_formes} 
	amounts to say that for $p$ large $\trd^p \omega =0$ on $S$ is equivalent to $\tra_S \omega=0$, which is  the generalization of Lemma \ref{lem_traTra} to differential forms. 
\end{rem}

 Let us now give a Hodge decomposition theorem which will be a byproduct of the latter two corollaries and Theorem \ref{thm_hodge}. For $p$ sufficiently large, it directly follows from the definition of  $\tra_S$ (Theorem \ref{thm_trace_formes}) that  for every $S\in \Sigma_{\delta M}$ such that $\dim S<j$ 
\begin{equation}\label{eq_tra_0}
	\tra_S \omega=0.\end{equation}
This, together with 
Corollary \ref{cor_trace_formes}. accounts for 
the fact that in Theorem \ref{thm_dense_formes} we are able to assume that our approximations vanish in the vicinity of a prescribed definable set $E$ of dimension $<j$. Similarly,
for $p$ close enough to $1$, it follows from the definition of residues (\ref{eq_dfn_res}) that for every $\omega \in \wca^j_p(M)$ and  every stratum $S\subset \delta M$ satisfying $\dim S< m-j-1$ 
\begin{equation}\label{eq_res_0} \res_{_S} \omega =0.\end{equation}
This accounts for the fact that Theorem \ref{thm_pprime} is able to provide  approximations that  vanish in the vicinity of a given definable subset $E$ of $\delta M$ of dimension $<m-j-1$. 
 These two observations lead us to the following Hodge decomposition theorem on singular sets, without any boundary condition (here $\wsc_{j+1}^p (M)=\wsc_{j+1}^p (M,\emptyset)$ and $E^j_p (M)=E^j_p (M,\emptyset)$):
\begin{cor}\label{cor_hodge_deltaM_ptit}
	For all $p\in (1,\infty)$ sufficiently large, we have for all $j>\dim \delta M+1$:
	\begin{equation}\label{eq_hc_cor}
		\wca^j_p(M)=E^j_p (M)\oplus cE^j_p (M) \oplus \mathscr{H}_p^j (M),
	\end{equation}
	where
	$$\mathscr{H}_p^j (M):=\{ \,\omega \in \wca^j_p(M):\;d\omega=0,\;\; \delta_p \omega=0\,\} $$
	and
	\begin{equation*}\label{eq_cE}
		cE^j_p(M):=\{\omega\in \wca^j_p(M): \exists \beta \in \wsc_{j+1}^p(M), \,\, \omega=\delta_p \beta\}.
	\end{equation*} 
Moreover, if $p\in (1,\infty)$ is close enough to $1$ then (\ref{eq_hc_cor}) holds for all $j<m-\dim \delta M-1$.
\end{cor}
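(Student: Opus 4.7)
The plan is to deduce the corollary directly from Theorem \ref{thm_hodge} by showing that, under the dimensional hypotheses, the Dirichlet and Neumann conditions on $\delta M$ become vacuous. The starting point is that Theorem \ref{thm_hodge} is stated for any definable subset $A$ of $\delta M$, so I will apply it with $A:=\delta M$ in the ``$p$ large'' case (so that $\delta M\setminus A=\emptyset$ and the Neumann side is unconstrained), and with $A:=\emptyset$ in the ``$p$ close to $1$'' case (so that the Dirichlet side is unconstrained). Once this is done, the entire task reduces to identifying the constrained Sobolev spaces with the unconstrained ones.

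For $p$ large and $j>\dim \delta M+1$, I would argue as follows. Since $\dim S\le \dim \delta M<j-1$ for every stratum $S\in\Sigma_{\delta M}$, observation (\ref{eq_tra_0}) gives $\tra_S\omega=0$ for all $\omega\in\wca^{j-1}_p(M)$ and every such $S$. By Corollary \ref{cor_trace_formes} (applied with $A=\delta M$ after passing to a normalization if needed, which does not affect the statement since normalizations induce isomorphisms on the Sobolev spaces), this means $\wca^{j-1}_p(M,\delta M)=\wca^{j-1}_p(M)$, and the analogous identity $\wca^j_p(M,\delta M)=\wca^j_p(M)$ holds as well. Substituting these equalities into Theorem \ref{thm_hodge} (with $A=\delta M$) immediately yields the decomposition with the three spaces as defined in the corollary, since the $p$-coexact term $cE^j_p(M,\emptyset)$ is trivially $cE^j_p(M)$.

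For $p$ close to $1$ and $j<m-\dim \delta M-1$, I would apply Theorem \ref{thm_hodge} with $A=\emptyset$. Here the Dirichlet side is automatically unconstrained, so what must be checked is that $\wsc^p_{j+1}(M,\delta M)=\wsc^p_{j+1}(M)$ and $\wsc^p_j(M,\delta M)=\wsc^p_j(M)$. By definition $\wsc^p_i(M,\delta M)=*_p\wca^{m-i}_{p'}(M,\delta M)$, so the question reduces to showing $\wca^{m-j-1}_{p'}(M,\delta M)=\wca^{m-j-1}_{p'}(M)$ and $\wca^{m-j}_{p'}(M,\delta M)=\wca^{m-j}_{p'}(M)$. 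Since $p$ close to $1$ corresponds to $p'$ large, and since $\dim \delta M<m-j-1\le m-j$ by hypothesis, these identities follow from Corollary \ref{cor_trace_formes} together with (\ref{eq_tra_0}) applied at exponent $p'$ (alternatively, working directly at exponent $p$, the vanishing of residues provided by (\ref{eq_res_0}) and Corollary \ref{cor_vanishing_res} yields the same conclusion). Plugging these identities back into Theorem \ref{thm_hodge} gives the desired decomposition.

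There is no substantive obstacle here: the corollary is purely a matter of book-keeping, as the two observations (\ref{eq_tra_0}) and (\ref{eq_res_0})---which are immediate from the definitions of $\tra_S$ and $\res_{_S}$---combine with the explicit characterizations already established (Corollaries \ref{cor_trace_formes} and \ref{cor_vanishing_res}) to force the boundary-condition subspaces to coincide with the unconditional ones in the indicated range of $j$. The only minor point requiring care is to verify that the dimensional threshold is sharp enough to absorb both the Dirichlet constraint at degree $j$ (and $j-1$, for the exact term) and the Neumann constraint at the dual degrees, but a direct count shows that the single hypothesis $j>\dim \delta M+1$ (resp.\ $j<m-\dim \delta M-1$) suffices in all cases.
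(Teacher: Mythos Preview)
Your proposal is correct and follows essentially the same approach as the paper: apply Theorem~\ref{thm_hodge} and use the observations (\ref{eq_tra_0}), (\ref{eq_res_0}) together with Corollaries~\ref{cor_trace_formes} and~\ref{cor_vanishing_res} to see that the boundary conditions are vacuous in the stated range of $j$. The only cosmetic difference is that in the case ``$p$ close to $1$'' the paper still takes $A=\delta M$ and invokes (\ref{eq_res_0}) with Corollary~\ref{cor_vanishing_res} at exponent $p$, whereas you take $A=\emptyset$ and shift the work to the Neumann side via $p'$; these are dual bookkeepings and both are valid.
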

\begin{proof}
	Apply Theorem \ref{thm_hodge} with $A=\delta M$. By (\ref{eq_tra_0}) and Corollary \ref{cor_trace_formes}, the spaces that sit on the right-hand-side of (\ref{eq_hc_cor}) coincide with those that appear on  right-hand-side of (\ref{eq_hc}). In the case ``$p$ close to $1$'', just replace Corollary  \ref{cor_trace_formes} with Corollary  \ref{cor_vanishing_res}, and (\ref{eq_tra_0}) with (\ref{eq_res_0}).
\end{proof}

\begin{rems}\label{sect_concluding} We conclude with a few facts about traces and residues:
\begin{enumerate}	
		\item It is  easy to produce examples of  $\wca^j_p$ forms (with $p$ arbitrarily large) whose trace is not an $L^p$ differential form. Let $M:= (-1,1)\times (0,1)$ and let us examine the trace on $S:=(-1,1)\times \{0\}$ near $x_0=(0,0)$
		 of the exact $1$-form $\omega(x,y):=df(x,y), $
		where $f(x,y):=(x^2+y^2)^{\frac{1}{2}(1-\frac{1}{p})}= |(x,y)|^{1-\frac{1}{p}}$.
Clearly, $|\omega(x,y)|\lesssim |(x,y)|^{-\frac{1}{p}}$ is $L^p$ on $M$ but
		   $\tra_S\omega(x)=\pm (1-\frac{1}{p})|x|^{-\frac{1}{p}}$ is not $L^p$ on $S$. 
		
		  The case $j>0$ is thus less nice than the case  $j=0$, where  Theorem \ref{thm_trace} established that the trace is always an $L^p$ function if $p$ is sufficiently large. Theorem \ref{thm_trace_formes} and Corollary \ref{cor_trace_formes} may however be regarded as partial generalizations of  $(ii)$ and $(iii)$  of Theorem \ref{thm_trace}.
		
		\item\label{item_resl1} It is also easy to produce examples of  $\wca^j_1$ forms that do not possess well-defined residues. Let $M:=\{(x,y)\in (0,1)^2: y< x^2\}$ and let $\omega(x,y):=yf'(x)dx+f(x)dy$, where $f(x)=\frac{1}{x^2}\sin \frac{1}{x}$. We have $\omega\in \wca^1_p(M)$ ($\omega$ is closed) for $p\in [1,\frac{3}{2})$. However, $\esslim_{\eta \to 0^+}\int_{x=\eta}\omega$ does not exist, i.e. the residue at the origin is not well-defined.
		
		This is due to the fact that the residue  on the $x$-axis is not $L^1$.
				We thus cannot spare the assumption ``$\res_Y\omega$ is  $L^1$ for all $S\prec Y\prec M$'' in Theorem \ref{thm_welldefined} (\ref{item_well_dfn}), i.e., we have to exclude forms that oscillate too much on higher strata.

		 \item\label{item_res_l2}  $L^2$ forms do not leave residues on singular strata, which means that if $S\in \Sigma$ satisfies $\dim S<m-1$ then $\res_{_S} \omega=0$ for all $\omega \in  \wca^j_1(M)\cap L^j_2(M)$ (the proof of this is given below). In the case $\dim \delta M<m-1$, i.e. if $\mba$ is a pseudomanifold, this means that $\wca^j_2(M)\subset \wca^j_p (M,\delta  M)$, for all $p$ close to $1$. 
	\end{enumerate}

\begin{proof}[Proof of (\ref{item_res_l2}).]
	Let $\omega \in  \wca^j_1(M)\cap L^j_2(M)$ and $S\in \Sigma$.  As our problem is local, we will assume $S=\{0\} \times\R^k$,  $\rho(x,y)=|y| $, and will identify $\uxo\cap M$ with $F\times W_\xo$, where $\uxo,F,$ and $W_\xo$ are like in the proof of Theorem \ref{thm_trace_formes}.  We set for $\eta>0$, $V^\eta:=\{x\in \uxo\cap M:\rho(x)=\eta\}$ as well as  $U^\eta:=\{x\in \uxo\cap M:\rho(x)\le\eta\}$, and let $F^\eta$ and $G^\eta$ be as in the proof of Theorem \ref{thm_trace_formes}.
 Note first that since  for almost every $y\in S$, we have for $\ep>0$ small $$\int_0^\ep||\omega||_{L^2(G^\eta\times\{y\})}^2d\eta \overset{(\ref{eq_coarea_sph})}\le ||\omega||_{L^2(F^\ep\times\{y\})}^2,$$  we get, after integrating with respect to $y$:
 \begin{equation}\label{eq_resl2_veta}
 	\int_0^\ep||\omega||_{L^2(V^\eta)}^2d\eta \le ||\omega||_{L^2(U^\ep)}^2.
 \end{equation}
	  Since we can argue by induction on the codimension of $S$, we can assume $\res_{_Y}\omega=0$ for every $S\prec Y\prec M$, which, by Theorem \ref{thm_welldefined}, means that $\res_{_S} \omega$ is well-defined. By definition of the essential limit,  for every $\varphi\in \cc_0^{m-j-1,\infty}(S)$, there thus must be  a negligible set $E\subset \R$ such that $\lim_{\eta \to 0^+, \eta\notin E}\int_{V ^\eta} \omega\wedge \pi^*\varphi $ exists,  $\pi$ being the orthogonal projection onto $S$  (see (\ref{eq_dfn_res})). Assume this limit to be nonzero for some $\varphi$. We deduce  that for $\eta\notin E$ positive small, $ \int_{V ^\eta} |\omega|$ is bounded below by some positive constant $\sigma$. By H\"older, we get ($\dim S<m-1$ forces $  \hn^{m-1}(V^\eta )\lesssim \eta$, see \cite[Proposition 4.3.4]{livre})
	$$\sigma  \le  \hn^{m-1}(V^\eta )^{1/2} \cdot ||\omega||_{L^2(V ^\eta)} \lesssim \sqrt{\eta}\cdot ||\omega||_{L^2(V ^\eta)}  ,$$
	 which, after squaring and integrating with respect to $\eta<\ep$, with $\ep>0$ small, gives
	$$\int_0^\ep \frac{\sigma^2}{\eta} d\eta\le \int_0^\ep||\omega||_{L^2(V ^\eta)}^2d\eta \overset{(\ref{eq_resl2_veta})}\le ||\omega||_{L^2(U^\ep)}^2  ,$$ 
	which contradicts that $\omega$ is $L^2$ (the left-hand-side is infinite).
\end{proof}
\end{rems}

	\end{section}

\end{document}